\numberwithin{equation}{section}
\newcommand{\Acyc}{\textsf{\textup{Acyc}}}
\newcommand{\FS}{\mathsf{FS}}
\newcommand{\Path}{\textsf{\textup{Path}}}
\newcommand{\Cycle}{\textsf{\textup{Cycle}}}
\newcommand{\Star}{\textsf{\textup{Star}}}
\newcommand{\diam}{\textup{diam}}
\newcommand{\inv}{\textup{inv}}
\newcommand{\bd}{\textup{bd}}
\newtheorem{theorem}{Theorem}[section]
\newtheorem{lemma}[theorem]{Lemma}
\newtheorem{proposition}[theorem]{Proposition}
\newtheorem{corollary}[theorem]{Corollary}
\newtheorem{conjecture}[theorem]{Conjecture}
\newtheorem{question}[theorem]{Question}
\newtheorem{problem}[theorem]{Problem}
\theoremstyle{definition}
\newtheorem{definition}[theorem]{Definition}
\newenvironment{remark}
  {\pushQED{\qed}\remarkx}
  {\popQED\endremarkx}
\begin{document}

\title{On the Diameters of Friends-and-Strangers Graphs}

\author{Ryan Jeong}
\address{Department of Pure Mathematics and Mathematical Statistics, Wilberforce Road, Cambridge, CB3 0WA, UK}
\email{rj450@cam.ac.uk}

\maketitle

\begin{abstract}
    Given simple graphs $X$ and $Y$ on the same number of vertices, the friends-and-strangers graph $\FS(X, Y)$ has as its vertices all bijections from $V(X)$ to $V(Y)$, where two bijections are adjacent if and only if they differ on two adjacent elements of $V(X)$ with images adjacent in $Y$. We study the diameters of connected components of friends-and-strangers graphs: the diameter of a component of $\FS(X,Y)$ corresponds to the largest number of swaps necessary to go from one configuration in the component to another. We show that any component of $\FS(\Path_n, Y)$ has $O(n^2)$ diameter and that any component of $\FS(\Cycle_n, Y)$ has $O(n^4)$ diameter, improvable to $O(n^3)$ whenever $\FS(\Cycle_n, Y)$ is connected. These results address an open problem posed by Defant and Kravitz. Using an explicit construction, we show that there exist $n$-vertex graphs $X$ and $Y$ such that $\FS(X,Y)$ has a component with $e^{\Omega(n)}$ diameter. This answers a question raised by Alon, Defant, and Kravitz in the negative. As a corollary, we observe that for such $X$ and $Y$, the lazy random walk on this component of $\FS(X,Y)$ has $e^{\Omega(n)}$ mixing time. This result deviates from related classical theorems regarding rapidly mixing Markov chains and makes progress on another open problem of Alon, Defant, and Kravitz. We conclude with several suggestions for future research.
\end{abstract}

\section{Introduction}

\subsection{Background and Motivation}

Let $X$ and $Y$ be $n$-vertex simple graphs. Interpret the vertices of $X$ as positions, and the vertices of $Y$ as people: say two people in the vertex set of $Y$ are friends if they are adjacent and strangers if they are not. Each person picks a position to stand on, yielding a starting configuration. From here, at any point in time, two friends standing on adjacent positions may switch places: we call this operation a friendly swap. From the initial configuration, say the $n$ people have a final configuration in mind, and they know it can be reached from the initial configuration by some sequence of friendly swaps. What is the worst-case (over pairs of starting and final configurations) number of friendly swaps that is necessary in order for the $n$ people to achieve the final configuration from the starting configuration? 

We may formalize the problem using the following definition.

\begin{definition}[\cite{defant2021friends}] \label{defn:fs_def}
Let $X$ and $Y$ be simple graphs on $n$ vertices. The \textit{\textcolor{red}{friends-and-strangers graph}} of $X$ and $Y$, denoted $\FS(X, Y)$, is a graph with vertices consisting of all bijections from $V(X)$ to $V(Y)$, with bijections $\sigma, \tau \in \FS(X,Y)$ adjacent if and only if there exists an edge $\{a, b\}$ in $X$ such that
\begin{enumerate}
    \item $\{\sigma(a), \sigma(b)\} \in E(Y)$,
    \item $\sigma(a) = \tau(b), \ \sigma(b) = \tau(a)$,
    \item $\sigma(c) = \tau(c)$ for all $c \in V(X) \setminus \{a, b\}$.
\end{enumerate}
In other words, $\sigma$ and $\tau$ differ precisely on two adjacent vertices of $X$ whose images under $\sigma$ (and $\tau$) are adjacent in $Y$. For any such bijections $\sigma, \tau$, we say that $\tau$ is achieved from $\sigma$ by an \textit{\textcolor{red}{$(X, Y)$-friendly swap}}.
\end{definition}

\begin{figure}[ht]
\begin{minipage}{.5\linewidth}
\centering
\subfloat[The graph $X$.]{\label{fig:X_gph}\includegraphics[width=0.48\textwidth]{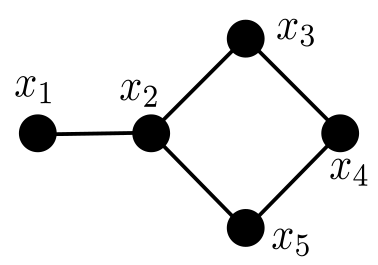}}
\end{minipage}%
\hfill
\begin{minipage}{.5\linewidth}
\centering
\subfloat[The graph $Y$.]{\label{fig:Y_gph}\includegraphics[width=0.48\textwidth]{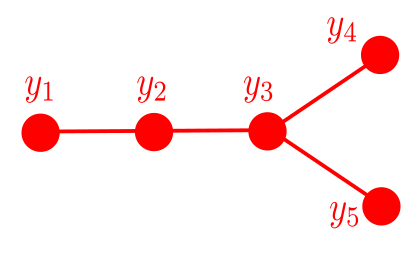}}
\end{minipage}\par\medskip
\centering
\subfloat[A sequence of $(X, Y)$-friendly swaps. The transpositions between adjacent configurations denote the two vertices in $X$ over which the $(X, Y)$-friendly swap takes place. Red text corresponds to vertices in $Y$ placed upon vertices of $X$, in black text: using colored text for vertices in $Y$ to distinguish them from vertices in $X$ in black text will be a convention throughout the work. The leftmost configuration corresponds to the bijection $\sigma$ in the vertex set of $\FS(X, Y)$ that maps $\sigma(x_1) = y_1$, $\sigma(x_2) = y_5$, $\sigma(x_3) = y_3$, $\sigma(x_4) = y_4$, and $\sigma(x_5) = y_2$. The other configurations correspond analogously to vertices in $\FS(X, Y)$.]{\label{fig:friendly_swaps}\includegraphics[width=.99\textwidth]{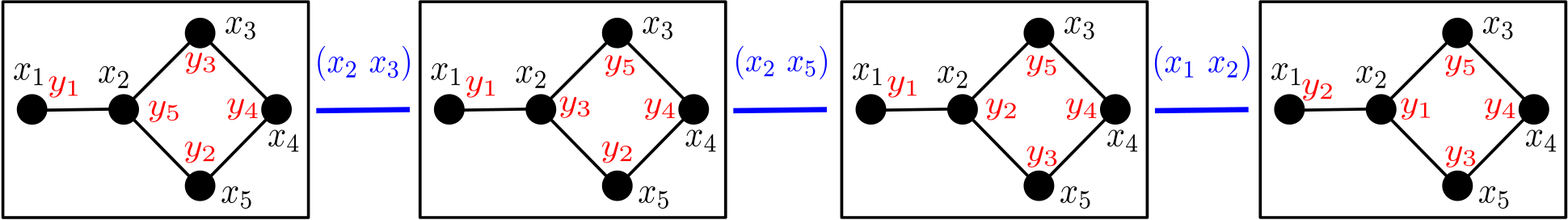}}

\caption{A sequence of $(X, Y)$-friendly swaps in $\FS(X, Y)$ for the $5$-vertex graphs $X$ and $Y$. Configurations in the bottom row correspond to vertices in $V(\FS(X, Y))$. Two consecutive configurations differ by an $(X, Y)$-friendly swap, so the corresponding vertices are adjacent in $\FS(X, Y)$.}
\label{fig:defn}
\end{figure}
See Figure \ref{fig:defn} for an illustration of Definition \ref{defn:fs_def} on five-vertex graphs. Defant and Kravitz \cite{defant2021friends}, in addition to introducing the framework of friends-and-strangers graphs, described the connected components of $\FS(\Path_n, Y)$ and $\FS(\Cycle_n, Y)$ in terms of the acyclic orientations of $\overline{Y}$ (the complement of $Y$), and determined both necessary conditions and sufficient conditions for $\FS(X, Y)$ to be connected. In a different paper \cite{jeong2022structural}, we extend their results: \cite[Corollary 4.14]{defant2021friends} states that $\FS(\Cycle_n, Y)$ is connected if and only if $\overline{Y}$ is a forest with trees of jointly coprime sizes, and we establish that if $X$ is biconnected (i.e., connected and with no cut vertex) and $Y$ is a graph for which $\FS(\Cycle_n, Y)$ is connected, then $\FS(X, Y)$ is connected, settling \cite[Conjecture 7.1]{defant2021friends}. In \cite{jeong2022structural}, we also initiate the study of the girth of friends-and-strangers graphs. Motivated by \cite{kornhauser1984pebble} and connections to molecular programming as seen in \cite{brailovskaya2019reversible}, the framework of friends-and-strangers was later generalized by \cite{milojevic2023connectivity} to permit for multiplicities onto vertices, in which many of the main results of \cite{defant2021friends, wilson1974graph} were also generalized accordingly.

A central objective in the study of friends-and-strangers graphs is to determine necessary and sufficient conditions for their connectivity. Indeed, $\FS(X,Y)$ being connected corresponds exactly to the property that one can go between any two configurations in $\FS(X,Y)$ via some sequence of $(X,Y)$-friendly swaps. Of course, the conditions one may derive will depend upon the assumptions on $X$ and $Y$ under which one works. If one elects to proceed under a regime in which $\FS(X,Y)$ cannot be connected (such as when $X$ and $Y$ are both bipartite; see the discussion around \cite[Proposition 2.7]{defant2021friends} and \cite[Subsection 2.3]{alon2022typical} for a parity obstruction which demonstrates why this is the case), one may instead study how small the number of connected components may be under this regime, and the natural question here is to ask for further conditions on $X$ and $Y$ ensuring that $\FS(X,Y)$ achieves the smallest possible number of connected components. As pursued in \cite[Sections 3 and 4]{defant2021friends} for (respectively) paths and cycles, one direction of inquiry is to fix (without loss of generality, as we will see in Proposition \ref{prop:basic_properties}(1)) $X$ to be some particular graph, and study the structure of $\FS(X, Y)$ for arbitrary $Y$: see \cite{defant2022connectedness, lee2022connectedness, wang2023connectivity, wilson1974graph, zhu2023evacuating}. It is also very natural to ask extremal and probabilistic questions concerning the connectivity of friends-and-strangers graphs, such as minimum degree conditions on $X$ and $Y$ which ensure that $\FS(X,Y)$ is connected or for threshold probabilities on Erd\H{o}s-R\'enyi random graphs $X,Y$ regarding the connectivity of $\FS(X,Y)$: see \cite{alon2022typical, bangachev2022asymmetric, jeong2023bipartite, milojevic2023connectivity, wang2022connectedness}.

The setup proposed by Definition \ref{defn:fs_def} is quite general. Indeed, friends-and-strangers graphs serve both as a common natural generalization of many classical combinatorial objects and as a framework which embodies many important problems in discrete mathematics and theoretical computer science. We illustrate this claim with a non-exhaustive listing of relevant examples. The graph $\FS(X, K_n)$ is the Cayley graph of the symmetric group on the vertex set of $X$ generated by the transpositions corresponding to the edges of $X$; we refer the reader to \cite{defant2021friends} and the references therein for a comprehensive discussion regarding the relevance of friends-and-strangers graphs within algebraic combinatorics. Letting $X$ be the $4$-by-$4$ grid and $Y$ a star graph, studying $\FS(X, Y)$ is equivalent to studying the configurations and moves that can be performed on the famous $15$-puzzle (with the central vertex of the star graph corresponding to the empty tile); see \cite{brunck2023books, demaine2018simple, parberry2015solving, yang2011sliding} for similar inquiries of a recreational flavor. The works \cite{naatz2000graph, stanley2012equivalence} both study the structure of the graph $\FS(\Path_n, Y)$ under certain restrictions on $Y$, while the works \cite{barrett1999elements, reidys1998acyclic} utilize $\FS(\Path_n, Y)$ to investigate the acyclic orientations of $\overline{Y}$. Asking if $X$ and $Y$ pack \cite{bollobas1978packings, kuhn2009minimum, sauer1978edge, yap1988packing, yuster2007combinatorial} in the graph packing literature is equivalent to asking if there exists an isolated vertex in $\FS(X,Y)$. Studying the token swapping problem \cite{aichholzer2021hardness, biniaz2023token, bonnet2018complexity, miltzow_et_al:LIPIcs:2016:6408, yamanaka2015swapping} on the graph $X$ is equivalent to studying distances between configurations in $\FS(X, K_n)$. Finally, as we will briefly touch upon in Subsection \ref{subsec:proof_of_lower_bound}, the interchange process on the graph $X$ \cite{aldous2002reversible, alon2013probability, angel2003random, berestycki2006phase, caputo2010proof, elboim2023infinite, hammond2015sharp, hermon2021interchange, schramm2005compositions} can be phrased in terms of (continuous-time) random walks on $\FS(X, K_n)$.

\subsection{Main Results and Organization} Unlike the existing body of work that studies the connectivity of friends-and-strangers graphs, the present paper initiates the study of their diameters, corresponding to the length of the ``longest shortest path," with lengths of shortest paths evaluated over all pairs of vertices. Indeed, the diameter of a connected component of $\FS(X, Y)$ corresponds to the largest number of $(X,Y)$-friendly swaps necessary to achieve one configuration in the component from another. In a more recreational tone, if we think of $\FS(X,Y)$ as a generalized $15$-puzzle, we are asking for the longest solution length for any solvable puzzle involving ``board $X$ and rules $Y$." The works \cite{alon2022typical, defant2021friends} both posed the following question, which asks whether the distance between any two configurations in $\FS(X,Y)$ is polynomial in the size of $X$ and $Y$.

\begin{question} [\cite{alon2022typical, defant2021friends}] \label{ques:poly_bdd}
Does there exist an absolute constant $C > 0$ such that for all $n$-vertex graphs $X$ and $Y$, every connected component of $\FS(X, Y)$ has diameter at most $n^C$?
\end{question}

In Section \ref{sec:background}, we introduce some background that we shall need later in the work. Before tackling the more global Question \ref{ques:poly_bdd}, in Section \ref{sec:fixed_one_graph}, we fix (without loss of generality) $X$ to be a complete, path, or cycle graph, and derive upper bounds on the maximum diameter of a component of $\FS(X,Y)$ in each setting. Our results on paths and cycles address an open problem posed in \cite[Subsection 7.3]{defant2021friends}. Furthermore, the discussion therein suggests that one must restrict their attention to rather contrived choices of graphs $X$ and $Y$ in order for $\FS(X,Y)$ to have a component with diameter that is superpolynomial in the size of $X$ and $Y$, suggesting that Question \ref{ques:poly_bdd} may be challenging to settle via constructive means if it holds in the negative.

In Section \ref{sec:large_diameter}, we establish the main result of this article, Theorem \ref{thm:main_diam_result}, which answers Question \ref{ques:poly_bdd} in the negative. We prove this theorem by constructing, for all integers $L \geq 1$, graphs $X_L$ and $Y_L$ on the same number of vertices: see Figure \ref{fig:demos} for a schematic diagram of the construction for $L=3$. The construction is such that the number of vertices of $X_L$ and $Y_L$ is $\Theta(L)$, and there exist two configurations $\sigma_s, \sigma_f \in V(\FS(X_L, Y_L))$ which lie in the same connected component $\mathscr{C}$ of $\FS(X_L, Y_L)$ and for which the distance between $\sigma_s$ and $\sigma_f$ is $e^{\Omega(n)}$. 

\begin{restatable}{theorem}{mainDiamResult} \label{thm:main_diam_result}
    For all $n \geq 2$, there exist $n$-vertex graphs $X$ and $Y$ such that $\FS(X,Y)$ has a connected component with diameter $e^{\Omega(n)}$.
\end{restatable}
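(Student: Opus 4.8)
The plan is to build, for every integer $L \geq 1$, a pair of graphs $X_L, Y_L$ on $\Theta(L)$ vertices together with two bijections $\sigma_s, \sigma_f$ in a common component $\mathscr{C}$ of $\FS(X_L,Y_L)$ whose $\FS$-distance is at least $2^{\Omega(L)}$; taking $L = \Theta(n)$ then gives the statement. The guiding intuition is that $\FS(X,Y)$ dynamics can simulate a counter: we want a gadget in which moving a designated ``output'' token by one notch forces a cascade of $\Theta(2^L)$ friendly swaps, exactly as incrementing the high-order bit of an $L$-bit binary counter flips all lower bits. So the construction should be a chain of $L$ ``bit gadgets,'' each a small fixed graph (on $O(1)$ vertices), wired in series so that a swap available at gadget $i$ is enabled only when gadgets $1, \dots, i-1$ are in a particular ``all-ones'' local state, and performing it resets those gadgets, which then must be driven back up through all their intermediate states before gadget $i$ can fire again.

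Concretely, I would first design a single bit gadget: a constant-size choice of local structure in $X_L$ and $Y_L$ (a few positions and a few tokens, most of them ``blockers'' that can only move within the gadget) such that the gadget has a well-defined cyclic or linear sequence of local configurations, consecutive ones joined by a single friendly swap, and such that exiting the gadget ``to the right'' (passing a carry to gadget $i+1$) is possible only from the terminal local state and has the side effect of returning the gadget to its initial local state. Second, I would compose $L$ copies in series, identifying shared positions/tokens at the interfaces, and define $\sigma_s$ to be the global all-initial configuration and $\sigma_f$ the configuration reached after gadget $L$ has fired once. Third — and this is the crux — I would prove a lower bound on $d_{\FS}(\sigma_s,\sigma_f)$: I would exhibit a monovariant $\Phi\colon \mathscr{C} \to \mathbb{Z}_{\geq 0}$, morally ``the integer encoded by the current bit pattern,'' show that every friendly swap changes $\Phi$ by at most $1$ (each swap touches only one gadget's local state, hence changes at most the corresponding bit, but the carry structure forces the usual binary-counter accounting), and show $\Phi(\sigma_f) - \Phi(\sigma_s) = 2^{\Omega(L)}$. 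This yields the diameter bound since $\sigma_s,\sigma_f$ lie in the same component. I also need to verify connectivity of $\mathscr{C}$ (or at least that $\sigma_s$ and $\sigma_f$ are connected), which should follow because the counter can be run forward and every elementary move is reversible.

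The main obstacle is the potential-function step: I must ensure that no ``shortcut'' moves exist — that the friendly-swap adjacency genuinely confines the dynamics to the intended counter transitions, with no unexpected edge in $\FS(X_L,Y_L)$ that would change $\Phi$ by more than $1$ or otherwise collapse the exponential distance. This is where the blocker tokens and the precise choice of $Y_L$ (which edges are ``friendships'') matter: I would need a careful case analysis showing that in any reachable configuration, the only available swaps are the local gadget transitions and the carry transitions, and then a clean inductive/accounting argument that these realize binary incrementing. A secondary point requiring care is keeping the gadget size $O(1)$ so that $L$ gadgets use only $\Theta(L) = \Theta(n)$ vertices, so that $2^{\Omega(L)} = e^{\Omega(n)}$; if a gadget needs $\omega(1)$ vertices the bound degrades but, as long as it is $\mathrm{poly}\log$-free and genuinely constant, the exponential survives. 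The small cases $n$ below the threshold where $L \geq 1$ is available are handled trivially (any two-vertex graphs give a component of diameter $O(1) = e^{\Omega(n)}$ for the implied constant), so the ``for all $n \geq 2$'' is immediate once the asymptotic construction is in place.
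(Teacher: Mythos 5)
Your overall strategy --- a chain of $\Theta(n)$ constant-size gadgets wired in series so that advancing the top gadget forces exponentially many moves lower down, with the two far-apart configurations lying in a single component --- is the same in spirit as the paper's construction: there the ``gadgets'' are $L$ layers, each consisting of two $16$-cycles with attached paths in $X_L$ and a star/complete-bipartite block with two ``knob'' vertices in $Y_L$; the ``carry'' is an $\ell$-extraction (exchanging the two $15$-element partite sets of $\mathcal K^\ell$ through a one-vertex bottleneck); and the lower bound $d(\sigma_s,\sigma_f)\ge 25^{L-1}$ comes from showing that any swap sequence realizing an $(\ell+1)$-extraction must contain at least $25$ $\ell$-extractions as subsegments.

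However, there are two genuine gaps. First, the potential-function step as you state it would fail: if $\Phi$ is ``the integer encoded by the current bit pattern,'' then the single swap that fires gadget $i$ (flipping its bit while the lower bits are all set) changes $\Phi$ by roughly $2^{i}$, not by at most $1$; the increment-by-one accounting of a binary counter holds only for the whole cascade, not per elementary move, so a $1$-Lipschitz $\Phi$ of this kind does not exist. To rescue the argument you either need a recursive/amortized count --- exactly what the paper does by proving that each layer-$(\ell+1)$ carry forces a constant number of layer-$\ell$ carries and iterating --- or you need $\Phi$ to measure position along the intended trajectory, which requires showing that the reachable component is essentially a single long path, a far stronger rigidity statement than anything you outline. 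Second, the step you flag as ``the main obstacle'' is not a detail to be checked but the entire content of the proof: you exhibit no explicit gadget and no argument that in every reachable configuration the only available swaps are the intended local and carry transitions. In the paper this is precisely the long chain of structural results about the component of $\sigma_s$ (the ``rule of two'' on each cycle, layer independence of the knob vertices, order invariance within the paths, and the knob-extraction lemma), which occupies most of Section 4 and is what makes the recursive inequality provable. Without a concrete construction and that rigidity analysis, your proposal is a plausible plan rather than a proof; with them, it would essentially reproduce the paper's argument.
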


\begin{figure}[ht]
  \centering
  \subfloat[The graph $X_3$.]{\includegraphics[width=0.495\textwidth]{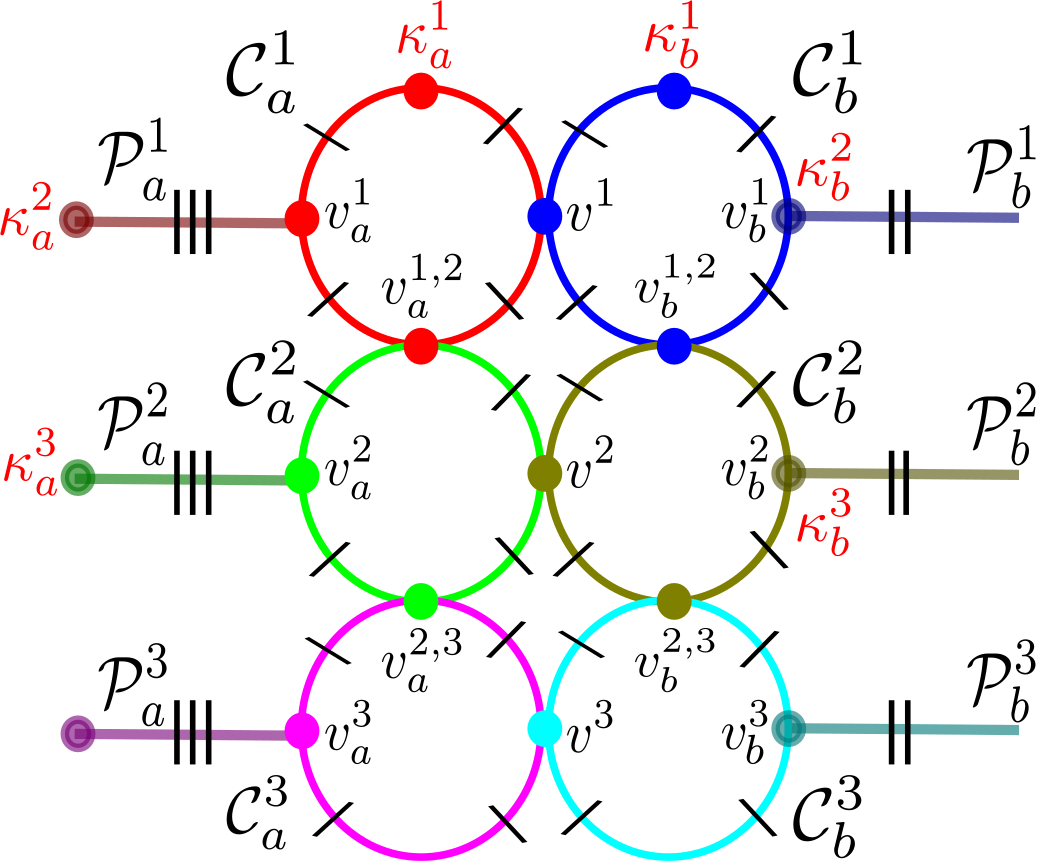}\label{fig:demo_X3}}
  \hfill
  \subfloat[The graph $Y_3$.]{\includegraphics[width=0.495\textwidth]{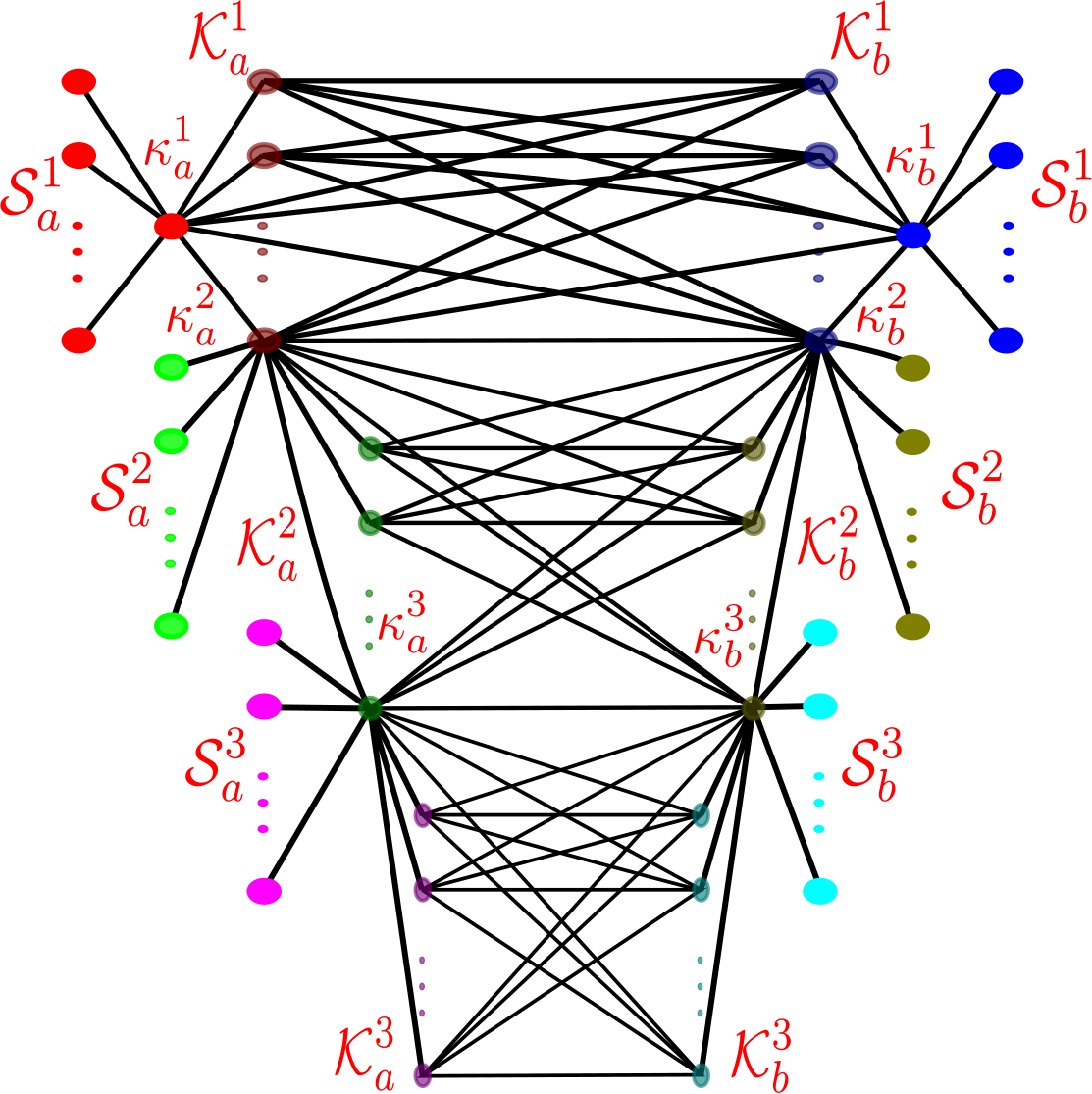}\label{fig:demo_Y3}}
  \caption{The graphs $X_3$ and $Y_3$.}
  \label{fig:demos}
\end{figure}

At the end of Section \ref{sec:large_diameter}, we briefly discuss implications of Theorem \ref{thm:main_diam_result} to the study of random walks on friends-and-strangers graphs, and deduce what might be thought of as the natural stochastic analogue of Theorem \ref{thm:main_diam_result}. Random walks on friends-and-strangers graphs model a variant of the interchange process where we may prohibit certain pairs of particles from swapping positions. This result contrasts many classical theorems regarding rapidly mixing Markov chains, all of which may be readily rewritten using the language of friends-and-strangers graphs, and makes progress on another open problem posed in \cite[Section 7]{alon2022typical}. We conclude the work with Section \ref{sec:future_directions}, which suggests several open problems and directions for future research.

\subsection{Notation} \label{subsec:notation}

In this article, unless stated otherwise, we assume that all graphs are simple. We employ standard asymptotic notation in this paper. Unless stated otherwise, all asymptotic notation in this paper will be with respect to $n$. Purely for the sake of completeness, we state the following standard notation.
\begin{itemize}
    \item The vertex and edge sets of a graph $G$ are denoted by $V(G)$ and $E(G)$, respectively.
    \item The complement of the graph $G$ is denoted $\overline{G}$.
    \item The statement that $G$ and $H$ are isomorphic is written as $G \cong H$.
    \item For a subset $S \subset V(G)$, we let $G|_{S}$ denote the induced subgraph of $G$ with vertex set $S$.
    \item The open neighborhood of $v \in V(G)$, which is the collection of all neighbors of $v$, is denoted by $N_G(v)$. The closed neighborhood of $v \in V(G)$ is denoted $N_G[v] = N_G(v) \cup \{v\}$. For a subset of vertices $S \subseteq V(G)$, we let
    \begin{align*}
        & N_G(S) = \bigcup_{v \in S} N_G(v), 
        & N_G[S] = \bigcup_{v \in S} N_G[v].
    \end{align*}
    \item The disjoint of a collection of graphs $\{G_i\}_{i \in I}$, notated $\bigoplus_{i \in I} G_i$, is the graph with vertex set $\bigsqcup_{i \in I} V(G_i)$ and edge set $\bigsqcup_{i \in I} E(G_i)$. This readily extends to expressing a graph as the disjoint union of its connected components.
    \item The distance $d(v,w)$ between $v, w \in V(G)$ is the length of the shortest path from $v$ to $w$. The diameter of a component $\mathscr{C}$ of $G$ is $\max_{v,w \in V(\mathscr{C})} d(v,w)$.
\end{itemize}
Graphs with vertex set $[n] := \{1, \dots, n\}$ that will be relevant later are
\begin{itemize}
    \item the complete graph $K_n$, with $E(K_n) := \{\{i, j\} : \{i, j \in [n], \ i \neq j\}\}$;
    \item the complete bipartite graph $K_{i,j}$, with $E(K_{i,j}) := \{\{v_1, v_2\}: v_1 \in [i], v_2 \in \{i+1, \dots, i+j\}\}$, which naturally partitions $V(K_{i,j})$ into two sets (henceforth called partite sets);
    \item the path graph $\Path_n$, with $E(\Path_n) := \{\{i, i+1\} : i \in [n-1]\}$;
    \item the cycle graph $\Cycle_n$, with $E(\Cycle_n) := \{\{i, i+1\} : i \in [n-1]\} \cup \{\{n, 1\}\}$;
    \item the star graph $\Star_n := K_{1,n-1}$.
\end{itemize}

\section{Background} \label{sec:background}

In this section, we introduce some background and summarize results from prior work that will be relevant later in the present paper, particularly in Section \ref{sec:fixed_one_graph}. Throughout this section and Section \ref{sec:fixed_one_graph}, we will assume that the vertex set of all graphs is $[n]$, with edge sets as in Subsection \ref{subsec:notation}. Note that if both $V(X)$ and $V(Y)$ are $[n]$, then the vertices of $\FS(X,Y)$ are the elements of $\mathfrak{S}_n$, the symmetric group of degree $n$.

\subsection{Acyclic Orientations} \label{subsec:acyclic_orientations}

An \textit{\textcolor{red}{orientation}} of a graph $G$ is an assignment of a direction to every edge of $G$, and an \textit{\textcolor{red}{acyclic orientation}} of $G$ is an orientation with no directed cycles. Denote the set of all acyclic orientations of $G$ by $\Acyc(G)$. We will be interested in operations on acyclic orientations of $G$ called \textit{\textcolor{red}{flips}} and \textit{\textcolor{red}{double-flips}}, as defined in \cite{defant2021friends}. Notably, it was shown in \cite[Theorem 4.7]{defant2021friends} that double-flips on acyclic orientations in $\Acyc(\overline{Y})$ are paramount in describing the connected components of $\FS(\Cycle_n, Y)$.

Letting $\alpha \in \Acyc(G)$, converting a source of $\alpha$ into a sink or a sink of $\alpha$ into a source by reversing the directions of all its incident edges results in another acyclic orientation $\alpha'$ of $G$. We call such an operation a \textit{\textcolor{red}{flip}}, and we say that $\alpha$ and $\alpha'$ are \textit{\textcolor{red}{flip equivalent}}, denoted $\alpha \sim \alpha'$. In the literature, the equivalence classes in $\Acyc(G)/\!\!\sim$ are called \textit{\textcolor{red}{toric acyclic orientations}}; we refer the interested reader to \cite{chen2010orientations, develin2016toric, macauley2011posets, pretzel1986reorienting, speyer2009powers} for related reading. We will further say that we perform an \textit{\textcolor{red}{inflip}} on $\alpha$ if we convert a source into a sink (the direction of all incident edges ``go into" the new sink), and an \textit{\textcolor{red}{outflip}} if we convert a sink into a source.\footnote{In particular, we may apply these operations to isolated vertices.} 

Similarly, flipping a nonadjacent source and sink of $\alpha$ into (respectively) a sink and a source results in another acyclic orientation $\alpha''$ of $G$: we call such an operation a \textit{\textcolor{red}{double-flip}}, and we say $\alpha$ and $\alpha''$ are \textit{\textcolor{red}{double-flip equivalent}}, denoted $\alpha \approx \alpha''$. It is easy to show that $\sim$ and $\approx$ are equivalence relations on $\Acyc(G)$. We denote the set of double-flip equivalence classes of $\Acyc(G)$ by $\Acyc(G)/\!\!\approx$, and denote the \textit{\textcolor{red}{double-flip equivalence class}} for which $\alpha$ is a representative by $[\alpha]_\approx$.

Assume $V(G) = [n]$, and take $\alpha \in \Acyc(G)$. Associated to the acyclic orientation $\alpha$ is a poset $P_\alpha = ([n], \leq_\alpha)$, where $i \leq_\alpha j$ if and only if there exists a directed path from $i$ to $j$ in $\alpha$. We define a \textit{\textcolor{red}{linear extension of $P_\alpha$}} to be any permutation $\sigma \in \mathfrak{S}_n$ such that $\sigma^{-1}(i) \leq \sigma^{-1}(j)$ whenever $i \leq_\alpha j$. We let $\mathcal L(\alpha)$ denote the collection of linear extensions of $P_\alpha$. For any $\sigma \in \mathfrak{S}_n$, it is not hard to see that there exists a unique acyclic orientation $\alpha_{G}(\sigma) \in \Acyc(G)$ for which $\sigma \in \mathcal L(\alpha_{G}(\sigma))$, and that this acyclic orientation is the result of directing each edge $\{i, j\} \in E(G)$ from $i$ to $j$ if and only if $\sigma^{-1}(i) < \sigma^{-1}(j)$. It is also not hard to see that the poset $P_\alpha$ associated to $\alpha \in \Acyc(G)$ has a linear extension (e.g., for $i \in [n]$, we can construct a linear extension $\sigma$ by setting $\sigma^{-1}(i)$ to be a source of $\alpha$, then removing the source and all incident edges from $\alpha$; in an abuse of notation,\footnote{We will commit similar abuses of notation in Section \ref{sec:fixed_one_graph}. They should not raise any confusion when invoked.} we understand $\alpha$ here as being mutated over the course of this greedy algorithm). We write 
\begin{align*}
    \mathcal L([\alpha]_\approx) = \bigsqcup_{\hat{\alpha} \in [\alpha]_\approx} \mathcal L(\hat{\alpha}).
\end{align*}
We refer the reader to \cite[Section 4]{defant2021friends} for a more comprehensive discussion regarding why these notions are of importance in the study of friends-and-strangers graphs (though this is illuminated in passing in Subsection \ref{subsec:background} and in the arguments of Section \ref{sec:fixed_one_graph}). 

For a graph $G$ and acyclic orientation $\alpha \in \Acyc(G)$, we can partition the directed edges of any cycle subgraph $\mathcal C$ of $G$ into $\mathcal C_\alpha^-$ and $\mathcal C_\alpha^+$, corresponding to edges directed in one of two possible directions under $\alpha$ in $\mathcal C$. The article \cite{pretzel1986reorienting} studied precisely when an acyclic orientation could be reached from another by a sequence of inflips or outflips, while \cite{propp2021lattice} extends this result by providing an upper bound on the number of inflips or outflips necessary to reach $\alpha$ from $\alpha'$ whenever $\alpha \sim \alpha'$. 

\begin{lemma}[\cite{pretzel1986reorienting, propp2021lattice}] \label{lem:inflips_outflips}
For $\alpha, \alpha' \in \Acyc(G)$, $\alpha'$ can be reached from $\alpha$ by a sequence of inflips if and only if for every cycle subgraph $\mathcal C$ of $G$, $|\mathcal C_\alpha^-| = |\mathcal C_{\alpha'}^-|$. Furthermore, whenever this is the case, $\alpha'$ can be reached from $\alpha$ by a sequence of at most $\binom{n}{2}$ inflips. Similarly, $\alpha'$ can be reached from $\alpha$ by a sequence of outflips if and only if for every cycle subgraph $\mathcal C$ of $G$, $|\mathcal C_\alpha^-| = |\mathcal C_{\alpha'}^-|$. Furthermore, whenever this is the case, $\alpha'$ can be reached from $\alpha$ by a sequence of at most $\binom{n}{2}$ outflips.
\end{lemma}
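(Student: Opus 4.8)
The plan is to treat the two directions of the biconditional separately and then extract the quantitative bound, and to reduce everything to the ``inflip'' assertions. For the reduction, note that the global reversal $\rho\colon\Acyc(G)\to\Acyc(G)$ that reverses the direction of every edge is an involution interchanging sources with sinks (hence inflips with outflips), carrying each $\mathcal C^-_\beta$ to $\mathcal C^+_\beta$, and preserving $|V(G)|$; since $|\mathcal C^-_\beta|+|\mathcal C^+_\beta|=|E(\mathcal C)|$ is fixed, the outflip statements follow verbatim from the inflip statements applied to $\rho(\alpha),\rho(\alpha')$.

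For necessity (the ``only if'' direction), fix a cycle subgraph $\mathcal C$ and orient it cyclically; this traversal determines, for every $\beta\in\Acyc(G)$, the partition of $E(\mathcal C)$ into $\mathcal C^+_\beta$ and $\mathcal C^-_\beta$. I would show that a single inflip leaves $|\mathcal C^-_\beta|$ unchanged. An inflip at a source $v\notin V(\mathcal C)$ moves no edge of $\mathcal C$. If $v\in V(\mathcal C)$, then $v$ meets exactly two edges $e,e'$ of $\mathcal C$, and the cyclic traversal points into $v$ along one of them, say $e$, and out of $v$ along $e'$; as $v$ is a source of $\beta$, both $e$ and $e'$ are oriented out of $v$, so $e\in\mathcal C^-_\beta$ and $e'\in\mathcal C^+_\beta$, while after the inflip both are oriented into $v$, so now $e\in\mathcal C^+$ and $e'\in\mathcal C^-$; the net change to $|\mathcal C^-|$ is $0$. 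Iterating gives $|\mathcal C^-_\alpha|=|\mathcal C^-_{\alpha'}|$ whenever $\alpha'$ is reached from $\alpha$ by inflips.

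For sufficiency together with the bound, I would pass to the height-function model: call $h\colon V(G)\to\mathbb Z$ compatible with $\beta$ if $\{u,v\}$ is oriented $u\to v$ in $\beta$ exactly when $h(u)<h(v)$. Every $\beta$ admits a compatible $h$ (any topological sort), and an inflip at a source $v$ of $\beta$ can be realized by replacing $h(v)$ with any integer exceeding $\max_{u\sim v}h(u)$, producing a height function compatible with the inflipped orientation; hence an inflip sequence from $\alpha$ to $\alpha'$ corresponds to a sequence of ``raise a local-minimum vertex'' moves carrying a compatible height function of $\alpha$ to one of $\alpha'$. Assuming the cycle-statistic hypothesis, one first deduces that $\alpha$ and $\alpha'$ are flip-equivalent, then chooses a suitable pair of compatible height functions (one can take $h_\alpha\le h_{\alpha'}$ pointwise after normalization) and runs the raise-moves greedily toward the target, checking at each stage that some local minimum of the current function still lies strictly below its target value, so a legal move exists. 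The bound $\binom n2$ I would obtain from a monovariant: fixing a target linear extension $\tau\in\mathcal L(\alpha')$ and tracking the number of unordered vertex pairs appearing in opposite relative orders in the current configuration and in $\tau$ (a quantity at most $\binom n2$), one shows the raise-moves can be chosen so this count strictly decreases — informally, each of the $\binom n2$ pairs needs to be reorganized at most once.

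The main obstacle is precisely this sufficiency-and-bound step. The naive reductions fail: peeling off a common source of $\alpha$ and $\alpha'$, or ``freezing'' a sink of $\alpha'$ and inducting on $G$ with that vertex deleted, does not work because an inflip at a vertex adjacent to the frozen one immediately destroys the frozen source or sink, so inflip sequences on a subgraph need not lift to $G$. One therefore needs the global height-function/monovariant bookkeeping of \cite{pretzel1986reorienting, propp2021lattice} to \emph{simultaneously} guarantee that every intermediate move is legal and that the total count does not exceed $\binom n2$; in particular, squeezing the constant down to exactly $\binom n2$ rather than a cruder $O(n^2)$ is where the careful pair-by-pair accounting of \cite{propp2021lattice} is required.
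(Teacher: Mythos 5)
A point of reference first: the paper does not prove this lemma at all --- it is quoted from Pretzel and Propp (the two cited works), and the surrounding text only explains that Pretzel characterizes reachability by inflips/outflips and Propp supplies the quantitative bound. So there is no internal proof to compare you against, and your write-up should be judged as a self-contained argument. The parts you actually carry out are correct: the necessity direction (an inflip at a source $v$ off the cycle $\mathcal C$ changes nothing, while an inflip at $v \in V(\mathcal C)$ reverses exactly the two cycle edges at $v$, one from each of $\mathcal C_\beta^-$ and $\mathcal C_\beta^+$, so $|\mathcal C^-|$ is invariant) is exactly right, and the reduction of the outflip statements to the inflip statements via the all-edge-reversal involution is sound, since $|\mathcal C^-_{\rho(\beta)}| = |E(\mathcal C)| - |\mathcal C^-_\beta|$, so equality of the statistics is preserved and the bound transfers verbatim.

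The genuine gap is that the substantive half of the lemma --- equal cycle statistics imply reachability by inflips, and at most $\binom{n}{2}$ of them suffice --- is not proved but only sketched, with the decisive steps asserted or deferred. Concretely: ``one first deduces that $\alpha$ and $\alpha'$ are flip-equivalent'' is essentially Pretzel's theorem itself, i.e., the statement under proof (note the paper's Proposition 2.2 shows flip equivalence coincides with inflip reachability, but its proof invokes this very lemma, so you cannot lean on that without circularity); the claim that compatible height functions can be normalized so that $h_\alpha \le h_{\alpha'}$ pointwise is unjustified; the assertion that at every stage some local minimum of the current height function lies strictly below its target value (so that a legal raise exists) is the crux of sufficiency and is stated rather than shown; and the $\binom{n}{2}$ bound via ``each pair is reorganized at most once'' needs a real argument, since a single inflip reverses every edge at the flipped vertex and can change that vertex's relative order with up to $n-1$ others, so a per-move strict decrease of your pair-count is not automatic. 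You candidly acknowledge that this bookkeeping is what the cited papers provide, which mirrors how the paper treats the lemma (as an imported black box), but as a proof attempt it establishes only the easy direction and the inflip/outflip symmetry.
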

We build on Lemma \ref{lem:inflips_outflips}. The following proposition establishes that we could have defined flip equivalence strictly with respect to inflips or outflips, as this would have resulted in the same notion.

\begin{proposition} \label{prop:flip_equiv}
Acyclic orientations $\alpha, \alpha' \in \Acyc(G)$ are flip equivalent if and only if $\alpha'$ can be reached from $\alpha$ by a sequence of inflips. Similarly, $\alpha \sim \alpha'$ if and only if $\alpha'$ can be reached from $\alpha$ by a sequence of outflips.
\end{proposition}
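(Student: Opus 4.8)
The plan is to prove the two claims separately, noting that they are dual (replacing $G$ by $G$ and reversing all orientations interchanges sources with sinks, hence inflips with outflips), so it suffices to treat the ``inflip'' case and then invoke this duality.

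First I would establish the easy direction: if $\alpha'$ is reachable from $\alpha$ by a sequence of inflips, then $\alpha \sim \alpha'$. Indeed, an inflip is by definition a flip (it converts a source into a sink), so a sequence of inflips is in particular a sequence of flips, whence $\alpha \sim \alpha'$ since $\sim$ is the equivalence relation generated by flips. The same argument works verbatim for outflips. This direction needs no appeal to Lemma~\ref{lem:inflips_outflips}.

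The substantive direction is the converse: if $\alpha \sim \alpha'$, then $\alpha'$ is reachable from $\alpha$ by inflips alone. By definition of $\sim$ as the transitive closure of single flips, it is enough to show that whenever $\beta'$ is obtained from $\beta$ by a single flip, $\beta'$ is reachable from $\beta$ by a sequence of inflips; the general case then follows by concatenation. A single flip is either an inflip (nothing to do) or an outflip, so the crux is: \emph{an outflip can be simulated by a sequence of inflips.} To see this, suppose $\beta'$ is obtained from $\beta$ by outflipping a sink $v$ into a source. I claim that for every cycle subgraph $\mathcal C$ of $G$ one has $|\mathcal C_\beta^-| = |\mathcal C_{\beta'}^-|$. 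If $v \notin V(\mathcal C)$ this is immediate since the orientations agree on $\mathcal C$. If $v \in V(\mathcal C)$, then $\mathcal C$ uses exactly two edges at $v$, both directed into $v$ under $\beta$ (as $v$ is a sink) and both directed out of $v$ under $\beta'$; reversing these two edges changes the count of edges oriented in a fixed reference direction around $\mathcal C$ by $+1$ and $-1$ (one edge flips each way, since the two edges at $v$ point in ``opposite senses'' along the cycle), so the net change in $|\mathcal C_\beta^-|$ is zero. Hence the cycle condition of Lemma~\ref{lem:inflips_outflips} holds for the pair $(\beta, \beta')$, and that lemma yields a sequence of inflips taking $\beta$ to $\beta'$. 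Applying this to each flip in a chain witnessing $\alpha \sim \alpha'$, and concatenating, gives a sequence of inflips from $\alpha$ to $\alpha'$. The outflip statement follows by the source/sink duality mentioned above.

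I expect the main obstacle to be the cycle-count bookkeeping in the key claim: one must argue carefully that flipping the two cycle-edges incident to a sink $v$ leaves $|\mathcal C_\beta^-|$ unchanged, which amounts to checking that the two edges of $\mathcal C$ at $v$ contribute with opposite signs to the signed count of edges around $\mathcal C$ (this is exactly why a sink contributes ``balanced'' edges to every cycle through it). Once this is in hand, everything else is a routine application of Lemma~\ref{lem:inflips_outflips} together with the observation that $\sim$ is generated by single flips.
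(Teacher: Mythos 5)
Your proposal is correct and takes essentially the same route as the paper: both hinge on the observation that a single flip preserves $|\mathcal C_\alpha^-|$ for every cycle subgraph $\mathcal C$, combined with Lemma~\ref{lem:inflips_outflips}. The only cosmetic difference is that you apply the lemma once per flip in the chain witnessing $\alpha \sim \alpha'$ (simulating each outflip by inflips and concatenating), whereas the paper notes that the cycle invariant is constant along the entire chain and invokes the lemma just once.
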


\begin{proof}
The statement that $\alpha'$ is reachable from $\alpha$ via a sequence of inflips (or outflips) implying $\alpha \sim \alpha'$ is immediate. To prove the converse, notice that for any cycle subgraph $\mathcal C$ of $G$ and acyclic orientations $\alpha, \alpha' \in \Acyc(G)$ for which $\alpha'$ can be reached from $\alpha$ by a flip, $|\mathcal C_\alpha^-| = |\mathcal C_{\alpha'}^-|$. Thus, if $\alpha \sim \alpha'$, then $|\mathcal C_\alpha^-| = |\mathcal C_{\alpha'}^-|$, so $\alpha'$ can be reached from $\alpha$ via a sequence of inflips (or outflips).
\end{proof}

\subsection{Background on Friends-and-Strangers Graphs} \label{subsec:background}

We mention those general properties of friends-and-strangers graphs that we will need later in the article. We refer the reader to \cite[Section 2]{defant2021friends} for a thorough treatment of the general properties of friends-and-strangers graphs. 

\begin{proposition}[{\cite[Proposition 2.6]{defant2021friends}}] \label{prop:basic_properties}
The following properties hold.
\begin{enumerate}
    \item Definition \ref{defn:fs_def} is symmetric with respect to $X$ and $Y$: we have that $\FS(X,Y) \cong \FS(Y,X)$.
    \item The graph $\FS(X,Y)$ is bipartite.
    \item If $X$ or $Y$ is disconnected, or if $X$ and $Y$ are connected graphs on $n \geq 3$ vertices and each have a cut vertex, then $\FS(X, Y)$ is disconnected.
\end{enumerate}

\end{proposition}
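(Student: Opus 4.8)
The plan is to dispatch the three parts in turn. Parts (1) and (2) are short. For (1), the map $\sigma \mapsto \sigma^{-1}$ is a bijection from $V(\FS(X,Y))$ to $V(\FS(Y,X))$, and one checks directly against Definition~\ref{defn:fs_def} that it preserves and reflects adjacency: if $\sigma$ and $\tau$ are adjacent in $\FS(X,Y)$ via an edge $\{a,b\} \in E(X)$ with $\{\sigma(a),\sigma(b)\} \in E(Y)$, then putting $a' = \sigma(a)$ and $b' = \sigma(b)$ yields $\{a',b'\} \in E(Y)$, $\{\sigma^{-1}(a'),\sigma^{-1}(b')\} = \{a,b\} \in E(X)$, $\sigma^{-1}(a') = \tau^{-1}(b')$, $\sigma^{-1}(b') = \tau^{-1}(a')$, and $\sigma^{-1}$ agrees with $\tau^{-1}$ off $\{a',b'\}$, which is exactly the condition for $\sigma^{-1}$ and $\tau^{-1}$ to be adjacent in $\FS(Y,X)$; the reverse implication is identical. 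For (2), fix bijections identifying $V(X)$ and $V(Y)$ with $[n]$, so that the vertices of $\FS(X,Y)$ become elements of $\mathfrak{S}_n$; an $(X,Y)$-friendly swap replaces a bijection by its composition with the transposition of the two positions being exchanged, hence reverses $\mathrm{sgn}$, so $\mathrm{sgn}\colon \mathfrak{S}_n \to \{\pm 1\}$ is a proper $2$-coloring of $\FS(X,Y)$.

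For part (3), suppose first that $X$ is disconnected (the case where $Y$ is disconnected follows by part (1)). Let $X_1,\dots,X_r$ with $r \ge 2$ be the components of $X$. Since every $(X,Y)$-friendly swap uses an edge of $X$, the two exchanged positions lie in a common $X_i$, so the unordered partition $\{\sigma(V(X_1)),\dots,\sigma(V(X_r))\}$ of $V(Y)$ is constant along every edge of $\FS(X,Y)$. When $n \ge 2$ this partition takes at least two values over all bijections $\sigma$, unless $X$ is edgeless, in which case $\FS(X,Y)$ has no edges and $n! \ge 2$ vertices; either way $\FS(X,Y)$ is disconnected.

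Finally, suppose $X$ and $Y$ are connected, $n \ge 3$, with cut vertices $x_0$ and $y_0$. Write $V(X) = \{x_0\} \sqcup A \sqcup A'$ with $A, A'$ nonempty and no edge of $X$ between $A$ and $A'$ (take $A$ to be a union of components of $X - x_0$ and $A'$ the rest), and likewise $V(Y) = \{y_0\} \sqcup B \sqcup B'$ with $B$ the vertex set of a single component of $Y - y_0$, so that $N_Y(b) \subseteq B \cup \{y_0\}$ for every $b \in B$. After possibly interchanging the roles of $X$ and $Y$ via part (1), a short size count lets us assume $|B| \le \min(|A|,|A'|)$; then there are configurations $\sigma_0,\sigma_1$ with $\sigma_0(x_0) = \sigma_1(x_0) = y_0$, $\sigma_0^{-1}(B) \subseteq A$, and $\sigma_1^{-1}(B) \subseteq A'$. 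I would show that $\sigma^{-1}(B) \subseteq \{x_0\} \cup A$ holds for every $\sigma$ reachable from $\sigma_0$, which separates $\sigma_0$ from $\sigma_1$. The structural facts driving this are: every edge of $X$ lies inside $\{x_0\} \cup A$ or inside $\{x_0\} \cup A'$, and the two values exchanged in any friendly swap are $Y$-adjacent, hence both lie in $B \cup \{y_0\}$ or both in $B' \cup \{y_0\}$. Consequently a vertex of $B$ can only be swapped with another vertex of $B$ or with $y_0$, and it moves between the position set $\{x_0\} \cup A$ and the position set $\{x_0\} \cup A'$ only while it occupies $x_0$; moreover, for such a vertex sitting on $x_0$ to step onto an $A'$-position, a $Y$-neighbor of it — necessarily $y_0$ or another vertex of $B$ — must occupy that $A'$-position at that instant. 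An induction on the number of swaps shows this never happens: to place a vertex of $B$ onto $x_0$ one must swap it with the current occupant of $x_0$, which (being $Y$-adjacent to it) is $y_0$ or a vertex of $B$, and that occupant is thereby pushed back onto an $A$-position; tracing this back, whenever a vertex of $B$ sits on $x_0$, both $y_0$ and all the remaining vertices of $B$ are confined to $A$, so the crossing into $A'$ cannot occur.

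The main obstacle is this last induction — tracking the positions of $y_0$ and of the vertices of $B$ through an arbitrary swap sequence, and in particular ruling out the configuration ``a vertex of $B$ on $x_0$ while $y_0$ lies on an $A'$-position.'' The remaining parts, and the disconnected subcase of part (3), are routine bookkeeping.
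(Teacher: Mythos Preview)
The paper does not give its own proof of this proposition; it is quoted verbatim as \cite[Proposition~2.6]{defant2021friends} and used as background. So there is no in-paper argument to compare your proposal against.

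That said, your proposal is correct and follows the standard line from the cited source. Parts (1) and (2) are fine as written. For part (3), the ``short size count'' does go through: take $B$ to be a smallest component of $Y - y_0$, swap the roles of $X$ and $Y$ if necessary so that $|B|$ is at most the size $a$ of a smallest component of $X - x_0$, and let $A$ be that component; then $|A| = a \ge |B|$ and $|A'| = (n-1) - a \ge a \ge |B|$. For the induction, the clean invariant to carry is the conjunction
\[
\sigma^{-1}(B) \subseteq A \cup \{x_0\} \quad\text{and}\quad \bigl(\sigma(x_0) \in B \ \Longrightarrow\ \sigma^{-1}(y_0) \in A\bigr),
\]
which is exactly what you describe informally; checking that a single $(X,Y)$-friendly swap preserves it is a short case analysis using only that $N_Y(b) \subseteq B \cup \{y_0\}$ for $b \in B$ and that every edge of $X$ lies in $A \cup \{x_0\}$ or in $A' \cup \{x_0\}$.
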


The definitions concerning acyclic orientations that were introduced in Subsection \ref{subsec:acyclic_orientations} were observed in \cite{defant2021friends} to be central in describing the structure of the connected components of $\FS(\Path_n, Y)$ and $\FS(\Cycle_n, Y)$, which are the graphs we will be interested in during Section \ref{sec:fixed_one_graph}. Specifically, we have the following theorems.

\begin{theorem}[{\cite[Theorem 3.1]{defant2021friends}}] \label{thm:path_comp}
Let $\alpha \in \Acyc(\overline{Y})$. Take any linear extension $\sigma \in \mathcal L(\alpha)$, and let $H_\alpha$ denote the connected component of $\FS(\Path_n, Y)$ which contains $\sigma$. Then 
\begin{align*}
    \FS(\Path_n, Y) = \bigoplus_{\alpha \in \Acyc(\overline{Y})} H_\alpha
\end{align*}
and $V(H_\alpha) = \mathcal L(\alpha)$. In particular, $H_\alpha$ is independent of the choice of $\sigma$.
\end{theorem}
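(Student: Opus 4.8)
The plan is to view a bijection $\sigma \in \FS(\Path_n, Y)$ as the word $\sigma(1)\sigma(2)\cdots\sigma(n)$, an arrangement of $V(Y)$ along the path. In this language an $(\Path_n, Y)$-friendly swap over the path-edge $\{i,i+1\}$ is exactly a transposition of the two consecutive entries $\sigma(i), \sigma(i+1)$, and it is legal precisely when $\{\sigma(i),\sigma(i+1)\} \in E(Y)$, i.e. when $\sigma(i)$ and $\sigma(i+1)$ are \emph{non}-adjacent in $\overline{Y}$. Recall from Subsection~\ref{subsec:acyclic_orientations} that $\alpha_{\overline{Y}}(\sigma)$ is obtained by orienting each edge $\{a,b\} \in E(\overline{Y})$ from $a$ to $b$ whenever $a$ precedes $b$ in this word, and that $\sigma \in \mathcal L(\alpha)$ if and only if $\alpha = \alpha_{\overline{Y}}(\sigma)$. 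The first step is to observe that the associated acyclic orientation is a component invariant: if $\tau$ is obtained from $\sigma$ by one friendly swap, that swap only transposes two entries non-adjacent in $\overline{Y}$, so the relative order of every $\overline{Y}$-adjacent pair is unchanged, whence $\alpha_{\overline{Y}}(\tau) = \alpha_{\overline{Y}}(\sigma)$. Thus each connected component of $\FS(\Path_n, Y)$ lies inside a single $\mathcal L(\alpha)$, and in particular $V(H_\alpha) \subseteq \mathcal L(\alpha)$.

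The substance is the reverse inclusion: any two linear extensions $\sigma, \tau$ of $P_\alpha$ must be joined by a path in $\FS(\Path_n, Y)$. The enabling remark is that if $u,v$ are incomparable in $P_\alpha$ then they are non-adjacent in $\overline{Y}$ (an edge of $\overline{Y}$ always induces a comparability), so transposing any two \emph{consecutive, $P_\alpha$-incomparable} entries of a linear extension is always a legal friendly swap, and it again yields a linear extension of $P_\alpha$ since it only reverses an incomparable pair. From here I would run the standard bubble-sort on linear extensions: let $m$ be the entry placed last by $\tau$, so $m$ is maximal in $P_\alpha$; then in $\sigma$ every entry to the right of $m$ is incomparable to $m$ (anything $<_\alpha m$ must precede $m$ in a linear extension, and nothing is $>_\alpha m$), so $m$ can be transposed rightward one legal step at a time until it reaches position $n$. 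Deleting position $n$ and passing to the induced subposet on $[n]\setminus\{m\}$ (where the same implication "incomparable $\Rightarrow$ $\overline{Y}$-nonadjacent" persists), induction on $n$ finishes the sort, exhibiting a path from $\sigma$ to $\tau$. This is precisely the classical fact that the linear-extension graph of a finite poset under incomparable adjacent transpositions is connected; one could alternatively cite it. Hence $\mathcal L(\alpha) \subseteq V(H_\alpha)$, so $V(H_\alpha) = \mathcal L(\alpha)$, and $H_\alpha$ depends only on $\alpha$, not on the chosen $\sigma \in \mathcal L(\alpha)$.

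To assemble the decomposition, recall that every $\sigma \in \mathfrak S_n$ is a linear extension of exactly one $\alpha \in \Acyc(\overline{Y})$, so $\{\mathcal L(\alpha)\}_{\alpha \in \Acyc(\overline{Y})}$ partitions $V(\FS(\Path_n, Y))$; combined with the two steps above this gives $\FS(\Path_n, Y) = \bigoplus_{\alpha \in \Acyc(\overline{Y})} H_\alpha$ with $V(H_\alpha) = \mathcal L(\alpha)$. I expect the bubble-sort step to be the only real obstacle, and within it the one point requiring care is the gap between "$P_\alpha$-incomparable'' (what the sorting argument produces) and "$\overline{Y}$-nonadjacent'' (the literal legality condition for a swap): the former implies the latter, which is exactly what makes every move of the sort legal, even though the converse implication can fail. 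Steps 1 and 3 are routine bookkeeping.
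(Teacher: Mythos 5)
Your proposal is correct, and there is nothing in this paper to compare it against in detail: Theorem \ref{thm:path_comp} is imported from \cite{defant2021friends} without proof. Your argument — friendly swaps along $\Path_n$ preserve $\alpha_{\overline{Y}}(\sigma)$ because only a $Y$-adjacent (hence $\overline{Y}$-nonadjacent) consecutive pair is reversed, and conversely any two linear extensions of $P_\alpha$ are linked by adjacent transpositions of incomparable pairs, each of which is a legal swap since incomparability implies $\overline{Y}$-nonadjacency — is exactly the standard proof given in the cited source, with the one delicate point (the one-directional implication between incomparability and $\overline{Y}$-nonadjacency) correctly identified and handled.
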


\begin{theorem}[{\cite[Theorem 4.7]{defant2021friends}}] \label{thm:cycle_comp}
Let $\alpha \in \Acyc(\overline{Y})$. Take any linear extension $\sigma \in \mathcal L([\alpha]_\approx)$, and let $H_{[\alpha]_\approx}$ denote the connected component of $\FS(\Cycle_n, Y)$ which contains $\sigma$. Then 
\begin{align*}
    \FS(\Cycle_n, Y) = \bigoplus_{[\alpha]_\approx \in \Acyc(\overline{Y})/\approx} H_{[\alpha]_\approx}
\end{align*}
and $V(H_{[\alpha]_\approx}) = \mathcal L([\alpha]_\approx)$. In particular, $H_{[\alpha]_\approx}$ is independent of the choice of $\sigma$.
\end{theorem}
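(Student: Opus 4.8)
The plan is to identify the connected components of $\FS(\Cycle_n, Y)$ by exhibiting the map $\Phi\colon\mathfrak{S}_n = V(\FS(\Cycle_n,Y)) \to \Acyc(\overline{Y})/\!\!\approx$, $\sigma \mapsto [\alpha_{\overline{Y}}(\sigma)]_\approx$, as the relevant ``component invariant.'' Concretely I would prove two assertions. \emph{Invariance:} if $\sigma,\tau$ are adjacent in $\FS(\Cycle_n,Y)$, then $\alpha_{\overline{Y}}(\sigma) \approx \alpha_{\overline{Y}}(\tau)$, so $\Phi$ is constant on each component. \emph{Reachability:} for every $\alpha \in \Acyc(\overline{Y})$, the whole set $\mathcal L([\alpha]_\approx)$ lies in a single component of $\FS(\Cycle_n,Y)$. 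Granting both, the component of any $\sigma$ is $\Phi^{-1}(\Phi(\sigma))$, and because $\alpha_{\overline{Y}}(\sigma)$ is the \emph{unique} acyclic orientation of $\overline{Y}$ with $\sigma \in \mathcal L(\alpha_{\overline{Y}}(\sigma))$, we get $\Phi^{-1}([\alpha]_\approx) = \bigsqcup_{\hat\alpha \in [\alpha]_\approx}\{\sigma : \alpha_{\overline{Y}}(\sigma) = \hat\alpha\} = \mathcal L([\alpha]_\approx)$; the displayed direct-sum decomposition and the independence of the choice of $\sigma$ are then immediate. Throughout I would use that $E(\Path_n) \subseteq E(\Cycle_n)$, so every component of $\FS(\Path_n,Y)$ sits inside a component of $\FS(\Cycle_n,Y)$, and I would freely invoke Theorem~\ref{thm:path_comp}.

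For \emph{Invariance}, split on which edge of $\Cycle_n$ carries the swap. A swap across a ``path edge'' $\{i,i+1\}$ with $1 \le i \le n-1$ exchanges the values at two positionally adjacent slots; legality forces $\{\sigma(i),\sigma(i+1)\}\notin E(\overline{Y})$, and exchanging two adjacent slots whose values are joined by no edge of $\overline{Y}$ changes no edge orientation at all, so in fact $\alpha_{\overline{Y}}(\tau) = \alpha_{\overline{Y}}(\sigma)$. The ``wrap edge'' $\{n,1\}$ is the substantive case: since $\sigma$ is a linear extension of $\alpha_{\overline{Y}}(\sigma)$, its first entry $\sigma(1)$ is a source and its last entry $\sigma(n)$ is a sink of $\alpha_{\overline{Y}}(\sigma)$, and legality of the swap gives $\{\sigma(1),\sigma(n)\}\notin E(\overline{Y})$, i.e.\ a nonadjacent source–sink pair. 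The swap moves $\sigma(n)$ to position $1$ and $\sigma(1)$ to position $n$, which reverses exactly the edges incident to $\sigma(n)$ (turning that sink into a source) and those incident to $\sigma(1)$ (turning that source into a sink), and nothing else — precisely a double-flip. Hence $\alpha_{\overline{Y}}(\tau)\approx\alpha_{\overline{Y}}(\sigma)$.

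For \emph{Reachability}, I would argue in two layers. First, for fixed $\beta \in \Acyc(\overline{Y})$, the set $\{\sigma : \alpha_{\overline{Y}}(\sigma)=\beta\}$ equals $\mathcal L(\beta)$, which by Theorem~\ref{thm:path_comp} is one component of $\FS(\Path_n,Y)$ and hence lies in one component of $\FS(\Cycle_n,Y)$. Second, since $\approx$ is generated by single double-flips, it suffices to show that $\beta \approx \beta'$ via one double-flip implies $\mathcal L(\beta)$ and $\mathcal L(\beta')$ meet a common component. Say $\beta'$ is obtained from $\beta$ by double-flipping a sink $p$ and a source $q$; we may assume $p\ne q$ (else $\beta'=\beta$), and $\{p,q\}\notin E(\overline{Y})$. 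I would build a linear extension $\sigma\in\mathcal L(\beta)$ with $\sigma(1)=q$ and $\sigma(n)=p$ greedily: put $q$ first (legitimate, $q$ being a source), then repeatedly extract a source different from $p$ — always possible because a finite poset on $\ge 2$ elements having $p$ as a maximal element must contain a minimal element other than $p$ — and put $p$ last when it alone remains. As $\{\sigma(n),\sigma(1)\}=\{p,q\}\in E(Y)$, the wrap-edge swap on $\sigma$ is legal, and by the computation in the \emph{Invariance} step it yields $\tau$ with $\alpha_{\overline{Y}}(\tau)=\beta'$, so $\tau\in\mathcal L(\beta')$ lies in the same component as $\sigma$. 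Iterating along a chain of double-flips then places all of $\mathcal L([\alpha]_\approx)$ in one component.

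I expect the \emph{Reachability} direction to be the main obstacle, and within it the realization of an arbitrary single double-flip as a genuine $(\Cycle_n,Y)$-friendly swap: one must produce a linear extension of the current orientation whose first and last entries are exactly the prescribed source and sink, and then rely on Theorem~\ref{thm:path_comp} to move freely among the linear extensions of each intermediate orientation before applying the next wrap-edge swap. The \emph{Invariance} step is comparatively routine once one records the two observations that the extremal entries of a linear extension form a source–sink pair and that the wrap-edge swap is exactly a double-flip; assembling these facts into the stated direct-sum decomposition is then purely formal.
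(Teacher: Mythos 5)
Your proof is correct, and it follows essentially the same route as the original argument: the paper itself does not reprove this statement (it is quoted from Defant--Kravitz), and the cited proof rests on exactly the two observations you isolate, namely that swaps along edges of the embedded $\Path_n$ leave $\alpha_{\overline{Y}}(\sigma)$ unchanged while a swap across $\{1,n\}$ is precisely a double-flip of the nonadjacent source/sink occupying positions $1$ and $n$, combined with Theorem \ref{thm:path_comp} to move freely within each $\mathcal L(\hat\alpha)$ and to realize any prescribed double-flip by a linear extension placing the given source first and the given sink last. This is also the same correspondence the present paper invokes when it uses the theorem, for instance in the proofs of Theorem \ref{thm:cycle_diam_1} and Corollary \ref{cor:double_flip_dist}, so no further comparison is needed.
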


Defant and Kravitz \cite{defant2021friends} also determined precisely when $\FS(\Cycle_n, Y)$ is connected. The coprimality condition on the sizes of the components of $\overline{Y}$ in Theorem \ref{thm:cycle_connected} may seem surprising at first glance. We refer the reader to the discussion around \cite[Corollary 4.12]{defant2021friends} and \cite[Corollary 4.14]{defant2021friends} to see where this condition emerges and why it is a natural one.

\begin{theorem}[{\cite[Corollary 4.14]{defant2021friends}}] \label{thm:cycle_connected}
Let $Y$ be a graph on $n \geq 3$ vertices. Then $\FS(\Cycle_n, Y)$ is connected if and only if $\overline{Y}$ is a forest with trees $\mathcal T_1, \dots, \mathcal T_r$ such that $\gcd(|V(\mathcal T_1)|, \dots, |V(\mathcal T_r)|) = 1$.
\end{theorem}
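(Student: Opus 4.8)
The plan is to recast the statement, via Theorem~\ref{thm:cycle_comp}, as a question about double-flip classes. Put $G = \overline{Y}$. Theorem~\ref{thm:cycle_comp} identifies the connected components of $\FS(\Cycle_n, Y)$ with the classes of $\Acyc(G)/\!\!\approx$, so $\FS(\Cycle_n, Y)$ is connected if and only if all acyclic orientations of $G$ are double-flip equivalent. It therefore suffices to prove three assertions, which together with Theorem~\ref{thm:cycle_comp} yield the claimed equivalence: \textbf{(i)} if $G$ is not a forest, then $|\Acyc(G)/\!\!\approx| \ge 2$; \textbf{(ii)} if $G$ is a forest with components $\mathcal{T}_1, \dots, \mathcal{T}_r$ and $d := \gcd(|V(\mathcal{T}_1)|, \dots, |V(\mathcal{T}_r)|)$ satisfies $d > 1$, then $|\Acyc(G)/\!\!\approx| \ge 2$; and \textbf{(iii)} if $G$ is a forest with $d = 1$, then $|\Acyc(G)/\!\!\approx| = 1$.

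For \textbf{(i)}, let $\mathcal{C}$ be a cycle subgraph of $G$, of length $\ell \ge 3$. The proof of Proposition~\ref{prop:flip_equiv} records that $|\mathcal{C}_\alpha^-|$ is preserved by a flip; I would check that it is also preserved by a double-flip. A double-flip inflips a source $s$ and outflips a non-adjacent sink $t$, and any $\mathcal{C}$-edge it affects is incident to $s$ or to $t$; if $s$ lies on $\mathcal{C}$ then, $s$ being a source, its two $\mathcal{C}$-edges both point away from $s$, so reversing them interchanges which of the two agrees with a fixed traversal of $\mathcal{C}$ and hence leaves $|\mathcal{C}_\alpha^-|$ fixed (and symmetrically for $t$, while if $s,t \notin V(\mathcal{C})$ nothing changes). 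Thus $\alpha \mapsto |\mathcal{C}_\alpha^-|$ descends to $\Acyc(G)/\!\!\approx$. Since one may extend an acyclic orientation of $\mathcal{C}$ to one of $G$, and one may orient $\mathcal{C}$ with just one, respectively exactly two, edges ``against'' the traversal, the map takes at least two values, so $|\Acyc(G)/\!\!\approx| \ge 2$.

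For \textbf{(ii)} and \textbf{(iii)}, I would study the \emph{flip graph} $F$ on $\Acyc(G)$: its vertices are the acyclic orientations of $G$, with one edge per flip, directed and labelled $+1$ for an inflip and $-1$ for an outflip. Since $G$ is a forest, $F$ is connected (Lemma~\ref{lem:inflips_outflips} and Proposition~\ref{prop:flip_equiv}, the cycle condition being vacuous). For \textbf{(ii)}, I would show that the labels along every closed walk in $F$ sum to a multiple of $d$; this is the step that genuinely uses $d \mid |V(\mathcal{T}_i)|$ for all $i$, and I expect it to be the technical heart of the argument. Granting it, fixing a base orientation $\alpha_0$ and letting $\Psi(\alpha) \in \mathbb{Z}/d\mathbb{Z}$ be the reduction mod $d$ of the label-sum of any walk from $\alpha_0$ to $\alpha$ defines a map $\Psi\colon \Acyc(G) \to \mathbb{Z}/d\mathbb{Z}$; a double-flip is one inflip followed by one outflip, so it changes $\Psi$ by $+1-1 = 0$, whence $\Psi$ is constant on double-flip classes, while $\Psi$ is plainly surjective, so $|\Acyc(G)/\!\!\approx| \ge d \ge 2$. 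For \textbf{(iii)} ($d = 1$), connectedness of $F$ reduces the problem to showing that a single inflip preserves the double-flip class (note a double-flip is \emph{two} flips, so this genuinely needs an argument). If $G$ has an isolated vertex $v$, this is immediate: $v$ is at once a source and a sink non-adjacent to everything, so the double-flip inflipping a given source $s$ and outflipping $v$ coincides with the lone inflip at $s$ (outflipping $v$ reverses no edges), and since inflips connect all of $\Acyc(G)$ we are done. If $G$ has no isolated vertex, then $r \ge 2$ (as $n \ge 3$ forces $d = 1$ to come from at least two parts) and every $|V(\mathcal{T}_i)| \ge 2$, and here I would exploit $d = 1$ through a Bézout-type argument, building a single inflip as an explicit composition of double-flips — using that the vertices of each $\mathcal{T}_i$ are non-adjacent to all vertices outside $\mathcal{T}_i$, so they can be threaded past the rest of the configuration freely, and combining, with Bézout coefficients for $\gcd(|V(\mathcal{T}_1)|,\dots,|V(\mathcal{T}_r)|) = 1$, several such double-flip sequences of ``sizes'' $|V(\mathcal{T}_i)|$ into one of net size one. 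This last construction, together with the divisibility statement underlying $\Psi$ in \textbf{(ii)}, is the part I expect to require the most work.
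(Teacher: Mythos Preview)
This theorem is not proved in the present paper: it is quoted from \cite{defant2021friends} as background (Theorem~\ref{thm:cycle_connected} carries only a citation to \cite[Corollary~4.14]{defant2021friends} and no proof), so there is no ``paper's own proof'' to compare your proposal against.

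That said, your outline is the right one and lines up with the structure of the original argument in \cite{defant2021friends}. Part \textbf{(i)} is complete as written. For part \textbf{(iii)} with no isolated vertex, note that the B\'ezout manoeuvre you describe is essentially the same as the one carried out explicitly in the proof of Theorem~\ref{thm:cycle_diam_1} in this paper (where inflips on one component are paired with outflips on another, and the residual offset is cancelled using B\'ezout coefficients for the component sizes); reading that proof should let you fill in your sketch. The genuinely open item in your proposal is the claim in \textbf{(ii)} that the signed flip-count along any closed walk in $F$ is divisible by $d$. A clean way to prove it is to work componentwise: for a single tree $\mathcal T_i$ on $m_i$ vertices, fix a linear extension $\sigma$ of the current orientation and track the class of $\sigma^{-1}(v)$ in $\mathbb Z/m_i\mathbb Z$ for a fixed vertex $v$; an inflip increments this by $1$ and an outflip decrements it by $1$ (cf.\ Figure~\ref{fig:inflips}), so any closed walk has net count divisible by $m_i$, and summing over components gives divisibility by $d$.
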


\section{Diameters of \texorpdfstring{$\FS(X,Y)$}{FS(X,Y)} with One Graph Fixed} \label{sec:fixed_one_graph}

Before investigating (and settling) the more global question of whether or not the diameters of connected components of friends-and-strangers graphs are polynomially bounded (in the sense posed by Question \ref{ques:poly_bdd}), we begin by restricting our study by choosing one of the two graphs $X$ and $Y$ to come from a natural family of graphs, and then establish bounds on the diameter of any connected component of $\FS(X,Y)$.

\subsection{Complete Graphs} \label{subsec:complete_graphs}

We begin by setting $Y = K_n$. Take any two configurations $\sigma, \tau \in V(\FS(X,K_n))$ that lie in the same connected component. Consider the following iterative algorithm, applied starting from $\sigma$ and proceeding sequentially on $i \in [n]$. In an abuse of notation, $\sigma$ is understood to be mutated over the course of this algorithm as we perform $(X,K_n)$-friendly swaps to modify its mappings.
\begin{enumerate}
    \item If $\sigma(i) = \tau(i)$, do nothing.
    \item If $\sigma(i) \neq \tau(i)$, swap $\tau(i)$ onto $i$ along a simple path, then swap $\sigma(i)$ back along the simple path that $\tau(i)$ traversed. 
\end{enumerate}
It is straightforward to prove via induction that at the beginning of any iteration $i \in [n]$, $\sigma(i)$ and $\tau(i)$ lie upon the same connected component of $X$ (so that the algorithm may always proceed), and that $\sigma(j) = \tau(j)$ for all $j < i$. Thus, $\sigma = \tau$ when the algorithm terminates after $n-1$ iterations (it must be that $\sigma(n) = \tau(n)$ at the beginning of the $n$\textsuperscript{th} iteration). For any iteration $i \in [n]$, step (2) requires at most $n-1$ $(X, K_n)$-friendly swaps to move $\tau(i)$ onto $i$, and at most $n-2$ $(X, K_n)$-friendly swaps to move $\sigma(i)$ back. This establishes that the diameter of any component of $\FS(X, K_n)$ is therefore at most $(n-1)((n-1)+(n-2)) = 2n^2 - 5n + 3 = O(n^2)$.

Finding the exact distance between two configurations in $\FS(X, K_n)$ is known as the \textit{\textcolor{red}{token swapping problem on $X$}} in the theoretical computer science literature. The $O(n^2)$ bound on the diameter of any component of $\FS(X, K_n)$ is well known, and we also have a bound of $\Omega(n^2)$ on the diameter of any component of $\FS(X, K_n)$ for particular choices of $X$ (e.g., see Remark \ref{rmk:path_lower_bound}). In general, computing exact distances between two configurations in $\FS(X,K_n)$, as well as the diameters of its connected components, is challenging, even when imposing additional assumptions on $X$ (e.g., see \cite{biniaz2023token, yamanaka2015swapping}). There do exist, however, exact polynomial-time algorithms which solve the token swapping problem for a number of choices of $X$, including cliques \cite{cayley1849lxxvii}, paths \cite{jerrum1985complexity}, stars \cite{portier1990whitney}, cycles \cite{kawahara2016time, van2014upper}, and complete bipartite graphs \cite{yamanaka2015swapping}. See Subsection \ref{subsec:complexity} for additional discussion regarding matters of hardness and approximation.

\subsection{Path Graphs} \label{subsec:path_graphs}

In this subsection, we fix $X = \Path_n$. We begin by introducing a notion which will serve as a monovariant in the proof of Proposition \ref{prop:path_diam}.

\begin{definition}
For $\sigma, \tau \in \mathfrak{S}_n$, call the ordered pair $(i,j)$ ($i,j \in [n]$, $i < j$) a \textit{\textcolor{red}{$(\sigma, \tau)$-inversion}} if either 
\begin{enumerate}
	\item $\sigma^{-1}(i) < \sigma^{-1}(j)$ and $\tau^{-1}(j) < \tau^{-1}(i)$, 
	\item $\sigma^{-1}(j) < \sigma^{-1}(i)$ and $\tau^{-1}(i) < \tau^{-1}(j)$.
\end{enumerate}
Denote the number of $(\sigma,\tau)$-inversions by $\inv(\sigma, \tau)$.
\end{definition}

In other words, the ordered pair $(i,j)$ is a $(\sigma, \tau)$-inversion if the relative ordering of the inverse images of $i, j$ under $\sigma$ is opposite that of $\tau$. If (without loss of generality) $\tau$ is the identity permutation, then $\inv(\sigma, \tau) = \inv(\sigma)$, the number of inversions of $\sigma$. It also follows immediately that $\inv(\sigma, \tau) = 0$ if and only if $\sigma = \tau$. 

\begin{proposition} \label{prop:path_diam}
Take $\alpha \in \Acyc(\overline{Y})$, and let $H_\alpha$ denote the corresponding connected component of $\FS(\Path_n, Y)$. Let $P_\alpha = ([n], \leq_\alpha)$ be the poset on $[n]$ for which $i \leq_\alpha j$ if and only if there exists a directed path from $i$ to $j$ in $\overline{Y}$ under $\alpha$. Then $\diam(H_\alpha) \leq \binom{n}{2} - p_\alpha$, where $p_\alpha$ denotes the number of comparable ordered pairs $(i,j)$ with $i, j \in [n]$, $i < j$ in $P_\alpha$.
\end{proposition}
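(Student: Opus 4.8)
The plan is to use $\inv(\sigma,\tau)$ as a monovariant and show two things: (i) it never increases below a certain threshold along a well-chosen sequence of friendly swaps, so that the distance between any two configurations in $H_\alpha$ is at most the maximal value of $\inv(\sigma,\tau)$ over $\sigma,\tau \in V(H_\alpha) = \mathcal L(\alpha)$; and (ii) this maximal value is exactly $\binom{n}{2} - p_\alpha$.

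For (ii), the key observation is that if $\sigma,\tau \in \mathcal L(\alpha)$, then for every comparable pair $(i,j)$ in $P_\alpha$ — say $i \leq_\alpha j$ — both linear extensions must order them the same way, i.e. $\sigma^{-1}(i) < \sigma^{-1}(j)$ and $\tau^{-1}(i) < \tau^{-1}(j)$, so $(i,j)$ is \emph{never} a $(\sigma,\tau)$-inversion. Hence $\inv(\sigma,\tau) \leq \binom{n}{2} - p_\alpha$ for all $\sigma,\tau \in V(H_\alpha)$. (One should also check that this bound is tight by exhibiting two linear extensions disagreeing on all incomparable pairs — reversing a topological order restricted to an antichain argument — though tightness is not strictly needed for the stated inequality.)

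For (i), I would argue that from any $\sigma \neq \tau$ in $H_\alpha$, there is always an $(\Path_n, Y)$-friendly swap that strictly decreases $\inv(\sigma,\tau)$. Since $X = \Path_n$, the available swaps are transpositions of values on \emph{adjacent} positions $k, k+1$; such a swap is legal precisely when the two values sitting there are adjacent in $Y$, equivalently \emph{non}-adjacent in $\overline{Y}$, equivalently incomparable in $P_\alpha$ when they lie consecutively in the linear extension $\sigma$ (adjacent elements in a linear extension that are comparable in the poset are forced, and adjacent incomparable elements can be swapped to yield another linear extension of $\alpha$). The standard fact here: if $\sigma \neq \tau$, pick the smallest position $k$ where they disagree, or more cleverly find adjacent positions $k,k+1$ in $\sigma$ whose values appear in the opposite relative order in $\tau$ — such a "descent relative to $\tau$" exists whenever $\sigma\neq\tau$. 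That pair $(i,j)$ is a $(\sigma,\tau)$-inversion, hence not comparable in $P_\alpha$ (by the argument in (ii) with the roles reasoned through Theorem~\ref{thm:path_comp}), so the swap is legal, stays inside $H_\alpha$, and removes exactly that one inversion while creating none. Iterating, we reach $\tau$ in at most $\inv(\sigma,\tau) \leq \binom{n}{2} - p_\alpha$ steps, giving $\diam(H_\alpha) \leq \binom{n}{2} - p_\alpha$.

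The main obstacle is the "always a legal inversion-reducing swap" step: one must verify carefully that the adjacent pair witnessing $\sigma \neq \tau$ in the relevant sense is genuinely a non-edge of $\overline{Y}$ (so the swap is permitted by Definition~\ref{defn:fs_def}) and genuinely incomparable in $P_\alpha$, using that $V(H_\alpha) = \mathcal L(\alpha)$ and that consecutive comparable elements of a linear extension cannot be transposed. The cleanest formulation: among positions $1,\dots,n$, since $\sigma \neq \tau$ there is some $k$ with $\tau^{-1}(\sigma(k)) > \tau^{-1}(\sigma(k+1))$ (otherwise $\tau^{-1}\circ\sigma$ is order-preserving on $[n]$, forcing $\sigma=\tau$); set $i = \min(\sigma(k),\sigma(k+1))$, $j=\max$, note $(i,j)$ is a $(\sigma,\tau)$-inversion, deduce $i,j$ are $\overline{Y}$-nonadjacent (else they'd be comparable in $P_\alpha$ and both $\sigma,\tau$, being linear extensions, would order them identically — contradiction), perform the swap, and confirm via a short counting argument that $\inv$ drops by exactly one.
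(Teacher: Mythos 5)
Your proposal is correct and follows essentially the same route as the paper: use $\inv(\sigma,\tau)$ as the monovariant, note that any $(\sigma,\tau)$-inversion pair must be incomparable in $P_\alpha$ (since $\sigma,\tau\in\mathcal L(\alpha)$) and hence $\overline{Y}$-nonadjacent, so the inversion-reducing swap is always legal, and finally observe that comparable pairs are never inversions, giving $\inv(\sigma,\tau)\leq\binom{n}{2}-p_\alpha$. The only cosmetic difference is the choice of sorting scheme — the paper runs a bubble-sort variant bringing $\tau(1),\tau(2),\dots$ into place, while you pick an arbitrary adjacent descent relative to $\tau$ at each step — but the key legality and counting arguments are identical.
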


\begin{proof}
We will show for any $\sigma, \tau \in V(H_\alpha)$ that $d(\sigma, \tau) = \inv(\sigma, \tau)$. Any $(\Path_n, Y)$-friendly swap reduces the number of $(\sigma,\tau)$-inversions by at most one, so $d(\sigma, \tau) \geq \inv(\sigma, \tau)$. Now consider the following variant of the bubble sort algorithm, which we perform beginning from $\sigma = \sigma(1) \sigma(2) \cdots \sigma(n)$. Say $\sigma(i_1) = \tau(1)$, and swap $\sigma(i_1)$ down to position $1$, yielding $\sigma_1$ with $\sigma_1(1) = \tau(1)$. Now, say $\sigma(i_2) = \tau(2)$ (with $i_2 \geq 2$), and swap $\sigma(i_2)$ down to position $2$, yielding $\sigma_2$ with $\sigma_2(j) = \tau(j)$ for $j \in [2]$; continue until we achieve $\sigma_n = \tau$. It is immediate that the execution of any swap performed during this algorithm would decrement $\inv(\sigma, \tau)$ by $1$. Furthermore, any proposed swap in this algorithm can be executed, i.e., involves two elements which comprise an edge in $Y$. Indeed, Theorem \ref{thm:path_comp} yields $\sigma, \tau \in V(H_\alpha) = \mathcal L(\alpha)$, but the existence of a swap in this algorithm that cannot be executed would yield $\alpha_{\overline{Y}}(\sigma) \neq \alpha_{\overline{Y}}(\tau)$ (if the proposed swap fails to be an edge in $Y$, it is an edge in $\overline{Y}$, and would be directed in opposite directions under $\alpha_{\overline{Y}}(\sigma)$ and $\alpha_{\overline{Y}}(\tau)$ because the two elements comprising the swap constitute a $(\sigma,\tau)$-inversion), which is a contradiction. Thus, $d(\sigma, \tau) = \inv(\sigma, \tau)$. If $(i,j) \in P_\alpha$, it follows from $\sigma, \tau \in \mathcal L(\alpha)$ that $\sigma^{-1}(i) < \sigma^{-1}(j)$ and $\tau^{-1}(i) < \tau^{-1}(j)$, so $(i,j)$ is not a $(\sigma, \tau)$-inversion. Thus, $d(\sigma, \tau) = \inv(\sigma, \tau) \leq \binom{n}{2} - p_\alpha$, and therefore $\diam(H_\alpha) \leq \binom{n}{2} - p_\alpha$. 
\end{proof}

Certainly, the two vertices incident to an edge of $\overline{Y}$ are comparable in the poset $P_\alpha = ([n], \leq_\alpha)$ for any $\alpha \in \Acyc(\overline{Y})$. This yields the following statement, as $\binom{n}{2} - p_\alpha \leq \binom{n}{2} - |E(\overline{Y})| = |E(Y)|$. For simplicity, we appeal to Theorem \ref{thm:path_bd}, rather than Proposition \ref{prop:path_diam}, in forthcoming arguments.

\begin{theorem} \label{thm:path_bd}
    The diameter of any connected component of $\FS(\Path_n, Y)$ is at most $|E(Y)|$.
\end{theorem}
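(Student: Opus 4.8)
The plan is to derive Theorem~\ref{thm:path_bd} as an immediate corollary of Proposition~\ref{prop:path_diam}. Fix a connected component of $\FS(\Path_n, Y)$; by Theorem~\ref{thm:path_comp} it equals $H_\alpha$ for some $\alpha \in \Acyc(\overline{Y})$, and Proposition~\ref{prop:path_diam} gives $\diam(H_\alpha) \leq \binom{n}{2} - p_\alpha$, where $p_\alpha$ counts the ordered pairs $(i,j)$ with $i < j$ that are comparable in the poset $P_\alpha$. So it suffices to bound $p_\alpha$ from below by $|E(\overline{Y})|$, since then $\binom{n}{2} - p_\alpha \leq \binom{n}{2} - |E(\overline{Y})| = |E(Y)|$.

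The key observation is that every edge of $\overline{Y}$ contributes a comparable pair in $P_\alpha$. Indeed, if $\{i,j\} \in E(\overline{Y})$, then $\alpha$ orients this edge in one of the two directions, say from $i$ to $j$; then there is a directed path (of length one) from $i$ to $j$ in $\alpha$, so $i \leq_\alpha j$, meaning the pair $\{i,j\}$ is comparable in $P_\alpha$. Distinct edges of $\overline{Y}$ give distinct unordered pairs, hence distinct comparable ordered pairs $(i,j)$ with $i<j$. Therefore $p_\alpha \geq |E(\overline{Y})|$. Combining this with Proposition~\ref{prop:path_diam} yields $\diam(H_\alpha) \leq \binom{n}{2} - |E(\overline{Y})|$, and since $|E(\overline{Y})| = \binom{n}{2} - |E(Y)|$ (the edge sets of $Y$ and $\overline{Y}$ partition the edge set of $K_n$), this equals $|E(Y)|$.

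There is essentially no obstacle here: the entire content of the bound lives in Proposition~\ref{prop:path_diam}, and Theorem~\ref{thm:path_bd} is just the clean restatement obtained by discarding the refinement $p_\alpha$ in favor of the simpler quantity $|E(\overline{Y})|$. The only point requiring a moment's care is confirming that comparability in $P_\alpha$ is genuinely implied by adjacency in $\overline{Y}$ — but this is immediate from the definition of $P_\alpha$ via directed paths, since an oriented edge is a directed path of length one. I would write this up in two or three sentences invoking Proposition~\ref{prop:path_diam}, as the excerpt itself already indicates ("Certainly, the two vertices incident to an edge of $\overline{Y}$ are comparable \dots").
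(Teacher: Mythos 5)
Your proposal is correct and matches the paper's own derivation exactly: the paper obtains Theorem \ref{thm:path_bd} from Proposition \ref{prop:path_diam} by noting that the endpoints of each edge of $\overline{Y}$ are comparable in $P_\alpha$, so $p_\alpha \geq |E(\overline{Y})|$ and hence $\binom{n}{2} - p_\alpha \leq \binom{n}{2} - |E(\overline{Y})| = |E(Y)|$. Nothing is missing.
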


\begin{remark} \label{rmk:path_lower_bound}
    It is not hard to see that $\FS(\Path_n, K_n)$ is connected (e.g., for any two configurations $\sigma, \tau \in V(\FS(\Path_n, K_n))$, the algorithm from Subsection \ref{subsec:complete_graphs} yields a path between $\sigma$ and $\tau$). From the proof of Proposition \ref{prop:path_diam}, we have for any $\sigma, \tau \in V(\FS(\Path_n, K_n))$ that $d(\sigma, \tau) = \inv(\sigma, \tau) \leq \binom{n}{2}$, and $\inv(\sigma, \tau) = \binom{n}{2}$ when $\tau$ is the ``reverse" of $\sigma$ (i.e., $\tau(i) = \sigma(n-i+1)$ for all $i \in [n]$). So $\diam(\FS(\Path_n, K_n)) = \binom{n}{2}$. Combined with Proposition \ref{prop:path_diam}, this establishes that the maximum diameter of a component of $\FS(\Path_n, K_n)$ is $\Omega(n^2)$, and thus $\Theta(n^2)$. Thus, there exist families of $n$-vertex graphs $Y$ for which the maximum diameter of a component of $\FS(\Path_n, Y)$ has diameter $\Theta(n^2)$. The same can be said for $\FS(K_n, Y)$.
\end{remark}

\begin{remark}
The upper bound of $\binom{n}{2} - p_\alpha$ on $\diam(H_\alpha)$ in Theorem \ref{prop:path_diam} corresponds to the number of ordered pairs $(i,j)$ ($i < j$; $i, j \in [n]$) that are incomparable in the poset $P_\alpha = ([n], \leq_\alpha)$. It follows from the proof of Proposition \ref{prop:path_diam} that for arbitrary $\sigma, \tau \in V(H_\alpha) = \mathcal L(\alpha)$, any $(\sigma,\tau)$-inversion must be a pair of incomparable elements in $P_\alpha$, and $d(\sigma, \tau) = \inv(\sigma, \tau)$. We now apply these observations to show that the upper bound on $\diam(H_\alpha)$ fails to be sharp: Figure \ref{fig:path_diam_countereg} provides an illustration of our construction. For $n = 6$, consider the graph shown in Figure \ref{fig:path_diam_countereg_a}, whose complement is shown in Figure \ref{fig:path_diam_countereg_b}. We will take $\alpha \in \Acyc(\overline{Y})$ to be an acyclic orientation for which the edges in this connected component are oriented as in Figure \ref{fig:path_diam_countereg_c}.

\begin{figure}[ht]
    \centering
    \subfloat[The graph $Y$.]{\includegraphics[width=0.33\textwidth]{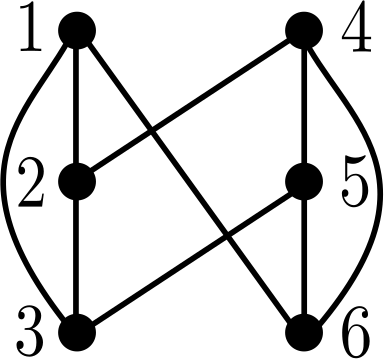}\label{fig:path_diam_countereg_a}}
    \hfill
    \subfloat[The graph $\overline{Y}$.]{\includegraphics[width=0.33\textwidth]{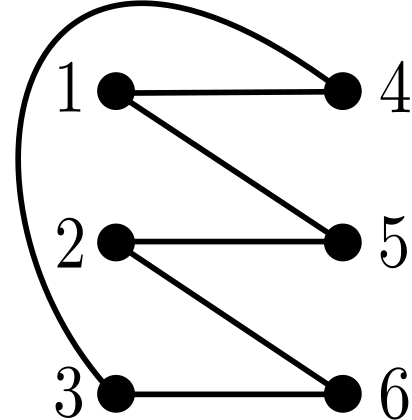}\label{fig:path_diam_countereg_b}}
    \hfill
    \subfloat[Direction of the edges in this component under $\alpha$.]{\includegraphics[width=0.33\textwidth]{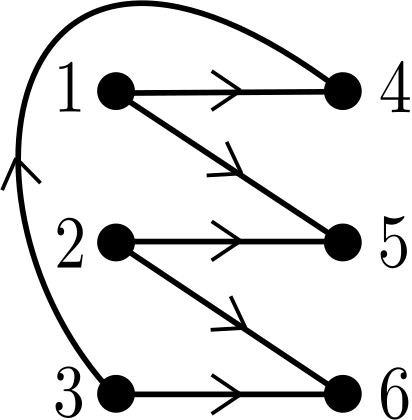}\label{fig:path_diam_countereg_c}}
    \caption{The construction we used to show that the bound given in Proposition \ref{prop:path_diam} fails to be sharp in general.}
    \label{fig:path_diam_countereg}
\end{figure}

Assume (towards a contradiction) that there exist $\sigma, \tau \in V(H_\alpha) = \mathcal L(\alpha)$ for which $d(\sigma, \tau) = \inv(\sigma, \tau) = \binom{n}{2} - p_\alpha$, so that all pairs of incomparable elements in $P_\alpha$ are $(\sigma,\tau)$-inversions. Any two elements in $\{1, 2, 3\}$ are incomparable in $P_\alpha$, so the relative ordering of $\{1, 2, 3\}$ in $\sigma$ must be the relative ordering of $\{1,2,3\}$ in $\tau$ reversed. Without loss of generality, assume $\sigma$ has relative ordering $1 \to 2\to 3$, so $\tau$ has $3 \to 2\to 1$. Since $\sigma, \tau \in \mathcal L(\alpha)$, the element $4$ follows vertex $2$ in both $\sigma$ and $\tau$, so $(2,4)$ is not a $(\sigma,\tau)$-inversion. But $(2,4)$ is incomparable in $P_\alpha$, a contradiction.
\end{remark}

\subsection{Cycle Graphs} \label{subsec:cycle_graphs}

In this subsection, we fix $X = \Cycle_n$. The setting $Y = K_n$ has been studied in the context of circular permutations \cite{kim2016sorting, van2014upper}. In particular, \cite[Procedure 3.6]{kim2016sorting} provides an algorithm that achieves the minimal number of $(\Cycle_n, K_n)$-friendly swaps between any two permutations in $\mathfrak{S}_n$. Extracting these results yields that the diameter of $\FS(\Cycle_n, K_n)$ is $\lfloor n^2/4 \rfloor$. In the spirit of Remark \ref{rmk:path_lower_bound}, it follows that there exist families of $n$-vertex graphs $Y$ for which $\FS(\Cycle_n, Y)$ has diameter $\Theta(n^2)$, and it is worth asking what conditions on $Y$ yield that the maximum diameter of a connected component of $\FS(\Cycle_n, Y)$ is at most quadratic in $n$. In this direction, we have the following proposition.

\begin{proposition} \label{prop:span_star}
If $Y$ has an isolated vertex or $|E(Y)| \leq n-2$, then the diameter of any connected component of $\FS(\Cycle_n, Y)$ is at most $|E(Y)|$.
\end{proposition}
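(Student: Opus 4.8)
The plan is to reduce the statement to the single hypothesis ``$Y$ is disconnected.'' Indeed, a connected graph on $n$ vertices has at least $n-1$ edges, so $|E(Y)| \le n-2$ forces $Y$ to be disconnected; likewise, an isolated vertex of $Y$ is a connected component of $Y$ by itself, so $Y$ is again disconnected. Write $Y = Y_1 \oplus \cdots \oplus Y_k$ with $k \ge 2$ for the decomposition of $Y$ into its connected components, and fix an arbitrary connected component $\mathscr{C}$ of $\FS(\Cycle_n, Y)$; we must bound $\diam(\mathscr{C})$.

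The first step is to extract an invariant. For $\sigma \in V(\mathscr{C})$ and $i \in [k]$, set $S_i(\sigma) = \sigma^{-1}(V(Y_i)) \subseteq V(\Cycle_n)$. Any $(\Cycle_n, Y)$-friendly swap exchanges the images of two $\Cycle_n$-adjacent positions whose images are $Y$-adjacent, hence lie in a common $Y_i$; consequently a single swap permutes $\sigma^{-1}(V(Y_i))$ internally for every $i$, so each $S_i$ is constant along edges of $\mathscr{C}$. Write $S_i$ for this common value, so that $V(\Cycle_n) = S_1 \sqcup \cdots \sqcup S_k$. I would then refine this partition: let $A_1, \dots, A_t$ be the maximal arcs of $\Cycle_n$ each of whose vertices all lie in a single $S_i$ (equivalently, the ``runs'' of the cyclic word that records, position by position, the index $i$ with that position in $S_i$). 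Since $k \ge 2$, at least two such arcs occur, so each $A_u$ is a \emph{proper} arc of the cycle and therefore $\Cycle_n|_{A_u} \cong \Path_{|A_u|}$. The same inspection of a single swap shows both that $B_u := \sigma(A_u) \subseteq V(Y)$ is constant along edges of $\mathscr{C}$, and that every $(\Cycle_n, Y)$-friendly swap available within $\mathscr{C}$ is a $(\Cycle_n|_{A_u}, Y|_{B_u})$-friendly swap for exactly one index $u$.

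The next step reads off a product structure. Because moves supported in distinct arcs do not interact, and because within $\mathscr{C}$ one can independently realize any reachable configuration of tokens inside each arc, $\mathscr{C}$ is isomorphic to the Cartesian product of graphs $\mathscr{C}^{[1]} \,\square\, \cdots \,\square\, \mathscr{C}^{[t]}$, where $\mathscr{C}^{[u]}$ is the connected component of $\FS(\Cycle_n|_{A_u}, Y|_{B_u}) \cong \FS(\Path_{|A_u|}, Y|_{B_u})$ obtained by restricting the configurations of $\mathscr{C}$ to $A_u$. Since the diameter of a Cartesian product of graphs is the sum of the diameters of the factors, and since Theorem \ref{thm:path_bd} gives $\diam(\mathscr{C}^{[u]}) \le |E(Y|_{B_u})|$ for each $u$, we obtain
\[
  \diam(\mathscr{C}) \;=\; \sum_{u=1}^{t} \diam\!\big(\mathscr{C}^{[u]}\big) \;\le\; \sum_{u=1}^{t} \big| E(Y|_{B_u}) \big|.
\]
Finally, $B_1, \dots, B_t$ partition $V(Y)$, so each edge of $Y$ is counted at most once on the right-hand side, whence $\sum_{u} |E(Y|_{B_u})| \le |E(Y)|$ and $\diam(\mathscr{C}) \le |E(Y)|$, as desired.

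The points that need a little care, though none is serious, are: (i) confirming that the sets $S_i$ and then $B_u$ are genuinely preserved along every edge of $\FS(\Cycle_n, Y)$ — this is immediate from what one $(\Cycle_n,Y)$-friendly swap does; and (ii) confirming that $\mathscr{C}$ is the \emph{full} Cartesian product rather than merely a spanning subgraph of it, i.e.\ that tokens inside one arc may be rearranged completely independently of the other arcs — this follows since swaps supported in different arcs commute and each $\mathscr{C}^{[u]}$ is connected by construction. The disconnectedness hypothesis enters at exactly one place: it is what guarantees $k \ge 2$, hence that each arc $A_u$ is a proper arc and $\Cycle_n|_{A_u}$ a genuine path, so that Theorem \ref{thm:path_bd} is applicable; for connected $Y$ one would instead be left with $\Cycle_n$ itself as a factor and the argument would break, consistent with the examples ($Y=K_n$) in which the diameter is genuinely $\Theta(n^2)$ rather than linear.
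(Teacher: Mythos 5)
Your proof is correct, and in fact it establishes something stronger than the stated proposition: as you note, both hypotheses only serve to force $Y$ to be disconnected, and your argument shows that disconnectedness of $Y$ alone already gives the bound $|E(Y)|$. The route is genuinely different from the paper's. The paper treats the two hypotheses separately, each time reducing directly to Theorem \ref{thm:path_bd}: when $Y$ has an isolated vertex $v$, the position $\sigma^{-1}(v)$ is frozen along any swap sequence, so every path in $\FS(\Cycle_n, Y)$ already lives in a path-versus-graph instance obtained by deleting that position; when $|E(Y)| \leq n-2$, it argues by contradiction that some edge of $\Cycle_n$ can be avoided, by taking the first $n-1$ steps of a shortest path (which touch at most $n-1$ of the $n$ cycle edges) and observing they would force a distance of $n-1 > n-2 \geq |E(Y)|$ in the resulting path instance, contradicting Theorem \ref{thm:path_bd}. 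You instead extract the invariant partition of $V(\Cycle_n)$ into preimages of the components of $Y$, cut the cycle into the resulting maximal arcs (at least two, precisely because $Y$ is disconnected), note that no swap crosses an arc boundary, and identify the component of $\FS(\Cycle_n, Y)$ with the Cartesian product of per-arc components of path instances, so that the diameter is the sum of per-arc diameters, bounded by $\sum_u |E(Y|_{B_u})| \leq |E(Y)|$. Your approach buys generality (any disconnected $Y$) and an exact structural description of the components as products, at the cost of having to verify the two points you flag --- invariance of the $B_u$ and equality, not mere containment, with the Cartesian product --- both of which do hold for the reasons you give (boundary-adjacent tokens lie in distinct components of $Y$, and legality of a swap inside an arc depends only on that arc, so per-arc swap sequences can be concatenated). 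The paper's proof is shorter and needs only containment of the relevant paths in a single path instance; its edge-count case also works directly from the hypothesis without passing through the component structure.
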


\begin{proof}
Consider any $\sigma, \tau \in V(\FS(\Cycle_n, Y))$ which lie in the same component. If $Y$ has an isolated vertex $v$, then it must be that $\sigma^{-1}(v)$ remains fixed over any sequence of $(\Cycle_n, Y)$-friendly swaps from $\sigma$ to $\tau$. Thus, it must be that any path from $\sigma$ to $\tau$ in $\FS(\Cycle_n, Y)$ is a path in 
\begin{align*}
    \FS\left(\Cycle_n |_{V(\Cycle_n) \setminus \{\sigma^{-1}(v)\}}, Y_{V(Y) \setminus \{v\}}\right),
\end{align*}
from which the result follows from Theorem \ref{thm:path_bd}. For the setting $|E(Y)| \leq n-2$, we will show that any $\sigma, \tau \in V(\FS(\Cycle_n, Y))$ in the same connected component will remain in the same component after removing some edge from $\Cycle_n$, from which the desired result again follows immediately from Theorem \ref{thm:path_bd}. Assume (towards a contradiction) that every path from $\sigma$ to $\tau$ in $\FS(\Cycle_n, Y)$ involves a swap over every edge in $E(\Cycle_n)$. Consider a shortest path $\Sigma = \{\sigma_i\}_{i=0}^\lambda$ from $\sigma$ or $\tau$, which has that $\sigma_0 = \sigma$ and $\sigma_\lambda = \tau$, and $\lambda \geq n$ by the assumption. Consider the subsequence $\{\sigma_i\}_{i=0}^{n-1}$ consisting of the first $n-1$ $(\Cycle_n, Y)$-friendly swaps of $\Sigma$. This must be a shortest path from $\sigma$ to $\sigma_{n-1}$ in $\FS(\Cycle_n, Y)$, and swaps upon at most $n-1$ edges of $\Cycle_n$: say $e \in E(\Cycle_n)$ is an edge upon which a swap does not occur, and let $\Cycle_n^{-e}$ be $\Cycle_n$ with this edge $e$ removed. Then $\{\sigma_i\}_{i=0}^{n-1}$ is a shortest path from $\sigma$ to $\sigma_{n-1}$ in $\FS(\Cycle_n^{-e}, Y)$ with length $n-1$. This contradicts Theorem \ref{thm:path_bd}, which yields $d(\sigma, \sigma_{n-1}) \leq |E(Y)| \leq n-2$.
\end{proof}

We were unable to extend the $O(n^2)$ bound from Proposition \ref{prop:span_star} to general $Y$, although we suspect that this is the truth (see Subsection \ref{subsec:improvements}). However, the existence of a universal constant $C>0$ such that the maximum diameter of a component of $\FS(\Cycle_n, Y)$ is $O(n^C)$ remains highly desirable. In conjunction with Theorem \ref{thm:cycle_connected}, the following theorem yields such a result whenever $\FS(\Cycle_n, Y)$ is connected.

\begin{theorem} \label{thm:cycle_diam_1}
Let $Y$ be a graph on $n \geq 3$ vertices, and let $n_1, \dots, n_r$ denote the sizes of the components of $\overline{Y}$. If $\gcd(n_1, \dots, n_r) = 1$, then any component of $\FS(\Cycle_n, Y)$ has diameter at most $4n^3 + |E(Y)|$.
\end{theorem}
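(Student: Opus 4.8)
The plan is to reduce the diameter bound to a bound on the number of double-flips needed to pass between two double-flip-equivalent acyclic orientations of $\overline{Y}$, and then to prove such a bound of order $n^2$ using Lemma \ref{lem:inflips_outflips} together with the coprimality hypothesis. First the reduction. Let $\mathscr{C}$ be a component of $\FS(\Cycle_n, Y)$ and $\sigma, \tau \in V(\mathscr{C})$; by Theorem \ref{thm:cycle_comp}, $\mathscr{C} = H_{[\alpha]_\approx}$, and since $\sigma \in \mathcal L(\alpha_{\overline{Y}}(\sigma))$ uniquely, both $\alpha_{\overline{Y}}(\sigma)$ and $\alpha_{\overline{Y}}(\tau)$ lie in $[\alpha]_\approx$, hence are double-flip equivalent. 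The first key claim is that a single double-flip can be realized by at most $2n$ $(\Cycle_n,Y)$-friendly swaps: given a configuration $\rho$ with orientation $\beta$ and a source $a$ and sink $b$ of $\beta$ (necessarily non-adjacent in $\overline{Y}$), bubble $a$ leftward to position $1$ (legal, as $a$ being a source means every vertex preceding it in $\rho$ is $Y$-adjacent to $a$), then bubble $b$ rightward to position $n$ (legal, as $b$ stays a sink: the only vertex whose relative position changed is $a$, which is not an $\overline{Y}$-neighbor of $b$), then swap the occupants of the cycle-adjacent positions $1$ and $n$ (legal, as $a,b$ are $Y$-adjacent). The bubbling steps are path-type swaps and preserve the orientation, while the boundary swap changes $\beta$ exactly by the double-flip at $(a,b)$; at most $(n-1)+(n-1)+1 \le 2n$ swaps are used. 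Therefore, if $\alpha_{\overline{Y}}(\sigma)$ can be brought to $\alpha_{\overline{Y}}(\tau)$ by at most $D$ double-flips, then $\sigma$ can be brought in $\le 2nD$ swaps to some $\rho^*$ with $\alpha_{\overline{Y}}(\rho^*) = \alpha_{\overline{Y}}(\tau)$; as $\rho^*$ and $\tau$ lie in $\mathcal L(\alpha_{\overline{Y}}(\tau))$, the vertex set of a component of $\FS(\Path_n, Y)$, Theorem \ref{thm:path_bd} gives $d(\rho^*,\tau) \le |E(Y)|$ (a path-graph path is a cycle-graph path). Hence $d(\sigma,\tau) \le 2nD + |E(Y)|$, and it suffices to establish that any two double-flip-equivalent orientations of $\overline{Y}$ are joined by at most $D = 2n^2$ double-flips.

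For the double-flip bound, write $G_1, \dots, G_r$ for the components of $\overline{Y}$ with $|V(G_i)| = n_i$; since $\gcd(n_i)=1$ and $n \ge 3$ we have $r \ge 2$. A double-flip at a source $a \in G_i$ and sink $b \in G_j$ with $i \ne j$ restricts to an inflip of $\alpha|_{G_i}$ and an outflip of $\alpha|_{G_j}$; a double-flip with $a,b$ in the same $G_i$ restricts to an inflip then an outflip of $\alpha|_{G_i}$; in all cases, by Proposition \ref{prop:flip_equiv} the flip class of each $\alpha|_{G_k}$ is preserved. In particular double-flip-equivalent orientations have flip-equivalent restrictions to every $G_k$, so by Lemma \ref{lem:inflips_outflips} we may repair each component to its target by at most $\binom{n_k}{2}$ inflips of $G_k$, and $\sum_k \binom{n_k}{2} \le \binom{n}{2}$. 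The strategy is then to repair $G_1, G_2, \dots$ in turn, performing each inflip of $G_i$ as a cross-component double-flip whose outflip half is spent on some other component (always possible since every nonempty acyclic orientation has a sink and $r \ge 2$). This perturbs the not-yet-repaired components only within their flip classes, so the cost of the initial sweep is at most $\binom{n}{2}$ double-flips, after which all but a bounded amount of "rotational drift" on the last component remains. Neutralizing this residual drift with a further $O(n^2)$ double-flips is exactly where the hypothesis $\gcd(n_1,\dots,n_r)=1$ enters: following the Defant--Kravitz analysis underlying Theorem \ref{thm:cycle_comp} (and the discussion around Theorem \ref{thm:cycle_connected}), the coprimality forces the double-flip class of an orientation to be determined solely by the tuple of flip classes of its restrictions to the $G_i$, which is what makes the bookkeeping consistent and the residual drift clearable.

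The main obstacle is precisely this last step: scheduling the cross-component double-flips so that, while every $G_i$ is simultaneously brought to its target orientation, the total number of double-flips stays of order $n^2$. Morally one is solving a transportation problem for the rotational discrepancies $\delta_i$ (each lying in a cyclic group of order dividing $n_i$) under the single constraint $\sum_i \delta_i \equiv 0$ modulo $\gcd(n_i)=1$, which is always satisfiable; the content is in solving it economically, with Lemma \ref{lem:inflips_outflips} keeping each individual component-repair cheap and the coprimality ensuring the "wrap-around" corrections do not cascade indefinitely. Everything else — the bubbling estimate, the transfer between $\FS(\Path_n,Y)$ and $\FS(\Cycle_n,Y)$, and the final arithmetic $2n \cdot 2n^2 + |E(Y)| = 4n^3 + |E(Y)|$ — is routine.
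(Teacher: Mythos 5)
Your reduction is exactly the paper's: realize each double-flip by bubbling the source to position $1$ and the sink to position $n$ (at most $2n$ swaps, legal for the reasons you give), finish with Theorem \ref{thm:path_bd} across the path obtained by deleting $\{1,n\}$, and conclude $d(\sigma,\tau)\le 2nD+|E(Y)|$ where $D$ bounds the number of double-flips between $\alpha_{\overline{Y}}(\sigma)$ and $\alpha_{\overline{Y}}(\tau)$. The gap is that you never actually establish $D\le 2n^2$; you yourself flag the scheduling of the cross-component double-flips and the clearing of the ``rotational drift'' as the main obstacle and leave it at the level of a transportation-problem metaphor, with an appeal to the Defant--Kravitz connectivity analysis that gives no quantitative control. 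That residual step is the real content of the theorem, and the coprimality hypothesis enters there through a concrete mechanism you do not supply.

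The paper closes this gap with two specific ingredients. First, for any acyclic orientation of an $n_i$-vertex component one can return to the \emph{same} orientation by a particular sequence of exactly $n_i$ inflips (or outflips): take a linear extension $\sigma(1)\cdots\sigma(n_i)$ and repeatedly flip the current first element, cycling the extension; this lets already-repaired components ``idle'' harmlessly, but only in multiples of $n_i$. Second, after a first sweep (paired carefully so that only the largest component $\overline{Y_r}$ ends up offset, by some $0\le c<n_r$, at cost at most $\sum_i n_i^2\le n^2$ double-flips), B\'ezout's lemma with the hypothesis $\gcd(n_1,\dots,n_r)=1$ produces coefficients $0\le d_1,\dots,d_{r-1}<n_r$ with $d_1n_1+\cdots+d_{r-1}n_{r-1}\equiv n_r-c\pmod{n_r}$; performing $d_in_i$ idle outflips on each repaired component, paired with inflips that advance $\overline{Y_r}$ through its $n_r$-cycle, clears the offset in at most $n_rn\le n^2$ further double-flips, giving $D\le 2n^2$ and hence $4n^3+|E(Y)|$. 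Without the bounded-coefficient B\'ezout step and the $n_i$-periodic rotation trick, your ``further $O(n^2)$ double-flips'' is an assertion rather than an argument, so the proof as written is incomplete precisely at the step the theorem is about.
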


\begin{proof}
Certainly, $r \geq 2$. Without loss of generality, we assume that $n_1 \leq \dots \leq n_r$, and we denote the corresponding components of $\overline{Y}$ by $\overline{Y_1}, \dots, \overline{Y_r}$, respectively. For $\alpha \in \Acyc(\overline{Y})$, we let $\alpha_i$ denote the acyclic orientation induced by $\alpha$ on $\overline{Y_i}$. We now fix $\alpha, \alpha'' \in \Acyc(\overline{Y})$ such that $\alpha \approx \alpha''$. Before studying distances in $\FS(\Cycle_n, Y)$, we will first bound the number of double-flips necessary to reach $\alpha''$ from $\alpha$. Certainly, $\alpha_i \sim \alpha''_i$ for all $i \in [r]$, and by Proposition \ref{prop:flip_equiv}, we can reach $\alpha''_i$ in no more than $\binom{n_i}{2}$ inflips or outflips from $\alpha_i$. Observe that for any $\alpha_i''$, we may return to $\alpha_i''$ by applying a different sequence of $n_i$ inflips; see Figure \ref{fig:inflips} for an illustration. Indeed, take a linear extension $\sigma \in \mathcal L(\alpha_i'')$, labeled $\sigma = \sigma(1)\sigma(2)\cdots \sigma(n_i)$, and perform an inflip on $\alpha''_i$ by converting the source $\sigma(1)$ into a sink, so that $\sigma(2)\dots \sigma(n_i)\sigma(1)$ is a linear extension of the poset associated to the resulting acyclic orientation in $\Acyc(\overline{Y_i})$. Performing $n_i$ inflips on $\alpha''_i$ in this manner returns $\sigma$ as a linear extension of the poset associated to the resulting acyclic orientation: since there exists a unique acyclic orientation $\alpha_{\overline{Y}}(\sigma) \in \Acyc(\overline{Y_i})$ for which $\sigma \in \mathcal L(\alpha_{\overline{Y}}(\sigma))$, this acyclic orientation must be $\alpha'_i$. Similarly, we can return to $\alpha_i''$ by applying a sequence of $n_i$ outflips.

\begin{figure}[ht]
    \centering
    \includegraphics[width=\textwidth]{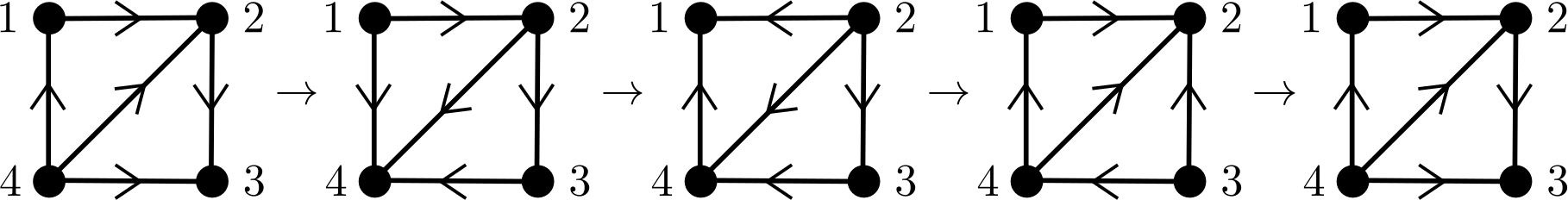}
    \caption{An example of a sequence of $n$ inflips which takes an acyclic orientation $\alpha$ of an $n$-vertex graph back to itself. We demonstrate on a $4$-vertex graph. The permutations $\bigl(\begin{smallmatrix}
    1 & 2 & 3 & 4 \\
    4 & 1 & 2 & 3
    \end{smallmatrix}\bigl)$, $\bigl(\begin{smallmatrix}
    1 & 2 & 3 & 4 \\
    1 & 2 & 3 & 4
    \end{smallmatrix}\bigl)$, $\bigl(\begin{smallmatrix}
    1 & 2 & 3 & 4 \\
    2 & 3 & 4 & 1
    \end{smallmatrix}\bigl)$, $\bigl(\begin{smallmatrix}
    1 & 2 & 3 & 4 \\
    3 & 4 & 1 & 2
    \end{smallmatrix}\bigl)$ are linear extensions of the posets associated with the first four acyclic orientations shown, respectively. The first and fifth acyclic orientations are the same.}
    \label{fig:inflips}
\end{figure}

Recalling that a double-flip applied to an acyclic orientation involves flipping a nonadjacent source and sink into (respectively) a sink and source, we thus proceed as follows. Starting from the acyclic orientation $\alpha$, perform a sequence of double-flips that act as inflips on sources in $\alpha_r$ and outflips on sinks in $\alpha_1, \dots, \alpha_{r-1}$ until we have reached $\alpha_1'', \dots, \alpha_r''$ at least once. Specifically, begin by performing inflips on $\alpha_r$ and outflips on $\alpha_1$ until we either reach $\alpha_1''$ (at which point we begin performing outflips on sinks in $\alpha_2$) or $\alpha_r''$ (at which point we begin performing inflips on sources in $\alpha_r''$ as described previously to return to $\alpha_r''$ every $n_r$ inflips). If we reach $\alpha_1'', \dots, \alpha_{r-1}''$ prior to $\alpha_r''$, then perform outflips on sinks in $\alpha_1''$ (returning to $\alpha_1''$ every $n_1$ outflips) until $\alpha_r''$ is reached: from here, pair these outflips on sinks with inflips on sources in $\alpha_r''$ until we retain $\alpha_1''$. Otherwise, we reach $\alpha_r''$ prior to $\alpha_1'', \dots, \alpha_{r-1}''$, for which $\alpha_r''$ will be ``offset" once we have $\alpha_1'', \dots, \alpha_{r-1}''$, since we are performing inflips on sources which return to $\alpha_r''$ every $n_r$ inflips. In either case, call the resulting acyclic orientation $\Tilde{\alpha}$, which satisfies $\Tilde{\alpha_i} = \alpha_i''$ for all $i \in [r-1]$ while $\Tilde{\alpha}_r$ differs from $\alpha_r''$ by some offset $0 \leq c < n_r$. By tracing the preceding description and recalling Proposition \ref{prop:flip_equiv}, it follows that the number of double-flips we perform to reach $\Tilde{\alpha}$ from $\alpha$ is bounded above by
\begin{align*}
    \max\left\{\binom{n_r}{2} + n_1, \ \sum_{i=1}^{r-1} \binom{n_i}{2}\right\} \leq \sum_{i=1}^r n_i^2 \leq \left(\sum_{i=1}^r n_i \right)^2 = n^2.
\end{align*}
By B\'ezout's Lemma (recall that $\gcd(n_1, \dots, n_r) = 1$), there exist integers $0 \leq d_1, \dots, d_{r-1} < n_r$ such that 
\begin{align*}
    d_1n_1 + \cdots + d_{r-1}n_{r-1} \equiv n_r - c \pmod{n_r}.
\end{align*}
Thus, from $\Tilde{\alpha}$, we can reach $\alpha''$ by performing $d_in_i$ outflips on $\Tilde{\alpha}_i = \alpha_i''$ for $i \in [r-1]$ (returning to $\Tilde{\alpha}_i = \alpha_i''$ every $n_i$ outflips), while performing inflips on $\Tilde{\alpha}_r$ as discussed to reach $\alpha_r''$. The number of double-flips we perform to reach $\alpha''$ from $\Tilde{\alpha}$ is therefore bounded above by 
\begin{align*}
    \sum_{i=1}^{r-1} d_in_i \leq \max\left\{d_1, \dots, d_{r-1}\right\}\left(\sum_{i=1}^{r-1} n_i\right) \leq n_rn \leq n^2,
\end{align*}
so at most $2n^2$ double-flips are necessary to reach $\alpha''$ from $\alpha$.

\medskip

We now turn to bounding $d(\sigma, \tau)$ for configurations $\sigma, \tau \in V(\FS(\Cycle_n, Y))$ in the same connected component. By Theorem \ref{thm:cycle_comp}, we have that $\sigma, \tau \in \mathcal L([\alpha]_\approx)$ for some $[\alpha]_\approx \in \Acyc(\overline{Y}) / \!\! \approx$. Denote $\alpha = \alpha_{\overline{Y}}(\sigma)$ and $\alpha'' = \alpha_{\overline{Y}}(\tau)$. By the preceding discussion, we can reach $\alpha''$ from $\alpha$ in $\lambda \leq 2n^2$ double-flips, yielding a sequence of acyclic orientations $\Sigma = \{\alpha_i\}_{i=0}^\lambda$ in the equivalence class $[\alpha]_\approx$ with $\alpha_0 = \alpha$ and $\alpha_\lambda = \alpha''$. From $\Sigma$, we will now construct a sequence of $(\Cycle_n, Y)$-friendly swaps which we can apply on $\sigma$; see Figure \ref{fig:cycle_flips} for an illustration. If the double-flip we performed to reach $\alpha_1$ from $\alpha$ inflips the source $v$ and outflips the sink $w$ in $\alpha$, it follows from $\sigma \in \mathcal L(\alpha)$ that for any $i < \sigma^{-1}(v)$, $\{\sigma(i), v\} \in E(Y)$. Indeed, if we had that $\{\sigma(i), v\} \in E(\overline{Y})$, $v$ being a source in $\alpha$ would imply that this edge is directed from $v$ to $\sigma(i)$ in $\alpha$, contradicting $\sigma \in \mathcal L(\alpha)$. Similarly, for any $j > \sigma^{-1}(w)$, $\{\sigma(j), w\} \in E(Y)$. Thus, we can swap $v$ to $1$ and $w$ to $n$ in no more than $2n-3$ $(\Cycle_n, Y)$-friendly swaps: it is easy to check that the resulting configuration remains in $\mathcal L(\alpha)$. Then we perform a $(\Cycle_n, Y)$-friendly swap which swaps $v$ and $w$ along the edge $\{1, n\}$ ($\{v,w\} \notin E(\overline{Y})$ by the definition of a double-flip, so $\{v,w\} \in E(Y)$). It is also straightforward to check that the configuration $\sigma_1$ resulting from this interchange is now in $\mathcal L(\alpha_1)$. 

\begin{figure}[ht]
    \centering
    \subfloat[Acyclic orientations $\alpha$, $\alpha_1 \in \Acyc(\overline{Y})$.]{\includegraphics[width=0.4\textwidth]{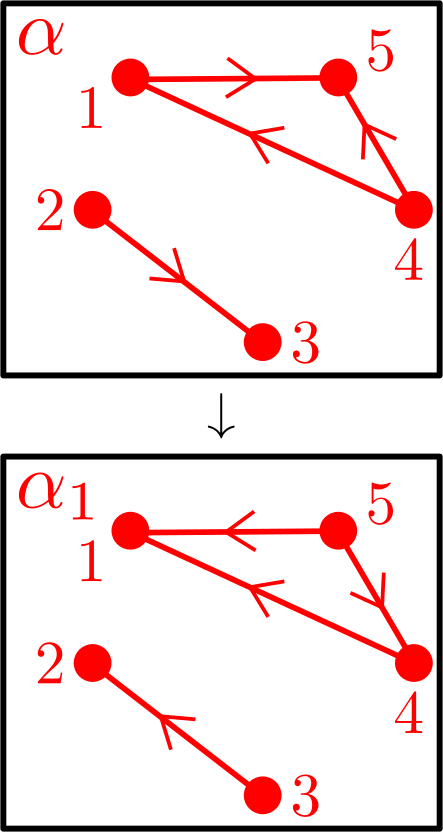}\label{fig:cycle_diam_1b_1}}
    \hfill
    \subfloat[The corresponding sequence of $(\Cycle_n, Y)$-friendly swaps we construct.]{\includegraphics[width=0.4\textwidth]{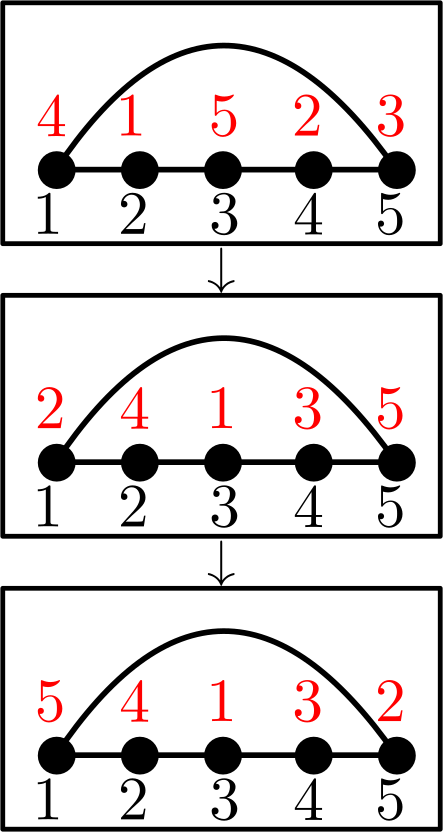}\label{fig:cycle_diam_1b_2}}
    \caption{The sequence of $(\Cycle_n, Y)$-friendly swaps that we construct corresponding to $\alpha, \alpha_1 \in \Acyc(\overline{Y})$ that are double-flip equivalent. We demonstrate on $5$-vertex graphs. The topmost bijection is in $\mathcal L(\alpha)$. We inflip $v = 2$ and outflip $w = 5$ to reach $\alpha''$ from $\alpha$: swapping $v$ left to $1$, then $w$ right to $5$, then swapping $v$ and $w$ along $\{1,5\}$ yields a permutation in $\mathcal L(\alpha_1)$.}
    \label{fig:cycle_flips}
\end{figure}

Proceed similarly through all $\lambda$ double-flips, and call the resulting configuration $\Tilde{\sigma}$: this configuration satisfies $\Tilde{\sigma} \in \mathcal L(\alpha'')$. Since $\Tilde{\sigma}, \tau \in \mathcal L(\alpha'')$, it follows from Theorem \ref{thm:path_comp} that $\Tilde{\sigma}, \tau$ lie in the same component of $\FS(\Path_n, Y)$ (specifically, the copy of $\Path_n$ in $\Cycle_n$ resulting from excluding the edge $\{1, n\}$). By Theorem \ref{thm:path_bd}, we can now reach $\tau$ from $\Tilde{\sigma}$ by performing no more than $|E(Y)|$ $(\Cycle_n, Y)$-friendly swaps. Altogether, we have that
\begin{align*}
    d(\sigma, \tau) \leq 2n^2 \cdot 2n + |E(Y)| = 4n^3 + |E(Y)|,
\end{align*}
so at most $4n^3 + |E(Y)|$ $(\Cycle_n, Y)$-friendly swaps are necessary to reach $\tau$ from $\sigma$.
\end{proof}

\begin{corollary}
For $n \geq 3$, if $\FS(\Cycle_n, Y)$ is connected, $\diam(\FS(\Cycle_n, Y)) \leq 4n^3 + |E(Y)|$.
\end{corollary}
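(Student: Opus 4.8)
The plan is to obtain this as an immediate consequence of Theorem~\ref{thm:cycle_diam_1} together with the connectivity characterization in Theorem~\ref{thm:cycle_connected}. First I would invoke Theorem~\ref{thm:cycle_connected}: since $\FS(\Cycle_n, Y)$ is connected and $n \geq 3$, the complement $\overline{Y}$ must be a forest with component trees $\mathcal{T}_1, \dots, \mathcal{T}_r$ satisfying $\gcd(|V(\mathcal{T}_1)|, \dots, |V(\mathcal{T}_r)|) = 1$. The connected components of $\overline{Y}$ are precisely these trees, so, in the notation of Theorem~\ref{thm:cycle_diam_1}, the component sizes $n_1, \dots, n_r$ of $\overline{Y}$ coincide with $|V(\mathcal{T}_1)|, \dots, |V(\mathcal{T}_r)|$ and therefore have $\gcd(n_1, \dots, n_r) = 1$.

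Second, with the coprimality hypothesis of Theorem~\ref{thm:cycle_diam_1} now verified, I would apply that theorem directly to conclude that every connected component of $\FS(\Cycle_n, Y)$ has diameter at most $4n^3 + |E(Y)|$. Finally, since $\FS(\Cycle_n, Y)$ is connected, it is its own unique component, so $\diam(\FS(\Cycle_n, Y)) \leq 4n^3 + |E(Y)|$, as claimed.

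There is no genuine obstacle at this stage: all of the substantive work — bounding via a B\'ezout argument the number of double-flips needed to pass between double-flip equivalent acyclic orientations of $\overline{Y}$, and then lifting each double-flip to at most $2n$ friendly swaps followed by a final bubble-sort pass controlled by Theorem~\ref{thm:path_bd} — is already carried out in the proof of Theorem~\ref{thm:cycle_diam_1}. The only point requiring (trivial) care is the bookkeeping identifying "sizes of the trees in the forest $\overline{Y}$" with "sizes of the connected components of $\overline{Y}$," which is immediate since the connected components of a forest are exactly its trees.
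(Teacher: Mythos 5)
Your proposal is correct and matches the paper's intended derivation exactly: the corollary is stated without proof precisely because Theorem \ref{thm:cycle_connected} gives the coprimality of the component sizes of $\overline{Y}$ needed to invoke Theorem \ref{thm:cycle_diam_1}, and connectivity makes $\FS(\Cycle_n, Y)$ its own unique component. Nothing more is needed.
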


Theorem \ref{thm:cycle_diam_1} can now be invoked to establish the following general bound on the diameter of any connected component of $\FS(\Cycle_n, Y)$, where $Y$ is arbitrary. This proves that, in the sense of Question \ref{ques:poly_bdd}, the diameter of $\FS(\Cycle_n, Y)$ is polynomially bounded.

\begin{theorem} \label{thm:cycle_diam_2}
    The diameter of any component of $\FS(\Cycle_n, Y)$ is at most $8n^4(1+o(1))$.
\end{theorem}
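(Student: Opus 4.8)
The plan is to adapt the two–phase argument proving Theorem~\ref{thm:cycle_diam_1}, the new twist being that the Bézout step at the end of that proof is unavailable when the component sizes of $\overline{Y}$ fail to be coprime. As there, fix configurations $\sigma,\tau$ lying in a common component of $\FS(\Cycle_n,Y)$; by Theorem~\ref{thm:cycle_comp} they both lie in $\mathcal L([\alpha]_\approx)$ for a single $[\alpha]_\approx$, and with $\alpha=\alpha_{\overline{Y}}(\sigma)$ and $\alpha''=\alpha_{\overline{Y}}(\tau)$ we have $\alpha\approx\alpha''$. Any sequence of $D$ double-flips carrying $\alpha$ to $\alpha''$ lifts to a sequence of $(\Cycle_n,Y)$-friendly swaps exactly as in Theorem~\ref{thm:cycle_diam_1}: each double-flip is realized in at most $2n-2$ swaps by sliding the flipped source to position $1$, sliding the flipped sink to position $n$, and swapping across $\{1,n\}$, and one final invocation of Theorem~\ref{thm:path_bd} contributes at most $|E(Y)|\le\binom n2$ swaps to pass from the resulting linear extension of $\alpha''$ to $\tau$. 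Hence $d(\sigma,\tau)\le (2n-2)D+\binom n2$, and everything reduces to showing that $\alpha''$ is reachable from $\alpha$ by $D\le 4n^3(1+o(1))$ double-flips, which then gives $d(\sigma,\tau)\le 8n^4(1+o(1))$.

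When $\gcd(n_1,\dots,n_r)=1$ the proof of Theorem~\ref{thm:cycle_diam_1} already supplies $D\le 2n^2$, so we may assume $d:=\gcd(n_1,\dots,n_r)\ge2$; in particular every component of $\overline{Y}$ has at least two vertices, so $r\le n/2$. If $r\ge2$, Phase~1 of Theorem~\ref{thm:cycle_diam_1} (inflips on $\overline{Y_r}$ paired with outflips on the remaining components) reaches, in $O(n^2)$ double-flips, an orientation agreeing with $\alpha''$ on $\overline{Y_1},\dots,\overline{Y_{r-1}}$ and agreeing with $\alpha''$ on $\overline{Y_r}$ up to a cyclic offset $c$ with $0\le c<n_r$. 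In Theorem~\ref{thm:cycle_diam_1} this offset was annihilated in one shot via Bézout's lemma, but one checks that, holding $\overline{Y_1},\dots,\overline{Y_{r-1}}$ fixed, the residue $c\bmod d$ is an invariant of the orientations reachable by the ``cyclic-return'' double-flips used there; when $d\nmid c$ one is therefore forced to use double-flips that inflip a source other than the one prescribed by that convention, and to then re-settle the disturbed components. I would bound the number of such corrective double-flips by $O(n^3)$ — iterating a suitably modified sweep $O(n)$ times, each costing $O(n^2)$ double-flips, to drive $c$ down to a multiple of $d$, after which Bézout applies. The case $r=1$, where $\overline{Y}$ is connected and there is no spare component to carry the outflip half of a double-flip, I would treat separately: a double-flip preserves each quantity $|\mathcal C_\alpha^-|$, so Lemma~\ref{lem:inflips_outflips} produces a sequence of at most $\binom n2$ inflips from $\alpha$ to $\alpha''$, and one argues that each such inflip can be implemented within a block of $O(n)$ double-flips whose outflip halves cancel off, again yielding $D=O(n^3)$.

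The routine parts — the reduction to counting double-flips, the $2n-2$ swaps per double-flip, and the closing path-sorting step — transcribe directly from the proof of Theorem~\ref{thm:cycle_diam_1}. The genuine obstacle is the non-coprime bookkeeping: replacing the one-line Bézout computation that closes the coprime case by an argument which, when $d\nmid c$, still reaches $\alpha''$ without letting the double-flip count exceed $O(n^3)$, and, in the connected case $r=1$, controlling the number of double-flips needed to implement a prescribed sequence of inflips when each inflip must be paired with an outflip inside a single component. I expect these to need a careful and somewhat delicate combinatorial analysis of how the toric/flip structure of the individual components of $\overline{Y}$ interacts with the global double-flip relation.
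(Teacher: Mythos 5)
Your reduction step is fine as far as it goes: lifting a length-$D$ double-flip sequence to at most $(2n-2)D+|E(Y)|$ friendly swaps transcribes directly from Theorem~\ref{thm:cycle_diam_1}. But the entire weight of the theorem then rests on the claim $D=O(n^3)$ for arbitrary double-flip equivalent $\alpha\approx\alpha''$, and this is exactly the part you do not prove. In the case $d=\gcd(n_1,\dots,n_r)\ge 2$ you assert that $c\bmod d$ is an invariant of the ``cyclic-return'' moves and then propose to ``drive $c$ down to a multiple of $d$'' by $O(n)$ modified sweeps --- but if the residue is invariant under the moves you describe, those moves cannot change it, and you never specify what the corrective double-flips are, why they exist, or why $O(n^3)$ of them suffice; the interaction between the per-component flip structure and the global double-flip relation in the non-coprime case is precisely the delicate point, as you yourself concede. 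The $r=1$ case is worse: a single inflip on a connected $\overline{Y}$ changes the double-flip class (double-flips preserve a balance between inflips and outflips inside the unique component), so ``implementing each inflip within a block of $O(n)$ double-flips whose outflip halves cancel off'' cannot work inflip-by-inflip; any valid simulation must be organized globally across the whole sequence, and no argument is given. Note also that the paper only obtains a bound on double-flip distance ($O(n^4)$, Corollary~\ref{cor:double_flip_dist}) as a \emph{consequence} of Theorem~\ref{thm:cycle_diam_2}, and sharpening such bounds is posed as an open problem (Conjecture~\ref{conj:double_flip_quadratic}); so the step you defer is not a routine patch but the main difficulty.

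For contrast, the paper avoids the non-coprime bookkeeping entirely: it adjoins to $Y$ a new vertex $v$ adjacent to all of $V(Y)$, so that $\overline{Y'}$ has an isolated vertex and Theorem~\ref{thm:cycle_diam_1} applies to $\FS(\Cycle_{n+1},Y')$ with bound $4n^3(1+o(1))$. A shortest path between the lifted configurations $\sigma',\tau'$ is then projected back by deleting the swaps involving $v$, which yields a short path from $\sigma$ to some \emph{cyclic rotation} $\tau_*$ of $\tau$; the residual distance $d(\tau,\tau_*)$ is controlled by a pigeonhole argument over the $n$ rotations of $\sigma$, giving $d(\sigma,\tau)\le 4n^3(1+o(1))+8n^3(1+o(1))(n+1)=8n^4(1+o(1))$. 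If you want to salvage your route, you would need to supply a complete combinatorial analysis of double-flip distances in the non-coprime and connected cases; as written, the proposal has a genuine gap at its crux.
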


\begin{proof}
    Consider two configurations $\sigma, \tau \in V(\FS(\Cycle_n, Y))$ in the same connected component. We construct an $(n+1)$-vertex graph $Y'$ by adding a vertex $v$ to $Y$ that is adjacent to all vertices in $V(Y)$, so $Y'$ has a spanning star subgraph with central vertex $v$. Define bijections $\sigma', \tau' \in V(\FS(\Cycle_{n+1}, Y'))$ by 
    \begin{align*}
        & \sigma'(i) = \begin{cases}
            \sigma(i) & i \in [n], \\
            v & i = n+1,
        \end{cases}
        & \tau'(i) = \begin{cases}
            \tau(i) & i \in [n], \\
            v & i = n+1.
        \end{cases}
    \end{align*}
    The configurations $\sigma', \tau'$ are in the same component of $\FS(\Cycle_{n+1}, Y')$. Indeed, from a sequence of $(\Cycle_n, Y)$-friendly swaps $\Sigma_1$ from $\sigma$ to $\tau$ of shortest length, we can construct a sequence $\Sigma_1'$ of $(\Cycle_{n+1}, Y')$-friendly swaps from $\sigma'$ to $\tau'$ by replacing every swap in $\Sigma_1$ which occurs along $\{1,n\} \in E(\Cycle_n)$ by a sequence of three swaps along the following edges in $E(\Cycle_n)$:
    \begin{align*}
        \{n, n+1\}, \{1, n+1\}, \{n, n+1\}.
    \end{align*}
    It is straightforward to confirm that $\Sigma_1'$ is a path from $\sigma'$ to $\tau'$, constructed from $\Sigma_1$ by ``crossing" the vertex $v$ as needed. Since $Y'$ has a spanning star subgraph, $\overline{Y'}$ has an isolated vertex, so it follows immediately that the components of $\overline{Y'}$ have jointly coprime size. So by Theorem \ref{thm:cycle_diam_1},
    \begin{align*}
        d(\sigma', \tau') \leq 4(n+1)^3 + |E(Y')| \leq 4(n+1)^3 + \binom{n+1}{2} = 4n^3(1+f(n)),
    \end{align*}
    where $f(n) = o(1)$. Let $\Sigma_2'$ be a sequence of swaps from $\sigma'$ to $\tau'$ of length at most $4n^3(1+f(n))$. We construct $\Sigma_2$ from $\Sigma_2'$ by removing all $(\Cycle_{n+1}, Y')$-friendly swaps involving $v$: it is straightforward to notice that $\Sigma_2$ yields a path of length at most $4n^3(1+f(n))$ from $\sigma$ to some cyclic rotation $\tau_*$ of $\tau$, i.e., $d(\sigma, \tau_*) \leq 4n^3(1+f(n))$. Towards a contradiction, assume $d(\tau, \tau_*) \geq 8n^4(1+2f(n)) + n$, and let $v_1, \dots, v_{n+1}$ be vertices along a shortest path from $\tau$ to $\tau_*$ satisfying $d(v_i, v_{i+1}) > 8n^3(1+2f(n))$ for all $i \in [n]$. Such vertices $v_i$ exist due to our assumption on $d(\tau, \tau_*)$. By appealing to the same argument as above, we deduce that there exist cyclic rotations $\sigma_1, \dots, \sigma_{n+1}$ of $\sigma$ such that $d(v_i, \sigma_i) \leq 4n^3(1+f(n))$ for all $i \in [n+1]$. Since there exist $n$ distinct rotations of $\sigma$, the pigeonhole principle yields the existence of $i \neq j$ for which
    \begin{align*}
        d(v_i, v_j) \leq d(v_i, \sigma') + d(\sigma', v_j) \leq 4n^3(1+f(n)) + 4n^3(1+f(n)) \leq 8n^3(1+2f(n))
    \end{align*}
    for some rotation $\sigma'$ of $\sigma$, which is a contradiction. Therefore, we conclude that
    \begin{align*}
        d(\sigma, \tau) \leq d(\sigma, \tau_*) + d(\tau_*, \tau) \leq 4n^3(1+f(n)) + 8n^3(1+2f(n))(n+1) = 8n^4(1+o(1)).
    \end{align*}
    The desired result now follows immediately.
\end{proof}

From Theorem \ref{thm:cycle_diam_2}, we can also extract the following analogue of Lemma \ref{lem:inflips_outflips} for double-flips.

\begin{corollary} \label{cor:double_flip_dist}
    If $\alpha, \alpha'' \in \Acyc(G)$ satisfy $\alpha \approx \alpha''$, then we can reach $\alpha''$ from $\alpha$ in no more than $4n^4(1+o(1))$ double-flips.
\end{corollary}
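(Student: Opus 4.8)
The plan is to realize double-flips on $\Acyc(G)$ as the friendly swaps that use the ``extra'' edge of $\Cycle_n$, and then feed this into Theorem~\ref{thm:cycle_diam_2}. Set $Y := \overline{G}$, so that $\overline{Y} = G$ and $\Acyc(\overline{Y}) = \Acyc(G)$; after relabeling we may assume $V(G) = [n]$, and since no non-trivial double-flip is available on a graph with at most one edge, we may take $n \ge 3$. Because $\alpha \approx \alpha''$, we have $\mathcal L(\alpha) \cup \mathcal L(\alpha'') \subseteq \mathcal L([\alpha]_\approx)$, so by Theorem~\ref{thm:cycle_comp} any chosen $\sigma \in \mathcal L(\alpha)$ and $\tau \in \mathcal L(\alpha'')$ lie in a common connected component $H_{[\alpha]_\approx}$ of $\FS(\Cycle_n, Y)$. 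Fixing such $\sigma, \tau$, Theorem~\ref{thm:cycle_diam_2} gives a shortest path $\sigma = \sigma_0, \sigma_1, \dots, \sigma_L = \tau$ in $\FS(\Cycle_n, Y)$ with $L \le 8n^4(1+o(1))$.

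The next step is to track the acyclic orientation $\alpha_{\overline{Y}}(\sigma_i) \in \Acyc(G)$ along this path. If the swap $\sigma_i \to \sigma_{i+1}$ takes place along an edge $\{j, j+1\}$ of $\Cycle_n$ with $j \in [n-1]$, then it is also a $(\Path_n, Y)$-friendly swap, so $\sigma_i$ and $\sigma_{i+1}$ lie in the same component of $\FS(\Path_n, Y)$; by Theorem~\ref{thm:path_comp} that component is $\mathcal L(\alpha_{\overline{Y}}(\sigma_i))$, forcing $\alpha_{\overline{Y}}(\sigma_{i+1}) = \alpha_{\overline{Y}}(\sigma_i)$. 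If instead the swap takes place along the edge $\{n,1\}$, then $\sigma_i(1)$ is a source of $\alpha_{\overline{Y}}(\sigma_i)$ and $\sigma_i(n)$ is a sink of it (positions $1$ and $n$ being the first and last), while $\{\sigma_i(1), \sigma_i(n)\} \notin E(\overline{Y})$ since $\sigma_i \to \sigma_{i+1}$ is a $(\Cycle_n, Y)$-friendly swap; hence inflipping $\sigma_i(1)$ and outflipping $\sigma_i(n)$ is a bona fide double-flip, and a direct check of edge orientations shows it carries $\alpha_{\overline{Y}}(\sigma_i)$ to $\alpha_{\overline{Y}}(\sigma_{i+1})$ (when both $\sigma_i(1)$ and $\sigma_i(n)$ are isolated in $\overline{Y}$ this double-flip is trivial and the orientation is unchanged). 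Deleting consecutive repetitions from $\alpha_{\overline{Y}}(\sigma_0), \dots, \alpha_{\overline{Y}}(\sigma_L)$ thus exhibits a sequence of double-flips from $\alpha$ to $\alpha''$ of length at most the number of indices $i$ for which $\sigma_i \to \sigma_{i+1}$ occurs along $\{n,1\}$.

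Finally I would bound the number of $\{n,1\}$-swaps along a geodesic. If both $\sigma_{i-1} \to \sigma_i$ and $\sigma_i \to \sigma_{i+1}$ were swaps along $\{n,1\}$, then together they interchange the occupants of positions $1$ and $n$ and then interchange them back, so $\sigma_{i-1} = \sigma_{i+1}$, contradicting minimality of the path. Hence the indices carrying an $\{n,1\}$-swap contain no two consecutive integers, so there are at most $\lceil L/2 \rceil \le 4n^4(1+o(1))$ of them. Combined with the previous paragraph, $\alpha''$ is reachable from $\alpha$ in at most $4n^4(1+o(1))$ double-flips, as claimed.

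The only step needing real care is the claim that an $\{n,1\}$-swap realizes a double-flip on $\Acyc(G)$: this uses that, for a linear extension, the occupants of the first and last positions are exactly a source and a sink whose only incident edges lie ``inside'', plus the harmless bookkeeping for the degenerate isolated-vertex case, where the ``double-flip'' is trivial and simply discarded. Everything else follows directly from Theorems~\ref{thm:path_comp}, \ref{thm:cycle_comp}, and~\ref{thm:cycle_diam_2}, with the factor-of-two gain over the raw diameter bound coming precisely from the cancellation of consecutive boundary swaps on a geodesic.
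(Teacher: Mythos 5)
Your proposal is correct and follows essentially the same route as the paper's proof: embed linear extensions of $\alpha$ and $\alpha''$ as vertices of a common component of $\FS(\Cycle_n,\overline{G})$, apply Theorem \ref{thm:cycle_diam_2} to get a geodesic of length at most $8n^4(1+o(1))$, observe via Theorem \ref{thm:path_comp} that only swaps across $\{1,n\}$ change the associated acyclic orientation and that each such swap realizes a double-flip, and halve the count because no two consecutive swaps on a geodesic can use $\{1,n\}$. Your explicit choice $Y=\overline{G}$ and the remark on the degenerate isolated-vertex case are just slightly more careful bookkeeping of the same argument.
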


\begin{proof}
    Given an $n$-vertex graph $G$ and $\alpha, \alpha'' \in \Acyc(G)$ satisfying $\alpha \approx \alpha''$, extract linear extensions $\sigma \in \mathcal L(\alpha)$, $\tau \in \mathcal L(\alpha'')$, and consider $\sigma$ and $\tau$ as vertices of $\FS(\Cycle_n, G)$. By Theorem \ref{thm:cycle_diam_2}, $d(\sigma, \tau) \leq 8n^4(1+o(1))$, so let $\Sigma = \{\sigma_i\}_{i=0}^\lambda$ be a shortest sequence of swaps from $\sigma$ to $\tau$, so $\lambda \leq 8n^4(1+o(1))$. Let $\Sigma_0 = \{\sigma_{i_j}\}_{j=0}^{\lambda'}$ be the subsequence of $\Sigma$ consisting of all indices $i_j$ for which $\sigma_{i_j+1}$ is reached from $\sigma_{i_j}$ by a $(\Cycle_n, G)$-friendly swap across the edge $\{1,n\}$. Since $\lambda$ is smallest possible, two consecutive swaps of $\Sigma$ cannot both be across the edge $\{1,n\}$, so $\lambda' \leq 4n^4(1+o(1))$. We will now describe how to use $\Sigma_0$ to construct a sequence $\Sigma' = \{\alpha_j\}_{j=0}^{\lambda'+1}$ of acyclic orientations, with $\alpha_0 = \alpha$ and $\alpha_{\lambda'+1} = \alpha''$, for which $\alpha_j$ is reachable from $\alpha_{j-1}$ by a double-flip for all $j \in [\lambda'+1]$. The desired result will then follow immediately. 
    
    Since we reached $\sigma_{i_0}$ from $\sigma$ by swapping along the graph $\FS(\Path_n, G)$ (specifically, the copy of $\Path_n$ in $\Cycle_n$ resulting from excluding the edge $\{1, n\}$), it follows from Theorem \ref{thm:path_comp} that $\sigma_{i_0} \in \mathcal L(\alpha)$. Let $\alpha_1$ be the result of taking $\alpha$ and performing a double-flip which involves an inflip on the source $\sigma_{i_0}(1)$ and an outflip on the sink $\sigma_{i_0}(n)$. Note that $\{\sigma_{i_0}(1), \sigma_{i_0}(n)\} \in E(Y)$ (we swapped these two vertices to reach $\sigma_{i_0+1}$ from $\sigma_{i_0}$), so $\{\sigma_{i_0}(1), \sigma_{i_0}(n)\} \notin E(\overline{Y})$, from which it follows that this is a valid double-flip.\footnote{This correspondence between double-flips and paths in $\FS(\Cycle_n, G)$ is the same as that which was observed in the first paragraph of the proof of \cite[Theorem 4.1]{defant2021friends}.} It is easy to check that $\sigma_{i_0+1} \in \mathcal L(\alpha_1)$, and by appealing to Theorem \ref{thm:path_comp} as before, $\sigma_{i_1} \in \mathcal L(\alpha_1)$. Continuing like this sequentially on $j \in [\lambda'+1]$ (the preceding discussion being the $j=1$ case) yields the desired sequence $\Sigma'$: for the case $j=\lambda'+1$, it follows as before from Theorem \ref{thm:path_comp} that $\sigma_{i_{\lambda'}+1}$ and $\tau$ are linear extensions of the poset (i.e., associated to the same acyclic orientation of $G$), so the final acyclic orientation in $\Sigma'$ is $\alpha_G(\tau) = \alpha''$.
\end{proof}

\section{Proof of Main Result} \label{sec:large_diameter}

We devote this section to answering Question \ref{ques:poly_bdd} in the negative, establishing Theorem \ref{thm:main_diam_result}.

\subsection{The Graphs \texorpdfstring{$X_L$}{XL} and \texorpdfstring{$Y_L$}{YL}} \label{subsec:construction}

We begin with the following observation. One can understand this as the central vertex of $\Star_n$ acting as a ``knob" rotating around $\Cycle_n$, and all other vertices of $V(\Star_n)$ moving cyclically around it: $n(n-1)$ such swaps in the same direction are needed for all vertices of $\Star_n$ to return to their original positions in the starting configuration. This interpretation will help motivate our construction.

\begin{lemma} \label{lem:star_cycle}
Every connected component of $\FS(\Cycle_n, \Star_n)$ is isomorphic to $\Cycle_{n(n-1)}$.
\end{lemma}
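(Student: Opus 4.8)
The plan is to exploit Theorem~\ref{thm:cycle_comp} together with the fact that $\overline{\Star_n}$ is the disjoint union of an isolated vertex (the center) and a clique $K_{n-1}$ on the leaves. First I would observe that any acyclic orientation $\alpha \in \Acyc(\overline{\Star_n})$ is determined by a linear order on the $n-1$ leaf-vertices (the orientation of $K_{n-1}$) together with the isolated center vertex, which carries no edge data. Since the only source/sink moves available on the isolated vertex are trivial and the moves on $K_{n-1}$ can only flip the unique source or unique sink of the total order, I claim every $\alpha$ is double-flip equivalent to every other: cyclically rotating the linear order on the leaves via repeated outflips (move the sink to the front) realizes all $(n-1)!$ cyclic classes, and pairing such a flip on $K_{n-1}$ with the (free) flip on the isolated center vertex shows all of $\Acyc(\overline{\Star_n})$ collapses to a single double-flip equivalence class. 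Hence by Theorem~\ref{thm:cycle_comp}, $\FS(\Cycle_n, \Star_n)$ is connected, so there is exactly one component, whose vertex set is all of $\mathfrak{S}_n$, of size $n!$.

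Next I would pin down the structure of this component. It is a connected bipartite graph (Proposition~\ref{prop:basic_properties}(2)) on $n!$ vertices. The key structural claim is that every vertex has degree exactly $2$: a configuration $\sigma$ admits an $(X,Y)$-friendly swap across an edge $\{a,b\}$ of $\Cycle_n$ only if $\{\sigma(a),\sigma(b)\} \in E(\Star_n)$, i.e.\ one of $\sigma(a),\sigma(b)$ is the center of the star. Since the center occupies exactly one vertex of $\Cycle_n$, and that vertex has exactly two neighbors in $\Cycle_n$, there are exactly two legal swaps from $\sigma$ (swap the center with its clockwise neighbor, or with its counterclockwise neighbor), and these produce two distinct configurations. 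A connected graph in which every vertex has degree exactly $2$ is a single cycle, so the component is $\Cycle_m$ for $m = n!$... wait — that is not what the statement says, so here I must be more careful. The resolution is that the component need \emph{not} be all of $\mathfrak{S}_n$; rather, the two swaps from $\sigma$ move the center around $\Cycle_n$, and after the center makes a full loop of $\Cycle_n$ the leaf-permutation has been cyclically rotated by one position, so returning the center to its original $\Cycle_n$-vertex with the \emph{same} leaf arrangement requires the center to loop $n-1$ times, giving a cycle of length $n(n-1)$. So the correct claim is that the component of $\sigma$, as a $2$-regular connected graph, is $\Cycle_\ell$ where $\ell$ is the order of the "advance the center one step" operation — and I would show $\ell = n(n-1)$ by direct computation: one full loop of the center ($n$ swaps) cyclically shifts the $n-1$ leaves by one, and this shift has order $n-1$.

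To make the count rigorous I would introduce coordinates: describe a configuration by the position $p \in \mathbb{Z}/n$ of the center on $\Cycle_n$ and the word $w$ giving the leaves read off clockwise starting just after the center. Then the clockwise swap sends $(p,w) \mapsto (p+1, w)$ if we track the leaf-word relative to the center, but in \emph{absolute} coordinates it shifts the word by one; after $n$ clockwise swaps we return the center to position $p$ with $w$ cyclically rotated by $1$. I would verify the orbit of $(p_0,w_0)$ under the clockwise swap has size exactly $\mathrm{lcm}$-type count $n(n-1)$: we need $n \mid t$ for the center to return and, writing $t = ns$, we need the leaf-word rotated $s$ times to be $w_0$, and since $w_0$ is a word of $n-1$ \emph{distinct} symbols its rotation has order exactly $n-1$, forcing $n-1 \mid s$, so the minimal $t$ is $n(n-1)$. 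Combined with $2$-regularity and connectedness this identifies the component with $\Cycle_{n(n-1)}$, and since all components are isomorphic (homogeneity: the two graphs are vertex-transitive-ish, or simply because the same argument applies verbatim to any starting $\sigma$), the proof is complete.

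\textbf{Main obstacle.} The one delicate point is the order computation of the leaf-rotation: one must carefully track whether a single "lap" of the center around $\Cycle_n$ rotates the leaf-word by $+1$ or $-1$ and by exactly one position (not zero, not two), since an off-by-one here changes $n(n-1)$ to something wrong. I would handle this by working a small explicit example (say $n=3$ or $n=4$, as in Figure~\ref{fig:inflips}) in the coordinates above before asserting the general formula. Everything else — connectedness via Theorem~\ref{thm:cycle_comp}, $2$-regularity, and the classification of connected $2$-regular graphs as cycles — is routine.
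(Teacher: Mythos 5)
Your first paragraph contains a genuine error: it is not true that all of $\Acyc(\overline{\Star_n})$ collapses to a single double-flip equivalence class, and $\FS(\Cycle_n,\Star_n)$ is \emph{not} connected for $n\ge 4$. In $\overline{\Star_n}\cong K_{n-1}\oplus K_1$, an acyclic orientation is a linear order on the $n-1$ leaves, and the unique source and unique sink of that order are \emph{adjacent} in $K_{n-1}$, so they can never be double-flipped with each other; the only available double-flips pair the isolated center with the minimum or maximum leaf, which cyclically rotates the linear order. Hence the double-flip classes are exactly the cyclic orders of the leaves, of which there are $(n-2)!$, not one, so by Theorem~\ref{thm:cycle_comp} the graph has $(n-2)!$ components (equivalently, Theorem~\ref{thm:cycle_connected} already rules out connectivity for $n\ge 4$, since $\overline{\Star_n}$ contains $K_{n-1}$ and is not a forest). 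Your own argument exposes the problem: if the graph were connected, your $2$-regularity observation would force it to be a single cycle on $n!$ vertices, contradicting the statement you are proving. You noticed this tension mid-proof and quietly restricted to the orbit of $\sigma$, but you never retracted the false connectivity claim, so as written the proposal is internally inconsistent.

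Fortunately the error sits in a step you do not actually need. The rest of the argument is correct and self-contained: every configuration admits exactly two friendly swaps (the center with either of its two $\Cycle_n$-neighbors), so every component is $2$-regular and hence a cycle; the component of $\sigma$ is the orbit of the ``advance the center one step'' operation; and the orbit size is $n(n-1)$ because returning the center needs $n\mid t$ while one lap rotates the word of $n-1$ distinct leaves by one position, which has order exactly $n-1$. This is essentially the paper's proof, which simply writes down the isomorphism $\varphi(i)=$ ``rotate the central vertex rightward $i$ times'' from $\Cycle_{n(n-1)}$ to the component. So the fix is to delete the first paragraph (or replace it with the correct count of $(n-2)!$ double-flip classes, which even yields the component size $n!/(n-2)!=n(n-1)$ directly and pairs nicely with your $2$-regularity argument).
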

\begin{proof}
Consider a component $\mathcal C$ of $\FS(\Cycle_n, \Star_n)$ with permutation $\sigma = \sigma(1)\cdots \sigma(n)$ such that $\sigma(1)$ is the central vertex of $\Star_n$. With $V(\Cycle_{n(n-1)}) = [n(n-1)]$, construct $\varphi: V(\Cycle_{n(n-1)}) \to V(\mathcal C)$ by defining $\varphi(i)$ to be the permutation achieved by starting from $\sigma$ and swapping $\sigma(1)$ rightward $i$ times (e.g., $\varphi(1) = \sigma(2)\sigma(1)\cdots \sigma(n)$). It follows that $\varphi$ is a graph isomorphism.
\end{proof}

We will now construct the graphs $X_L$ and $Y_L$, for every integer $L \geq 1$, that we study to prove Theorem \ref{thm:main_diam_result}. In the following description, assume we have fixed some arbitrary integer $L \geq 1$. 

\subsubsection*{\textcolor{red}{The Graph $X_L$.}}
The graph $X_L$ contains an $L \times 2$ array of cycle subgraphs, with adjacent cycles intersecting in exactly one vertex. Say $X_L$ has $L$ layers, indexed by $\ell \in [L]$; we will subscript subgraphs and vertices corresponding to the ``left column" of $X_L$ by $a$, and those in the right by $b$. As such, we denote the left and right cycle subgraphs in layer $\ell$ by $\mathcal C_a^\ell$ and $\mathcal C_b^\ell$, respectively. Corresponding to each $\mathcal C_a^\ell$ and $\mathcal C_b^\ell$ is a path subgraph of $X_L$ extending out of it; that corresponding to $\mathcal C_a^\ell$ is denoted $\mathcal P_a^\ell$, and similarly $\mathcal P_b^\ell$ for $\mathcal C_b^\ell$. Denote the subgraph of $X_L$ consisting of the $\ell$th layer by $X^\ell$. The subgraph consisting of $\mathcal P_a^\ell$ and $\mathcal C_a^\ell$ is denoted $X_a^\ell$, and similarly $X_b^\ell$ for $P_b^\ell$ and $\mathcal C_b^\ell$. Denote, whenever they are defined for $\ell \in [L]$,
\begin{gather*}
    v_a^\ell = V(\mathcal P_a^\ell) \cap V(\mathcal C_a^\ell), \ v_b^\ell = V(\mathcal P_b^\ell) \cap V(\mathcal C_b^\ell), \ v^\ell = V(\mathcal C_a^\ell) \cap V(\mathcal C_b^\ell), \\
    v_a^{\ell, \ell+1} = V(\mathcal C_a^\ell) \cap V(\mathcal C_a^{\ell+1}), \ v_b^{\ell, \ell+1} = V(\mathcal C_b^\ell) \cap V(\mathcal C_b^{\ell+1}).
\end{gather*}
For each of the following sets, we place three inner vertices in the path in $\mathcal C_a^\ell$ between the two vertices in the set:
\begin{align*}
    \{v_a^\ell, v_a^{\ell, \ell+1}\}, \{v_a^{\ell, \ell+1}, v^\ell\}, \{v^\ell, v_a^{\ell-1, \ell}\}, \{v_a^{\ell-1, \ell}, v_a^\ell\}.
\end{align*}
The analogous statement for $\mathcal C_b^\ell$ holds. The exceptions are layers $1$ and $L$: we place seven inner vertices in the upper path from $v_a^1$ to $v^1$ in $\mathcal C_a^1$ and the upper path from $v_b^1$ to $v^1$ in $\mathcal C_b^1$, and seven inner vertices in the lower path from $v_a^L$ to $v^L$ in $\mathcal C_a^L$ and from $v_b^L$ to $v^L$ in $\mathcal C_b^L$. It follows from our construction that for every $\ell \in [L]$,
\begin{align*}
    |V(\mathcal C_a^\ell)| = |V(\mathcal C_b^\ell)| = 16.
\end{align*}
We will also set,\footnote{It will be important that, for every $\ell \in [L]$, $\mathcal P_a^\ell$ has exactly one more vertex than $\mathcal P_b^\ell$. The choice of the lengths of these paths, as well as the number of inner vertices in the segments of the cycle subgraphs, is not terribly important as long as they are not too small. The values we chose here suffice.} for every $\ell \in [L]$,
\begin{align*}
    |V(\mathcal P_a^\ell)| = 16, |V(\mathcal P_b^\ell)| = 15,
\end{align*}
so that the graph $X_L$ has 
\begin{align*}
    n = 60 + 58(L-1) = 58L+2
\end{align*}
vertices. (Indeed, it can be checked that layer $1$ has $60$ vertices, and for each subsequent layer, we add $58$ new vertices to the graph $Y_L$.) Figure \ref{fig:X_3} illustrates this construction for $L=3$.

\begin{figure}[ht]
\begin{center}
\includegraphics[width=0.5\textwidth]{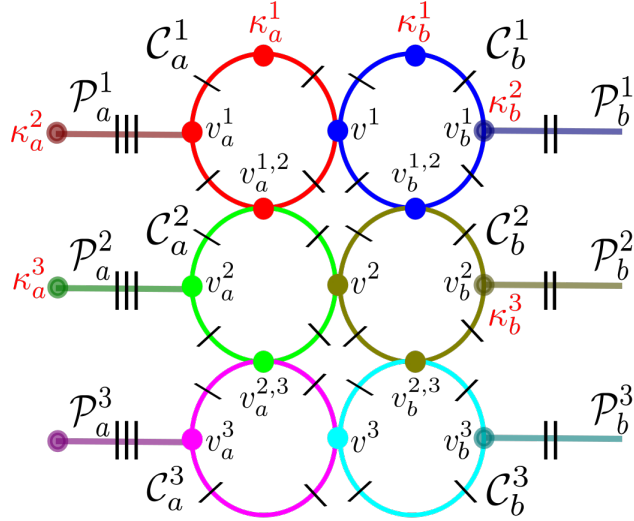} 
\caption{Labeled schematic diagram of the construction for $X_3$. Subgraphs of $X_3$ marked a specific color correspond to the $\sigma_s$-preimages of the vertices of the same color in Figure \ref{fig:Y_3}. We take care in appropriately coloring the vertices between two adjacent cycle subgraphs and between adjacent path and cycle subgraphs. Paths marked with one hatch mark have three inner vertices. The paths $\mathcal P_b^i$ with two hatch marks have $15$ vertices, while paths $\mathcal P_a^i$ with three hatch marks have $16$ vertices.}
\label{fig:X_3}
\end{center}
\end{figure}

\subsubsection*{\textcolor{red}{The Graph $Y_L$.}} 

We construct a complementary graph $Y_L$ for each $X_L$: we assign to each cycle subgraph $\mathcal C_a^\ell$ and $\mathcal C_b^\ell$ of $X_L$ a corresponding ``knob vertex" in $V(Y_L)$, denoted $\kappa_a^\ell$ and $\kappa_b^\ell$, respectively; we set a collection of vertices of $V(Y_L)$ to swap only with each knob. The construction of $Y_L$ proceeds sequentially according to $\ell \in [L]$. Take two disjoint copies of $\Star_{15}$, denoted $\mathcal S_a^1$ and $\mathcal S_b^1$, with central vertices $\kappa_a^1$ and $\kappa_b^1$, respectively, and a complete bipartite graph $\mathcal K^1$ with $15$ vertices in each of its partite sets $\mathcal K_a^1$ and $\mathcal K_b^1$. Set $\kappa_a^1$ and $\kappa_b^1$ adjacent to all the vertices in $V(\mathcal K^1)$. If $L=1$, this completes the construction of $Y_L$. If $L > 1$, take one vertex each in $\mathcal K_a^1$ and $\mathcal K_b^1$, which shall correspond to $\kappa_a^2$ and $\kappa_b^2$, central vertices of star subgraphs (again both isomorphic to $\Star_{15}$) $\mathcal S_a^2$ and $\mathcal S_b^2$, respectively, and also construct complete bipartite graph $\mathcal K^2$ with $15$ vertices in each of its partite sets $\mathcal K_a^2$ and $\mathcal K_b^2$. Set $\kappa_a^2$ and $\kappa_b^2$ adjacent to all the vertices in $V(\mathcal K^2)$. Proceed similarly: for $2 \leq \ell \leq L$, take two vertices of $\mathcal K^{\ell-1}$ in opposite partite sets and construct $\mathcal S_a^\ell$, $\mathcal S_b^\ell$, and $\mathcal K^\ell$, related as before, until all $n = 58L+2$ vertices are exhausted. We shall often refer to vertices $\kappa_a^\ell$ and $\kappa_b^\ell$ as \textit{\textcolor{red}{knob vertices}} of $Y_L$. Figure \ref{fig:Y_3} illustrates this construction for $L = 3$, while Figure \ref{fig:Y_3_text} provides a ``collapsed" view of our construction. 

\begin{figure}[ht]
\begin{center}
\includegraphics[width=0.5\textwidth]{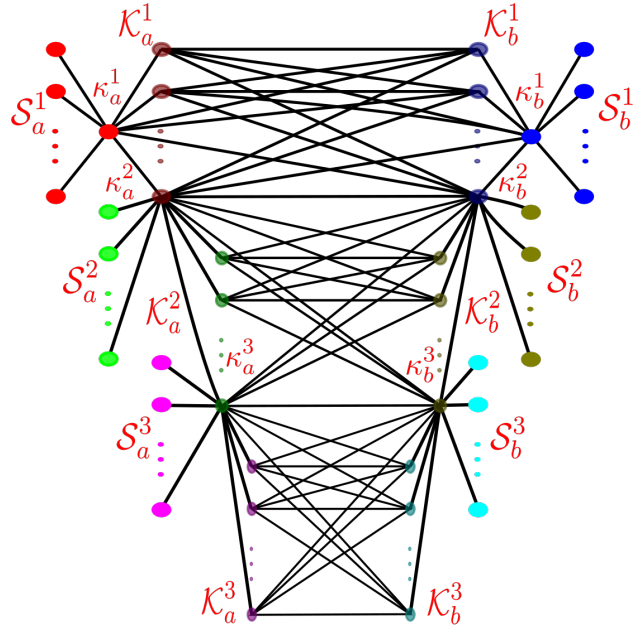} 
\caption{Labeled schematic diagram of the construction for $Y_3$. The vertices of $Y_3$ marked with a particular color correspond to the $\sigma_s$-images of the vertices of the same color in Figure \ref{fig:X_3}.}
\label{fig:Y_3}
\end{center}
\end{figure}

\subsubsection*{\textcolor{red}{The Starting Configuration $\sigma_s$ and its Connected Component $\mathscr{C}$.}} 

Take an arbitrary $L \geq 1$ and graphs $X_L$, $Y_L$. We are now going to describe a specific starting configuration $\sigma_s(X_L, Y_L) \in V(\FS(X_L,Y_L))$ which lies in the connected component $\mathscr{C}(X_L, Y_L)$ of $\FS(X_L,Y_L)$; we will later show that there exists a different configuration in $\mathscr{C}(X_L, Y_L)$ whose distance from $\sigma_s(X_L, Y_L)$ is $e^{\Omega(n)}$. Henceforth, we abbreviate $\sigma_s(X_L, Y_L)$ and $\mathscr{C}(X_L, Y_L)$ to $\sigma_s$ and $\mathscr{C}$. In forthcoming discussions, $X_L$ and $Y_L$ will be understood to be arbitrary such graphs on the same number of vertices.

Take all $15$ vertices in $V(\mathcal K_a^1)$ and place them onto $V(\mathcal P_a^1) \setminus \{v_a^1\}$, and the $15$ vertices in $V(\mathcal K_b^1)$ onto $V(\mathcal P_b^1)$; if $L > 1$, we place $\kappa_a^2$ onto the leftmost vertex of $V(\mathcal P_a^1)$ and $\kappa_b^2$ onto $v_b^1$. Now take subgraph $\mathcal S_a^1$ of $Y_L$: place $\kappa_a^1$ onto the middle vertex of the upper path between $v_a^1$ and $v^1$ (which has seven vertices), and place all $14$ leaves of $\mathcal S_a^1$ onto the remaining $14$ vertices of $V(\mathcal C_a^1) \setminus \{v^1\}$ in some way. Similarly, take $\mathcal S_b^1$: place $\kappa_b^1$ onto the middle vertex of the upper path between $v^1$ and $v_b^1$, and place all $14$ leaves of $\mathcal S_b^1$ onto the remaining $14$ vertices of $V(\mathcal C_b^1)$. This has filled all mappings on the subgraph $V(X^1)$ of $X_L$ by vertices in $V(\mathcal K^1)$, $V(\mathcal S_a^1)$, and $V(\mathcal S_b^1)$, and thus yields $\sigma_s$ if $L=1$.

Proceed sequentially according to the layer $\ell \in [L]$: say we placed all vertices of $V(\mathcal K^i)$, $V(\mathcal S_a^i)$, and $V(\mathcal S_b^i)$ for $i < \ell$ onto the corresponding $V(X^i)$ of $X_L$. Place all $15$ vertices in $V(\mathcal K_a^\ell)$ onto $V(\mathcal P_a^\ell) \setminus \{v_a^\ell\}$, and the $15$ vertices in $V(\mathcal K_b^\ell)$ onto $V(\mathcal P_b^\ell)$; if $L > \ell$, place $\kappa_a^{\ell+1}$ onto the leftmost vertex of $V(\mathcal P_a^\ell)$ and $\kappa_b^{\ell+1}$ onto $v_b^\ell$. Now take $\mathcal S_a^\ell$, and place its $14$ leaves onto the remaining $14$ vertices in $V(\mathcal C_a^\ell) \setminus \{v^\ell\}$. Similarly take $\mathcal S_b^\ell$, and place its $14$ leaves onto the $14$ remaining vertices in $V(\mathcal C_b^\ell)$. An illustration of this starting configuration is given in Figures \ref{fig:X_3} and \ref{fig:Y_3}: the vertices of a particular color in Figure \ref{fig:Y_3} are placed upon the correspondingly colored subgraph in Figure \ref{fig:X_3} to achieve $\sigma_s \in V(\FS(X_L, Y_L))$.

\begin{figure}
    \centering
    \includegraphics[width=0.7\textwidth]{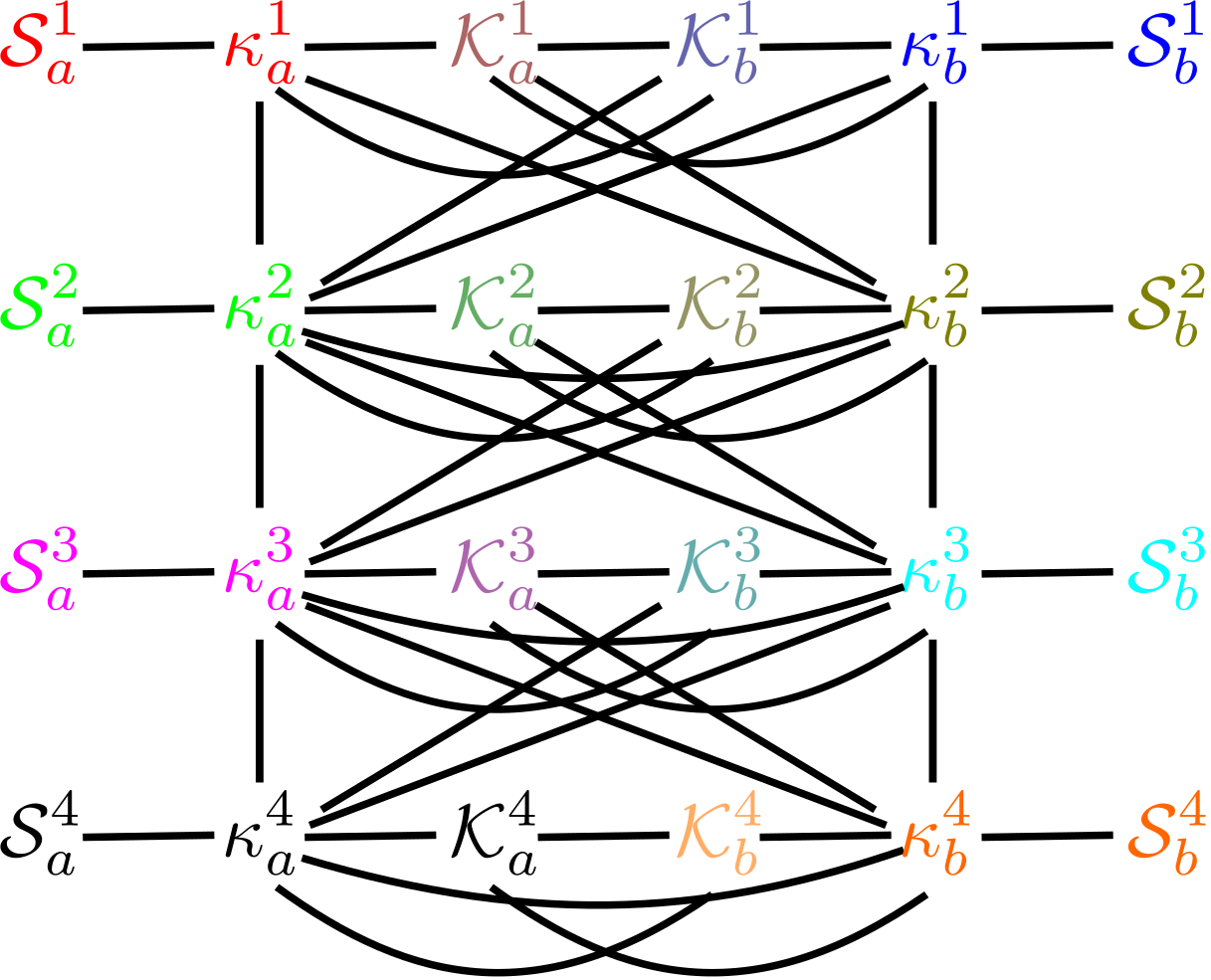}
    \caption{A simplified schematic diagram of $Y_4$ to illustrate the neighborhoods of different kinds of vertices. Here, all subgraphs $\mathcal S_a^\ell$, $\mathcal S_b^\ell$, $\mathcal K_a^\ell$, and $\mathcal K_b^\ell$ are to be understood as excluding any knob vertices.}
    \label{fig:Y_3_text}
\end{figure}

\begin{remark} \label{rule_of_two_remark}
By the construction of $\sigma_s \in V(\FS(X_L, Y_L))$, for any $\ell \in [L]$, 
\begin{align*}
    & V(\mathcal S_a^\ell) \setminus \{\kappa_a^\ell\} \subset \sigma_s(V(\mathcal C_a^\ell)),
    & V(\mathcal S_b^\ell) \setminus \{\kappa_b^\ell\} \subset \sigma_s(V(\mathcal C_b^\ell)).
\end{align*}
As such, all leaves of a star subgraph $\mathcal S_a^\ell$ or $\mathcal S_b^\ell$ of $Y_L$ are placed onto a corresponding cycle subgraph $\mathcal C_a^\ell$ or $\mathcal C_b^\ell$ of $X_L$, respectively. This yields that, for any $\ell \in [L]$,
\begin{align*}
    |\sigma_s(V(\mathcal C_a^\ell)) \setminus (V(\mathcal S_a^\ell) \setminus \{\kappa_a^\ell\})| = |\sigma_s(V(\mathcal C_b^\ell)) \setminus (V(\mathcal S_b^\ell) \setminus \{\kappa_b^\ell\})| = 2.
\end{align*}
In other words, the number of vertices upon any cycle subgraph $\mathcal C_a^\ell$ or $\mathcal C_b^\ell$ of $X_L$ which are not leaves of the corresponding star subgraph of $Y_L$, under $\sigma_s$, is exactly two.
\end{remark}

We introduce the following definition for notational convenience in forthcoming arguments.

\begin{definition} \label{defn:boundary}
    Fix $\ell \in [L]$.
    \begin{itemize}
        \item The \textit{\textcolor{red}{boundary}} $\bd(\mathcal C_a^\ell)$ of $\mathcal C_a^\ell$ is the subset of $\{v_a^\ell, v_a^{\ell-1,\ell}, v_a^{\ell, \ell+1}, v^\ell\}$ defined for $\ell$.
        \item The \textit{\textcolor{red}{boundary}} $\bd(\mathcal C_b^\ell)$ of $\mathcal C_b^\ell$ is the subset of $\{v_b^\ell, v_b^{\ell-1,\ell}, v_b^{\ell, \ell+1}, v^\ell\}$ defined for $\ell$.
    \end{itemize}
\end{definition}

In Subsections \ref{subsec:configurations_in_c} and \ref{subsec:extractions}, unless otherwise stated, we fix an arbitrary integer $L \geq 1$ and refer to the graphs $X_L$ and $Y_L$, with $\sigma_s$ denoting the corresponding starting configuration. We elect to refer to paths in $\FS(X_L, Y_L)$ as \textit{\textcolor{red}{swap sequences}}, which are denoted by the vertices and edges in $\FS(X_L, Y_L)$ that constitute the path. More specifically, a swap sequence of length $\lambda$ is a sequence of vertices $\Sigma = \{\sigma_i\}_{i=0}^\lambda \subseteq V(\FS(X_L, Y_L))$ for which $\{\sigma_{i-1}, \sigma_{i}\} \in E(\FS(X_L, Y_L))$ for all $i \in [\lambda]$.

\subsection{Configurations in \texorpdfstring{$\mathscr{C}$}{C}} \label{subsec:configurations_in_c}

In this subsection, we derive properties satisfied by all vertices in $\mathscr{C}$. Intuitively, our aim in this subsection is to uncover many conditions satisfied by all of the vertices in $\mathscr{C}$, which has the effect of producing strong rigidities on the corresponding swapping problem. These rigidities will allow us to argue in Subsection \ref{subsec:extractions} that in order to move certain vertices in $Y_L$ down and across the graph $X_L$, we necessarily must perform very specific sequences of swaps.

Remark \ref{rule_of_two_remark} observes that in the starting configuration $\sigma_s$, the leaves of any star graph $\mathcal S_a^\ell$ or $\mathcal S_b^\ell$ lie upon the vertices of $\mathcal C_a^\ell$ and $\mathcal C_b^\ell$, respectively. In particular, for any cycle subgraph $\mathcal C_a^\ell$ in $X_L$, exactly two vertices that are not leaves of $\mathcal S_a^\ell$ lie upon them; an analogous statement holds for cycle subgraphs of the form $\mathcal C_b^\ell$. We begin our study of $\mathscr{C}$ by establishing that this property is maintained after any sequence of swaps in $\FS(X_L, Y_L)$ beginning at $\sigma_s$, i.e., that all vertices in $\mathscr{C}$ satisfy this property: we prove this in Proposition \ref{prop:rule_of_two}.

\begin{proposition} \label{prop:rule_of_two}
Any $\sigma \in V(\mathscr{C})$ satisfies, for all $\ell \in [L]$,
\begin{align*}
    V(\mathcal S_a^\ell) \setminus \{\kappa_a^\ell\} \subset \sigma(V(\mathcal C_a^\ell)) \text{ and } V(\mathcal S_b^\ell) \setminus \{\kappa_b^\ell\} \subset \sigma(V(\mathcal C_b^\ell)).
\end{align*}
\end{proposition}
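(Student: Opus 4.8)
The plan is to argue by contradiction, considering a shortest swap sequence $\Sigma = \{\sigma_i\}_{i=0}^\lambda$ from $\sigma_s$ to a configuration $\sigma = \sigma_\lambda$ that violates the claimed property, and to show that the first violation is impossible. By Remark \ref{rule_of_two_remark}, $\sigma_s$ satisfies the property, so there is a smallest index $m \in [\lambda]$ at which it fails; the swap from $\sigma_{m-1}$ to $\sigma_m$ is across some edge $\{x, x'\} \in E(X_L)$, and $\sigma_{m-1}$ still satisfies the property while $\sigma_m$ does not. The key point is to understand how the property ``$V(\mathcal S_a^\ell) \setminus \{\kappa_a^\ell\} \subset \sigma(V(\mathcal C_a^\ell))$ for all $\ell$'' (and the $b$-analogue) can be broken by a single friendly swap.

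\textbf{Key steps.} First I would observe that a single swap across $\{x,x'\}$ can only change which cycle subgraph a vertex $y \in V(Y_L)$ sits on if $x$ and $x'$ belong to different cycle subgraphs, which (by the structure of $X_L$) forces $\{x,x'\}$ to be incident to one of the shared vertices $v^\ell$, $v_a^{\ell,\ell+1}$, $v_b^{\ell,\ell+1}$, $v_a^\ell$, or $v_b^\ell$ — i.e., one of $x, x'$ lies in the boundary $\bd(\mathcal C_a^\ell)$ or $\bd(\mathcal C_b^\ell)$ of some cycle, and the swap moves a vertex off of a cycle subgraph onto an adjacent path or cycle. Second, for the violated inclusion to be ``$V(\mathcal S_a^\ell) \setminus \{\kappa_a^\ell\}$'' leaving $\sigma(V(\mathcal C_a^\ell))$, we would need some leaf $y$ of $\mathcal S_a^\ell$ to be swapped off $\mathcal C_a^\ell$ across such a boundary edge. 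But here I would use the structure of $Y_L$: a leaf $y$ of $\mathcal S_a^\ell$ has $N_{Y_L}(y) = \{\kappa_a^\ell\}$, so $y$ can only be swapped with a vertex of $X_L$ currently occupied by $\kappa_a^\ell$. Thus the swap that moves $y$ off $\mathcal C_a^\ell$ must place $\kappa_a^\ell$ onto $\mathcal C_a^\ell$ in exchange. Now a counting argument kicks in: by Remark \ref{rule_of_two_remark} / the fact that $\sigma_{m-1}$ satisfies the property, exactly $|V(\mathcal C_a^\ell)| - 14 = 2$ vertices on $\mathcal C_a^\ell$ under $\sigma_{m-1}$ are non-leaves of $\mathcal S_a^\ell$; after moving $y$ (a leaf) off and $\kappa_a^\ell$ (a non-leaf — indeed $\kappa_a^\ell \notin V(\mathcal S_a^\ell)\setminus\{\kappa_a^\ell\}$, wait, $\kappa_a^\ell$ is the center, so it is a non-leaf) on, the leaf $y$ is now not on $\mathcal C_a^\ell$. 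The essential contradiction I want is that $\kappa_a^\ell$ cannot simultaneously be located on $\mathcal C_a^\ell$ and be positioned to swap with the leaf $y$ sitting on a boundary vertex of $\mathcal C_a^\ell$ and move it off — more carefully, the only way $\kappa_a^\ell$ can occupy a boundary vertex of $\mathcal C_a^\ell$ is itself governed by $N_{Y_L}(\kappa_a^\ell)$, which consists of the $14$ leaves of $\mathcal S_a^\ell$ plus the $15$ vertices of $V(\mathcal K^\ell) = V(\mathcal K_a^\ell) \cup V(\mathcal K_b^\ell)$. I would trace through: for $\kappa_a^\ell$ to have arrived at a position adjacent (in $X_L$) to where $y$ sits, and for the swap to move $y$ strictly off $\mathcal C_a^\ell$, one needs $y$ sitting on the boundary vertex and $\kappa_a^\ell$ on the adjacent path/other-cycle vertex; but then $\kappa_a^\ell \notin \sigma_{m-1}(V(\mathcal C_a^\ell))$, and the swap only moves $\kappa_a^\ell$ onto $\mathcal C_a^\ell$ while moving $y$ off, keeping the count of non-leaves on $\mathcal C_a^\ell$ at $2$ but now with $y$ absent — so actually the property for $\ell$ is freshly violated precisely at this step, which is what we need to rule out. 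The way to rule it out is to check that this scenario can never be reached: since $\sigma_{m-1} \in V(\mathscr{C})$ satisfies the property, $\kappa_a^\ell$ sitting off $\mathcal C_a^\ell$ means all $14$ leaves of $\mathcal S_a^\ell$ are on $\mathcal C_a^\ell$; the two remaining vertices on the $16$-vertex cycle $\mathcal C_a^\ell$ are non-leaves, and I claim neither of them can be $y$ in a position from which a swap with $\kappa_a^\ell$ removes $y$. Here I'd invoke that $y$ is a leaf, hence IS on $\mathcal C_a^\ell$ by hypothesis on $\sigma_{m-1}$, so if the swap moves $y$ off $\mathcal C_a^\ell$ then after the swap there are only $13$ leaves of $\mathcal S_a^\ell$ on $\mathcal C_a^\ell$ — but then $\kappa_a^\ell$ took one of the $16$ spots and $2$ other non-leaves remain, accounting for $13 + 1 + 2 = 16$; this is the violated configuration. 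The contradiction must therefore come from showing the swap across that boundary edge cannot have been a legal $(X_L, Y_L)$-friendly swap in the first place, OR that $\sigma_{m-1}$ couldn't have had $\kappa_a^\ell$ so positioned — and the cleanest route is to prove the stronger statement simultaneously by induction that knob vertices stay within their designated region, then conclude.

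\textbf{Main obstacle.} The real work, and the step I expect to be delicate, is pinning down exactly which edges of $X_L$ can move a leaf of $\mathcal S_a^\ell$ off $\mathcal C_a^\ell$ and showing each is blocked. This requires carefully combining two facts: (i) leaves of $\mathcal S_a^\ell$ have a single neighbor $\kappa_a^\ell$ in $Y_L$, so the only vertex they can be swapped with is $\kappa_a^\ell$; and (ii) $\kappa_a^\ell$'s own neighborhood in $Y_L$ is $\{$leaves of $\mathcal S_a^\ell\} \cup V(\mathcal K^\ell)$, which constrains where $\kappa_a^\ell$ can travel — in particular, to leave $\mathcal C_a^\ell$ it must swap with a vertex of $V(\mathcal K^\ell)$, and those live (under $\sigma_{m-1}$) on the path subgraphs $\mathcal P_a^\ell, \mathcal P_b^\ell$ or deeper layers. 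The likely resolution is that the inductive hypothesis should be strengthened to simultaneously track the locations of the knob vertices $\kappa_a^\ell, \kappa_b^\ell$ (e.g., asserting each knob stays off the opposite column's cycles, or within a prescribed union of subgraphs), so that when I reach step $m$ I have enough control to see that the purported swap is illegal or the purported prior configuration is unreachable. I would set up this combined invariant, verify it holds for $\sigma_s$ directly from the description of the starting configuration, and then do the one-swap case analysis over the finitely many edge-types of $X_L$ (boundary edges at $v^\ell$, at $v_a^{\ell,\ell+1}$, at $v_a^\ell$, and the path-to-cycle junctions), showing in each case that the invariant is preserved; the rule-of-two (count $=2$) conclusion then drops out as a corollary of the knob-location part of the invariant.
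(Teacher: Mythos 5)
There is a genuine gap at the decisive step. Your setup matches the paper's: take a shortest swap sequence from $\sigma_s$ whose last configuration is the first violation, note that a leaf $\mu$ of $\mathcal S_a^\ell$ has $N_{Y_L}(\mu) = \{\kappa_a^\ell\}$, so the violating swap must exchange $\mu$ (sitting on a boundary vertex of $\mathcal C_a^\ell$) with $\kappa_a^\ell$ (sitting just off the cycle). But you then concede that this scenario is consistent with the rule of two at $\sigma_{m-1}$, and you propose to escape by proving ``simultaneously by induction that knob vertices stay within their designated region.'' That invariant is never formulated, and in its natural form it is false or unusable: knobs do legitimately leave their cycles (e.g.\ $\kappa_a^\ell$ can swap with elements of $V(\mathcal K^\ell)$ and walk into $\mathcal P_a^\ell$; this is exactly how the intended dynamics operate), and the paper's actual statement controlling knob locations (Proposition \ref{prop:layer_independence}) is proved \emph{after} Proposition \ref{prop:rule_of_two} and its multi-page proof repeatedly invokes the rule of two, so building it into the induction here is not a small strengthening but essentially the hardest part of Section \ref{sec:large_diameter}, left undone.

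The paper closes the two dangerous cases without any extra invariant, using only minimality and a short backward trace, and your plan contains neither argument. If $\mu$ sits at $v_a^\ell$ with $\kappa_a^\ell$ on the adjacent path vertex, one looks back to the last time $\kappa_a^\ell$ was outside $V(\mathcal P_a^\ell)\setminus\{v_a^\ell\}$: it must have entered through $v_a^\ell$, and from then on $\mu$ is frozen (its only friend is in the path interior, and there are no $X_L$-edges between $V(\mathcal C_a^\ell)\setminus\{v_a^\ell\}$ and $V(\mathcal P_a^\ell)\setminus\{v_a^\ell\}$), so $\mu$ cannot be at $v_a^\ell$ at step $\lambda-1$ --- contradiction. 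If instead $\mu$ sits at $v^\ell$ (or $v_a^{\ell-1,\ell}$, $v_a^{\ell,\ell+1}$) with $\kappa_a^\ell$ on the adjacent cycle, minimality forces the swap into $\sigma_{\lambda-1}$ to have moved $\kappa_a^\ell$ (not $\mu$), and then $\sigma_{\lambda-2}$ already has three vertices on $\mathcal C_b^\ell$ that are not leaves of $\mathcal S_b^\ell$ (namely $\mu$, $\kappa_a^\ell$, and the vertex $\kappa_a^\ell$ displaced), violating the rule of two one step earlier and contradicting minimality. Without these (or equivalent) arguments, your proposal identifies the right configuration to rule out but does not rule it out.
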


As in Remark \ref{rule_of_two_remark}, this means that for any cycle subgraph $\mathcal C_a^\ell$ or $\mathcal C_b^\ell$ in $X_L$ and $\sigma \in V(\mathscr{C})$,
\begin{align*}
    |\sigma(V(\mathcal C_a^\ell)) \setminus (V(\mathcal S_a^\ell) \setminus \{\kappa_a^\ell\})| = |\sigma(V(\mathcal C_b^\ell)) \setminus (V(\mathcal S_b^\ell) \setminus \{\kappa_b^\ell\})| = 2,
\end{align*}
since $|V(\mathcal S_a^\ell) \setminus \{\kappa_a^\ell\}| = |V(\mathcal S_b^\ell) \setminus \{\kappa_b^\ell\}| = 14$, and $|V(\mathcal C_a^\ell)| = |V(\mathcal C_b^\ell)| = 16$ for all $\ell \in [L]$. 

\begin{remark}
    Although Proposition \ref{prop:rule_of_two} describes a global property maintained by all configurations in $\mathscr{C}$, we frequently appeal to it (for sake of brevity) as a local property satisfied by specific configurations in $\mathscr{C}$ during the proof of Proposition \ref{prop:rule_of_two}.\footnote{Making this clarification is important, as the proof proceeds by assuming (towards a contradiction) that Proposition \ref{prop:rule_of_two} is satisfied by particular configurations in $\mathscr{C}$ and is violated by another.} This practice of localizing a more global statement to a particular configuration will also be utilized for other results in later proofs in this section, and it should not raise any ambiguity whenever it is invoked.
\end{remark}

\begin{proof}[Proof of Proposition \ref{prop:rule_of_two}]
Assume (towards a contradiction) that the proposition is false, so there exists a swap sequence $\Sigma = \{\sigma_i\}_{i=0}^\lambda$ with $\sigma_0 = \sigma_s$ in $\mathscr{C}$ of shortest length $\lambda$ containing a vertex violating Proposition \ref{prop:rule_of_two}: $\sigma_\lambda$ violates Proposition \ref{prop:rule_of_two}, while all $\sigma_i$ for $i < \lambda$ satisfy it, and $\lambda \geq 1$. Thus, there exists a star subgraph $\mathcal S$ (of form $\mathcal S_a^\ell$ or $\mathcal S_b^\ell$) of $Y_L$ and a leaf $\mu \in V(\mathcal S)$ such that $\sigma_{\lambda-1}^{-1}(\mu)$ is in the appropriate cycle subgraph, but $\sigma_\lambda^{-1}(\mu)$ is not. Say $\mathcal S = \mathcal S_a^\ell$ for $\ell \in [L]$: raising a contradiction when $\mathcal S = \mathcal S_b^\ell$ is entirely analogous. Here, $\mu \in V(\mathcal S_a^\ell) \setminus \{\kappa_a^\ell\}$ has $N_{Y_L}(\mu) = \{\kappa_a^\ell\}$ and $\sigma_{\lambda-1}^{-1}(\mu) \in V(\mathcal C_a^\ell)$, $\sigma_\lambda^{-1}(\mu) \notin V(\mathcal C_a^\ell)$, so $\sigma_{\lambda-1}^{-1}(\mu) \in \bd(\mathcal C_a^\ell)$ and $\sigma_\lambda$ is reached from $\sigma_{\lambda-1}$ by swapping $\mu$ and $\kappa_a^\ell$. Figure \ref{fig:rule_of_two} depicts the configurations described in the following two cases.

\subsubsection*{\textbf{Case 1: $\sigma_{\lambda-1}^{-1}(\mu) = v_a^\ell$.}} Here, $\sigma_{\lambda-1}^{-1}(\kappa_a^\ell) \in N_{X_L}(v_a^\ell) \cap V(\mathcal P_a^\ell)$. Let $\xi < \lambda-1$ be the final such index with $\sigma_\xi^{-1}(\kappa_a^\ell) \notin V(\mathcal P_a^\ell) \setminus \{v_a^\ell\}$; $\xi$ is well-defined since 
\begin{align*}
    \sigma_s^{-1}(\kappa_a^\ell) \notin V(\mathcal P_a^\ell) \setminus \{v_a^\ell\},
\end{align*}
which implies $\sigma_s \neq \sigma_{\lambda-1}$, so $\lambda \geq 2$. By the definition of $\xi$ and $\sigma_{\lambda-1}^{-1}(\kappa_a^\ell) \in N_{X_L}(v_a^\ell) \cap V(\mathcal P_a^\ell)$,
\begin{align} \label{eq:rule_of_two_eq_1}
    \sigma_j^{-1}(\kappa_a^\ell) \in V(\mathcal P_a^\ell) \setminus \{v_a^\ell\} \text{ for } \xi + 1 \leq j \leq \lambda-1.
\end{align}
Necessarily, $\sigma_\xi^{-1}(\kappa_a^\ell) = v_a^\ell$ and $\sigma_{\xi+1}^{-1}(\kappa_a^\ell) \in N_{X_L}(v_a^\ell) \cap V(\mathcal P_a^\ell)$, so 
\begin{align*}
    \sigma_\xi^{-1}(\mu) = \sigma_{\xi+1}^{-1}(\mu) \in V(\mathcal C_a^\ell) \setminus \{v_a^\ell\};
\end{align*}
note that $\sigma_\xi$ satisfies Proposition \ref{prop:rule_of_two}. Since $N_{Y_L}(\mu) = \{\kappa_a^\ell\}$ and there are no edges between $V(\mathcal C_a^\ell) \setminus \{v_a^\ell\}$ and $V(\mathcal P_a^\ell) \setminus \{v_a^\ell\}$, it follows from \eqref{eq:rule_of_two_eq_1} that $\sigma_j^{-1}(\mu)$ is fixed for $\xi \leq j \leq \lambda-1$, so
\begin{align*}
    \sigma_\xi^{-1}(\mu) = \sigma_{\lambda-1}^{-1}(\mu) \in V(\mathcal C_a^\ell) \setminus \{v_a^\ell\},
\end{align*}
contradicting $\sigma_{\lambda-1}^{-1}(\mu) = v_a^\ell$.

\subsubsection*{\textbf{Case 2: $\sigma_{\lambda-1}^{-1}(\mu) \neq v_a^\ell$.}} Here, $\sigma_{\lambda-1}^{-1}(\mu) \in \bd(\mathcal C_a^\ell) \setminus \{v_a^\ell\}$, and
\begin{align*}
    \sigma_{\lambda-1}^{-1}(\kappa_a^\ell) = \sigma_\lambda^{-1}(\mu) \in N_{X_L}(\sigma_{\lambda-1}^{-1}(\mu)) \setminus V(\mathcal C_a^\ell).
\end{align*}
Proceeding backwards in $\Sigma$, it must be that either 
\begin{align*}
    \sigma_{\lambda-2}^{-1}(\mu) \neq \sigma_{\lambda-1}^{-1}(\mu) \text{ or } \sigma_{\lambda-2}^{-1}(\kappa_a^\ell) \neq \sigma_{\lambda-1}^{-1}(\kappa_a^\ell);
\end{align*}
note that $\lambda \geq 2$, since $\sigma_s^{-1}(\kappa_a^\ell) \neq \sigma_{\lambda-1}^{-1}(\kappa_a^\ell)$. Indeed, if not, swapping $\mu$ and $\kappa_a^\ell$ directly from $\sigma_{\lambda-2}$ raises a contradiction on $\lambda$ being minimal. Now, $N_{Y_L}(\mu) = \{\kappa_a^\ell\}$ implies 
\begin{align*}
    \sigma_{\lambda-2}^{-1}(\kappa_a^\ell) \neq \sigma_{\lambda-1}^{-1}(\kappa_a^\ell) \text{ and } \sigma_{\lambda-2}^{-1}(\mu) = \sigma_{\lambda-1}^{-1}(\mu),
\end{align*}
since if both preimages differ, $\sigma_{\lambda-2} = \sigma_\lambda$. Thus, $\sigma_{\lambda-2}^{-1}(\kappa_a^\ell) \notin V(\mathcal C_a^\ell)$ and 
\begin{align*}
    \sigma_{\lambda-2}(\sigma_{\lambda-1}^{-1}(\kappa_a^\ell)) \notin V(\mathcal S_a^\ell) \setminus \{\kappa_a^\ell\}
\end{align*}
by Proposition \ref{prop:rule_of_two} (on $\sigma_{\lambda-2}$), so $\sigma_{\lambda-2}(\sigma_{\lambda-1}^{-1}(\kappa_a^\ell))$ is not a leaf (see $N_{Y_L}(\kappa_a^\ell)$). We further assume that
\begin{align*}
    \sigma_{\lambda-2}^{-1}(\mu) = \sigma_{\lambda-1}^{-1}(\mu) = v^\ell;
\end{align*}
raising a contradiction for the cases $v_a^{\ell-1, \ell}$ and $v_a^{\ell, \ell+1}$ can be done analogously. So 
\begin{align*}
    \sigma_{\lambda-2}^{-1}(\{\kappa_a^\ell,\mu\}) \subset V(\mathcal C_b^\ell) \text{ and } \sigma_{\lambda-2}(\sigma_{\lambda-1}^{-1}(\kappa_a^\ell)) \in V(\mathcal C_b^\ell).
\end{align*}
Altogether, we have that
\begin{align*}
    |\sigma_{\lambda-2}(V(\mathcal C_b^\ell)) \setminus (V(\mathcal S_b^\ell) \setminus \{\kappa_b^\ell\})| \geq 3, 
\end{align*}
and since $|V(\mathcal C_b^\ell)| = 16$ and $|V(\mathcal S_b^\ell) \setminus \{\kappa_b^\ell\}| = 14$, $V(\mathcal S_b^\ell) \setminus \{\kappa_b^\ell\} \not\subset \sigma_{\lambda-2}(V(\mathcal C_b^\ell))$. Thus, $\sigma_{\lambda-2}$ violates Proposition \ref{prop:rule_of_two}, contradicting $\lambda$ being minimal.
\begin{figure}[ht]
  \centering
  \subfloat[Case $1$. After $\sigma_\xi$, $\kappa_a^\ell$ does not exit $V(\mathcal P_a^\ell) \setminus \{v_a^\ell\}$, leading to a contradiction on the placement of $\mu$ in $\sigma_{\lambda-1}$.]{\includegraphics[width=0.42\textwidth]{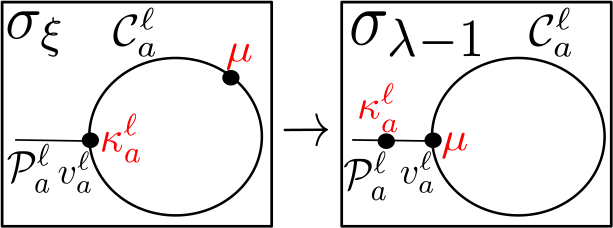}\label{fig:rule_of_two_1}}
  \hfill
  \subfloat[Case $2$. Here, $\sigma_{\lambda-1}$ results by swapping $\kappa_a^\ell$ along $\mathcal C_b^\ell$, so that $\sigma_{\lambda-2}$ violates Proposition \ref{prop:rule_of_two} on $\mathcal C_b^\ell$.]{\includegraphics[width=0.53\textwidth]{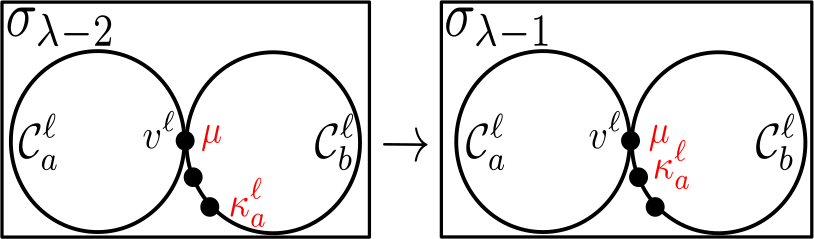}\label{fig:rule_of_two_2}}
  \caption{Configurations in $\Sigma$ raising a contradiction for both cases in the proof of Proposition \ref{prop:rule_of_two}.}
  \label{fig:rule_of_two}
\end{figure}
\end{proof}

Proposition \ref{prop:rule_of_two} restricts the preimages of the leaves of $\mathcal S_a^\ell$ and $\mathcal S_b^\ell$ under any $\sigma \in V(\mathscr{C})$. We now derive a restriction on the preimages of all other vertices in $V(Y_L)$ under any $\sigma \in V(\mathscr{C})$. As Proposition \ref{prop:layer_independence} formalizes, for such $\sigma$, any vertex in $V(Y_L)$ is close to its preimage in $\sigma_s$. 

\begin{restatable}{proposition} {layerIndependence}\label{prop:layer_independence}
Any configuration $\sigma \in V(\mathscr{C})$ must satisfy the following four properties.
\begin{enumerate}
    \item The layer $1$ knob vertices lie upon the corresponding subgraph of $X^1$, i.e.,
    \begin{align*}
        \sigma^{-1}(\kappa_a^1) \in V(X_a^1) \text{ and } \sigma^{-1}(\kappa_b^1) \in V(X_b^1).
    \end{align*}
    \item For $2 \leq \ell \leq L$, the layer $\ell$ knob vertices lie upon the subgraphs $X^{\ell-1}$ or $X^\ell$, i.e.,
    \begin{align*}
        \{\sigma^{-1}(\kappa_a^\ell), \sigma^{-1}(\kappa_b^\ell)\} \subset V(X^{\ell-1}) \cup V(X^\ell).
    \end{align*}
    \item For $\ell \in [L-1]$, any vertex in $V(\mathcal K^\ell)$ that is not a layer $\ell+1$ knob lies upon $X^\ell$, i.e.,
    \begin{align*}
        \sigma^{-1}(V(\mathcal K^\ell) \setminus \{\kappa_a^{\ell+1}, \kappa_b^{\ell+1}\}) \subset V(X^{\ell}),
    \end{align*}
    and every vertex in $V(\mathcal K^L)$ lies upon $X^L$, i.e., 
    \begin{align*}
        \sigma^{-1}(V(\mathcal K^L)) \subset V(X^L).
    \end{align*}
    \item For $\ell \in [L]$, there is at most one $\mu \in V(\mathcal K^\ell)$ not in $V(\mathcal P_a^\ell) \cup V(\mathcal P_b^\ell)$, i.e.,
    \begin{align*}
        |\sigma^{-1}(V(\mathcal K^\ell)) \setminus (V(\mathcal P_a^\ell) \cup V(\mathcal P_b^\ell))| \leq 1.
    \end{align*}
\end{enumerate}
\end{restatable}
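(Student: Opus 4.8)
The plan is to establish all four items simultaneously, by the minimal-swap-sequence method already used to prove Proposition~\ref{prop:rule_of_two}. One first checks directly from the construction of $\sigma_s$ that $\sigma_s$ satisfies (1)--(4). Suppose the conclusion fails, and let $\Sigma=\{\sigma_i\}_{i=0}^{\lambda}$ be a shortest swap sequence with $\sigma_0=\sigma_s$ whose terminal vertex $\sigma_\lambda$ violates at least one of (1)--(4); then $\lambda\ge1$, the configurations $\sigma_0,\dots,\sigma_{\lambda-1}$ all satisfy (1)--(4), and every configuration in $\Sigma$ satisfies Proposition~\ref{prop:rule_of_two}. Write the last swap as taking place across the edge $\{p,q\}\in E(X_L)$, interchanging $\mu:=\sigma_{\lambda-1}(p)$ and $\nu:=\sigma_{\lambda-1}(q)$ (so $\{\mu,\nu\}\in E(Y_L)$, $\sigma_\lambda(p)=\nu$, $\sigma_\lambda(q)=\mu$, and $\sigma_\lambda$ agrees with $\sigma_{\lambda-1}$ off $\{p,q\}$). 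Since only $\mu$ and $\nu$ change position, at least one of them -- say $\mu$ -- moves from $p$ to $q$ with $q$ lying in a region forbidden to $\mu$ by (1)--(4); the possibility that $\nu$ also does so is symmetric.

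The two facts that drive the argument are the layered cut structure of $X_L$ and the layered neighbourhood structure of $Y_L$. For the former: a layer $X^\ell$ meets $X^{\ell\pm1}$ only in the cut vertices $v_a^{\ell-1,\ell},v_b^{\ell-1,\ell},v_a^{\ell,\ell+1},v_b^{\ell,\ell+1}$, each pendant path $\mathcal P_a^\ell$ (resp.\ $\mathcal P_b^\ell$) is attached to the rest of $X_L$ only at $v_a^\ell$ (resp.\ $v_b^\ell$), and $\mathcal C_a^\ell$ meets $\mathcal C_b^\ell$ only at $v^\ell$; consequently a $Y_L$-vertex can change which of these regions contains it only by a swap across the corresponding cut edge. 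For the latter: a leaf of $\mathcal S_a^\ell$ or $\mathcal S_b^\ell$ has its own knob as its unique $Y_L$-neighbour; a non-knob vertex of $\mathcal K_a^\ell$ has neighbourhood $V(\mathcal K_b^\ell)\cup\{\kappa_a^\ell,\kappa_b^\ell\}$, and symmetrically for $\mathcal K_b^\ell$; and $N_{Y_L}(\kappa_a^\ell)$ consists only of the leaves of $\mathcal S_a^\ell$, the vertices of $V(\mathcal K^\ell)$, and, when $\ell\ge2$, the vertices $\kappa_a^{\ell-1},\kappa_b^{\ell-1}$ and $V(\mathcal K_b^{\ell-1})$ (symmetrically for $\kappa_b^\ell$). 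Because $\mu$ is crossing a cut edge out of its allowed region, $p$ is forced to be one of a short, explicit list of cut vertices and $q$ lies just outside; combining this with Proposition~\ref{prop:rule_of_two} -- which pins the $14$ leaves of each star onto its $16$-vertex cycle -- one rules out $\nu$ being a star-leaf (such a leaf would have to sit on a cycle that $q$ does not belong to), so $\nu$, being a $Y_L$-neighbour of $\mu$, must be a knob or a $\mathcal K^\bullet$-vertex of a specific layer.

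From here the argument branches on the type of $\mu$ -- a knob $\kappa_a^\ell$ or $\kappa_b^\ell$ (a would-be violation of (1) or (2)), a non-knob vertex of $\mathcal K^\ell$, or one of the special vertices $\kappa_a^{\ell+1},\kappa_b^{\ell+1}\in V(\mathcal K^\ell)$ (would-be violations of (2), (3), or (4)) -- and on the position of $p$; the $a$- and $b$-sides are symmetric, and layers $1$ and $L$ need only minor bookkeeping (there is no $v_a^{0,1}$ or $v_a^{L,L+1}$, and the relevant arcs of $\mathcal C_a^1,\mathcal C_a^L$ carry seven rather than three inner vertices). In each branch, one of two things occurs. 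Either \emph{(a)} the forced location of $\nu$ in $\sigma_{\lambda-1}$ already contradicts one of (1)--(4) for $\sigma_{\lambda-1}$, contradicting the minimality of $\lambda$; or \emph{(b)} one must look one more swap back, exactly as in Case~2 of the proof of Proposition~\ref{prop:rule_of_two}: the preimages under $\sigma_{\lambda-2}$ of $\mu$ and $\nu$ cannot both already coincide with their $\sigma_{\lambda-1}$-preimages (else the last swap could be performed directly from $\sigma_{\lambda-2}$, contradicting minimality), and using the rigidity afforded by the small $Y_L$-neighbourhoods one concludes that $\sigma_{\lambda-2}$ either already places $\mu$ or $\nu$ at a forbidden position or crowds at least three non-star-leaf vertices onto a single $16$-vertex cycle that carries a $14$-leaf star, contradicting Proposition~\ref{prop:rule_of_two} and hence the minimality of $\lambda$.

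The main obstacle, which is why the induction must be run on all four items at once, is their genuine interdependence: confining a knob as in (1)--(2) uses that the $\mathcal K^\bullet$-vertices that could ``escort'' it across a cut vertex are not parked there as in (3)--(4) -- a star-leaf or a vertex of the wrong layer provably cannot play that escorting role -- while, conversely, confining the $\mathcal K^\bullet$-vertices past a cut vertex as in (3)--(4) uses that the knobs are not available as escorts. The subtlest point is the bound ``at most one'' (rather than ``at most two'') in item~(4): a vertex of $\mathcal K_a^\ell$ can be displaced off its pendant path $\mathcal P_a^\ell$ only by swapping, at $v_a^\ell$, with a $Y_L$-neighbour -- necessarily a vertex of $\mathcal K_b^\ell$ or a knob -- and following this escort jointly with Proposition~\ref{prop:rule_of_two} shows that a second simultaneous escape, from either $\mathcal P_a^\ell$ or $\mathcal P_b^\ell$, would force too many non-star-leaf vertices onto a single cycle a few swaps earlier, which is again the rule-of-two contradiction.
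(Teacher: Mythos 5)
Your overall framework (prove all four properties simultaneously, take a minimal swap sequence whose last term is the first violator, exploit the cut structure of $X_L$ and the small $Y_L$-neighborhoods, and note that the partner $\nu$ of the escaping vertex must be a knob or a $\mathcal K^\bullet$-vertex) matches the paper's setup. The gap is in your claim that every branch then closes by inspecting only $\sigma_{\lambda-1}$ and $\sigma_{\lambda-2}$: that a violation is found either at $\sigma_{\lambda-1}$ directly or, as in Case~2 of the proof of Proposition~\ref{prop:rule_of_two}, one swap further back. This is false for the hard branches. Take the branch where $\kappa_a^1$ sits on $v_a^{1,2}$ and swaps with $\kappa_a^2$ or $\kappa_b^2$ lying on a neighbor of $v_a^{1,2}$ in $\mathcal C_a^2$ (an attempted violation of property~(1)). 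Both $\sigma_{\lambda-1}$ and $\sigma_{\lambda-2}$ can satisfy all of (1)--(4) and Proposition~\ref{prop:rule_of_two}: property~(2) permits $\kappa_a^2$ on $X^2$, property~(4) permits it as the single element of $V(\mathcal K^1)$ off the paths, and $\sigma_{\lambda-2}$ may differ from $\sigma_{\lambda-1}$ by a perfectly legal swap (e.g.\ $\kappa_a^2$ moving along $\mathcal C_a^2$ past a leaf of $\mathcal S_a^2$). The obstruction is genuinely long-range: the paper has to compare $\sigma_{\lambda-1}(V(X^1))$ with $\sigma_s(V(X^1))$, locate the vertex on $v_b^{1,2}$, trace it back to the last index $\xi$ at which it entered, and run a further analysis (with another index $\zeta$ and sub-inductions) before a contradiction with Proposition~\ref{prop:rule_of_two} or property~(4) appears; the analogous branch for property~(2) (a knob trying to cross $v_a^{\ell,\ell+1}$) likewise requires tracking a designated vertex over a whole segment of the sequence. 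A related mechanism you omit is the forward construction: when $\kappa_a^1$ tries to exit through $v^1$, neither $\sigma_{\lambda-1}$ nor $\sigma_{\lambda-2}$ is contradictory by itself; the paper instead swaps $\kappa_b^1$ forward along $\mathcal P_b^1$ onto $v_b^1$ to manufacture a configuration in $\mathscr{C}$ with three non-leaves on $\mathcal C_b^1$, contradicting the already-global Proposition~\ref{prop:rule_of_two}.

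The second missing ingredient is an auxiliary positional invariant for property~(4). Your sketch ("a second simultaneous escape would force too many non-star-leaf vertices onto a single cycle a few swaps earlier") does not explain how you know, at the moment the second $\mathcal K^\ell$-vertex escapes through $v_a^\ell$ or $v_b^\ell$ by swapping with a knob, where the \emph{other} knob of layer $\ell$ is. The paper needs exactly this: Lemma~\ref{lem:two_statements}, an invariant proved by its own induction along the entire swap sequence, which forces one of $\kappa_a^\ell,\kappa_b^\ell$ to be trapped in $V(\mathcal P_a^\ell)\setminus\{v_a^\ell\}$ (or $V(\mathcal P_b^\ell)\setminus\{v_b^\ell\}$) in that situation, whence $\sigma_{\lambda-1}$ violates property~(1) (if $\ell=1$) or property~(4) at layer $\ell-1$ (if $\ell\ge2$). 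Nothing recoverable from the last two swaps yields this information, so as written your plan for item~(4) — and for the knob-escape branches above — does not go through without substantial additional machinery of the kind the paper develops.
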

Confirming that the starting configuration $\sigma_s$ satisfies these four properties is straightforward. Case 4 of the proof of Proposition \ref{prop:layer_independence} relies on the following Lemma \ref{lem:two_statements}, which is illustrated in Figure \ref{fig:two_statements}. In the statement of the lemma, we elect to index the final term of the swap sequence by $\lambda-1$ as this is where the result applies in the proof of Proposition \ref{prop:layer_independence}.
\begin{lemma} \label{lem:two_statements}
    Let $\Sigma = \{\sigma_i\}_{i=0}^{\lambda-1}$ with $\sigma_0 = \sigma_s$, $\lambda \geq 1$ be a swap sequence in $\mathscr{C}$ such that for all $1 \leq i \leq \lambda-1$, $\sigma_i$ satisfies the four properties of Proposition \ref{prop:layer_independence}. Then for all $0 \leq i \leq \lambda-1$ and $\ell \in [L]$, the following two statements hold.
\begin{enumerate}
    \item If $\sigma_i^{-1}(V(\mathcal K^\ell)) \subset V(\mathcal P_a^\ell) \cup V(\mathcal P_b^\ell)$, then
    \begin{align*}
        \sigma_i(\{v_a^\ell, v_b^\ell\}) \subset V(\mathcal K^\ell) \implies |\sigma_i^{-1}(\{\kappa_a^\ell, \kappa_b^\ell\}) \cap ((V(\mathcal P_a^\ell) \setminus \{v_a^\ell\}) \cup (V(\mathcal P_b^\ell) \setminus \{v_b^\ell\}))| = 1.
    \end{align*}
    \item If $\sigma_i^{-1}(V(\mathcal K^\ell)) \not\subset V(\mathcal P_a^\ell) \cup V(\mathcal P_b^\ell)$, then 
    \begin{align*}
        & \sigma_i(v_a^\ell) \in V(\mathcal K^\ell) \implies \sigma_i^{-1}(\{\kappa_a^\ell, \kappa_b^\ell\}) \cap (V(\mathcal P_a^\ell) \setminus \{v_a^\ell\}) \neq \emptyset, \\
        & \sigma_i(v_b^\ell) \in V(\mathcal K^\ell) \implies \sigma_i^{-1}(\{\kappa_a^\ell, \kappa_b^\ell\}) \cap (V(\mathcal P_b^\ell) \setminus \{v_b^\ell\}) \neq \emptyset.
    \end{align*}
\end{enumerate}
\end{lemma}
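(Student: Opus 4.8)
The plan is to prove statement (1) directly from Propositions \ref{prop:rule_of_two} and \ref{prop:layer_independence}, and statement (2) by induction on $i$. Recall that every $\sigma_i$ in the swap sequence lies in $\mathscr{C}$, so Proposition \ref{prop:rule_of_two} applies to it, and every $\sigma_i$ satisfies the four properties of Proposition \ref{prop:layer_independence} (by hypothesis for $1 \le i \le \lambda-1$, and straightforwardly for $\sigma_s$). For statement (1), assume $\sigma_i^{-1}(V(\mathcal K^\ell)) \subseteq V(\mathcal P_a^\ell) \cup V(\mathcal P_b^\ell)$ and $\sigma_i(v_a^\ell), \sigma_i(v_b^\ell) \in V(\mathcal K^\ell)$. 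The paths $\mathcal P_a^\ell, \mathcal P_b^\ell$ are vertex-disjoint with $|V(\mathcal P_a^\ell)| + |V(\mathcal P_b^\ell)| = 31$, while $|V(\mathcal K^\ell)| = 30$, so exactly one vertex $w \in V(\mathcal P_a^\ell) \cup V(\mathcal P_b^\ell)$ has $\sigma_i(w) \notin V(\mathcal K^\ell)$; since $w \ne v_a^\ell, v_b^\ell$, it lies in $(V(\mathcal P_a^\ell) \setminus \{v_a^\ell\}) \cup (V(\mathcal P_b^\ell) \setminus \{v_b^\ell\})$. Such a vertex lies on no cycle subgraph of $X_L$ and on no $X^{\ell'}$ with $\ell' \ne \ell$, so Proposition \ref{prop:rule_of_two} forbids $\sigma_i(w)$ from being a leaf of a star subgraph, and Proposition \ref{prop:layer_independence} forbids its being a non-knob vertex of any $V(\mathcal K^{\ell'})$ or a knob $\kappa_a^{\ell'}, \kappa_b^{\ell'}$ with $\ell' \ne \ell$ (for $\ell' = \ell+1$, using that $\kappa_a^{\ell+1}, \kappa_b^{\ell+1} \in V(\mathcal K^\ell)$ whereas $w \notin \sigma_i^{-1}(V(\mathcal K^\ell))$). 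Hence $\sigma_i(w) \in \{\kappa_a^\ell, \kappa_b^\ell\}$, so at least one knob has preimage in the path interiors; and if both did, those two preimages together with the $30$ vertices of $\sigma_i^{-1}(V(\mathcal K^\ell))$ would be $32$ distinct vertices of the $31$-vertex set $V(\mathcal P_a^\ell) \cup V(\mathcal P_b^\ell)$, which is absurd. So exactly one does.

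For statement (2) I induct on $i$, the inductive hypothesis being that both statements hold for every smaller index (statement (1) being already known for every $\sigma_i$). The case $i=0$ is vacuous since $\sigma_s^{-1}(V(\mathcal K^\ell)) = (V(\mathcal P_a^\ell) \setminus \{v_a^\ell\}) \cup V(\mathcal P_b^\ell) \subseteq V(\mathcal P_a^\ell) \cup V(\mathcal P_b^\ell)$. For $i \ge 1$, the single $(X_L, Y_L)$-friendly swap from $\sigma_{i-1}$ to $\sigma_i$ leaves unchanged all the data that statement (2) constrains for layer $\ell$ — whether $v_a^\ell$ or $v_b^\ell$ carries a vertex of $V(\mathcal K^\ell)$, whether $\sigma_i^{-1}(V(\mathcal K^\ell))$ escapes $V(\mathcal P_a^\ell) \cup V(\mathcal P_b^\ell)$, and the positions of $\kappa_a^\ell, \kappa_b^\ell$ relative to the path interiors — unless the swap involves $v_a^\ell$, involves $v_b^\ell$, or moves $\kappa_a^\ell$ or $\kappa_b^\ell$ between a path interior and a cycle vertex, so I would case on which of these occurs. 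The recurring mechanism is a rigidity from Propositions \ref{prop:rule_of_two} and \ref{prop:layer_independence}: a vertex of $V(\mathcal K^\ell)$ can cross $v_a^\ell$ (from $\mathcal P_a^\ell$ into $\mathcal C_a^\ell$, or back) only by swapping with $\kappa_a^\ell$ or $\kappa_b^\ell$ standing on the adjacent vertex of $\mathcal C_a^\ell$, since in the relevant configurations the cycle-neighbours of $v_a^\ell$ carry only leaves of $\mathcal S_a^\ell$ or layer-$\ell$ knobs and a leaf of $\mathcal S_a^\ell$ is $Y_L$-adjacent only to $\kappa_a^\ell$; the same holds at $v_b^\ell$. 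In each case, statements (1) and (2) applied to $\sigma_{i-1}$ determine where the two layer-$\ell$ knobs had to be just before the swap, whence the required placement after it.

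The step I expect to be the main obstacle is the family of subcases in which the swap carries $\sigma_{i-1}$ between the regime of statement (1) (all of $V(\mathcal K^\ell)$ on the two paths) and the regime of statement (2) (exactly one vertex of $V(\mathcal K^\ell)$ off the paths): there one must combine the exact count furnished by statement (1) for $\sigma_{i-1}$ with the fact that the vertex of $V(\mathcal K^\ell)$ entering or leaving a cycle does so through $v_a^\ell$ or $v_b^\ell$ by trading places with a specific knob, in order to certify that the knob remaining on the path interiors sits on the side demanded by statement (2).
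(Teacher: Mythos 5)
Your direct, counting-based proof of statement (1) is correct, and it is a genuine simplification over the paper's treatment, which instead maintains (1) inside the induction on $i$: since the $30$ vertices of $V(\mathcal K^\ell)$ sit injectively inside the $31$-vertex set $V(\mathcal P_a^\ell) \cup V(\mathcal P_b^\ell)$ and both $v_a^\ell, v_b^\ell$ carry vertices of $V(\mathcal K^\ell)$, the unique exceptional vertex $w$ is interior, and Propositions \ref{prop:rule_of_two} and \ref{prop:layer_independence}(1)--(3) (together with $\kappa_a^{\ell+1}, \kappa_b^{\ell+1} \in V(\mathcal K^\ell)$) do force $\sigma_i(w) \in \{\kappa_a^\ell, \kappa_b^\ell\}$, and the cardinality bound rules out both knobs lying on the interiors. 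This part stands on its own, without any induction.

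For statement (2), however, there is a genuine gap, and it sits exactly at the subcase you flag as the main obstacle. Consider the transition where at time $i-1$ all of $V(\mathcal K^\ell)$ lies on the two paths, both $v_a^\ell$ and $v_b^\ell$ carry vertices of $V(\mathcal K^\ell)$, and the swap sends the vertex on $v_a^\ell$ onto $\mathcal C_a^\ell$ by trading with a layer-$\ell$ knob on the adjacent cycle vertex. At time $i$ the hypothesis of (2) holds at $v_b^\ell$, so you must exhibit a knob on $V(\mathcal P_b^\ell) \setminus \{v_b^\ell\}$; statement (1) at time $i-1$ only gives one knob somewhere on the interiors, and nothing local to layer $\ell$ (neighborhoods of leaves, adjacency of $\mathcal K^\ell$-vertices to the knobs) distinguishes the $a$-side from the $b$-side. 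The missing ingredient is a cross-layer argument: for $\ell \geq 2$ this scenario cannot occur at all, because $\kappa_a^\ell, \kappa_b^\ell \in V(\mathcal K^{\ell-1})$ and both would then lie off $V(\mathcal P_a^{\ell-1}) \cup V(\mathcal P_b^{\ell-1})$, contradicting Proposition \ref{prop:layer_independence}(4) at layer $\ell-1$, which $\sigma_{i-1}$ satisfies by hypothesis; and for $\ell = 1$, Proposition \ref{prop:layer_independence}(1) pins $\kappa_a^1$ to $X_a^1$ and $\kappa_b^1$ to $X_b^1$, so the knob trading at $v_a^1$ must be $\kappa_a^1$ and the interior knob is forced to be $\kappa_b^1$ on $V(\mathcal P_b^1) \setminus \{v_b^1\}$. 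Your sketch announces that the side must be "certified" but supplies no mechanism for doing so, and the tools you list cannot do it. Relatedly, your stated recurring mechanism is inaccurate in the regime of statement (2): the unique off-path vertex of $V(\mathcal K^\ell)$ may occupy the cycle vertex adjacent to $v_a^\ell$ and trade with the opposite-partite-set vertex on $v_a^\ell$, so crossings of $v_a^\ell$ or $v_b^\ell$ need not involve $\kappa_a^\ell$ or $\kappa_b^\ell$; these cases are harmless (nothing constrained by (2) changes), but they belong in the case analysis and their omission reflects that the inductive step has not actually been carried out.
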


\begin{figure}[ht]
  \centering
  \subfloat[Lemma \ref{lem:two_statements}(1) on the configuration $\sigma_i$: if $\sigma_i^{-1}(V(\mathcal K^\ell)) \subset V(\mathcal P_a^\ell) \cup V(\mathcal P_b^\ell)$ and $\sigma_i(\{v_a^\ell, v_b^\ell\}) \subset V(\mathcal K^\ell)$, then either $\kappa_a^\ell$ or $\kappa_b^\ell$ lies upon $V(\mathcal P_a^\ell) \setminus \{v_a^\ell\}$ or $V(\mathcal P_b^\ell) \setminus \{v_b^\ell\}$.]{\includegraphics[width=0.495\textwidth]{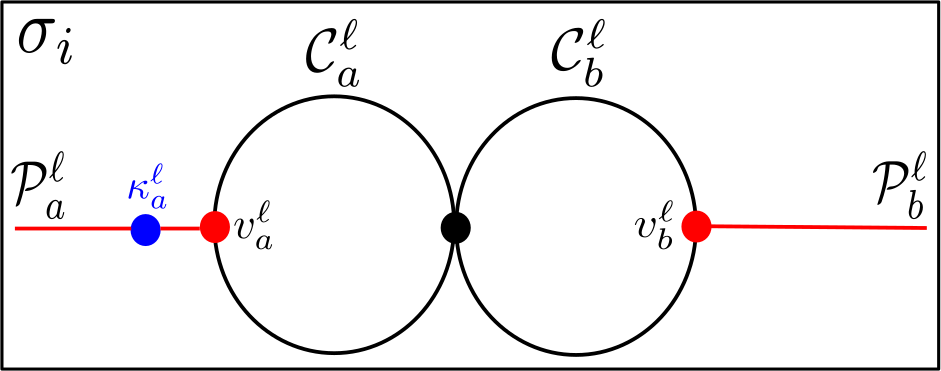}\label{fig:two_statements_1}}
  \hfill
  \subfloat[First implication of Lemma \ref{lem:two_statements}(2) on the configuration $\sigma_i$: if there exists some $\mu \in V(\mathcal K^\ell)$ with $\sigma_i^{-1}(\mu) \notin V(\mathcal P_a^\ell) \cup V(\mathcal P_b^\ell)$, and $\sigma_i(v_a^\ell) \in V(\mathcal K^\ell)$, then either $\kappa_a^\ell$ or $\kappa_b^\ell$ lies upon $V(\mathcal P_a^\ell) \setminus \{v_a^\ell\}$.]{\includegraphics[width=0.495\textwidth]{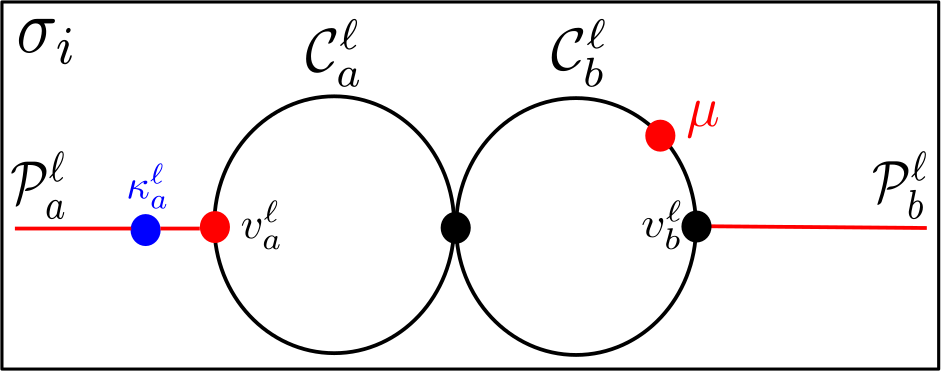}\label{fig:two_statements_2}}
  \caption{Illustrations for both parts of Lemma \ref{lem:two_statements} for some $\sigma_i \in \Sigma$. Subgraphs/vertices colored in red correspond to $\sigma_i$-preimages of $V(\mathcal K^\ell)$, while $\sigma_i$-preimages of elements in $\{\kappa_a^\ell, \kappa_b^\ell\}$ are colored in blue. For Figure \ref{fig:two_statements_2}, note that by appealing to Proposition \ref{prop:layer_independence}(4) and comparing cardinalities, we can deduce that at most two vertices of $\sigma_i(V(\mathcal P_a^\ell) \cup V(\mathcal P_b^\ell))$ can fail to lie in $V(\mathcal K^\ell)$.}
  \label{fig:two_statements}
\end{figure}

\begin{proof}[Proof of Lemma \ref{lem:two_statements}]
Fix $\Sigma = \{\sigma_i\}_{i=0}^{\lambda-1}$ to be a swap sequence satisfying the assumptions of Lemma \ref{lem:two_statements}. We prove the two statements of Lemma \ref{lem:two_statements} hold for all $\ell \in [L]$ inductively for $0 \leq i \leq \lambda-1$. They can be checked to hold for all $\ell \in [L]$ when $i=0$, so assume they are true for some $0 \leq i < \lambda-1$. We prove that $\sigma_{i+1}$ satisfies both statements for all $\ell \in [L]$. In what follows, assume we refer (unless stated otherwise) to some fixed, arbitrary $\ell \in [L]$. We break into cases based on whether or not $\sigma_i^{-1}(\mu) \subset V(\mathcal P_a^\ell) \cup V(\mathcal P_b^\ell)$. 

\medskip

\paragraph{\textbf{Case 1: $\sigma_i^{-1}(V(\mathcal K^\ell)) \subset V(\mathcal P_a^\ell) \cup V(\mathcal P_b^\ell)$.}} We will further break into subcases based on whether or not $\sigma_i(\{v_a^\ell, v_b^\ell\}) \subset V(\mathcal K^\ell)$.

\smallskip

\paragraph{\textbf{Subcase 1.1: $\sigma_i(\{v_a^\ell, v_b^\ell\}) \subset V(\mathcal K^\ell)$.}} By the induction hypothesis, we have that
\begin{align} \label{eq:two_statements_lemma_eq_1}
    |\sigma_i^{-1}(\{\kappa_a^\ell, \kappa_b^\ell\}) \cap ((V(\mathcal P_a^\ell) \setminus \{v_a^\ell\}) \cup (V(\mathcal P_b^\ell) \setminus \{v_b^\ell\}))| = 1.
\end{align}
If $\sigma_i(v_a^\ell) = \sigma_{i+1}(v_a^\ell)$ and $\sigma_i(v_b^\ell) = \sigma_{i+1}(v_b^\ell)$, then $\sigma_{i+1}$ satisfies Lemma \ref{lem:two_statements}(1) since
\begin{align*}
    & \sigma_{i+1}^{-1}(V(\mathcal K^\ell)) \subset V(\mathcal P_a^\ell) \cup V(\mathcal P_b^\ell),
    & |\sigma_{i+1}^{-1}(\{\kappa_a^\ell, \kappa_b^\ell\}) \cap ((V(\mathcal P_a^\ell) \setminus \{v_a^\ell\}) \cup (V(\mathcal P_b^\ell) \setminus \{v_b^\ell\}))| = 1,
\end{align*}
and satisfies Lemma \ref{lem:two_statements}(2) trivially.\footnote{Generally, in what follows, we do not comment on the ``other statement" in Lemma \ref{lem:two_statements} holding trivially, and only check the statement which applies, depending on whether $\sigma_{i+1}^{-1}(V(\mathcal K^\ell)) \subset V(\mathcal P_a^\ell) \cup V(\mathcal P_b^\ell)$ or not in the given context.} So consider the setting where either $\sigma_i(v_a^\ell) \neq \sigma_{i+1}(v_a^\ell)$ or $\sigma_i(v_b^\ell) \neq \sigma_{i+1}(v_b^\ell)$: say $\sigma_i(v_a^\ell) \neq \sigma_{i+1}(v_a^\ell)$ (the setting $\sigma_i(v_b^\ell) \neq \sigma_{i+1}(v_b^\ell)$ is analogous). If 
\begin{align*}
    \sigma_{i+1}^{-1}(\sigma_i(v_a^\ell)) \in N_{X_L}(v_a^\ell) \cap V(\mathcal P_a^\ell),
\end{align*}
then $\sigma_{i+1}$ satisfies Lemma \ref{lem:two_statements}(1). Indeed, since $\kappa_a^\ell, \kappa_b^\ell \notin V(\mathcal K^\ell)$, the hypothesis $\sigma_{i+1}(\{v_a^\ell, v_b^\ell\}) \subset V(\mathcal K^\ell)$ implies that $\sigma_i^{-1}(\kappa_a^\ell) = \sigma_{i+1}^{-1}(\kappa_a^\ell)$ and $\sigma_i^{-1}(\kappa_b^\ell) = \sigma_{i+1}^{-1}(\kappa_b^\ell)$. If 
\begin{align*}
    \sigma_{i+1}^{-1}(\sigma_i(v_a^\ell)) \notin V(\mathcal P_a^\ell),
\end{align*}
then since $\sigma_i(v_a^\ell) \in V(\mathcal K^\ell)$, we have that
\begin{align*}
    \sigma_{i+1}^{-1}(V(\mathcal K^\ell)) \not\subset V(\mathcal P_a^\ell) \cup V(\mathcal P_b^\ell).
\end{align*}
From studying the neighborhoods of vertices in $V(\mathcal K^\ell)$ to produce possibilities for $\sigma_{i+1}(v_a^\ell)$, Propositions \ref{prop:rule_of_two} and \ref{prop:layer_independence}(2,3)\footnote{Indeed, $\sigma_{i+1}$ would violate Proposition \ref{prop:layer_independence}(2) if $\sigma_{i+1}(v_a^\ell) \in \{\kappa_a^{\ell+2}, \kappa_b^{\ell+2}\}$ and Proposition \ref{prop:layer_independence}(3) if $\sigma_{i+1}(v_a^\ell) \in V(\mathcal K^{\ell+1}) \setminus \{\kappa_a^{\ell+2}, \kappa_b^{\ell+2}\}$. Henceforth, we do not explicitly make such further distinctions when appealing to multiple properties from Proposition \ref{prop:layer_independence} together.} imply 
\begin{align*}
    \sigma_{i+1}(v_a^\ell) \in V(\mathcal K^\ell) \cup \{\kappa_a^\ell, \kappa_b^\ell\}
\end{align*}
(consider the possible vertices in $N_{Y_L}(\sigma_i(v_a^\ell))$), from which $\sigma_i^{-1}(V(\mathcal K^\ell)) \subset V(\mathcal P_a^\ell) \cup V(\mathcal P_b^\ell)$ implies 
\begin{align*}
    \sigma_{i+1}(v_a^\ell) \in \{\kappa_a^\ell, \kappa_b^\ell\}.
\end{align*}
This yields $\ell = 1$. Indeed, if $\ell \geq 2$, then $\sigma_i$ violates Proposition \ref{prop:layer_independence}(4) on layer $\ell-1$, since with \eqref{eq:two_statements_lemma_eq_1},
\begin{align*}
    \sigma_i^{-1}(\{\kappa_a^\ell, \kappa_b^\ell\}) \subset \sigma_i^{-1}(V(\mathcal K^{\ell-1})) \setminus (V(\mathcal P_a^{\ell-1}) \cup V(\mathcal P_b^{\ell-1})),
\end{align*}
so that $\sigma_{i+1}(v_a^1) = \kappa_a^1$ by Proposition \ref{prop:layer_independence}(1) on $\sigma_{i+1}$. This result, with Proposition \ref{prop:layer_independence}(1) (on $\sigma_i$) and \eqref{eq:two_statements_lemma_eq_1}, yields 
\begin{align*}
    \sigma_i^{-1}(\kappa_b^1) = \sigma_{i+1}^{-1}(\kappa_b^1) \in V(\mathcal P_b^1) \setminus \{v_b^1\},
\end{align*}
so $\sigma_{i+1}$ satisfies Lemma \ref{lem:two_statements}(2). 

\smallskip

\paragraph{\textbf{Subcase 1.2: $\sigma_i(\{v_a^\ell, v_b^\ell\}) \not\subset V(\mathcal K^\ell)$.}} Since $|V(\mathcal P_a^\ell) \cup V(\mathcal P_b^\ell)| = 31$ and $|V(\mathcal K^\ell)| = 30$, and recalling our initial assumption $\sigma_i^{-1}(V(\mathcal K^\ell)) \subset V(\mathcal P_a^\ell) \cup V(\mathcal P_b^\ell)$, we have that
\begin{align} \label{eq:two_statements_lemma_eq_2}
    & |\sigma_i(\{v_a^\ell, v_b^\ell\}) \cap V(\mathcal K^\ell)| = 1,
    & \sigma_i((V(\mathcal P_a^\ell) \setminus \{v_a^\ell\}) \cup (V(\mathcal P_b^\ell) \setminus \{v_b^\ell\})) \subset V(\mathcal K^\ell).
\end{align}
Say $\sigma_i(v_a^\ell) \in V(\mathcal K^\ell)$; the setting $\sigma_i(v_b^\ell) \in V(\mathcal K^\ell)$ is argued analogously.  By \eqref{eq:two_statements_lemma_eq_2}, $\sigma_i(v_b^\ell) \notin V(\mathcal K^\ell)$. If $\sigma_{i+1}^{-1}(\sigma_i(v_a^\ell)) \notin V(\mathcal P_a^\ell)$, then $\sigma_i^{-1}(V(\mathcal K^\ell)) \subset V(\mathcal P_a^\ell) \cup V(\mathcal P_b^\ell)$ and $\sigma_i(v_b^\ell) = \sigma_{i+1}(v_b^\ell) \notin V(\mathcal K^\ell)$, implying
\begin{align*}
    \sigma_{i+1}^{-1}(V(\mathcal K^\ell)) \not\subset V(\mathcal P_a^\ell) \cup V(\mathcal P_b^\ell) \text{ and } \sigma_{i+1}(v_a^\ell) \notin V(\mathcal K^\ell),
\end{align*}
so $\sigma_{i+1}$ satisfies Lemma \ref{lem:two_statements}(2). If $\sigma_{i+1}^{-1}(\sigma_i(v_a^\ell)) \in V(\mathcal P_a^\ell)$, then $\sigma_i^{-1}(V(\mathcal K^\ell)) \subset V(\mathcal P_a^\ell) \cup V(\mathcal P_b^\ell)$ and $\sigma_i(v_b^\ell) \notin V(\mathcal K^\ell)$ yield 
\begin{align} \label{eq:two_statements_lemma_eq_3}
    \sigma_{i+1}^{-1}(V(\mathcal K^\ell)) \subset V(\mathcal P_a^\ell) \cup V(\mathcal P_b^\ell).
\end{align}
Studying the neighborhoods of vertices in $V(\mathcal K^\ell)$ and recalling that $\sigma_i(v_b^\ell) \notin V(\mathcal K^\ell)$ yields that the only way we can have that $\sigma_{i+1}(v_b^\ell) \in V(\mathcal K^\ell)$ (exactly when $\sigma_{i+1}$ does not trivially satisfy Lemma \ref{lem:two_statements}(1)) without $\sigma_{i+1}$ violating Proposition \ref{prop:rule_of_two} or \ref{prop:layer_independence}(2,3) is if 
\begin{align*}
    \sigma_i(v_b^\ell) \in \{\kappa_a^\ell, \kappa_b^\ell\} \text{ and } \sigma_{i+1}^{-1}(\sigma_i(v_b^\ell)) \in N_{X_L}(v_b^\ell) \cap V(\mathcal P_b^\ell).
\end{align*}
These results, along with \eqref{eq:two_statements_lemma_eq_3}, $|V(\mathcal P_a^\ell) \cup V(\mathcal P_b^\ell)| = 31$, $|V(\mathcal K^\ell)| = 30$, and $\kappa_a^\ell, \kappa_b^\ell \notin V(\mathcal K^\ell)$, imply
\begin{align*}
    |\sigma_{i+1}^{-1}(\{\kappa_a^\ell, \kappa_b^\ell\}) \cap ((V(\mathcal P_a^\ell) \setminus \{v_a^\ell\}) \cup (V(\mathcal P_b^\ell) \setminus \{v_b^\ell\}))| = 1,
\end{align*}
so $\sigma_{i+1}$ satisfies Lemma \ref{lem:two_statements}(1). 

\medskip

\paragraph{\textbf{Case 2: $\sigma_i^{-1}(V(\mathcal K^\ell)) \not\subset V(\mathcal P_a^\ell) \cup V(\mathcal P_b^\ell)$.}} By Proposition \ref{prop:layer_independence}(4) (on $\sigma_i$), there exists a unique $\mu \in V(\mathcal K^\ell)$ such that $\sigma_i^{-1}(\mu) \notin V(\mathcal P_a^\ell) \cup V(\mathcal P_b^\ell)$, so $|V(\mathcal P_a^\ell) \cup V(\mathcal P_b^\ell)| = 31$ and $|V(\mathcal K^\ell)| = 30$ yield
\begin{align} \label{eq:two_statements_lemma_eq_4}
    |(V(\mathcal P_a^\ell) \cup V(\mathcal P_b^\ell)) \setminus \sigma_i^{-1}(V(\mathcal K^\ell))| = 2.
\end{align}
We break into subcases based on the subset of $\{\sigma_i(v_a^\ell), \sigma_i(v_b^\ell)\}$ that is in $V(\mathcal K^\ell)$. 

\smallskip

\paragraph{\textbf{Subcase 2.1: $\sigma_i(\{v_a^\ell, v_b^\ell\}) \subset V(\mathcal K^\ell)$.}} By the induction hypothesis,\footnote{The resulting observations are enough to deduce that $\ell=1$, but this is not necessary for the proceeding argument.}
\begin{align*}
    & \sigma_i^{-1}(\{\kappa_a^\ell, \kappa_b^\ell\}) \cap (V(\mathcal P_a^\ell) \setminus \{v_a^\ell\}) \neq \emptyset,
    & \sigma_i^{-1}(\{\kappa_a^\ell, \kappa_b^\ell\}) \cap (V(\mathcal P_b^\ell) \setminus \{v_b^\ell\}) \neq \emptyset.
\end{align*}
If $\sigma_i(v_a^\ell) = \sigma_{i+1}(v_a^\ell)$ and $\sigma_i(v_b^\ell) = \sigma_{i+1}(v_b^\ell)$, then $\sigma_{i+1}$ satisfies Lemma \ref{lem:two_statements}(2). If $\sigma_i(v_a^\ell) \neq \sigma_{i+1}(v_a^\ell)$ (the setting $\sigma_i(v_b^\ell) \neq \sigma_{i+1}(v_b^\ell)$ is argued analogously), we must have that (exactly) one of
\begin{align*}
    & \sigma_{i+1}^{-1}(\sigma_i(v_a^\ell)) \in N_{X_L}(v_a^\ell) \cap V(\mathcal P_a^\ell),
    & \sigma_{i+1}^{-1}(\sigma_i(v_a^\ell)) \in N_{X_L}(v_a^\ell) \setminus V(\mathcal P_a^\ell) \text{  and  } \sigma_{i+1}(v_a^\ell) = \mu
\end{align*}
must hold, since $\sigma_{i+1}$ would otherwise violate Proposition \ref{prop:layer_independence}(4), due to
\begin{align*}
    \{\mu, \sigma_i(v_a^\ell)\} \subset \sigma_{i+1}^{-1}(V(\mathcal K^\ell)) \setminus (V(\mathcal P_a^\ell) \cup V(\mathcal P_b^\ell)).
\end{align*}
As before, $\sigma_{i+1}$ satisfies Lemma \ref{lem:two_statements}(2) in both situations. 

\smallskip

\paragraph{\textbf{Subcase 2.2: $\{\sigma_i(v_a^\ell),\sigma_i(v_b^\ell)\} \cap V(\mathcal K^\ell) = \{\sigma_i(v_a^\ell)\}$.}} The setting $\{\sigma_i(v_a^\ell),\sigma_i(v_b^\ell)\} \cap V(\mathcal K^\ell) = \{\sigma_i(v_b^\ell)\}$ is argued analogously. The induction hypothesis yields 
\begin{align*}
    \sigma_i^{-1}(\{\kappa_a^\ell, \kappa_b^\ell\}) \cap (V(\mathcal P_a^\ell) \setminus \{v_a^\ell\}) \neq \emptyset.
\end{align*}
From \eqref{eq:two_statements_lemma_eq_4}, we deduce that 
\begin{align} \label{eq:two_statements_lemma_eq_5}
    |\sigma_i^{-1}(\{\kappa_a^\ell, \kappa_b^\ell\}) \cap (V(\mathcal P_a^\ell) \setminus \{v_a^\ell\})| = 1
\end{align}
and also that
\begin{align*}
    (V(\mathcal P_a^\ell) \cup V(\mathcal P_b^\ell)) \setminus \sigma_i^{-1}(V(\mathcal K^\ell)) = \{v_b^\ell\} \cup (\sigma_i^{-1}(\{\kappa_a^\ell, \kappa_b^\ell\}) \cap (V(\mathcal P_a^\ell) \setminus \{v_a^\ell\})).
\end{align*}
We can argue as in Subcase 2.1 to deduce that $\sigma_{i+1}$ satisfies Lemma \ref{lem:two_statements}(2) if $\sigma_i(v_a^\ell) = \sigma_{i+1}(v_a^\ell)$ and $\sigma_i(v_b^\ell) = \sigma_{i+1}(v_b^\ell)$, or if $\sigma_i(v_a^\ell) \neq \sigma_{i+1}(v_a^\ell)$. If $\sigma_i(v_b^\ell) \neq \sigma_{i+1}(v_b^\ell)$, 
\begin{align*}
    \sigma_i^{-1}(\mu) = \sigma_{i+1}^{-1}(\mu) \notin V(\mathcal P_a^\ell) \cup V(\mathcal P_b^\ell),
\end{align*}
so $\sigma_{i+1}$ satisfies Lemma \ref{lem:two_statements}(2) if $\sigma_{i+1}(v_b^\ell) \notin V(\mathcal K^\ell)$. Thus, assume $\sigma_{i+1}(v_b^\ell) \in V(\mathcal K^\ell)$. Studying the neighborhoods of vertices in $V(\mathcal K^\ell)$ yields that $\sigma_i(v_b^\ell) \in \{\kappa_a^\ell, \kappa_b^\ell\}$, as $\sigma_i$ would violate Proposition \ref{prop:rule_of_two} or \ref{prop:layer_independence}(2,3) otherwise. If 
\begin{align*}
    \sigma_{i+1}^{-1}(\sigma_i(v_b^\ell)) \in N_{X_L}(v_b^\ell) \cap V(\mathcal P_b^\ell),
\end{align*}
$\sigma_{i+1}$ satisfies Lemma \ref{lem:two_statements}(2). If 
\begin{align*}
    \sigma_{i+1}^{-1}(\sigma_i(v_b^\ell)) \in N_{X_L}(v_b^\ell) \setminus V(\mathcal P_b^\ell),
\end{align*}
it must be that $\sigma_{i+1}(v_b^\ell) = \mu$ (recall that $\mu \in V(\mathcal K^\ell)$ is the unique such vertex for which $\sigma_i^{-1}(\mu) \notin V(\mathcal P_a^\ell) \cup V(\mathcal P_b^\ell)$). By \eqref{eq:two_statements_lemma_eq_5}, alongside $\sigma_i(v_b^\ell) \in \{\kappa_a^\ell, \kappa_b^\ell\}$ and $\sigma_{i+1}^{-1}(\sigma_i(v_b^\ell)) \in N_{X_L}(v_b^\ell) \cap V(\mathcal P_b^\ell)$, $\sigma_{i+1}$ satisfies Lemma \ref{lem:two_statements}(1).

\smallskip

\paragraph{\textbf{Subcase 2.3: $\sigma_i(\{v_a^\ell, v_b^\ell\}) \cap V(\mathcal K^\ell) = \emptyset$.}} From \eqref{eq:two_statements_lemma_eq_4}, we have that
\begin{align*}
    \sigma_i^{-1}(V(\mathcal K^\ell) \setminus \{\mu\}) = (V(\mathcal P_a^\ell) \cup V(\mathcal P_b^\ell)) \setminus \{v_a^\ell, v_b^\ell\}
\end{align*}
since the LHS is a subset of the RHS and their cardinalities are equal. If $\sigma_{i+1}^{-1}(\mu) \in \{v_a^\ell, v_b^\ell\}$, then
\begin{align*}
    \sigma_{i+1}^{-1}(V(\mathcal K^\ell)) \subset V(\mathcal P_a^\ell) \cup V(\mathcal P_b^\ell) \text{ and } \sigma_{i+1}(\{v_a^\ell, v_b^\ell\}) \not\subset V(\mathcal K^\ell),
\end{align*}
so $\sigma_{i+1}$ satisfies Lemma \ref{lem:two_statements}(1). Now assume $\sigma_{i+1}^{-1}(\mu) \notin \{v_a^\ell, v_b^\ell\}$, from which it easily follows that
\begin{align*}
    \sigma_{i+1}^{-1}(V(\mathcal K^\ell)) \not\subset V(\mathcal P_a^\ell) \cup V(\mathcal P_b^\ell) \text{ and } |\{\sigma_{i+1}(v_a^\ell), \sigma_{i+1}(v_b^\ell)\} \cap V(\mathcal K^\ell)| \leq 1.
\end{align*}
Of course, $\sigma_{i+1}$ satisfies Lemma \ref{lem:two_statements}(2) if 
\begin{align*}
    \{\sigma_{i+1}(v_a^\ell), \sigma_{i+1}(v_b^\ell)\} \cap V(\mathcal K^\ell) = \emptyset.
\end{align*}
If $\sigma_{i+1}(v_a^\ell) \in V(\mathcal K^\ell)$ (the setting $\sigma_{i+1}(v_b^\ell) \in V(\mathcal K^\ell)$ is argued analogously), then by studying the neighborhoods of vertices in $V(\mathcal K^\ell)$, it must be that $\sigma_i(v_a^\ell) \in \{\kappa_a^\ell, \kappa_b^\ell\}$, since $\sigma_i$ would otherwise violate Proposition \ref{prop:rule_of_two} or \ref{prop:layer_independence}(2,3). Furthermore, 
\begin{align*}
    \sigma_{i+1}^{-1}(\sigma_i(v_a^\ell)) \in N_{X_L}(v_a^\ell) \cap V(\mathcal P_a^\ell),
\end{align*}
since if $\sigma_{i+1}^{-1}(\sigma_i(v_a^\ell)) \in N_{X_L}(v_a^\ell) \setminus V(\mathcal P_a^\ell)$, we would have
\begin{align*}
    \{\sigma_i^{-1}(\mu), \sigma_i^{-1}(\sigma_{i+1}(v_a^\ell))\} \subset \sigma_i^{-1}(V(\mathcal K^\ell)) \setminus (V(\mathcal P_a^\ell) \cup V(\mathcal P_b^\ell)),
\end{align*}
implying $\sigma_i$ violates Proposition \ref{prop:layer_independence}(4); $\mu \neq \sigma_{i+1}(v_a^\ell)$ since $\sigma_{i+1}^{-1}(\mu) \notin \{v_a^\ell, v_b^\ell\}$. It follows quickly that $\sigma_{i+1}$ satisfies Lemma \ref{lem:two_statements}(2). This completes the induction.
\end{proof}

We are now ready to prove Proposition \ref{prop:layer_independence}.

\begin{proof}[Proof of Proposition \ref{prop:layer_independence}]
Assume (towards a contradiction) that the proposition is false, so there exists a swap sequence $\Sigma = \{\sigma_i\}_{i=0}^\lambda$ with $\sigma_0 = \sigma_s$ of minimal length $\lambda$ containing a vertex that violates Proposition \ref{prop:layer_independence}. It is apparent from the preceding comment that $\lambda \geq 1$. We also observe that all terms $\sigma_i \in \Sigma$ satisfy Proposition \ref{prop:rule_of_two}, and that $\sigma_\lambda$ must violate at least one of the four properties of Proposition \ref{prop:layer_independence}. We break into cases based on the property that the configuration $\sigma_\lambda$ violates, and reach a contradiction in every case to deduce that none of these four properties can be broken by $\sigma_\lambda$. This will produce the desired contradiction on our initial assumption.

\subsubsection*{\textbf{Case 1: $\sigma_\lambda^{-1}(\kappa_a^1) \notin V(X_a^1)$ or $\sigma_\lambda^{-1}(\kappa_b^1) \notin V(X_b^1)$.}}

Assume that this statement holds. We only study the setting in which $\sigma_\lambda^{-1}(\kappa_a^1) \notin V(X_a^1)$; raising a contradiction when $\sigma_\lambda^{-1}(\kappa_b^1) \notin V(X_b^1)$ is analogous. To reach $\sigma_\lambda$ from $\sigma_{\lambda-1}$, we must have $\sigma_{\lambda-1}^{-1}(\kappa_a^1) \in \{v^1, v_a^{1,2}\}$ (in particular, we must have $\lambda \geq 2$, since $\sigma_s^{-1}(\kappa_a^1) \notin \{v^1, v_a^{1,2}\}$). We break into subcases based on the value of $\sigma_{\lambda-1}^{-1}(\kappa_a^1)$.

\smallskip

\paragraph{\textbf{Subcase 1.1: $\sigma_{\lambda-1}^{-1}(\kappa_a^1) = v^1$.}} Here, $\sigma_\lambda^{-1}(\kappa_a^1) \in N_{X_L}(v^1) \cap V(\mathcal C_b^1)$. Recall that 
\begin{align*}
    N_{Y_L}(\kappa_a^1) = (V(\mathcal S_a^1) \setminus \{\kappa_a^1\}) \cup V(\mathcal K^1).
\end{align*}
Since $\sigma_{\lambda-1}$ satisfies Proposition \ref{prop:rule_of_two} (on $\mathcal C_a^1$), the vertex $\sigma_{\lambda-1}(\sigma_\lambda^{-1}(\kappa_a^1))$ that $\kappa_a^1$ swaps with to reach $\sigma_\lambda$ from $\sigma_{\lambda-1}$ lies in $V(\mathcal K^1)$. Since $\sigma_{\lambda-1}$ satisfies Proposition \ref{prop:rule_of_two} (on $\mathcal C_b^1$), it must be that
\begin{align*}
    \{\kappa_a^1, \sigma_{\lambda-1}(\sigma_\lambda^{-1}(\kappa_a^1))\} = \sigma_{\lambda-1}(V(\mathcal C_b^1)) \setminus (V(\mathcal S_b^1) \setminus \{\kappa_b^1\}).
\end{align*}
Combining this with $\sigma_{\lambda-1}^{-1}(\kappa_b^1) \in V(X_b^1)$ (due to $\sigma_{\lambda-1}$ satisfying Proposition \ref{prop:layer_independence}(1)), we deduce that
\begin{align*}
    \sigma_{\lambda-1}^{-1}(\kappa_b^1) \in V(\mathcal P_b^1) \setminus \{v_b^1\}.
\end{align*}
However, by applying Propositions \ref{prop:rule_of_two} and \ref{prop:layer_independence}(1-3) to $\sigma_{\lambda-1}$, and recalling our assumption that $\sigma_{\lambda-1}^{-1}(\kappa_a^1) = v^1$, we deduce that
\begin{align*}
    \sigma_{\lambda-1}(\mathcal P_b^1) \setminus \{\kappa_b^1\} \subset (V(\mathcal S_b^1) \setminus \{\kappa_b^1\}) \cup V(\mathcal K^1) = N_{Y_L}(\kappa_b^1),
\end{align*}
so from $\sigma_{\lambda-1}$, we can swap $\kappa_b^1$ along $V(\mathcal P_b^1)$ onto $v_b^1$, yielding a configuration $\tau \in V(\mathscr{C})$ satisfying
\begin{align*}
    |\tau(V(\mathcal C_b^1)) \setminus (V(\mathcal S_b^1) \setminus \{\kappa_b^1\})| \geq 3,
\end{align*}
contradicting Proposition \ref{prop:rule_of_two}. See Figure \ref{fig:layer_independence_1.1} for an illustration. In particular, this argument (with the analogue for the setting where $\sigma_\lambda^{-1}(\kappa_b^1) \notin V(X_b^1)$) concludes the study of the first three cases for $L=1$.

\begin{figure}[ht]
\begin{center}

\includegraphics[width=0.8\textwidth]{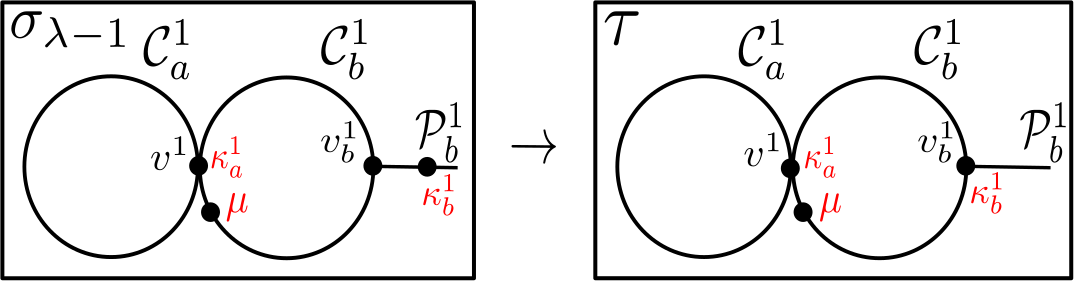} 

\caption{Configurations in $\Sigma$ used to raise a contradiction for Subcase 1.1, where we let $\mu = \sigma_{\lambda-1}(\sigma_\lambda^{-1}(\kappa_a^1))$. From $\sigma_{\lambda-1}$, swapping $\kappa_b^1$ left onto $v_b^1$ yields a configuration $\tau$ which violates Proposition \ref{prop:rule_of_two} on $\mathcal C_b^1$.}
\label{fig:layer_independence_1.1}
\end{center}
\end{figure}
\paragraph{\textbf{Subcase 1.2: $\sigma_{\lambda-1}^{-1}(\kappa_a^1) = v_a^{1,2}$.}} This subcase only applies for $L \geq 2$. Observing that we must have 
\begin{align*}
    \sigma_\lambda^{-1}(\kappa_a^1) \in N_{X_L}(v_a^{1,2}) \cap V(\mathcal C_a^2),
\end{align*}
studying $N_{Y_L}(\kappa_a^1)$ yields $\sigma_\lambda(v_a^{1,2}) \in \{\kappa_a^2, \kappa_b^2\}$, since 
\begin{align*}
    \sigma_\lambda(v_a^{1,2}) \in V(\mathcal S_a^1) \setminus \{\kappa_a^1\} \text{  and 
 }\sigma_\lambda(v_a^{1,2}) \in V(\mathcal K^1) \setminus \{\kappa_a^2, \kappa_b^2\}
\end{align*}
imply $\sigma_{\lambda-1}$ violates Proposition \ref{prop:rule_of_two} and Proposition \ref{prop:layer_independence}(3), respectively. Since $\sigma_j$ satisfies Propositions \ref{prop:rule_of_two} and \ref{prop:layer_independence}(2,3) for all $0 \leq j \leq \lambda-1$, a case check on the types of vertices in $V(Y_L)$ and considering which of them can be in $\sigma_j(V(X^1) \setminus \{v_a^{1,2}, v_b^{1,2}\})$ implies
\begin{align} \label{eq:layer_ind_case_1_eq_1}
    \sigma_j(V(X^1) \setminus \{v_a^{1,2}, v_b^{1,2}\}) \subset \sigma_s(V(X^1)) \text{ for all } 0 \leq j \leq \lambda-1.
\end{align}
The observations $|\sigma_{\lambda-1}(V(X^1))| = |\sigma_s(V(X^1))|$ and $\sigma_s^{-1}(\{\kappa_a^2, \kappa_b^2\}) \subset V(X^1)$ together imply that, since $\kappa_a^1$ swaps with either $\kappa_a^2$ or $\kappa_b^2$ into $N_{X_L}(v_a^{1,2}) \cap V(\mathcal C_a^2)$ to reach $\sigma_\lambda$ from $\sigma_{\lambda-1}$,
\begin{align} \label{eq:layer_ind_case_1_eq_2}
    \sigma_{\lambda-1}(V(X^1)) \setminus \sigma_s(V(X^1)) \neq \emptyset,
\end{align}
while \eqref{eq:layer_ind_case_1_eq_1} applied to $j = \lambda-1$ and $\sigma_{\lambda-1}(v_a^{1,2}) = \kappa_a^1 \in \sigma_s(V(X^1))$ together imply that
\begin{align} \label{eq:layer_ind_case_1_eq_3}
    |\sigma_{\lambda-1}(V(X^1)) \setminus \sigma_s(V(X^1))| = |\sigma_s(V(X^1)) \setminus \sigma_{\lambda-1}(V(X^1))|\leq 1.
\end{align}
If it were true that $\sigma_{\lambda-1}(v_b^{1,2}) \in \sigma_s(V(X^1))$, recalling that $\sigma_{\lambda-1}(v_a^{1,2}) = \kappa_a^1 \in \sigma_s(V(X^1))$, we get
\begin{align*}
    \sigma_{\lambda-1}(\{v_a^{1,2}, v_b^{1,2}\}) \subset \sigma_s(V(X^1)) \implies \sigma_{\lambda-1}(V(X^1) \setminus \{v_a^{1,2}, v_b^{1,2}\}) \not\subset \sigma_s(V(X^1)),
\end{align*}
with the implication due to \eqref{eq:layer_ind_case_1_eq_2}, contradicting \eqref{eq:layer_ind_case_1_eq_1} on $j = \lambda-1$. Therefore, 
\begin{align*}
    \sigma_{\lambda-1}(v_b^{1,2}) \notin \sigma_s(V(X^1)).
\end{align*}
This result, alongside a case check on the possible values of $\sigma_{\lambda-1}(v_b^{1,2})$ (applying Propositions \ref{prop:rule_of_two} and \ref{prop:layer_independence}(2,3) to $\sigma_{\lambda-1}$), gives
\begin{align} \label{eq:layer_ind_case_1_eq_4}
    \sigma_{\lambda-1}(v_b^{1,2}) \in (V(\mathcal S_b^2) \setminus \{\kappa_b^2\}) \cup V(\mathcal K^2).
\end{align}
Let $\sigma_\xi$ be the final term of $\Sigma$ before $\sigma_{\lambda-1}$ satisfying 
\begin{align*}
    \sigma_\xi(v_b^{1,2}) \neq \sigma_{\lambda-1}(v_b^{1,2});
\end{align*}
$\xi < \lambda-1$ is well-defined since $\sigma_{\lambda-1}(v_b^{1,2}) \notin \sigma_s(V(X^1))$. To reach $\sigma_{\xi+1}$ from $\sigma_\xi$, we swap $\sigma_{\lambda-1}(v_b^{1,2})$ with $\sigma_\xi(v_b^{1,2})$, where 
\begin{align*}
    \sigma_\xi^{-1}(\sigma_{\lambda-1}(v_b^{1,2})) \in N_{X_L}(v_b^{1,2}) \cap V(\mathcal C_b^2).
\end{align*}
Indeed, see \eqref{eq:layer_ind_case_1_eq_4}; if we had that
\begin{align*}
    \sigma_\xi^{-1}(\sigma_{\lambda-1}(v_b^{1,2})) \in N_{X_L}(v_b^{1,2}) \cap V(\mathcal C_b^1),
\end{align*}
$\sigma_\xi$ would violate Proposition \ref{prop:rule_of_two} on $\mathcal C_b^2$ if $\sigma_{\lambda-1}(v_b^{1,2}) \in V(\mathcal S_b^2) \setminus \{\kappa_b^2\}$ and Proposition \ref{prop:layer_independence}(2,3) if $\sigma_{\lambda-1}(v_b^{1,2}) \in V(\mathcal K^2)$. By the definition of $\xi$, $\sigma_j(v_b^{1,2})$ remains unchanged for $\xi+1 \leq j \leq \lambda-1$. Furthermore, from \eqref{eq:layer_ind_case_1_eq_4}, we observe that
\begin{align} \label{eq:layer_ind_case_1_eq_5}
    \sigma_\xi(v_b^{1,2}) \in \{\kappa_a^2, \kappa_b^2\} \cup V(\mathcal K^2), 
\end{align}
since the statements
\begin{align*}
    \sigma_\xi(v_b^{1,2}) \in (V(\mathcal S_a^3) \setminus \{\kappa_a^3\}) \cup (V(\mathcal S_b^3) \setminus \{\kappa_b^3\}) \text {  and  }\sigma_\xi(v_b^{1,2}) \in V(\mathcal K^3)
\end{align*}
would result in $\sigma_\xi$ violating Propositions \ref{prop:rule_of_two} and \ref{prop:layer_independence}(2,3), respectively. If $\sigma_\xi(v_b^{1,2}) \in \{\kappa_a^2, \kappa_b^2\}$, then 
\begin{align*}
    \sigma_\lambda^{-1}(\sigma_\xi(v_b^{1,2})) \in V(X^1);
\end{align*}
this is immediate if $\sigma_\xi(v_b^{1,2}) = \sigma_\lambda(v_a^{1,2})$ (recall that $\sigma_\lambda(v_a^{1,2}) \in \{\kappa_a^2, \kappa_b^2\}$), and if $\sigma_\xi(v_b^{1,2}) \neq \sigma_\lambda(v_a^{1,2})$, the assumption $\sigma_{\lambda-1}^{-1}(\sigma_\xi(v_b^{1,2})) = \sigma_\lambda^{-1}(\sigma_\xi(v_b^{1,2})) \notin V(X^1)$ (we swap $\sigma_\lambda(v_a^{1,2})$ and $\kappa_a^1$ to reach $\sigma_\lambda$ from $\sigma_{\lambda-1}$), alongside $\sigma_{\lambda-1}^{-1}(\sigma_\lambda(v_a^{1,2})) \in N_{X_L}(v_a^{1,2}) \cap V(\mathcal C_a^2)$, would contradict \eqref{eq:layer_ind_case_1_eq_3}, since we would have
\begin{align*}
     \{\sigma_\xi(v_b^{1,2}), \sigma_\lambda(v_a^{1,2})\} \subseteq \sigma_s(V(X^1)) \setminus \sigma_{\lambda-1}(V(X^1)).
\end{align*}
Thus, $\sigma_\xi(v_b^{1,2})$ traverses a path from $v_b^{1,2}$ to $v_a^{1,2}$, not involving $v_b^{1,2}$ past $\sigma_\xi$, as we go from $\sigma_\xi$ to $\sigma_\lambda$. Certainly, this traversal swaps $\sigma_\xi(v_b^{1,2})$ along both $V(\mathcal C_a^2)$ and $V(\mathcal C_b^2)$. Suppose $\sigma_\xi(v_b^{1,2}) = \kappa_a^2$. Due to \eqref{eq:layer_ind_case_1_eq_4}, $\sigma_\xi(v_b^{1,2}) = \kappa_a^2$ must have swapped with $\sigma_{\lambda-1}(v_b^{1,2}) \in V(\mathcal K^2)$ to reach $\sigma_{\xi+1}$ from $\sigma_\xi$. Let $\zeta > \xi+1$ be the earliest such index satisfying 
\begin{align*}
    \sigma_\zeta(\sigma_{\xi+1}^{-1}(\kappa_a^2)) \neq \kappa_a^2;
\end{align*}
$\zeta$ is well-defined since $\sigma_\xi(v_b^{1,2}) = \kappa_a^2$ swaps along both $V(\mathcal C_a^2)$ and $V(\mathcal C_b^2)$ to reach $\sigma_\lambda$. The vertex $\sigma_\zeta(\sigma_{\xi+1}^{-1}(\kappa_a^2))$ must have swapped with $\kappa_a^2$ to reach $\sigma_\zeta$ from $\sigma_{\zeta-1}$. Since $\sigma_\zeta(v_b^{1,2}) = \sigma_{\lambda-1}(v_b^{1,2}) \in V(\mathcal K^2)$, $\sigma_\zeta(\sigma_{\xi+1}^{-1}(\kappa_a^2)) \in N_{Y_L}(\kappa_a^2)$, and $\kappa_a^2$ are all not in $V(\mathcal S_b^2) \setminus \{\kappa_b^2\}$, we have
\begin{align*}
    |\sigma_\zeta(V(\mathcal C_b^2)) \setminus (V(\mathcal S_b^2) \setminus \{\kappa_b^2\})| \geq 3,
\end{align*}
contradicting Proposition \ref{prop:rule_of_two}. See Figure \ref{fig:layer_independence_1.2a} for an illustration. Thus, $\sigma_\xi(v_b^{1,2}) = \kappa_b^2$. Let $\zeta > \xi+1$ be the earliest such index satisfying 
\begin{align*}
    \sigma_\zeta^{-1}(\kappa_b^2) \in V(\mathcal C_a^2) \setminus N_{X_L}[\bd(\mathcal C_a^2)];
\end{align*}
$\zeta$ is well-defined since $\kappa_b^2$ goes from $v_b^{1,2}$ to $v_a^{1,2}$ to reach $\sigma_\lambda$. Here, $\kappa_b^2$ must have swapped with $\kappa_a^2$ to reach $\sigma_\zeta$ from $\sigma_{\zeta-1}$: as in the preceding case, $\kappa_b^2$ would be ``stuck" otherwise, due to $\sigma_\zeta$ satisfying Proposition \ref{prop:rule_of_two} (on $\mathcal C_a^2$). But then $\sigma_\zeta$ would violate Proposition \ref{prop:layer_independence}(4) on $\ell=1$, namely since $\{\kappa_a^2, \kappa_b^2\} \subset V(\mathcal K^1)$, which implies
\begin{align*}
    |\sigma_\zeta^{-1}(V(\mathcal K^1)) \setminus (V(\mathcal P_a^1) \cup V(\mathcal P_b^1))| \geq 2.
\end{align*}
See Figure \ref{fig:layer_independence_1.2b} for an illustration. So, by \eqref{eq:layer_ind_case_1_eq_5}, we must have $\sigma_\xi(v_b^{1,2}) \in V(\mathcal K^2)$. Since we swap $\sigma_{\lambda-1}(v_b^{1,2})$ with $\sigma_\xi(v_b^{1,2})$ to reach $\sigma_{\xi+1}$ from $\sigma_\xi$, it follows from \eqref{eq:layer_ind_case_1_eq_4} that $\sigma_{\lambda-1}(v_b^{1,2}) \in V(\mathcal K^2)$, since there is no element of $V(\mathcal K^2)$ (in particular, $\sigma_\xi(v_b^{1,2})$) that can swap with an element of $V(\mathcal S_b^2) \setminus \{\kappa_b^2\}$. But then $\sigma_\xi$ violates Proposition \ref{prop:layer_independence}(4) (on $\ell = 2$), which is our final contradiction in this case. We conclude that Proposition \ref{prop:layer_independence}(1) cannot have been the property violated by $\sigma_\lambda$.

\begin{figure}[ht]
  \centering
  \subfloat[Assuming $\sigma_\xi(v_b^{1,2}) = \kappa_a^2$, with $\mu' = \sigma_\zeta(\sigma_{\xi+1}^{-1}(\kappa_a^2))$. Here, $\sigma_\zeta$ violates Proposition \ref{prop:rule_of_two} on $\mathcal C_b^2$ due to $\kappa_a^2$, $\mu$, and $\mu'$, none of which are in $V(\mathcal S_b^2) \setminus \{\kappa_b^2\}$.]{\includegraphics[width=0.45\textwidth]{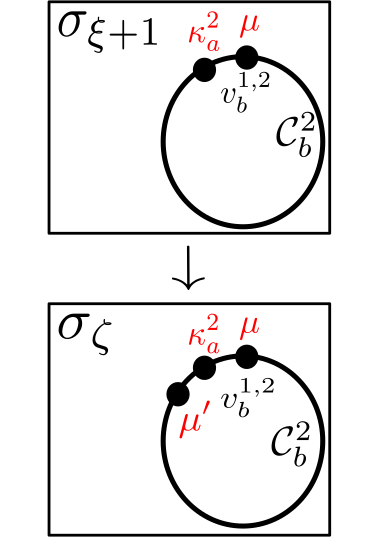}\label{fig:layer_independence_1.2a}}
  \hfil
  \subfloat[Assuming $\sigma_\xi(v_b^{1,2}) = \kappa_b^2$. Here, $\sigma_\zeta$ violates Proposition \ref{prop:layer_independence}(4) on $\ell=1$ due to $\kappa_a^2$, $\kappa_b^2$.]{\includegraphics[width=0.44\textwidth]{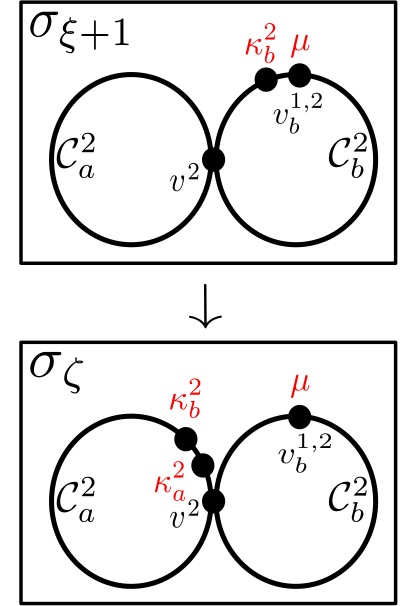}\label{fig:layer_independence_1.2b}}
  \caption{Configurations in $\Sigma$ used to raise a contradiction for Subcase 1.2 when we assume that $\sigma_\xi(v_b^{1,2}) \in \{\kappa_a^2, \kappa_b^2\}$. We let $\mu = \sigma_{\lambda-1}(v_b^{1,2})$.}
  \label{fig:layer_independence_1.2}
\end{figure}

\subsubsection*{\textbf{Case 2: For some $\ell \geq 2$, we have $\sigma_\lambda^{-1}(\kappa_a^\ell) \notin V(X^{\ell-1}) \cup V(X^\ell)$ or $\sigma_\lambda^{-1}(\kappa_b^\ell) \notin V(X^{\ell-1}) \cup V(X^\ell)$.}}

This case is relevant only for $L \geq 2$. Assume this statement holds for some $2 \leq \ell \leq L$. We only study the setting in which $\sigma_\lambda^{-1}(\kappa_a^\ell) \notin V(X^{\ell-1}) \cup V(X^\ell)$. Raising a contradiction when $\sigma_\lambda^{-1}(\kappa_b^\ell) \notin V(X^{\ell-1}) \cup V(X^\ell)$ is entirely analogous. Notice that 
\begin{align*}
    \sigma_{\lambda-1}^{-1}(\kappa_a^\ell) \in \{v_a^{\ell-2, \ell-1}, v_b^{\ell-2, \ell-1}, v_a^{\ell, \ell+1}, v_b^{\ell, \ell+1}\}
\end{align*}
(precisely, the RHS above is the subset of these vertices defined for $\ell$). To reach $\sigma_\lambda$ from $\sigma_{\lambda-1}$, the vertex $\sigma_{\lambda-1}(\sigma_\lambda^{-1}(\kappa_a^\ell))$ that $\kappa_a^\ell$ swaps with satisfies
\begin{align} \label{eq:layer_ind_case_2_eq_1}
    \sigma_{\lambda-1}(\sigma_\lambda^{-1}(\kappa_a^\ell)) \in \{\kappa_a^{\ell-1}, \kappa_b^{\ell-1}\} \cup V(\mathcal K^\ell) \cup (V(\mathcal S_a^\ell) \setminus \{\kappa_a^\ell\}),
\end{align}
as $\sigma_{\lambda-1}(\sigma_\lambda^{-1}(\kappa_a^\ell)) \in V(\mathcal K_b^{\ell-1})$ would cause $\sigma_{\lambda-1}$ to violate Proposition \ref{prop:layer_independence}(4), since we then get
\begin{align*}
     \{\kappa_a^\ell, \sigma_{\lambda-1}(\sigma_\lambda^{-1}(\kappa_a^\ell))\} \subset \sigma_{\lambda-1}^{-1}(V(\mathcal K^{\ell-1})) \setminus (V(\mathcal P_a^{\ell-1}) \cup V(\mathcal P_b^{\ell-1}))
\end{align*}
which implies that
\begin{align*}
    |\sigma_{\lambda-1}^{-1}(V(\mathcal K^{\ell-1})) \setminus (V(\mathcal P_a^{\ell-1}) \cup V(\mathcal P_b^{\ell-1}))| \geq 2.
\end{align*}
We break into subcases based on the value of $\sigma_{\lambda-1}^{-1}(\kappa_a^\ell)$.

\smallskip

\paragraph{\textbf{Subcase 2.1: $\sigma_{\lambda-1}^{-1}(\kappa_a^\ell) \in \{v_a^{\ell-2, \ell-1}, v_b^{\ell-2, \ell-1}\}$.}} This subcase applies for $\ell \geq 3$. The vertex which $\kappa_a^\ell$ swaps onto satisfies
\begin{align*}
    \sigma_\lambda^{-1}(\kappa_a^\ell) \in (N_{X_L}(v_a^{\ell-2, \ell-1}) \cup N_{X_L}(v_b^{\ell-2, \ell-1})) \cap V(X^{\ell-2}).
\end{align*}
From \eqref{eq:layer_ind_case_2_eq_1}, we deduce that the vertex $\kappa_a^\ell$ swaps with to reach $\sigma_\lambda$ from $\sigma_{\lambda-1}$ satisfies
\begin{align*}
    \sigma_{\lambda-1}(\sigma_\lambda^{-1}(\kappa_a^\ell)) \in \{\kappa_a^{\ell-1}, \kappa_b^{\ell-1}\},
\end{align*}
since the statements
\begin{align*}
    \sigma_{\lambda-1}(\sigma_\lambda^{-1}(\kappa_a^\ell)) \in V(\mathcal S_a^\ell) \setminus \{\kappa_a^\ell\} \text{  and  } \sigma_{\lambda-1}(\sigma_\lambda^{-1}(\kappa_a^\ell)) \in V(\mathcal K^\ell)
\end{align*}
respectively imply that $\sigma_{\lambda-1}$ violates Proposition \ref{prop:rule_of_two} and Proposition \ref{prop:layer_independence}(2,3). Proceeding backwards in $\Sigma$, $\sigma_{\lambda-2} \neq \sigma_\lambda$ ($\sigma_{\lambda-1}^{-1}(\kappa_a^\ell) \neq \sigma_s^{-1}(\kappa_a^\ell)$ implies $\sigma_{\lambda-1} \neq \sigma_s$, so $\sigma_{\lambda-2}$ is well-defined, and $\lambda$ is minimal). Now, if we had that
\begin{align*}
    \sigma_{\lambda-2}^{-1}\left(\sigma_{\lambda-1}(\sigma_\lambda^{-1}(\kappa_a^\ell))\right) = \sigma_\lambda^{-1}(\kappa_a^\ell) \text{ and } \sigma_{\lambda-2}^{-1}(\kappa_a^\ell) = \sigma_{\lambda-1}^{-1}(\kappa_a^\ell),
\end{align*}
then swapping $\sigma_{\lambda-1}(\sigma_\lambda^{-1}(\kappa_a^\ell))$ with $\kappa_a^\ell$ directly from $\sigma_{\lambda-2}$ would contradict $\lambda$ being minimal. Thus, we have
\begin{align*}
    \sigma_{\lambda-2}^{-1}(\kappa_a^\ell) \in N_{X_L}(\sigma_{\lambda-1}^{-1}(\kappa_a^\ell)) \cap V(X^{\ell-1}),
\end{align*}
since $\sigma_{\lambda-2}$ satisfies Proposition \ref{prop:rule_of_two} and neither $\sigma_{\lambda-1}(\sigma_\lambda^{-1}(\kappa_a^\ell))$ nor $\kappa_a^\ell$ can swap with vertices in
\begin{align*}
    (V(\mathcal S_a^{\ell-2}) \setminus \{\kappa_a^{\ell-2}\}) \cup (V(\mathcal S_b^{\ell-2}) \setminus \{\kappa_b^{\ell-2}\}).
\end{align*} 
But any vertex in $N_{Y_L}(\kappa_a^\ell)$ with which $\kappa_a^\ell$ can swap to reach $\sigma_{\lambda-2}$ from $\sigma_{\lambda-1}$ raises a contradiction: a vertex of $\{\kappa_a^{\ell-1}, \kappa_b^{\ell-1}\} \cup V(\mathcal K^{\ell-1})$ implies $\sigma_{\lambda-1}$ violates Proposition \ref{prop:layer_independence}(4) (respectively, on layers $\ell-2$ and $\ell-1$, due to $\sigma_{\lambda-1}(\sigma_\lambda^{-1}(\kappa_a^\ell))$ and $\kappa_a^\ell$), a vertex of $V(\mathcal K^\ell)$ implies $\sigma_{\lambda-1}$ violates Proposition \ref{prop:layer_independence}(2,3), and a vertex in $V(\mathcal S_a^\ell) \setminus \{\kappa_a^\ell\}$ implies $\sigma_{\lambda-1}$ violates Proposition \ref{prop:rule_of_two}. See Figure \ref{fig:layer_independence_2.1} for an illustration.

\begin{figure}[ht]
    \centering
    \includegraphics[width=0.5\textwidth]{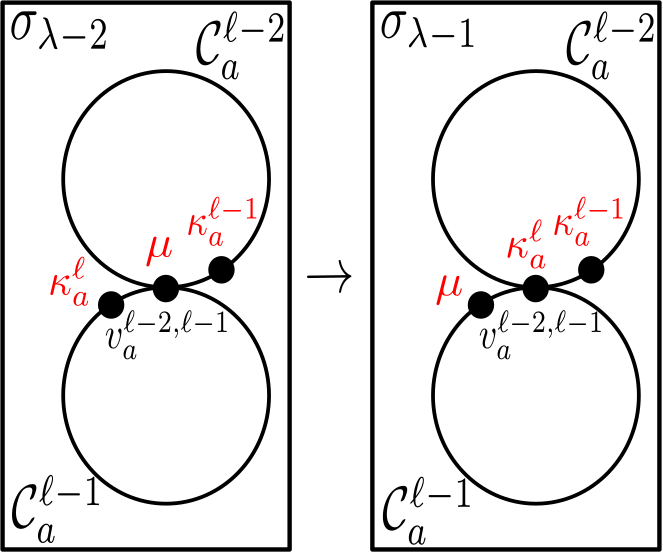}
    \caption{Configurations in $\Sigma$ used to raise a contradiction for Subcase 2.1, illustrated for $\sigma_{\lambda-1}^{-1}(\kappa_a^\ell) = v_a^{\ell-2, \ell-1}$ and $\sigma_{\lambda-1}(\sigma_\lambda^{-1}(\kappa_a^\ell)) = \kappa_a^{\ell-1}$. Here, $\kappa_a^\ell$ must swap with a vertex $\mu \in N_{Y_L}(\kappa_a^\ell)$ to reach $\sigma_{\lambda-2}$ from $\sigma_{\lambda-1}$, for which all possibilities of $\mu$ raise a contradiction.}
    \label{fig:layer_independence_2.1}
\end{figure}

\smallskip

\paragraph{\textbf{Subcase 2.2: $\sigma_{\lambda-1}^{-1}(\kappa_a^\ell) \in \{v_a^{\ell, \ell+1}, v_b^{\ell, \ell+1}\}$.}} This subcase applies for $2 \leq \ell < L$. The vertex $\kappa_a^\ell$ swaps onto satisfies
\begin{align*}
    \sigma_\lambda^{-1}(\kappa_a^\ell) \in (N_{X_L}(v_a^{\ell, \ell+1}) \cup N_{X_L}(v_b^{\ell, \ell+1})) \cap V(X^{\ell+1}).
\end{align*}
From \eqref{eq:layer_ind_case_2_eq_1}, we deduce that the vertex $\kappa_a^\ell$ swaps with to reach $\sigma_\lambda$ from $\sigma_{\lambda-1}$ satisfies
\begin{align} \label{eq:layer_ind_case_2_eq_2}
    \sigma_{\lambda-1}(\sigma_\lambda^{-1}(\kappa_a^\ell)) \in \{\kappa_a^{\ell+1}, \kappa_b^{\ell+1}\},
\end{align}
since the statements
\begin{align*}
    & \sigma_{\lambda-1}(\sigma_\lambda^{-1}(\kappa_a^\ell)) \in V(\mathcal S_a^\ell) \setminus \{\kappa_a^\ell\},
    & \sigma_{\lambda-1}(\sigma_\lambda^{-1}(\kappa_a^\ell)) \in \{\kappa_a^{\ell-1}, \kappa_b^{\ell-1}\} \cup (V(\mathcal K^\ell) \setminus \{\kappa_a^{\ell+1}, \kappa_b^{\ell+1}\})
\end{align*} 
imply that $\sigma_{\lambda-1}$ violates Proposition \ref{prop:rule_of_two} and Proposition \ref{prop:layer_independence}(1-3), respectively.
Let $\sigma_\xi$, with $\xi < \lambda-1$, be the last term in $\Sigma$ before $\sigma_{\lambda-1}$ satisfying 
\begin{align*}
    \sigma_\xi^{-1}\left(\sigma_{\lambda-1}(\sigma_\lambda^{-1}(\kappa_a^\ell))\right) \in V(X^\ell);
\end{align*}
$\xi$ is well-defined since (see \eqref{eq:layer_ind_case_2_eq_2}) $\sigma_s^{-1}(\{\kappa_a^{\ell+1}, \kappa_b^{\ell+1}\}) \subset V(X^\ell)$, which also implies $\sigma_s \neq \sigma_{\lambda-1}$ since 
\begin{align*}
    \sigma_{\lambda-1}^{-1}\left(\sigma_{\lambda-1}(\sigma_\lambda^{-1}(\kappa_a^\ell))\right) = \sigma_\lambda^{-1}(\kappa_a^\ell) \notin V(X^\ell).
\end{align*}
By the definition of $\xi$,
\begin{align} \label{eq:layer_ind_case_2_eq_3}
    \sigma_j^{-1}\left(\sigma_{\lambda-1}(\sigma_\lambda^{-1}(\kappa_a^\ell)) \right) \notin V(X^\ell) \text{ for } \xi+1 \leq j \leq \lambda-1.
\end{align}
Since $\sigma_{\lambda-1}(\sigma_\lambda^{-1}(\kappa_a^\ell))$ traverses a path to $\sigma_{\lambda-1}^{-1}(\kappa_a^\ell)$ as we go from $\sigma_{\xi+1}$ to $\sigma_\lambda$, not involving $V(X^\ell)$ until $\sigma_\lambda$, we further deduce that 
\begin{align*}
    \sigma_\xi^{-1}\left(\sigma_{\lambda-1}(\sigma_\lambda^{-1}(\kappa_a^\ell))\right) = \sigma_{\lambda-1}^{-1}(\kappa_a^\ell),
\end{align*}
since $\sigma_{\lambda-1}(\sigma_\lambda^{-1}(\kappa_a^\ell))$ cannot traverse a path from $\{v_a^{\ell, \ell+1}, v_b^{\ell, \ell+1}\} \setminus \{\sigma_{\lambda-1}^{-1}(\kappa_a^\ell)\}$ to $\sigma_{\lambda-1}^{-1}(\kappa_a^\ell)$ as we swap from $\sigma_{\xi+1}$ to $\sigma_\lambda$ without violating Proposition \ref{prop:rule_of_two} or \ref{prop:layer_independence}(2,4).\footnote{This can be proved using arguments essentially identical to those in Subcase 1.2.} Thus, from $\sigma_\xi$ to $\sigma_{\xi+1}$, $\sigma_{\lambda-1}(\sigma_\lambda^{-1}(\kappa_a^\ell))$ swaps into 
\begin{align*}
    N_{X_L}(\sigma_{\lambda-1}^{-1}(\kappa_a^\ell)) \cap V(X^{\ell+1})
\end{align*}
from $\sigma_{\lambda-1}^{-1}(\kappa_a^\ell)$, and since $\sigma_{\xi+1}$ satisfies Proposition \ref{prop:layer_independence}(2,3), a case check on $N_{Y_L}\left(\sigma_{\lambda-1}(\sigma_\lambda^{-1}(\kappa_a^\ell))\right)$ (see \eqref{eq:layer_ind_case_2_eq_2}) yields
\begin{align*}
    \sigma_{\xi+1}(\sigma_{\lambda-1}^{-1}(\kappa_a^\ell)) \in (V(\mathcal S_a^{\ell+1}) \setminus \{\kappa_a^{\ell+1}\}) \cup (V(\mathcal S_b^{\ell+1}) \setminus \{\kappa_b^{\ell+1}\}) \cup V(\mathcal K^{\ell+1}).
\end{align*}
If it were true that
\begin{align} \label{eq:layer_ind_case_2_eq_4}
    \sigma_{\xi+1}(\sigma_{\lambda-1}^{-1}(\kappa_a^\ell)) \in (V(\mathcal S_a^{\ell+1}) \setminus \{\kappa_a^{\ell+1}\}) \cup (V(\mathcal S_b^{\ell+1}) \setminus \{\kappa_b^{\ell+1}\}),
\end{align}
then it must be that $\sigma_{\lambda-1}(\sigma_\lambda^{-1}(\kappa_a^\ell))$ is the corresponding knob vertex. So from \eqref{eq:layer_ind_case_2_eq_3}, we deduce that $\sigma_j(\sigma_{\lambda-1}^{-1}(\kappa_a^\ell))$ would be fixed for $\xi+1 \leq j \leq \lambda-1$. Taking $j = \xi+1$ and $j = \lambda-1$ would imply 
\begin{align*}
    \sigma_{\xi+1}(\sigma_{\lambda-1}^{-1}(\kappa_a^\ell)) = \kappa_a^\ell,
\end{align*}
contradicting \eqref{eq:layer_ind_case_2_eq_4}. Therefore, $\sigma_{\xi+1}(\sigma_{\lambda-1}^{-1}(\kappa_a^\ell)) \in V(\mathcal K^{\ell+1})$. We now inductively establish that 
\begin{align} \label{eq:layer_ind_case_2_eq_5}
    \sigma_j(\sigma_{\lambda-1}^{-1}(\kappa_a^\ell)) \in V(\mathcal K^{\ell+1}) \text{ for } \xi+1 \leq j \leq \lambda-1
\end{align}
by showing that it is fixed for all such $j$. Assume for some $j$ satisfying $\xi+1 \leq j < \lambda-1$ that $\sigma_j$ satisfies this claim. Then to reach $\sigma_{j+1}$ from $\sigma_j$, $\sigma_j(\sigma_{\lambda-1}^{-1}(\kappa_a^\ell))$ cannot swap with either $\{\kappa_a^{\ell+1}, \kappa_b^{\ell+1}\}$ (see \eqref{eq:layer_ind_case_2_eq_2}; $\sigma_{j+1}$ would violate either \eqref{eq:layer_ind_case_2_eq_3} or Proposition \ref{prop:layer_independence}(4) for layer $\ell$, depending on whether it swaps with $\sigma_{\lambda-1}(\sigma_\lambda^{-1}(\kappa_a^\ell))$ or not, respectively), another vertex in $V(\mathcal K^{\ell+1})$ ($\sigma_{j+1}$ would violate Proposition \ref{prop:layer_independence}(4) for layer $\ell+1$), or a vertex in 
\begin{align*}
    (V(\mathcal S_a^{\ell+2}) \setminus \{\kappa_a^{\ell+2}\}) \cup (V(\mathcal S_b^{\ell+2}) \setminus \{\kappa_b^{\ell+2}\}) \cup V(\mathcal K^{\ell+2})
\end{align*}
(for the setting $\sigma_j(\sigma_{\lambda-1}^{-1}(\kappa_a^\ell)) \in \{\kappa_a^{\ell+2}, \kappa_b^{\ell+2}\}$, if it applies; $\sigma_{j+1}$ would violate Proposition \ref{prop:rule_of_two} or Proposition \ref{prop:layer_independence}(2,3)). This completes the induction. Now, \eqref{eq:layer_ind_case_2_eq_5} on $j = \lambda-1$ raises a contradiction, since $\kappa_a^\ell \notin V(\mathcal K^{\ell+1})$. See Figure \ref{fig:layer_independence_2.2} for an illustration. This is our final contradiction in this case. We conclude that Proposition \ref{prop:layer_independence}(2) cannot have been the property violated by $\sigma_\lambda$.

\begin{figure}[ht]
  {\includegraphics[width=0.8\textwidth]{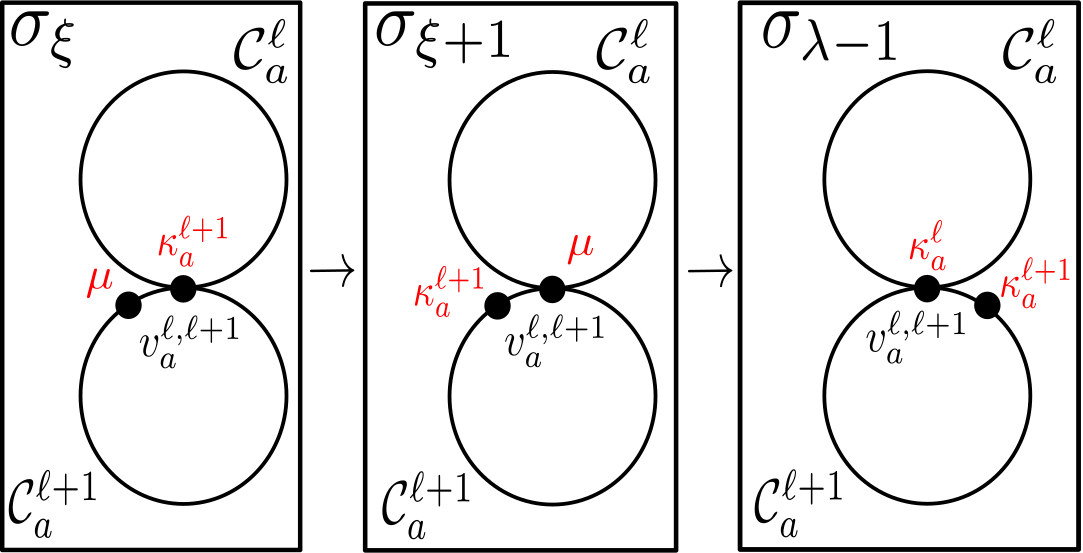}}
  \caption{Configurations in $\Sigma$ used to raise a contradiction for Subcase 2.2, illustrated for $\sigma_{\lambda-1}^{-1}(\kappa_a^\ell) = v_a^{\ell, \ell+1}$ and $\sigma_{\lambda-1}(\sigma_\lambda^{-1}(\kappa_a^\ell)) = \kappa_a^{\ell+1}$. We let $\mu = \sigma_{\xi+1}(v_a^{\ell,\ell+1})$. For $\xi+1 \leq j \leq \lambda-1$, we have $\sigma_j(v_a^{\ell, \ell+1}) \in V(\mathcal K^{\ell+1})$, contradicting $\sigma_{\lambda-1}^{-1}(\kappa_a^\ell) = v_a^{\ell, \ell+1}$.}
  \label{fig:layer_independence_2.2}
\end{figure}

\subsubsection*{\textbf{Case 3: There exists $\ell \in [L-1]$ and $\mu \in V(\mathcal K^\ell) \setminus \{\kappa_a^{\ell+1}, \kappa_b^{\ell+1}\}$ such that $\sigma_\lambda^{-1}(\mu) \notin V(X^\ell)$, or there exists $\mu \in V(\mathcal K^L)$ such that $\sigma_\lambda^{-1}(\mu) \notin V(X^L)$.}}

This case is relevant only for $L \geq 2$. The proceeding argument raises a contradiction both when assuming the existence of $\ell \in [L-1]$ for which there exists $\mu \in V(\mathcal K^\ell) \setminus \{\kappa_a^{\ell+1}, \kappa_b^{\ell+1}\}$ such that $\sigma_\lambda^{-1}(\mu) \notin V(X^\ell)$, and also when assuming the existence of $\mu \in V(\mathcal K^L)$ such that $\sigma_\lambda^{-1}(\mu) \notin V(X^L)$, taking $\ell = L$.

Observe that (where, more precisely, the RHS is the subset that is well-defined for $\ell$)
\begin{align*}
    \sigma_{\lambda-1}^{-1}(\mu) \in \{v_a^{\ell-1, \ell}, v_b^{\ell-1, \ell}, v_a^{\ell, \ell+1}, v_b^{\ell, \ell+1}\},
\end{align*}
and also that the vertex $\sigma_{\lambda-1}(\sigma_\lambda^{-1}(\mu))$ that $\mu$ swaps with to reach $\sigma_\lambda$ from $\sigma_{\lambda-1}$ satisfies
\begin{align} \label{eq:layer_ind_case_3_eq_1}
    \sigma_{\lambda-1}(\sigma_\lambda^{-1}(\mu)) \in \{\kappa_a^{\ell}, \kappa_b^{\ell}\},
\end{align}
since $N_{Y_L}(\mu) \subset V(\mathcal K^\ell) \cup \{\kappa_a^{\ell}, \kappa_b^{\ell}\}$, and $\sigma_{\lambda-1}(\sigma_\lambda^{-1}(\mu)) \in V(\mathcal K^\ell)$ would imply $\sigma_{\lambda-1}$ violates Proposition \ref{prop:layer_independence}(4) on layer $\ell$. If $\sigma_{\lambda-1}^{-1}(\mu) \in \{v_a^{\ell, \ell+1}, v_b^{\ell, \ell+1}\}$ (valid for $\ell < L$), then we would have that
\begin{align*}
    \sigma_\lambda^{-1}(\mu) \in (N_{X_L}(v_a^{\ell, \ell+1}) \cup N_{X_L}(v_b^{\ell, \ell+1})) \cap V(X^{\ell+1}),
\end{align*}
from which \eqref{eq:layer_ind_case_3_eq_1} implies that $\sigma_{\lambda-1}$ violates Proposition \ref{prop:layer_independence}(1) if $\ell = 1$ and Proposition \ref{prop:layer_independence}(2) if $\ell \geq 2$. Thus, it must be that $\ell \geq 2$ and $\sigma_{\lambda-1}^{-1}(\mu) \in \{v_a^{\ell-1, \ell}, v_b^{\ell-1, \ell}\}$, so that
\begin{align} \label{eq:layer_ind_case_3_eq_2}
    \sigma_\lambda^{-1}(\mu) \in (N_{X_L}(v_a^{\ell-1, \ell}) \cup N_{X_L}(v_b^{\ell-1, \ell})) \cap V(X^{\ell-1}).
\end{align}
Proceeding backwards in $\Sigma$, $\sigma_{\lambda-2} \neq \sigma_\lambda$ ($\sigma_{\lambda-1}^{-1}(\mu) \neq \sigma_s^{-1}(\mu)$ implies that $\sigma_{\lambda-1} \neq \sigma_s$, so $\sigma_{\lambda-2}$ is well-defined, and $\lambda$ is minimal). If neither $\mu$ nor $\sigma_{\lambda-1}(\sigma_\lambda^{-1}(\mu))$ were swapped to reach $\sigma_{\lambda-2}$ from $\sigma_{\lambda-1}$, swapping them directly from $\sigma_{\lambda-2}$ would contradict $\lambda$ being minimal. Furthermore, from \eqref{eq:layer_ind_case_3_eq_1} and \eqref{eq:layer_ind_case_3_eq_2}, $\mu$ swaps onto 
\begin{align*}
    N_{X_L}(\sigma_{\lambda-1}^{-1}(\mu)) \cap V(X^\ell)
\end{align*}
to reach $\sigma_{\lambda-2}$ from $\sigma_{\lambda-1}$, since $\sigma_{\lambda-1}$ satisfies Proposition \ref{prop:rule_of_two} and neither $\mu$ nor $\sigma_{\lambda-1}(\sigma_\lambda^{-1}(\kappa_a^\ell))$ can swap with vertices in the set
\begin{align*}
    (V(\mathcal S_a^{\ell-1}) \setminus \{\kappa_a^{\ell-1}\}) \cup (V(\mathcal S_b^{\ell-1}) \setminus \{\kappa_b^{\ell-1}\}).
\end{align*}
But the vertex $\sigma_{\lambda-2}(\sigma_{\lambda-1}^{-1}(\mu))$ that $\mu$ swaps with to reach $\sigma_{\lambda-2}$ from $\sigma_{\lambda-1}$ implies that $\sigma_{\lambda-2}$ violates Proposition \ref{prop:layer_independence}(4) on layer $\ell-1$ if 
\begin{align*}
    \sigma_{\lambda-2}(\sigma_{\lambda-1}^{-1}(\mu)) \in \{\kappa_a^\ell, \kappa_b^\ell\}
\end{align*}
due to $\sigma_{\lambda-1}(\sigma_\lambda^{-1}(\mu))$ and $\sigma_{\lambda-2}(\sigma_{\lambda-1}^{-1}(\mu))$ (see \eqref{eq:layer_ind_case_3_eq_1}) and on layer $\ell$ if 
\begin{align*}
    \sigma_{\lambda-2}(\sigma_{\lambda-1}^{-1}(\mu)) \in V(\mathcal K^\ell)
\end{align*}
due to $\mu$ and $\sigma_{\lambda-2}(\sigma_{\lambda-1}^{-1}(\mu))$. See Figure \ref{fig:layer_independence_3} for an illustration. We conclude that Proposition \ref{prop:layer_independence}(3) cannot have been the property violated by $\sigma_\lambda$.

\begin{figure}[ht]
    \centering
    \includegraphics[width=0.5\textwidth]{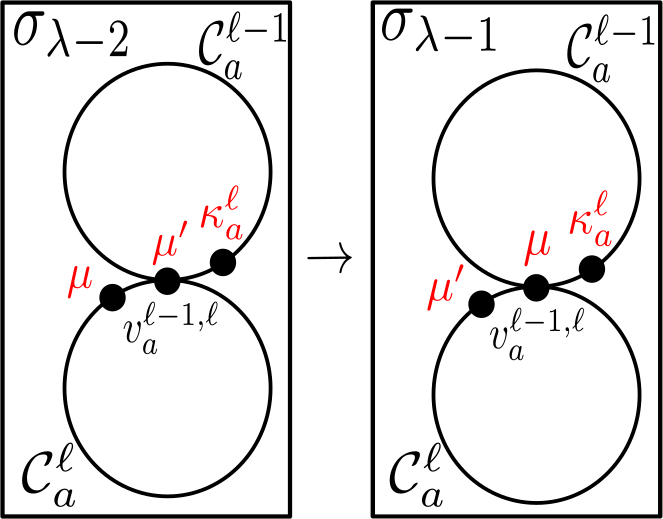}
    \caption{Configurations in $\Sigma$ used to raise a contradiction in Case 3, illustrated for $\sigma_{\lambda-1}(\sigma_\lambda^{-1}(\mu)) = \kappa_a^\ell$ and $\sigma_{\lambda-1}^{-1}(\mu) = v_a^{\ell-1,\ell}$. We let $\mu' = \sigma_{\lambda-2}(\sigma_{\lambda-1}^{-1}(\mu))$. All possibilities of $\mu'$ will cause $\sigma_{\lambda-2}$ to violate Proposition \ref{prop:layer_independence}(4).}
    \label{fig:layer_independence_3}
\end{figure}

\subsubsection*{\textbf{Case 4: There exists $\ell \in [L]$ such that $|\sigma_\lambda^{-1}(V(\mathcal K^\ell)) \setminus (V(\mathcal P_a^\ell) \cup V(\mathcal P_b^\ell))| \geq 2$.}}

Assume that this statement holds for some $\ell \in [L]$. We must have that
\begin{align} \label{eq:layer_ind_case_4_eq}
    |\sigma_{\lambda-1}^{-1}(V(\mathcal K^\ell)) \setminus (V(\mathcal P_a^\ell) \cup V(\mathcal P_b^\ell))| = 1 \text{ and } |\sigma_\lambda^{-1}(V(\mathcal K^\ell)) \setminus (V(\mathcal P_a^\ell) \cup V(\mathcal P_b^\ell))| = 2,
\end{align}
since $|\sigma_\lambda^{-1}(V(\mathcal K^\ell)) \setminus (V(\mathcal P_a^\ell) \cup V(\mathcal P_b^\ell))| \geq 2$, $\lambda$ is minimal, and for any index $1 \leq i \leq \lambda$, we have
\begin{align*}
    |\sigma_i^{-1}(V(\mathcal K^\ell)) \setminus (V(\mathcal P_a^\ell) \cup V(\mathcal P_b^\ell))| - |\sigma_{i-1}^{-1}(V(\mathcal K^\ell)) \setminus (V(\mathcal P_a^\ell) \cup V(\mathcal P_b^\ell))| \leq 1.
\end{align*}
By \eqref{eq:layer_ind_case_4_eq} and Proposition \ref{prop:layer_independence}(4), there is a unique $\mu \in V(\mathcal K^\ell)$ such that 
\begin{align*}
    \sigma_{\lambda-1}^{-1}(\mu) \notin V(\mathcal P_a^\ell) \cup V(\mathcal P_b^\ell).
\end{align*}
Furthermore, there exists $\mu' \in V(\mathcal K^\ell)$ such that (exactly) one of the two following statements hold: 
\begin{align*}
    & \sigma_{\lambda-1}(v_a^\ell) = \mu', \ \sigma_\lambda^{-1}(\mu') \in N_{X_L}(v_a^\ell) \setminus V(\mathcal P_a^\ell);
    & \sigma_{\lambda-1}(v_b^\ell) = \mu', \ \sigma_\lambda^{-1}(\mu') \in N_{X_L}(v_b^\ell) \setminus V(\mathcal P_b^\ell).
\end{align*}
Studying the neighborhoods of vertices in $V(\mathcal K^\ell)$ yields 
\begin{align*}
    \sigma_\lambda(\sigma_{\lambda-1}^{-1}(\mu')) \in \{\kappa_a^\ell, \kappa_b^\ell\};
\end{align*}
it can easily be checked that $\sigma_{\lambda-1}$ would violate one of \eqref{eq:layer_ind_case_4_eq}, Proposition \ref{prop:rule_of_two}, or Proposition \ref{prop:layer_independence}(2,3) otherwise. We will assume (the other three cases are analogous)
\begin{align*}
    \sigma_{\lambda-1}(v_a^\ell) = \mu', \ \sigma_\lambda(\sigma_{\lambda-1}^{-1}(\mu')) = \kappa_a^\ell.
\end{align*}
It follows from Lemma \ref{lem:two_statements}(2) that $\sigma_{\lambda-1}^{-1}(\kappa_b^\ell) \in V(\mathcal P_a^\ell) \setminus \{v_a^\ell\}$. But if $\ell=1$, $\sigma_{\lambda-1}$ violates Proposition \ref{prop:layer_independence}(1) since 
\begin{align*}
    \sigma_{\lambda-1}^{-1}(\{\kappa_a^1, \kappa_b^1\}) \subset V(X_a^1) \setminus \{v^1\};
\end{align*}
if $\ell \geq 2$, $\sigma_{\lambda-1}$ violates Proposition \ref{prop:layer_independence}(4) on layer $\ell-1$ since
\begin{align*}
    \sigma_{\lambda-1}^{-1}(\{\kappa_a^\ell, \kappa_b^\ell\}) \subset \sigma_{\lambda-1}^{-1}(V(\mathcal K^{\ell-1})) \setminus (V(\mathcal P_a^{\ell-1}) \cup V(\mathcal P_b^{\ell-1})).
\end{align*}
See Figure \ref{fig:layer_independence_4} for an illustration.

\begin{figure}[ht]
    \centering
    \includegraphics[width=0.8\textwidth]{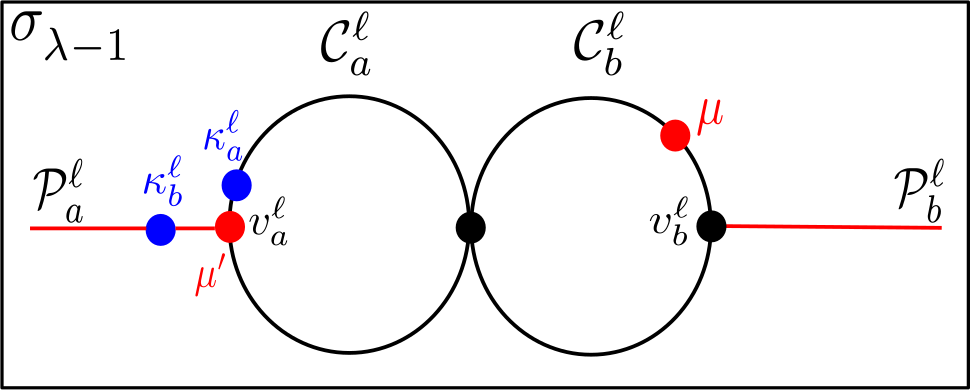}
    \caption{Raising a contradiction for Case 4, illustrated under the assumptions $\sigma_{\lambda-1}(v_a^\ell) = \mu'$, $\sigma_\lambda^{-1}(\mu') \in N_{X_L}(v_a^\ell) \setminus V(\mathcal P_a^\ell)$, and $\sigma_\lambda(\sigma_{\lambda-1}^{-1}(\mu')) = \kappa_a^\ell$. Since there exists $\mu \in V(\mathcal K^\ell)$ for which $\sigma_{\lambda-1}^{-1}(\mu) \notin V(\mathcal P_a^\ell) \cup V(\mathcal P_b^\ell)$ and $\sigma_{\lambda-1}(v_a^\ell) \in V(\mathcal K^\ell)$, Lemma \ref{lem:two_statements}(2) yields $\sigma_{\lambda-1}^{-1}(\kappa_b^\ell) \in V(\mathcal P_a^\ell) \setminus \{v_a^\ell\}$. This implies that $\sigma_{\lambda-1}$ violates Proposition \ref{prop:layer_independence}, regardless of what the value of $\ell$ is.}
    \label{fig:layer_independence_4}
\end{figure}

\medskip

We conclude that Proposition \ref{prop:layer_independence}(4) cannot have been the property violated by $\sigma_\lambda$. Together with the conclusions of the other three cases, we conclude that $\sigma_\lambda$ satisfies all properties of Proposition \ref{prop:layer_independence}, which contradicts $\sigma_\lambda$ failing to satisfy at least one of the properties, completing the proof.
\end{proof}

We can understand Propositions \ref{prop:rule_of_two} and \ref{prop:layer_independence} as separating elements of $V(Y_L)$ so that for any configuration $\sigma \in V(\mathscr{C})$, specific vertices of $Y_L$ can lie only upon specific subgraphs of $X_L$. In particular, for any $\ell \in [L]$, it follows from these two results that 
\begin{align*}
    \sigma(V(\mathcal P_a^\ell) \cup V(\mathcal P_b^\ell) \setminus \{v_a^\ell, v_b^\ell\}) \subseteq V(\mathcal K^\ell) \cup \{\kappa_a^\ell, \kappa_b^\ell\}.
\end{align*}
Proposition \ref{prop:layer_independence} and Lemma \ref{lem:two_statements} together now yield the following result.
\begin{proposition} \label{prop:path_images}
For any $\sigma \in V(\mathscr{C})$, the following two statements hold.
\begin{enumerate}
    \item If $\sigma^{-1}(V(\mathcal K^\ell)) \subset V(\mathcal P_a^\ell) \cup V(\mathcal P_b^\ell)$, then
    \begin{align*}
        \sigma(\{v_a^\ell, v_b^\ell\}) \subset V(\mathcal K^\ell) \implies |\sigma^{-1}(\{\kappa_a^\ell, \kappa_b^\ell\}) \cap ((V(\mathcal P_a^\ell) \setminus \{v_a^\ell\}) \cup (V(\mathcal P_b^\ell) \setminus \{v_b^\ell\}))| = 1.
    \end{align*}
    \item If $\sigma^{-1}(V(\mathcal K^\ell)) \not\subset V(\mathcal P_a^\ell) \cup V(\mathcal P_b^\ell)$, then 
    \begin{align*}
        & \sigma(v_a^\ell) \in V(\mathcal K^\ell) \implies \sigma^{-1}(\{\kappa_a^\ell, \kappa_b^\ell\}) \cap (V(\mathcal P_a^\ell) \setminus \{v_a^\ell\}) \neq \emptyset, \\
        & \sigma(v_b^\ell) \in V(\mathcal K^\ell) \implies \sigma^{-1}(\{\kappa_a^\ell, \kappa_b^\ell\}) \cap (V(\mathcal P_b^\ell) \setminus \{v_b^\ell\}) \neq \emptyset.
    \end{align*}
    \end{enumerate}
\end{proposition}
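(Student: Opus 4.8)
The plan is to deduce Proposition \ref{prop:path_images} immediately from Lemma \ref{lem:two_statements} together with Proposition \ref{prop:layer_independence}. The role of Proposition \ref{prop:layer_independence} is precisely to discharge the hypothesis of Lemma \ref{lem:two_statements} that every configuration of the relevant swap sequence satisfies the four properties: by Proposition \ref{prop:layer_independence} this holds automatically for every configuration in $\mathscr{C}$, and hence along any swap sequence contained in $\mathscr{C}$.

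First I would fix an arbitrary $\sigma \in V(\mathscr{C})$. If $\sigma = \sigma_s$, the two statements can be checked directly — this is exactly the $i = 0$ base case already verified at the start of the proof of Lemma \ref{lem:two_statements}. Otherwise, since $\mathscr{C}$ is a connected component of $\FS(X_L, Y_L)$ containing $\sigma_s$, there is a swap sequence $\Sigma = \{\sigma_i\}_{i=0}^{\lambda}$ in $\mathscr{C}$ with $\sigma_0 = \sigma_s$, $\sigma_\lambda = \sigma$, and $\lambda \geq 1$; every term $\sigma_i$ is a vertex of $\mathscr{C}$. By Proposition \ref{prop:layer_independence}, each $\sigma_i$ (for $0 \leq i \leq \lambda$) satisfies all four of its properties. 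Reading $\Sigma$ as a swap sequence of the shape $\{\sigma_i\}_{i=0}^{\lambda'-1}$ with $\lambda' = \lambda + 1$ — the index shift in the statement of Lemma \ref{lem:two_statements} being purely cosmetic, chosen only to align with the proof of Proposition \ref{prop:layer_independence} — the hypotheses of Lemma \ref{lem:two_statements} are met: $\sigma_0 = \sigma_s$, $\lambda' \geq 1$, and $\sigma_i$ satisfies the four properties for all $1 \leq i \leq \lambda' - 1 = \lambda$. Applying Lemma \ref{lem:two_statements} then yields that both displayed statements hold for every $0 \leq i \leq \lambda$ and every $\ell \in [L]$; specializing to $i = \lambda$ gives exactly the two statements of Proposition \ref{prop:path_images} for $\sigma = \sigma_\lambda$.

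Since all of the combinatorial content has already been established in Lemma \ref{lem:two_statements} and Proposition \ref{prop:layer_independence}, there is no real obstacle in this argument; the only items warranting a word of care are the trivial base case $\sigma = \sigma_s$ and the harmless re-indexing of the swap sequence so that it conforms to the form in which Lemma \ref{lem:two_statements} is stated.
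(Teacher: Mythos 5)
Your argument is correct and is exactly the paper's route: the paper derives Proposition \ref{prop:path_images} as an immediate consequence of Proposition \ref{prop:layer_independence} (which guarantees every configuration of any swap sequence in $\mathscr{C}$ satisfies the four properties) combined with Lemma \ref{lem:two_statements}, which is precisely what you spell out, re-indexing included.
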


We now prove a third invariant of any configuration in $\mathscr{C}$. Toward this, we begin by introducing the following notion of ordering for elements of $V(\mathcal K^\ell)$ in the same partite set, which is illustrated in Figure \ref{fig:left}.

\begin{definition} \label{defn:left}
For $\sigma \in V(\mathscr{C})$, $\ell \in [L]$, and $\mu_1, \mu_2 \in V(\mathcal K^\ell)$ in the same partite set, say that $\mu_1$ is \textit{\textcolor{red}{left}} of $\mu_2$ on $\sigma$ if (exactly) one of the following holds:
\begin{enumerate}
    \item $\sigma^{-1}(\{\mu_1, \mu_2\}) \subset V(\mathcal P_a^\ell)$ and $d(\sigma^{-1}(\mu_2), v_a^\ell) < d(\sigma^{-1}(\mu_1), v_a^\ell)$,
    \item $\sigma^{-1}(\{\mu_1, \mu_2\}) \subset V(\mathcal P_b^\ell)$ and $d(\sigma^{-1}(\mu_1), v_b^\ell) < d(\sigma^{-1}(\mu_2), v_b^\ell)$,
    \item $\sigma^{-1}(\mu_1) \in V(\mathcal P_a^\ell)$ and $\sigma^{-1}(\mu_2) \in V(\mathcal P_b^\ell)$.
\end{enumerate}
\end{definition}

\begin{figure}[ht]
  \centering
  \subfloat[Definition \ref{defn:left}(1).]{\includegraphics[width=0.32\textwidth]{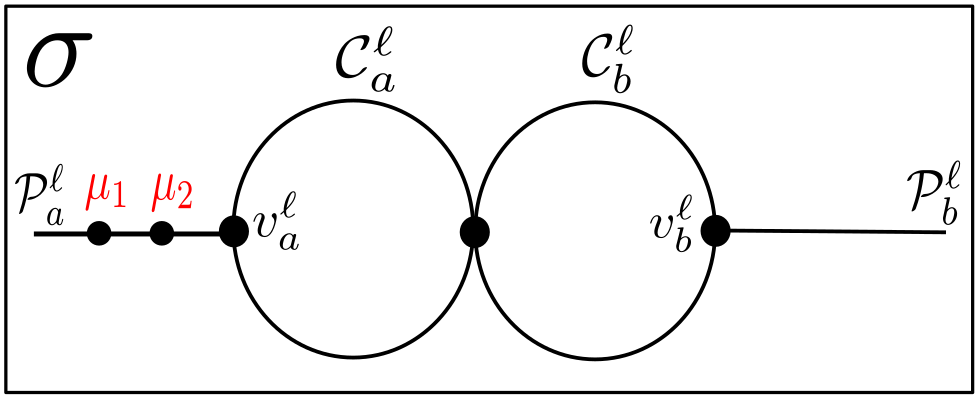}\label{fig:left_1}}
  \hfill
  \subfloat[Definition \ref{defn:left}(2).]{\includegraphics[width=0.32\textwidth]{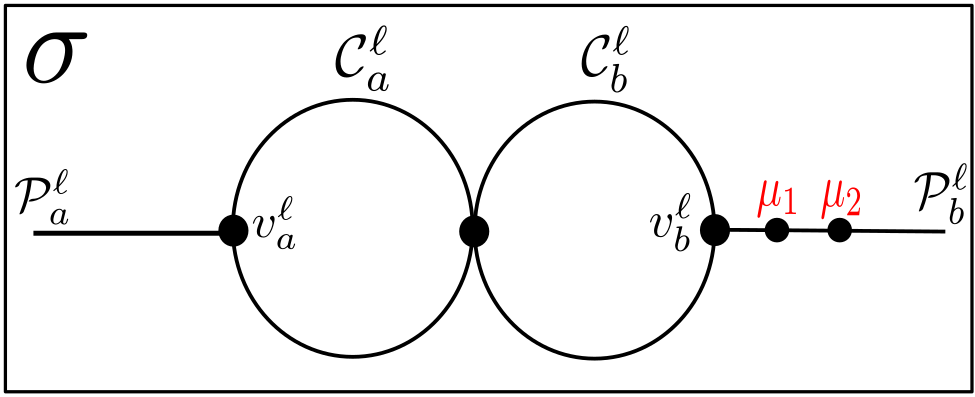}\label{fig:left_2}}
  \hfill
  \subfloat[Definition \ref{defn:left}(3).]{\includegraphics[width=0.32\textwidth]{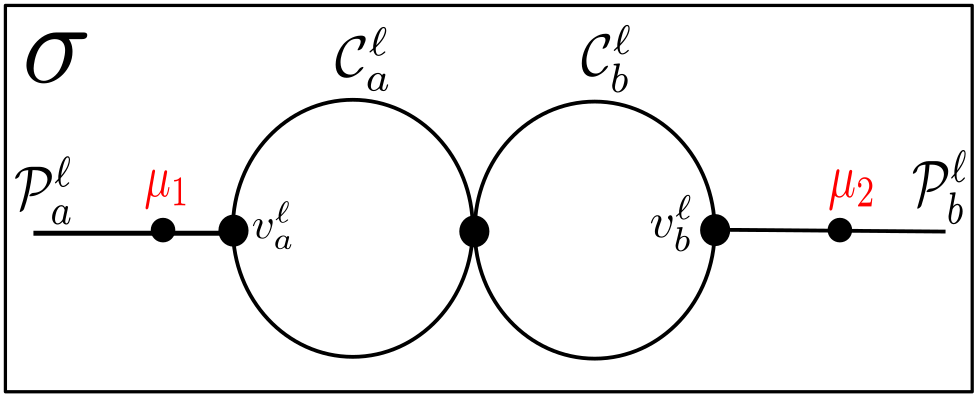}\label{fig:left_3}}
  \caption{An illustration of Definition \ref{defn:left}.}
  \label{fig:left}
\end{figure}

Since $\sigma_s^{-1}(V(\mathcal K^\ell)) \subset V(\mathcal P_a^\ell) \cup V(\mathcal P_b^\ell)$, it follows from Definition \ref{defn:left} that for any $\mu_1, \mu_2 \in V(\mathcal K^\ell)$ in the same partite set, either $\mu_1$ is left of $\mu_2$ on $\sigma_s$ or $\mu_2$ is left of $\mu_1$ on $\sigma_s$. The following proposition asserts that the left relation established by $\sigma_s$ cannot change for other $\sigma \in V(\mathscr{C})$.

\begin{proposition} \label{prop:order_invariance}
Take $\ell \in [L]$ and $\mu_1, \mu_2 \in V(\mathcal K^\ell)$ in the same partite set, with $\mu_1$ left of $\mu_2$ on $\sigma_s$. If $\sigma \in V(\mathscr{C})$ is such that $\sigma^{-1}(\{\mu_1, \mu_2\}) \subset V(\mathcal P_a^\ell) \cup V(\mathcal P_b^\ell)$, then $\mu_1$ is left of $\mu_2$ in $\sigma$.
\end{proposition}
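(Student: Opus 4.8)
The plan is to argue by minimal counterexample, exactly in the style of the proofs of Propositions~\ref{prop:rule_of_two} and \ref{prop:layer_independence}. Suppose the statement fails, and take a swap sequence $\Sigma = \{\sigma_i\}_{i=0}^\lambda$ with $\sigma_0 = \sigma_s$ of minimal length $\lambda$ for which $\sigma_\lambda$ violates it: there are $\ell \in [L]$ and $\mu_1, \mu_2 \in V(\mathcal K^\ell)$ in the same partite set, with $\mu_1$ left of $\mu_2$ on $\sigma_s$ and $\sigma_\lambda^{-1}(\{\mu_1,\mu_2\}) \subset V(\mathcal P_a^\ell) \cup V(\mathcal P_b^\ell)$, but $\mu_1$ not left of $\mu_2$ on $\sigma_\lambda$; since the left relation is total on pairs both of whose preimages lie on the paths, $\mu_2$ is left of $\mu_1$ on $\sigma_\lambda$. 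Two structural facts about $X_L$ will do most of the work: there is no edge of $X_L$ joining a vertex of $\mathcal P_a^\ell$ to a vertex of $\mathcal P_b^\ell$, and the only edges of $X_L$ with exactly one endpoint in $V(\mathcal P_a^\ell) \cup V(\mathcal P_b^\ell)$ are the two edges of $\mathcal C_a^\ell$ incident to $v_a^\ell$ and the two edges of $\mathcal C_b^\ell$ incident to $v_b^\ell$. I also record a consequence of Definition~\ref{defn:left}, call it $(\star)$: if $\sigma \in V(\mathscr{C})$ and $\mu, \mu' \in V(\mathcal K^\ell)$ lie in the same partite set with $\sigma^{-1}(\mu) \in \{v_a^\ell, v_b^\ell\}$ and $\sigma^{-1}(\mu') \in V(\mathcal P_a^\ell) \cup V(\mathcal P_b^\ell)$, then $\mu$ is left of $\mu'$ on $\sigma$ if and only if $\sigma^{-1}(\mu') \in V(\mathcal P_b^\ell)$; this is immediate from the three cases of Definition~\ref{defn:left} since $v_a^\ell$ (resp.\ $v_b^\ell$) is the unique vertex of $\mathcal P_a^\ell$ (resp.\ $\mathcal P_b^\ell$) at distance $0$ from itself.

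First I would rule out the case $\sigma_{\lambda-1}^{-1}(\{\mu_1,\mu_2\}) \subset V(\mathcal P_a^\ell) \cup V(\mathcal P_b^\ell)$. If this held, minimality of $\lambda$ would force $\mu_1$ left of $\mu_2$ on $\sigma_{\lambda-1}$; the swap producing $\sigma_\lambda$ moves two $Y_L$-adjacent vertices, and since $\mu_1, \mu_2$ are non-adjacent in $Y_L$ it moves at most one of them, and (as both of $\mu_1, \mu_2$ lie on the paths in $\sigma_{\lambda-1}$ and $\sigma_\lambda$ and there is no $\mathcal P_a^\ell$--$\mathcal P_b^\ell$ edge) such a move is along a single edge interior to $\mathcal P_a^\ell$ or to $\mathcal P_b^\ell$, which cannot move the moved vertex past the other in the linear order of that path; so $\mu_1$ would still be left of $\mu_2$ on $\sigma_\lambda$, a contradiction. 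Hence not both of $\mu_1, \mu_2$ lie on the paths under $\sigma_{\lambda-1}$, so by Proposition~\ref{prop:layer_independence}(4) exactly one of them --- call it $\mu^\ast$, the other $\mu^{\ast\ast}$ --- has $\sigma_{\lambda-1}^{-1}(\mu^\ast) \notin V(\mathcal P_a^\ell) \cup V(\mathcal P_b^\ell)$, and $\mu^\ast$ is then the unique such vertex of $V(\mathcal K^\ell)$. The swap producing $\sigma_\lambda$ must move $\mu^\ast$ onto a path, so the second structural fact gives $\sigma_\lambda^{-1}(\mu^\ast) \in \{v_a^\ell, v_b^\ell\}$.

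Next I would trace $\Sigma$ backwards to the last time $\mu^\ast$ sat on a path. Let $\xi$ be the largest index with $\xi \le \lambda-1$ and $\sigma_\xi^{-1}(\mu^\ast) \in V(\mathcal P_a^\ell) \cup V(\mathcal P_b^\ell)$; this is well defined, since $\sigma_s$ places all of $V(\mathcal K^\ell)$ on the paths while $\mu^\ast$ is off them at $\sigma_{\lambda-1}$, so $0 \le \xi \le \lambda-2$. Then $\mu^\ast$ is off the paths at each of $\sigma_{\xi+1}, \dots, \sigma_{\lambda-1}$, and the second structural fact gives $\sigma_\xi^{-1}(\mu^\ast) \in \{v_a^\ell, v_b^\ell\}$. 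By Proposition~\ref{prop:layer_independence}(4), $\mu^\ast$ is the unique vertex of $V(\mathcal K^\ell)$ off the paths at each of $\sigma_{\xi+1}, \dots, \sigma_{\lambda-1}$, so $\mu^{\ast\ast}$ is on a path at each of these; moreover at $\sigma_\xi$ (resp.\ $\sigma_\lambda$) the swap to $\sigma_{\xi+1}$ (resp.\ from $\sigma_{\lambda-1}$) moves $\mu^\ast$ only, so were $\mu^{\ast\ast}$ off the paths at $\sigma_\xi$ (resp.\ $\sigma_\lambda$) it would be off them alongside $\mu^\ast$ at $\sigma_{\xi+1}$ (resp.\ $\sigma_{\lambda-1}$), again contradicting Proposition~\ref{prop:layer_independence}(4). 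Thus $\mu^{\ast\ast}$ is on a path at every step of $\sigma_\xi, \dots, \sigma_\lambda$, and --- since there is no $\mathcal P_a^\ell$--$\mathcal P_b^\ell$ edge --- it lies on one fixed path $\mathcal P \in \{\mathcal P_a^\ell, \mathcal P_b^\ell\}$ throughout $\sigma_\xi, \dots, \sigma_\lambda$.

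Finally I would apply $(\star)$ at both $\sigma_\xi$ and $\sigma_\lambda$: at each of these $\mu^\ast$ sits at $v_a^\ell$ or $v_b^\ell$ while $\mu^{\ast\ast}$ lies on $\mathcal P$, so the left relation between $\mu^\ast$ and $\mu^{\ast\ast}$ is the same at $\sigma_\xi$ and at $\sigma_\lambda$, namely: $\mu^\ast$ is left of $\mu^{\ast\ast}$ exactly when $\mathcal P = \mathcal P_b^\ell$. Since $\xi < \lambda$, the configuration $\sigma_\xi$ satisfies the proposition, so this common relation agrees with the left relation on $\sigma_s$; hence the left relation between $\mu_1$ and $\mu_2$ on $\sigma_\lambda$ coincides with that on $\sigma_s$, i.e.\ $\mu_1$ is left of $\mu_2$ on $\sigma_\lambda$, contradicting the choice of $\Sigma$. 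I expect the main obstacle to be verifying the two structural facts about $X_L$ carefully --- in particular that $v_a^\ell$ and $v_b^\ell$ are the only path vertices with a neighbour off the paths, and that no edge crosses from $\mathcal P_a^\ell$ to $\mathcal P_b^\ell$ --- together with the ``both on the paths'' sub-case at the start; after that, everything reduces to the short observation $(\star)$ and the budget bound of Proposition~\ref{prop:layer_independence}(4).
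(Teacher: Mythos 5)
Your argument is correct and is essentially the paper's proof in different packaging: both rest on Proposition~\ref{prop:layer_independence}(4) (at most one of $\mu_1,\mu_2$ off the paths), the non-adjacency $\{\mu_1,\mu_2\} \notin E(Y_L)$, the fact that the companion vertex stays on one fixed path while the other makes an excursion off the paths, and the fact that such an excursion can only exit and re-enter through $v_a^\ell$ or $v_b^\ell$, so the left relation is unchanged across it. The paper runs a forward induction over the ``re-entry'' configurations of the swap sequence, whereas you use a minimal counterexample so that only the final excursion needs to be analyzed (with your observation $(\star)$ making explicit the case analysis the paper leaves terse); the content is the same.
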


\begin{proof}
Let $\Sigma = \{\sigma_i\}_{i=0}^\lambda$ with $\sigma_0 = \sigma_s$ and $\sigma_\lambda = \sigma$ be a swap sequence in $\FS(X_L, Y_L)$ starting from $\sigma_s$ and ending at $\sigma$, where $\sigma^{-1}(\{\mu_1, \mu_2\}) \subset V(\mathcal P_a^\ell) \cup V(\mathcal P_b^\ell)$. By Proposition \ref{prop:layer_independence}(4), any $\sigma_i \in \Sigma$ satisfies 
\begin{align*}
    |\sigma_i^{-1}(V(\mathcal K^\ell)) \setminus (V(\mathcal P_a^\ell) \cup V(\mathcal P_b^\ell))| \leq 1,
\end{align*}
so that in particular,
\begin{align*}
    |\sigma_i^{-1}(\{\mu_1, \mu_2\}) \setminus (V(\mathcal P_a^\ell) \cup V(\mathcal P_b^\ell))| \leq 1.
\end{align*}
Consider the subsequence $\Sigma' = \{\sigma_{i_j}\}_{j=0}^{\lambda'} \subseteq \Sigma$, $\lambda' \leq \lambda$ with $i_0 = 0$ and then consisting of all configurations $\sigma_i \in \Sigma$ for which 
\begin{align*}
    |\sigma_{i-1}^{-1}(\{\mu_1, \mu_2\}) \setminus (V(\mathcal P_a^\ell) \cup V(\mathcal P_b^\ell))| = 1 \text{ and } |\sigma_i^{-1}(\{\mu_1, \mu_2\}) \setminus (V(\mathcal P_a^\ell) \cup V(\mathcal P_b^\ell))| = 0.
\end{align*}
If $\mu_1$ is left of $\mu_2$ on $\sigma_{i_{\lambda'}}$, then $\mu_1$ is left of $\mu_2$ on $\sigma_k$ for all $k \geq \lambda'$. Indeed, the construction of $\Sigma'$ and $\sigma^{-1}(\{\mu_1, \mu_2\}) \subset V(\mathcal P_a^\ell) \cup V(\mathcal P_b^\ell)$ imply that $\sigma_k^{-1}(\mu_1)$ and $\sigma_k^{-1}(\mu_2)$ remain upon the same path subgraphs in $X_L$ for all such $k$, and this claim now follows if $\mu_1$ is left of $\mu_2$ on $\sigma_{i_{\lambda'}}$ due to Definition \ref{defn:left}(3) and from $\{\mu_1, \mu_2\} \notin E(Y_L)$ otherwise. Since $\lambda \geq \lambda'$, it suffices to show that $\mu_1$ is left of $\mu_2$ on $\sigma_{i_{\lambda'}}$, toward which we can induct on $j$ to show that $\mu_1$ is left of $\mu_2$ on $\sigma_{i_j}$ for all $0 \leq j \leq \lambda'$. The statement holds for $j=0$ by assumption, so assume $\mu_1$ is left of $\mu_2$ on $\sigma_{i_j}$ for some $0 \leq j < \lambda'$. Take the unique vertex $\mu \in \{\mu_1, \mu_2\}$ such that 
\begin{align*}
    \sigma_{i_{j+1}-1}^{-1}(\mu) \notin V(\mathcal P_a^\ell) \cup V(\mathcal P_b^\ell).
\end{align*}
It is now straightforward to inductively argue, relying on the definition of $\Sigma'$, Proposition \ref{prop:layer_independence}(4), and the fact that $\{\mu_1, \mu_2\} \notin E(Y_L)$, that the other vertex in $\{\mu_1, \mu_2\}$ (i.e., not $\mu$) must remain upon the same path subgraph in $X_L$ over all configurations $\sigma_k$ for $i_j \leq k \leq i_{j+1}$. With this observation, it quickly follows, by breaking into cases based on which statement of Definition \ref{defn:left} yields $\mu_1$ left of $\mu_2$ on $\sigma_{i_j}$ and relying on the fact that $\{\mu_1, \mu_2\} \notin E(Y_L)$, that $\mu_1$ is left of $\mu_2$ on $\sigma_{i_{j+1}}$.
\end{proof}

We are now ready to prove the main result (in conjunction with Proposition \ref{prop:order_invariance}) we will need in order to derive a lower bound on the diameter of $\mathscr{C}$.

\begin{proposition} \label{prop:knob_extract}
For any configuration $\sigma \in V(\mathscr{C})$ and $\ell \in [L-1]$, 
\begin{enumerate}
    \item $\sigma^{-1}(\kappa_a^{\ell+1}) \notin V(\mathcal P_a^\ell) \cup V(\mathcal P_b^\ell) \implies V(\mathcal K_b^\ell) \subset \sigma(V(\mathcal P_a^\ell))$,
    \item $\sigma^{-1}(\kappa_b^{\ell+1}) \notin V(\mathcal P_a^\ell) \cup V(\mathcal P_b^\ell) \implies V(\mathcal K_a^\ell) \subset \sigma(V(\mathcal P_a^\ell))$.
\end{enumerate}
\end{proposition}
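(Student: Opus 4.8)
The plan is to prove (1); statement (2) is symmetric, interchanging the roles of $\kappa_a^{\ell+1}$ and $\kappa_b^{\ell+1}$ and of $\mathcal K_a^\ell$ and $\mathcal K_b^\ell$. Fix $\sigma$ as in the hypothesis. Since $\kappa_a^{\ell+1}\in V(\mathcal K^\ell)$ and Proposition~\ref{prop:layer_independence}(4) allows at most one vertex of $V(\mathcal K^\ell)$ to lie off $V(\mathcal P_a^\ell)\cup V(\mathcal P_b^\ell)$, the vertex $\kappa_a^{\ell+1}$ is the \emph{unique} such vertex, so every vertex of $V(\mathcal K^\ell)\setminus\{\kappa_a^{\ell+1}\}$ lies on $V(\mathcal P_a^\ell)\cup V(\mathcal P_b^\ell)$ (and the same holds at any configuration in $\mathscr C$ with $\kappa_a^{\ell+1}$ off the two paths).

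The crux is an auxiliary claim: no vertex of $\mathcal K_a^\ell\setminus\{\kappa_a^{\ell+1}\}$ lies on $V(\mathcal P_a^\ell)$ in $\sigma$. To prove it I would take a swap sequence $\{\sigma_i\}_{i=0}^{\lambda}$ from $\sigma_s$ to $\sigma$ and let $\xi$ be the last index with $\sigma_\xi^{-1}(\kappa_a^{\ell+1})\in V(\mathcal P_a^\ell)\cup V(\mathcal P_b^\ell)$ (well-defined and $<\lambda$, as $\sigma_s$ places $\kappa_a^{\ell+1}$ on $\mathcal P_a^\ell$). Because $\mathcal P_a^\ell$ and $\mathcal P_b^\ell$ attach to the rest of $X_L$ only at $v_a^\ell$ and $v_b^\ell$, the step $\sigma_\xi\to\sigma_{\xi+1}$ pushes $\kappa_a^{\ell+1}$ out through one of these, so $\sigma_\xi^{-1}(\kappa_a^{\ell+1})\in\{v_a^\ell,v_b^\ell\}$. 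On $\sigma_s$ the vertex $\kappa_a^{\ell+1}$ is \emph{left} of every other vertex of $\mathcal K_a^\ell$ (all of $\mathcal K_a^\ell$ sits on $\mathcal P_a^\ell$, with $\kappa_a^{\ell+1}$ placed farthest from $v_a^\ell$), so Proposition~\ref{prop:order_invariance} keeps $\kappa_a^{\ell+1}$ left of each such vertex that lies on the paths at $\sigma_\xi$; but with $\kappa_a^{\ell+1}$ sitting at $v_a^\ell$ or $v_b^\ell$, Definition~\ref{defn:left}(1) (respectively (3)) is then violated unless that vertex is off $V(\mathcal P_a^\ell)$. Hence no vertex of $\mathcal K_a^\ell\setminus\{\kappa_a^{\ell+1}\}$ is on $V(\mathcal P_a^\ell)$ at $\sigma_\xi$. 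After time $\xi$ none can arrive there either: entering $V(\mathcal P_a^\ell)$ requires a swap across $v_a^\ell$ from $V(\mathcal C_a^\ell)$, so the arriving vertex lies off the two paths just before; for a step after $\xi$ this would place two vertices of $V(\mathcal K^\ell)$ off the paths, contradicting Proposition~\ref{prop:layer_independence}(4), and at step $\xi$ itself the arriving vertex must be the swap partner of $\kappa_a^{\ell+1}$, hence a $Y_L$-neighbor of $\kappa_a^{\ell+1}$ and therefore not in $\mathcal K_a^\ell$ (which is independent in $Y_L$, being a part of the complete bipartite graph $\mathcal K^\ell$). This proves the claim.

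With the claim, a count on $\mathcal P_b^\ell$ finishes. All $14$ vertices of $\mathcal K_a^\ell\setminus\{\kappa_a^{\ell+1}\}$ lie on the $15$-vertex path $\mathcal P_b^\ell$, and if $p$ denotes the number of $\mathcal K_b^\ell$-vertices on $\mathcal P_a^\ell$, then $\mathcal P_b^\ell$ carries $14+(15-p)$ vertices of $V(\mathcal K^\ell)$, so $29-p\le 15$, i.e.\ $p\ge 14$. If $p=15$ we are done. If $p=14$, then every vertex of $\mathcal P_b^\ell$ (including $v_b^\ell$) carries a vertex of $V(\mathcal K^\ell)$ and no knob lies on $\mathcal P_b^\ell$, which contradicts Proposition~\ref{prop:path_images}(2) (applicable since $\kappa_a^{\ell+1}\in V(\mathcal K^\ell)$ is off the paths). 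Hence $p=15$, i.e.\ $V(\mathcal K_b^\ell)\subset\sigma(V(\mathcal P_a^\ell))$.

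I expect the auxiliary claim to be the only genuine obstacle: Proposition~\ref{prop:order_invariance} controls the $\mathcal K_a^\ell$-vertices only while $\kappa_a^{\ell+1}$ itself sits on a path, so one must transport that information across the moment $\kappa_a^{\ell+1}$ is ``extracted,'' which is exactly what the last-crossing index $\xi$ and the ``at most one vertex off the paths'' bound of Proposition~\ref{prop:layer_independence}(4) are for; everything afterward is bookkeeping.
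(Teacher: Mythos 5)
Your proof is correct and follows essentially the same route as the paper's: the last index $\xi$ at which $\kappa_a^{\ell+1}$ sits on $V(\mathcal P_a^\ell) \cup V(\mathcal P_b^\ell)$, Proposition \ref{prop:order_invariance} to force the remaining vertices of $V(\mathcal K_a^\ell)$ off $V(\mathcal P_a^\ell)$ at $\sigma_\xi$, Proposition \ref{prop:layer_independence}(4) together with $N_{Y_L}(\kappa_a^{\ell+1}) \cap V(\mathcal K_a^\ell) = \emptyset$ to control what can happen at and after step $\xi$, and Proposition \ref{prop:path_images}(2) to finish. The only organizational difference is the endgame: the paper establishes the positive containment $V(\mathcal K_b^\ell) \subset \sigma_k(V(\mathcal P_a^\ell))$ at $\sigma_\xi$ or $\sigma_{\xi+1}$ and propagates it forward by induction, whereas you propagate the exclusion of $V(\mathcal K_a^\ell) \setminus \{\kappa_a^{\ell+1}\}$ from $V(\mathcal P_a^\ell)$ and conclude with a cardinality count at $\sigma$ itself; both are sound.
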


\begin{proof}
We will take $\sigma \in V(\mathscr{C})$ and $\ell \in [L-1]$ such that $\sigma^{-1}(\kappa_a^{\ell+1}) \notin V(\mathcal P_a^\ell) \cup V(\mathcal P_b^\ell)$; proving the latter implication when assuming $\sigma^{-1}(\kappa_b^{\ell+1}) \notin V(\mathcal P_a^\ell) \cup V(\mathcal P_b^\ell)$ can be done analogously. By Proposition \ref{prop:layer_independence}(4) and the assumption on $\sigma^{-1}(\kappa_a^{\ell+1})$,
\begin{align*}
    |\sigma^{-1}(V(\mathcal K^\ell)) \setminus (V(\mathcal P_a^\ell) \cup V(\mathcal P_b^\ell))| = 1,
\end{align*}
from which it follows that $V(\mathcal K_b^\ell) \subset \sigma(V(\mathcal P_a^\ell) \cup V(\mathcal P_b^\ell))$. To prove that $V(\mathcal K_b^\ell) \subset \sigma(V(\mathcal P_a^\ell))$, let $\Sigma = \{\sigma_i\}_{i=0}^\lambda$ be a swap sequence from $\sigma_0 = \sigma_s$ to $\sigma_\lambda = \sigma$: note that $\lambda \geq 1$, since 
\begin{align*}
    \sigma_s^{-1}(\kappa_a^{\ell+1}) \in V(\mathcal P_a^\ell) \cup V(\mathcal P_b^\ell).
\end{align*}
Consider the largest $\xi < \lambda$ for which
\begin{align*}
    \sigma_\xi^{-1}(\kappa_a^{\ell+1}) \in V(\mathcal P_a^\ell) \cup V(\mathcal P_b^\ell),
\end{align*}
noting that $\xi < \lambda$ is well-defined, since $\sigma_s^{-1}(\kappa_a^{\ell+1}) \in V(\mathcal P_a^\ell) \cup V(\mathcal P_b^\ell)$. It must be that 
\begin{align} \label{eq:knob_extract_eq_1}
    & \sigma_\xi^{-1}(\kappa_a^{\ell+1}) \in \{v_a^\ell, v_b^\ell\},
    & \sigma_{\xi+1}^{-1}(\kappa_a^{\ell+1}) \notin V(\mathcal P_a^\ell) \cup V(\mathcal P_b^\ell).
\end{align}
Since $N_{Y_L}(\kappa_a^{\ell+1}) \cap V(\mathcal K_a^\ell) = \emptyset$, we deduce that 
\begin{align} \label{eq:knob_extract_eq_2}
    \sigma_\xi^{-1}(V(\mathcal K_a^\ell) \setminus \{\kappa_a^{\ell+1}\}) \subset V(\mathcal P_a^\ell) \cup V(\mathcal P_b^\ell),
\end{align}
as otherwise, we would have that
\begin{align*}
    |\sigma_{\xi+1}^{-1}(V(\mathcal K_a^\ell)) \setminus (V(\mathcal P_a^\ell) \cup V(\mathcal P_b^\ell))| \geq 2,
\end{align*}
contradicting Proposition \ref{prop:layer_independence}(4). From \eqref{eq:knob_extract_eq_1}, \eqref{eq:knob_extract_eq_2}, and Proposition \ref{prop:order_invariance}, we further observe that
\begin{align} \label{eq:knob_extract_eq_3}
    \sigma_\xi^{-1}(V(\mathcal K_a^\ell) \setminus \{\kappa_a^{\ell+1}\}) \subset V(\mathcal P_b^\ell),
\end{align}
as for any $\mu \in V(\mathcal K_a^\ell)$, $\kappa_a^{\ell+1}$ is left of $\mu$ on $\sigma_\xi$ since $\kappa_a^{\ell+1}$ is left of $\mu$ on $\sigma_s$. By the definition of $\xi$, 
\begin{align*}
    \sigma_k^{-1}(\kappa_a^{\ell+1}) \notin V(\mathcal P_a^\ell) \cup V(\mathcal P_b^\ell) \text{ for } k > \xi,
\end{align*}
so by Proposition \ref{prop:layer_independence}(4),
\begin{align} \label{eq:knob_extract_eq_4}
    \{\kappa_a^{\ell+1}\} = \sigma_k^{-1}(V(\mathcal K^\ell)) \setminus (V(\mathcal P_a^\ell) \cup V(\mathcal P_b^\ell)) \text{ for } k > \xi.
\end{align}
If $V(\mathcal K_b^\ell) \subset \sigma_\xi(V(\mathcal P_a^\ell))$, \eqref{eq:knob_extract_eq_4} can be used to inductively prove that
\begin{align*}
    V(\mathcal K_b^\ell) \subset \sigma_k(V(\mathcal P_a^\ell)) \text{ for } k > \xi,
\end{align*}
with the induction basis following from \eqref{eq:knob_extract_eq_1}. In particular, $V(\mathcal K_b^\ell) \subset \sigma(V(\mathcal P_a^\ell))$, which is the desired statement. Thus, we now proceed under the assumption $|V(\mathcal K_b^\ell) \setminus \sigma_\xi(V(\mathcal P_a^\ell))| \geq 1$. Further assume (towards a contradiction) that there exists
\begin{align} \label{eq:knob_extract_eq_5}
    \mu \in \left(V(\mathcal K_b^\ell) \setminus \sigma_\xi(V(\mathcal P_a^\ell))\right) \cap \sigma_\xi(V(\mathcal P_b^\ell)).
\end{align}
Then from \eqref{eq:knob_extract_eq_3}, \eqref{eq:knob_extract_eq_5}, and the fact that the LHS and RHS have equal cardinality,
\begin{align} \label{eq:knob_extract_eq_6}
    (V(\mathcal K_a^\ell) \setminus \{\kappa_a^{\ell+1}\}) \cup \{\mu\} = \sigma_\xi(V(\mathcal P_b^\ell)).
\end{align}
See \eqref{eq:knob_extract_eq_1}; \eqref{eq:knob_extract_eq_6} immediately raises a contradiction if $\sigma_\xi^{-1}(\kappa_a^{\ell+1}) = v_b^\ell$, and if $\sigma_\xi^{-1}(\kappa_a^{\ell+1}) = v_a^\ell$, \eqref{eq:knob_extract_eq_1} and Proposition \ref{prop:path_images}(2) (the hypotheses necessary for the implication follow from \eqref{eq:knob_extract_eq_1} and \eqref{eq:knob_extract_eq_6}) imply that
\begin{align*}
    \sigma_\xi(V(\mathcal P_b^\ell)) = \sigma_{\xi+1}(V(\mathcal P_b^\ell)) \text{ and } \sigma_{\xi+1}^{-1}(\{\kappa_a^\ell, \kappa_b^\ell\}) \cap (V(\mathcal P_b^\ell) \setminus \{v_b^\ell\}) \neq \emptyset,
\end{align*}
respectively, raising a contradiction on \eqref{eq:knob_extract_eq_6}. Therefore,
\begin{align} \label{eq:knob_extract_eq_7}
    \left(V(\mathcal K_b^\ell) \setminus \sigma_\xi(V(\mathcal P_a^\ell))\right) \cap \sigma_\xi(V(\mathcal P_b^\ell)) = \emptyset.
\end{align}
If it were true that $|V(\mathcal K_b^\ell) \setminus \sigma_\xi(V(\mathcal P_a^\ell))| \geq 2$, Proposition \ref{prop:layer_independence}(4) would imply
\begin{align*}
    |V(\mathcal K_b^\ell) \setminus (V(\mathcal P_a^\ell) \cup V(\mathcal P_b^\ell))| \leq 1,
\end{align*}
so there would exist $\mu \in V(\mathcal K_b^\ell) \setminus \sigma_\xi(V(\mathcal P_a^\ell))$ such that $\sigma_\xi^{-1}(\mu) \in V(\mathcal P_a^\ell) \cup V(\mathcal P_b^\ell)$, contradicting \eqref{eq:knob_extract_eq_7}. Thus, 
\begin{align*}
    |V(\mathcal K_b^\ell) \setminus \sigma_\xi(V(\mathcal P_a^\ell))| = 1.
\end{align*}
Letting $\mu$ denote the unique element in this set, it must be that $\mu \notin \sigma_\xi(V(\mathcal P_b^\ell))$ by \eqref{eq:knob_extract_eq_7}, so that
\begin{align*}
    \sigma_\xi^{-1}(\mu) \in \sigma_\xi^{-1}(V(\mathcal K^\ell)) \setminus (V(\mathcal P_a^\ell) \cup V(\mathcal P_b^\ell)).
\end{align*}
It thus follows from Proposition \ref{prop:layer_independence}(4) that $\sigma_{\xi+1}(\sigma_\xi^{-1}(\kappa_a^{\ell+1})) = \mu$, so that $V(\mathcal K_b^\ell) \subset \sigma_{\xi+1}(V(\mathcal P_a^\ell))$. Now $V(\mathcal K_b^\ell) \subset \sigma(V(\mathcal P_a^\ell))$ can be established by arguing as when we assumed $V(\mathcal K_b^\ell) \subset \sigma_\xi(V(\mathcal P_a^\ell))$.
\end{proof}

\subsection{Extractions} \label{subsec:extractions}

Equipped with the results of Subsection \ref{subsec:configurations_in_c}, we are now ready to establish that there are two configurations in $\mathscr{C}$, one of them being $\sigma_s$, with distance $e^{\Omega(n)}$. The idea is to construct a series of swaps, layer-by-layer. For $\ell \in [L-1]$, each iteration on layer $\ell+1$ will require a certain number of iterations in layer $\ell$. We formalize this in Definition \ref{defn:extraction} by a notion that we refer to as $\ell$-extractions, which we illustrate in Figure \ref{fig:extraction}.

\begin{definition} \label{defn:extraction}
For $\ell \in [L]$ and $\sigma, \tau \in V(\mathscr{C})$, we say that $\tau$ is an \textit{\textcolor{red}{$\ell$-extraction of $\sigma$}} if either:
\begin{enumerate}
    \item $V(\mathcal K_a^\ell) \subset \sigma(V(\mathcal P_a^\ell))$ and $V(\mathcal K_a^\ell) \cap \tau(V(\mathcal P_a^\ell)) = \emptyset$, $V(\mathcal K_b^\ell) \subset \tau(V(\mathcal P_a^\ell))$,
    \item $V(\mathcal K_b^\ell) \subset \sigma(V(\mathcal P_a^\ell))$ and $V(\mathcal K_b^\ell) \cap \tau(V(\mathcal P_a^\ell)) = \emptyset$, $V(\mathcal K_a^\ell) \subset \tau(V(\mathcal P_a^\ell))$.
\end{enumerate}
\end{definition}

In other words, if $\tau$ is an $\ell$-extraction of $\sigma$, one of the two partite sets of $V(\mathcal K^\ell)$ is a subset of $\sigma(V(\mathcal P_a^\ell))$. Then $\tau$ ``extracts" this partite set out of $\mathcal P_a^\ell$ and replaces it with the other partite set of $V(\mathcal K^\ell)$, which is then a subset $\tau(V(\mathcal P_a^\ell))$.

\begin{figure}[ht]
  \centering
  \subfloat[Definition \ref{defn:extraction}(1).]{\includegraphics[width=0.49\textwidth]{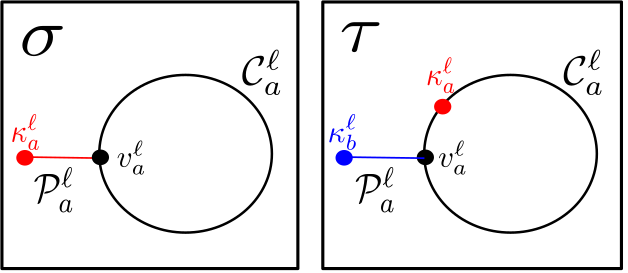}\label{fig:extract_a}}
  \hfill
  \subfloat[Definition \ref{defn:extraction}(2).]{\includegraphics[width=0.49\textwidth]{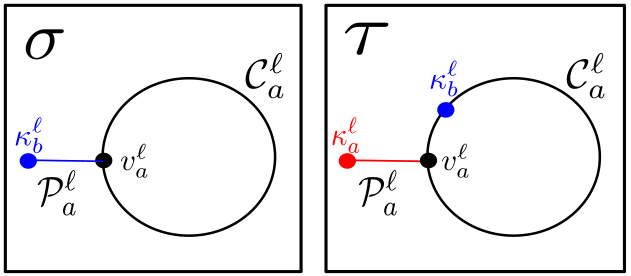}\label{fig:extract_b}}
  \caption{An illustration of Definition \ref{defn:extraction}. Red subgraphs/vertices corresponding to preimages of $V(\mathcal K_a^\ell)$, while blue subgraphs/vertices correspond to preimages of $V(\mathcal K_b^\ell)$. By Proposition \ref{prop:order_invariance}, the relative ordering of the vertices in a partite set of $V(\mathcal K^\ell)$ is the same as in $\sigma_s$, so the appropriate knob vertex always lies upon the leftmost vertex in $\mathcal P_a^\ell$.}
  \label{fig:extraction}
\end{figure}

For use in the proof of Proposition \ref{prop:L_knob}, we also introduce the following definition, corresponding to knob vertices in $Y_L$ rotating about their corresponding cycle subgraphs in $X_L$. Recall from Subsection \ref{subsec:construction} that for all $\ell \in [L]$, 
\begin{align*}
    & |V(\mathcal S_a^\ell)| = |V(\mathcal S_b^\ell)| = 15,
    & |V(\mathcal C_a^\ell)| = |V(\mathcal C_b^\ell)| = 16.
\end{align*}

\begin{definition} \label{defn:knob_rotation}
    For $\ell \in [L]$, $\mu_a \in N_{Y_L}(\kappa_a^\ell)$ and a positive integer $\lambda$ such that $\lambda \equiv 0 \pmod{16}$, a \textit{\textcolor{red}{$\kappa_a^\ell$-rotation with $\mu_a$}} is a swap sequence $\{\sigma_i\}_{i=0}^\lambda$ for which $\sigma_i(V(\mathcal C_a^\ell)) = \{\mu_a\} \cup V(\mathcal S_a^\ell)$ for all $0 \leq i \leq \lambda$ and there exists an enumeration $V(\mathcal C_a^\ell) = \{v_0, v_1, \dots, v_{15}\}$ such that $\{v_{i-1}, v_i\} \in E(\mathcal C_a^\ell)$ for all $i \in [15]$ and $\sigma_j(v_i) = \kappa_a^\ell$ whenever $i \equiv j \pmod{16}$. Similarly, for $\ell \in [L]$, $\mu_b \in N_{Y_L}(\kappa_b^\ell)$ and a positive integer $\lambda$ such that $\lambda \equiv 0 \pmod{16}$, a \textit{\textcolor{red}{$\kappa_b^\ell$-rotation with $\mu_b$}} is a swap sequence $\{\sigma_i\}_{i=0}^\lambda$ for which $\sigma_i(V(\mathcal C_b^\ell)) = \{\mu_b\} \cup V(\mathcal S_b^\ell)$ for all $0 \leq i \leq \lambda$ and there exists an enumeration $V(\mathcal C_b^\ell) = \{v_0, v_1, \dots, v_{15}\}$ such that $\{v_{i-1}, v_i\} \in E(\mathcal C_b^\ell)$ for all $i \in [15]$ and $\sigma_j(v_i) = \kappa_b^\ell$ whenever $i \equiv j \pmod{16}$. 
\end{definition}

Note that Definition \ref{defn:knob_rotation}, which is illustrated in Figure \ref{fig:knob_rotation}, corresponds to a cyclic rotation of all elements in $\sigma_0(V(\mathcal C_a^\ell)) \setminus \{\kappa_a^\ell\}$ about the knob vertex $\kappa_a^\ell$, which is fixed in the same position since $\sigma_0(v_0) = \sigma_\lambda(v_0) = \kappa_a^\ell$. The direction and length $\lambda$ of this rotation depend on the enumeration of the vertices in the relevant cycle and the value $\lambda/16$, respectively.

\begin{figure}[ht]
    \centering
    \includegraphics[width=\textwidth]{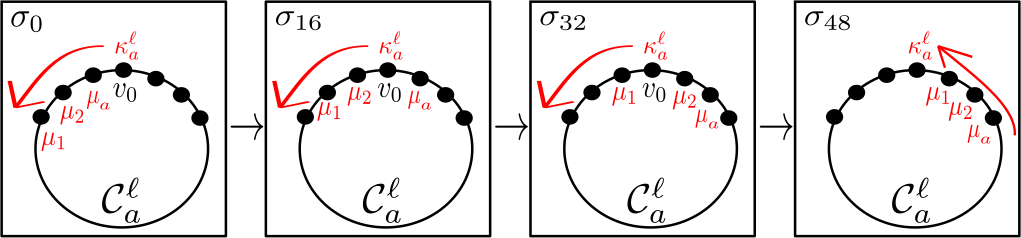}
    \caption{An illustration of a $\kappa_a^\ell$-rotation with $\mu_a$, where $\lambda=3\cdot 16 = 48$ and $\kappa_a^\ell$ rotates counterclockwise around $\mathcal C_a^\ell$. Here, $\mu_1, \mu_2 \in V(\mathcal S_a^\ell) \setminus \{\kappa_a^\ell\}$. As $\kappa_a^\ell$ rotates over $\mathcal C_a^\ell$, it cyclically rotates all elements of $(V(\mathcal S_a^\ell) \setminus \{\kappa_a^\ell\}) \cup \{\mu_a\}$ about it. In this case, every such element moves three vertices clockwise along $V(\mathcal C_a^\ell) \setminus \{v_0\}$.}
    \label{fig:knob_rotation}
\end{figure}

The final configuration $\sigma_f$ in $\mathscr{C}$ for which we will argue that $d(\sigma_s, \sigma_f) = \Omega(n^{L-1})$ is going to be an $L$-extraction of $\sigma_s$ (of the kind from Definition \ref{defn:extraction}(1)). We begin by showing that for any $\ell \in [L]$, $\ell$-extractions of $\sigma_s$ exist in $\mathscr{C}$. This will follow as an immediate corollary of Proposition \ref{prop:L_knob} by taking $\eta = 1$ for this value of $\ell$, since $\sigma_\lambda$ is then an $\ell$-extraction of $\sigma_s$.

\begin{proposition} \label{prop:L_knob}
For any positive integer $\eta$ and $\ell \in [L]$, there exists a swap sequence $\{\sigma_i\}_{i=0}^\lambda$, $\sigma_0 = \sigma_s$ with a subsequence $\{\sigma_{i_j}\}_{j=0}^\eta$, $i_0 = 0$, $i_\eta = \lambda$ such that
\begin{enumerate}
    \item for every $j \in [\eta]$, $\sigma_{i_j}$ is an $\ell$-extraction of $\sigma_{i_{j-1}}$;
    \item for every $j \in [\eta]$ and $\mu \in V(\mathcal K^L)$, there exists a $\kappa_a^L$-rotation with $\mu$ and $\kappa_b^L$-rotation with $\mu$ that is a contiguous subsequence of $\{\sigma_i\}_{i=i_{j-1}}^{i_j}$.
\end{enumerate}
\end{proposition}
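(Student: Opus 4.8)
\medskip

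The plan is to induct on $\ell$. For the base case $\ell = 1$, the statement is purely local to layer $1$: I would construct the swap sequence $\{\sigma_i\}_{i=0}^\lambda$ by hand, exploiting Lemma \ref{lem:star_cycle} and the structure of $\mathcal C_a^1$, $\mathcal C_b^1$, $\mathcal C_a^L$, $\mathcal C_b^L$. The starting configuration $\sigma_s$ has $V(\mathcal K_a^1)$ sitting on $\mathcal P_a^1$ (with $\kappa_a^2$ at the leftmost vertex) and $V(\mathcal K_b^1)$ on $\mathcal P_b^1$ (with $\kappa_b^2$ at $v_b^1$); note for $\ell = 1$ the ``bottleneck'' knob vertices $\kappa_a^{\ell+1}, \kappa_b^{\ell+1}$ are never actually forced to move since everything stays inside layer $1$, and in fact when $\ell = L$ there is no $\ell+1$ at all. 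To realize one $1$-extraction I would: first slide the contents of $\mathcal P_a^1$ rightward so the $15$ vertices of $V(\mathcal K_a^1)$ pile up near $v_a^1$ (with $\kappa_a^2$ at $v_a^1$, which is harmless as $\ell=1$); then use the knob $\kappa_a^1$ (which, by $\sigma_s$, lives on the seven-vertex upper arc of $\mathcal C_a^1$) as a carousel à la Lemma \ref{lem:star_cycle} to ferry the $V(\mathcal K_a^1)$ vertices one at a time across $\mathcal C_a^1$ through $v^1$ onto $\mathcal C_b^1$ and out along $\mathcal P_b^1$, while simultaneously bringing the $V(\mathcal K_b^1)$ vertices the other way onto $\mathcal P_a^1$; the extra inner vertices (three per arc, seven on the layer-$1$ and layer-$L$ arcs) give exactly the slack needed so that at every intermediate step at most two non-leaf vertices occupy any $\mathcal C_a^\ell$ or $\mathcal C_b^\ell$, i.e.\ Proposition \ref{prop:rule_of_two} is never violated. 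Requirement (2) is vacuous when $L = 1$ and when $L > 1$ is handled in the inductive step, so for $\ell = 1 < L$ I would additionally, before starting the extraction, perform a full $\kappa_a^L$-rotation and $\kappa_b^L$-rotation with each $\mu \in V(\mathcal K^L)$ in turn (each of length a multiple of $16$, as in Definition \ref{defn:knob_rotation}) — these are carried out entirely within $\mathcal C_a^L, \mathcal C_b^L$ and commute with everything in layer $1$.

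\medskip

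For the inductive step, assume the proposition holds for $\ell - 1$ (with any $\eta$), and fix $\eta$ and $\ell \ge 2$. The key structural fact, supplied by Proposition \ref{prop:knob_extract}, is a \emph{gadget reduction}: in order to move $\kappa_a^{\ell+1}$ (or $\kappa_b^{\ell+1}$) off of $\mathcal P_a^\ell \cup \mathcal P_b^\ell$ — which is what an $\ell$-extraction ultimately requires, since $\kappa_a^{\ell+1}$ or $\kappa_b^{\ell+1}$ sits at an end of $\mathcal P_a^\ell$ or at $v_b^\ell$ — one must first have $V(\mathcal K_b^\ell)$ (resp.\ $V(\mathcal K_a^\ell)$) entirely on $\mathcal P_a^\ell$. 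But $V(\mathcal K_b^\ell) = V(\mathcal K^{\ell-1}_{\text{appropriate partite set}})$-type vertices are precisely among the $V(\mathcal K^{\ell-1})$ vertices living on layer $\ell-1$, so assembling them all onto one side is itself accomplished by a sequence of $(\ell-1)$-extractions. Concretely: to perform a single $\ell$-extraction I need to shuttle $15$ vertices of one partite set of $V(\mathcal K^\ell)$ across $\mathcal C_a^\ell$ and $\mathcal C_b^\ell$ using $\kappa_a^\ell$ as a carousel, and between consecutive such crossings the relevant knob from layer $\ell$ (namely $\kappa_a^{\ell+1}$ or $\kappa_b^{\ell+1}$, the element of $V(\mathcal K_a^\ell) \cup V(\mathcal K_b^\ell)$ that is itself a layer-$(\ell+1)$ knob) must be parked off the paths, which by Proposition \ref{prop:knob_extract} forces a re-extraction downstairs. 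Counting this out, one $\ell$-extraction costs on the order of a bounded number (roughly $15$, the number of vertices to move across) of $(\ell-1)$-extractions, so $\eta$ many $\ell$-extractions cost $O(\eta)$ many $(\ell-1)$-extractions; I would invoke the inductive hypothesis with that value of $\eta$ to obtain the sub-sequence realizing all of them, then interleave with the layer-$\ell$ carousel moves (again Lemma \ref{lem:star_cycle} guarantees the carousel returns $\kappa_a^\ell$ to a canonical spot after $16$ steps, and $\sigma_s$'s seven-vertex arcs give the needed room). Requirement (2) propagates automatically: the inductive hypothesis already threads a $\kappa_a^L$- and $\kappa_b^L$-rotation with every $\mu \in V(\mathcal K^L)$ into each of its $(\ell-1)$-extractions, and since every $\ell$-extraction I build contains at least one $(\ell-1)$-extraction, it inherits the rotations; the lengths stay divisible by $16$ as required.

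\medskip

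The main obstacle — and the place where the heavy lifting of Subsection \ref{subsec:configurations_in_c} is really used — is verifying that the swap sequences I write down actually \emph{can} be executed, i.e.\ that every intermediate configuration is a legitimate vertex of $\FS(X_L, Y_L)$ and that no move I propose is blocked. Because $Y_L$ is extremely sparse (the leaves of each $\mathcal S_a^\ell, \mathcal S_b^\ell$ only swap with their knob, and the $\mathcal K^\ell$-vertices only swap with $\kappa_a^\ell, \kappa_b^\ell$ or with $\mathcal K^{\ell+1}$-vertices), the carousel manoeuvres are forced to proceed in a very rigid order; the content of Propositions \ref{prop:rule_of_two}, \ref{prop:layer_independence}, \ref{prop:path_images}, \ref{prop:order_invariance}, \ref{prop:knob_extract} is exactly that these rigidities hold for \emph{every} configuration in $\mathscr{C}$, hence in particular along my sequence, which lets me (i) be sure the carousel never ``jams'' because the two non-leaf slots on a cycle are always available, and (ii) be sure that the only way to progress is the expensive recursive one, which is what makes the count tight rather than merely an upper bound. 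A secondary bookkeeping obstacle is keeping track of \emph{which} partite set of $V(\mathcal K^\ell)$ is on $\mathcal P_a^\ell$ after a given extraction (Definition \ref{defn:extraction} alternates them), so that the recursion is set up to extract the correct set downstairs; Proposition \ref{prop:order_invariance} pins down the internal order within each partite set (the layer-$(\ell+1)$ knob always occupies the extreme vertex of $\mathcal P_a^\ell$), which is what makes this bookkeeping consistent. I would organize the write-up so that the carousel construction is stated once as a self-contained ``crossing lemma'' and then applied at each level, with the validity checks discharged by citing the propositions above applied to each configuration along the way.
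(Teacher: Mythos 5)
Your overall architecture---an explicit recursive construction in which one layer-$\ell$ extraction is assembled from a bounded number of lower-layer extractions, with knob-rotation ``carousel'' ferries interleaved at the interface vertices---matches the paper's proof, which organizes the same idea as an induction on the number of layers $L$ rather than on $\ell$. However, your handling of requirement (2) has a genuine gap. First, (2) is not vacuous when $L=1$: it demands that every $1$-extraction segment contain a $\kappa_a^1$- and a $\kappa_b^1$-rotation with every $\mu \in V(\mathcal K^1)$, and this is exactly the feature of the base case that powers the induction (it supplies the windows in which $\kappa_a^2,\kappa_b^2$ reach $v_a^{1,2},v_b^{1,2}$ and can serve as carousels for layer $2$). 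Second, for $\ell = 1 < L$ you propose to satisfy (2) by performing, before the extraction, $\kappa_a^L$- and $\kappa_b^L$-rotations with each $\mu \in V(\mathcal K^L)$ ``entirely within $\mathcal C_a^L,\mathcal C_b^L$,'' commuting with layer $1$. This cannot be done: in $\sigma_s$ (for $L \geq 2$) the knobs $\kappa_a^L,\kappa_b^L$ lie on the layer-$(L-1)$ paths and no vertex of $V(\mathcal K^L)$ lies on the layer-$L$ cycles, so the configuration $\sigma(V(\mathcal C_a^L)) = \{\mu\} \cup V(\mathcal S_a^L)$ required by Definition \ref{defn:knob_rotation} is unavailable; by Proposition \ref{prop:knob_extract} (applied at layer $L-1$), even moving $\kappa_a^L$ off $V(\mathcal P_a^{L-1}) \cup V(\mathcal P_b^{L-1})$ already forces $V(\mathcal K_b^{L-1})$ onto $\mathcal P_a^{L-1}$, i.e.\ the full recursive, exponentially long rearrangement. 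Moreover (2) requires such rotations inside \emph{every} segment $\{\sigma_i\}_{i=i_{j-1}}^{i_j}$, not once at the start. (Relatedly, your base-case intermediate state with $\kappa_a^2$ at $v_a^1$ while other $\mathcal K_a^1$-vertices remain on $\mathcal P_a^1$ contradicts Proposition \ref{prop:order_invariance}: same-partite-set vertices are nonadjacent in $Y_L$, so the extraction must expel them in their original order, with $\kappa_a^2$ last.)

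The second structural problem is that your induction on $\ell$ with $L$ fixed lacks the right inductive strengthening. Your step needs, inside each $(\ell-1)$-extraction, moments at which $\kappa_a^{\ell}$ and $\kappa_b^{\ell}$ (which are vertices of $V(\mathcal K^{\ell-1})$, and which are the carousels for an $\ell$-extraction, not $\kappa_a^{\ell+1},\kappa_b^{\ell+1}$ as you write) sit at $v_a^{\ell-1,\ell}$, $v_b^{\ell-1,\ell}$; i.e.\ you need $\kappa_a^{\ell-1}$- and $\kappa_b^{\ell-1}$-rotations with $\kappa_a^{\ell},\kappa_b^{\ell}$ inside each $(\ell-1)$-extraction. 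But requirement (2) as stated only guarantees rotations at the top layer $L$, so for $\ell-1 < L$ your hypothesis does not deliver these windows; you would have to strengthen the statement (demand $\kappa_a^\ell$- and $\kappa_b^\ell$-rotations with every $\mu \in V(\mathcal K^\ell)$ inside each $\ell$-extraction). The paper avoids this by inducting on $L$, so the hypothesis is always invoked at its own top layer, where (2) hands over exactly the rotations with $\kappa_a^{L+1},\kappa_b^{L+1} \in V(\mathcal K^L)$ that get extended into the new layer; the count is $31$ (not roughly $15$) lower-layer extractions because each window admits one crossing, all $30$ vertices of $V(\mathcal K^{L+1})$ must cross, and one extra segment handles the final adjustment. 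Finally, note that Propositions \ref{prop:rule_of_two}--\ref{prop:knob_extract} are necessity statements about every configuration in $\mathscr{C}$; they explain why the construction must be recursive, but they do not certify that your proposed swaps are legal---the existence half of Proposition \ref{prop:L_knob} still requires the explicit extension/interleaving bookkeeping together with direct checks (from the adjacency structure of $Y_L$) that each proposed swap exchanges $Y_L$-adjacent occupants along an edge of $X_L$.
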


\begin{proof}
We deviate from our usual practice in Subsections \ref{subsec:configurations_in_c} and \ref{subsec:extractions} of assuming that everything proceeds under the context of some fixed $L \geq 1$, and establish Proposition \ref{prop:L_knob} via induction on $L$. Specifically, we will show by induction on $L \geq 1$ that for any fixed $L \geq 1$, Proposition \ref{prop:L_knob} holds for the graphs $X_L$ and $Y_L$. During the induction step, in another deviation from our usual practice, we shall be more explicit about the pairs of graphs and the starting configurations that we reference for sake of clarity. 

We begin with the induction basis, $L=1$. Here, $\ell = 1$ is the only value of $\ell$ for which Proposition \ref{prop:L_knob} applies. Consider the following sequence of swaps from $\sigma_s$: Figure \ref{fig:L=1} illustrates the first three steps of this procedure.
\begin{enumerate}
    \item Perform a $\kappa_b^1$-rotation with $\sigma_s(v_b^1)$ to move $\sigma_s(v_b^1)$ to $v^1$.
    \item Perform a $\kappa_a^1$-rotation with $\sigma_s(v_b^1)$ to move $\sigma_s(v_b^1)$ to $v_a^1$.
    \item Swap $\sigma_s(v_b^1)$ as far left through $V(\mathcal P_a^1)$ as possible, yielding a vertex $\mu \in V(\mathcal K_a^1)$ upon $v_a^1$.
    \item Perform a $\kappa_a^1$-rotation with $\mu$ to move $\mu$ to $v^1$.
    \item Perform a $\kappa_b^1$-rotation with $\mu$ to move $\mu$ to $v_b^1$.
    \item Swap $\mu$ as far right through $V(\mathcal P_b^1)$ as possible, producing a vertex in $V(\mathcal K_b^1)$ upon $v_b^1$.
\end{enumerate}
It is straightforward to conclude that repeating this algorithm $15$ times (since $|V(\mathcal K_a^\ell)| = |V(\mathcal K_b^\ell)| = 15$) from $\sigma_s$ (adapted to the mapping upon $v_b^1$, then the mapping upon $v_a^1$, for subsequent iterations) yields a $1$-extraction $\sigma_{i_1}$ of $\sigma_s$, namely of the kind in Definition \ref{defn:extraction}(1), since we have
\begin{align*}
    & \sigma_{i_1}(V(\mathcal P_b^1)) = \sigma_s(V(\mathcal P_a^1) \setminus \{v_a^1\}) = V(\mathcal K_a^1),
    & \sigma_{i_1}(V(\mathcal P_a^1) \setminus \{v_a^1\}) = \sigma_s(V(\mathcal P_b^1)) = V(\mathcal K_b^1),
\end{align*}
and that for every $\mu \in V(\mathcal K^1)$, there exists a $\kappa_a^1$-rotation with $\mu$ and $\kappa_b^1$-rotation with $\mu$ that is a contiguous subsequence of the resulting swap sequence. It is similarly straightforward to see that we can repeat this algorithm to interchange the positions of $V(\mathcal K_a^1)$ and $V(\mathcal K_b^1)$ arbitrarily many times (i.e., for any positive integer $\eta$), with a $\kappa_a^1$-rotation with $\mu$ and $\kappa_b^1$-rotation with $\mu$ for every $\mu \in V(\mathcal K^1)$ executed as a contiguous subsequence of every such interchange. On even iterations of this interchange, we simply switch the roles of $V(\mathcal K_a^1)$ and $V(\mathcal K_b^1)$ in the above algorithm, resulting in $1$-extractions of the kind in Definition \ref{defn:extraction}(2).

\begin{figure}[ht]
    \centering
    \includegraphics[width=\textwidth, height=6cm]{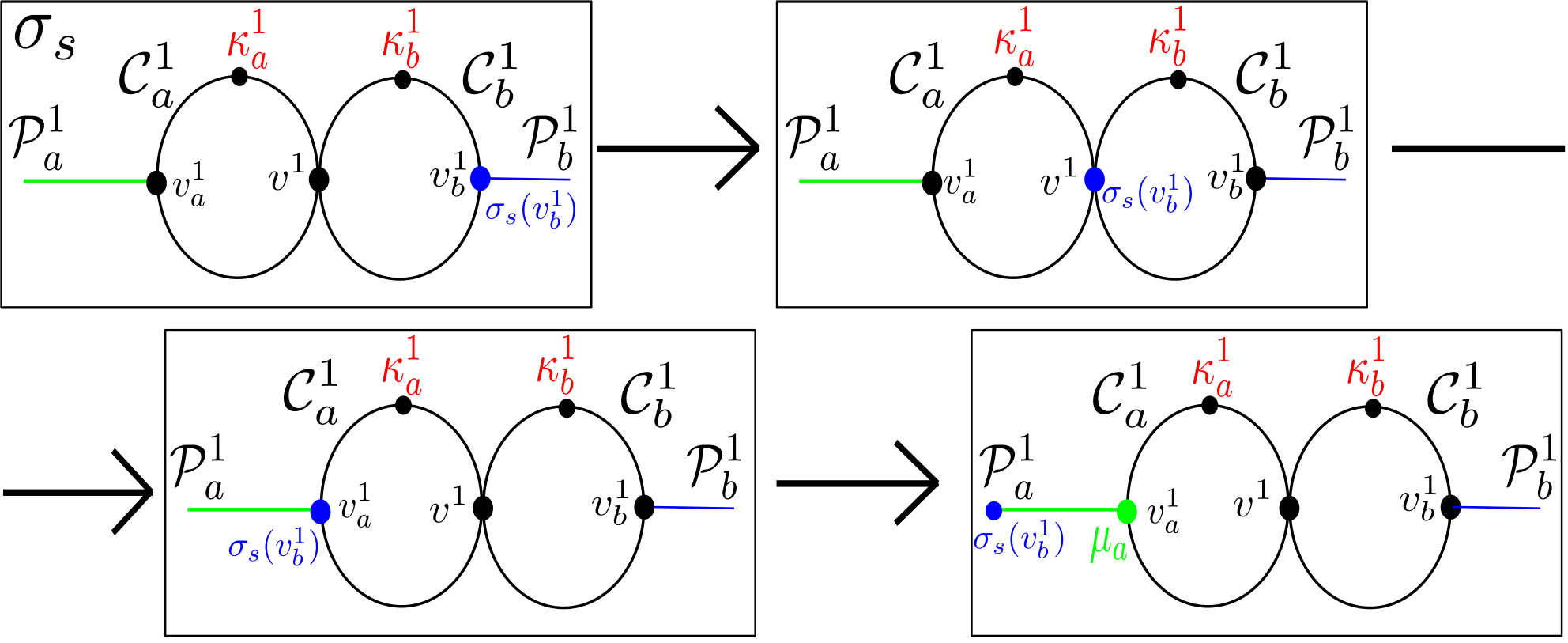}
    \caption{An illustration of the first half of the sequence of swaps discussed for the induction basis, $L=1$. Preimages of $V(\mathcal K_a^1)$ are colored green, and preimages of $V(\mathcal K_b^1)$ are colored blue. This segment of the sequence of swaps involves a $\kappa_b^1$-rotation with $\sigma_s(v_b^1)$, a $\kappa_a^1$-rotation with $\sigma_s(v_b^1)$, and a sequence of swaps moving $\sigma_s(v_b^1)$ left through $V(\mathcal P_a^\ell)$. A vertex $\mu_a \in V(\mathcal K_a^1)$ now lies upon $v_a^\ell$; we can similarly move $\mu_a$ to the right. Continuing until every vertex of $V(\mathcal K^1)$ is moved in an analogous subroutine yields a $1$-extraction $\sigma_{i_1}$ of $\sigma_s$. We can interchange $\mathcal K_a^1$ and $\mathcal K_b^1$ in this way arbitrarily many times.}
    \label{fig:L=1}
\end{figure}
Now assume Proposition \ref{prop:L_knob} holds for some fixed $L \geq 1$ (i.e., for this fixed $L \geq 1$, Proposition \ref{prop:L_knob} holds for the graphs $X_L$ and $Y_L$). By the induction hypothesis applied on $\eta = 31$ and $\ell \in [L]$, we can extract a swap sequence $\{\sigma_i\}_{i=0}^\lambda$ in $V(\mathscr{C}(X_L, Y_L))$ with $\sigma_0 = \sigma_s(X_L, Y_L)$ and with a subsequence $\{\sigma_{i_j}\}_{j=0}^{31}$ satisfying Proposition \ref{prop:L_knob}. Now consider $X_{L+1}$ and $Y_{L+1}$, which has corresponding starting configuration $\sigma_s(X_{L+1}, Y_{L+1})$ in the connected component $\mathscr{C}(X_{L+1}, Y_{L+1})$, which we denote $\sigma_s$ and $\mathscr{C}$, respectively. In an abuse of notation, for the rest of the present proof we let $X_L$ denote the first $L$ layers of $X_{L+1}$ and $Y_L = Y_{L+1} |_{\sigma_s(V(X_L))}$. These subgraphs are isomorphic to the graphs $X_L$ and $Y_L$ as they were originally defined during their construction in Subsection \ref{subsec:construction}, and under these isomorphisms, $\sigma_s$ restricted to $X_L$ can be understood to be the same as $\sigma_s(X_L, Y_L)$ as defined in Subsection \ref{subsec:construction}. Furthermore, the swap sequence $\{\sigma_i\}_{i=0}^\lambda$ can be understood as being in $\mathscr{C}$, with $\sigma_0 = \sigma_s$, if we set
\begin{align*}
    \sigma_i(v) = \sigma_s(v) \text{ for all } v \in V(X_{L+1}) \setminus V(X_L), \ i = 0, \dots, \lambda.
\end{align*}
As such, it follows from the induction hypothesis that Proposition \ref{prop:L_knob} holds for $(X_{L+1}, Y_{L+1})$ if we take $\ell \in [L]$, and all that remains is to confirm that Proposition \ref{prop:L_knob} holds for $(X_{L+1}, Y_{L+1})$ for $\ell = L+1$. In the proceeding argument, we assume that the swap sequence $\{\sigma_i\}_{i=0}^\lambda$ we extracted above using the induction hypothesis was for $\ell = L$. In a similar vein, given some $\sigma \in V(\mathscr{C})$ with $\sigma(X_L) = V(Y_L)$ and $\sigma_i$ from the swap sequence $\{\sigma_i\}_{i=0}^\lambda$, define the \textit{\textcolor{red}{extension}} of $\sigma_i$ with respect to $\sigma$ to be the configuration\footnote{Whenever we construct such extensions in the forthcoming argument, it will be clear that they lie in $V(\mathscr{C})$.} $\tau \in V(\FS(X_{L+1}, Y_{L+1}))$ with
\begin{itemize}
    \item $\tau(v) = \sigma(v)$ for all $v \in V(X_{L+1}) \setminus V(X_L)$;
    \item $\tau(v) = \sigma_i(v)$ for all $v \in V(X_L)$.
\end{itemize}
We will say that we \textit{\textcolor{red}{extend}} $\sigma_i$ with respect to $\sigma$, and will generally apply this notion en masse to subsequences of $\{\sigma_i\}_{i=0}^\lambda$ with respect to a single configuration of $V(\mathscr{C})$. 

\medskip

We will now construct a swap sequence $\{\sigma'_i\}_{i=0}^{\lambda'}$ in $V(\mathscr{C})$, with $\sigma'_0 = \sigma_s$, satisfying Proposition \ref{prop:L_knob} for $\eta=1$. This is illustrated in Figure \ref{fig:general_L}. From the swap sequence $\{\sigma_i\}_{i=0}^\lambda$, with subsequence $\{\sigma_{i_j}\}_{j=0}^{31}$ as discussed before, consider $\{\sigma_i\}_{i=i_0}^{i_1}$, which, by the induction hypothesis, has a contiguous subsequence $\{\sigma_i\}_{i=j_1}^{k_1}$ that is a $\kappa_b^L$-rotation with $\kappa_b^{L+1}$. Let $t_1$ be such that $j_1 \leq t_1 \leq k_1$ and $\sigma_{t_1}(\kappa_b^{L+1}) = v_b^{L, L+1}$: the observation that such a $t_1$ exists follows quickly from the restrictions of Definition \ref{defn:knob_rotation}. We construct a swap sequence $\mathscr{S}_1$ in $\mathscr{C}$ by merging $\{\sigma_i^{1,1}\}_{i=0}^{t_1-i_0}, \{\sigma_i^{1,2}\}_{i=0}^{z_1}$, and $\{\sigma_i^{1,3}\}_{i=0}^{i_1-t_1}$, which we now define.
\begin{enumerate}
    \item Extend $\{\sigma_i\}_{i=i_0}^{t_1}$ with respect to $\sigma_s$, yielding $\{\sigma_i^{1,1}\}_{i=0}^{t_1-i_0}$.
    
    \item Let $\{\sigma_i^{1,2}\}_{i=0}^{z_1}$, with $\sigma_0^{1,2} = \sigma_{t_1-i_0}^{1,1}$, be a $\kappa_b^{L+1}$-rotation with $\sigma_{t_1-i_0}^{1,1}(v_b^{L+1})$, with length such that $\sigma_{t_1-i_0}^{1,1}(v_b^{L+1})$ is moved to $v^{L+1}$.
    
    \item Extend $\{\sigma_i\}_{i=t_1}^{i_1}$ with respect to $\sigma_{t_1-i_0}^{1,1}$, yielding $\{\sigma_i^{1,3}\}_{i=0}^{i_1-t_1}$.
\end{enumerate}
Now take the subsequence $\{\sigma_i\}_{i=i_1}^{i_2}$ of $\{\sigma_i\}_{i=0}^\lambda$, which has contiguous subsequence $\{\sigma_i\}_{i=j_2}^{k_2}$ that is a $\kappa_a^L$-rotation with $\kappa_a^{L+1}$, and $t_2$ such that $j_2 \leq t_2 \leq k_2$ and $\sigma_{t_2}(\kappa_a^{L+1}) = v_a^{L, L+1}$. Construct $\mathscr{S}_2$ by merging $\{\sigma_i^{2,1}\}_{i=0}^{t_2-i_1}, \{\sigma_i^{2,2}\}_{i=0}^{t_2-i_1}$, and $\{\sigma_i^{2,3}\}_{i=0}^{t_2-i_1}$, which we now define.
\begin{enumerate}
    \item Extend $\{\sigma_i\}_{i=i_1}^{t_2}$ with respect to $\sigma^{1,3}_{i_1-t_1}$, yielding $\{\sigma_i^{2,1}\}_{i=0}^{t_2-i_1}$.
    
    \item Let $\{\sigma_i^{2,2}\}_{i=0}^{z_2}$, with $\sigma_0^{2,2} = \sigma_{t_2-i_1}^{2,1}$, be the result of performing a $\kappa_a^{L+1}$-rotation with $\sigma_s(v_b^{L+1})$ to move $\sigma_s(v_b^{L+1})$ to $v_a^{L+1}$, then swapping $\sigma_s(v_b^{L+1})$ as far left as possible across $V(\mathcal P_a^{L+1})$, then performing a $\kappa_a^{L+1}$-rotation to swap the resulting vertex $\mu$ upon $v_a^{L+1}$ onto $v^{L+1}$.
    
    \item Extend $\{\sigma_i\}_{i=t_2}^{i_2}$ with respect to $\sigma_{t_2-i_1}^{2,1}$, yielding $\{\sigma_i^{2,3}\}_{i=0}^{i_2-t_2}$.
\end{enumerate}
It is now straightforward to see how to similarly construct the sequences $\mathscr{S}_1, \dots, \mathscr{S}_{31}$, and why we took $\eta = 31$ when appealing to the induction hypothesis: each sequence corresponding to a different vertex in $V(\mathcal K^{L+1})$ lying on $v^{L+1}$, and following $\mathscr{S}_1$, we alternate the path that we ``push" this vertex through. The only modification arises when we construct $\mathscr{S}_{31}$: during the $\kappa_b^L$-rotation with $\kappa_b^{L+1}$ within $\{\sigma_i\}_{i=i_{\eta-1}}^{i_\eta}$, simply include a $\kappa_b^{L+1}$-rotation which moves the vertex upon $v^{L+1}$ onto $v_b^{L+1}$. Merging $\mathscr{S}_1, \dots, \mathscr{S}_{31}$ yields a sequence $\{\sigma'_i\}_{i=0}^{\lambda'}$, such that $\sigma'_{\lambda'}$ is an $(L+1)$-extraction of $\sigma'_0 = \sigma_s$, since 
\begin{align*}
    & \sigma'_{\lambda'}(V(\mathcal P_b^{L+1})) = \sigma_s(V(\mathcal P_a^{L+1}) \setminus \{v_a^{L+1}\}) = V(\mathcal K_a^{L+1}), \\
    & \sigma'_{\lambda'}(V(\mathcal P_a^{L+1}) \setminus \{v_a^{L+1}\}) = \sigma_s(V(\mathcal P_b^{L+1})) = V(\mathcal K_b^{L+1}).
\end{align*}
It is evident by tracing the above construction that for every $\mu \in V(\mathcal K^{L+1})$, there exists a $\kappa_a^{L+1}$-rotation with $\mu$ and $\kappa_b^{L+1}$-rotation with $\mu$ that is a contiguous subsequence of $\{\sigma'_i\}_{i=0}^{\lambda'}$. Thus, the swap sequence $\{\sigma'_i\}_{i=0}^{\lambda'}$ establishes that Proposition \ref{prop:L_knob} holds for $L+1$ on $\eta = 1$. 

\begin{figure}[ht]
    \centering
    \includegraphics[width=\textwidth]{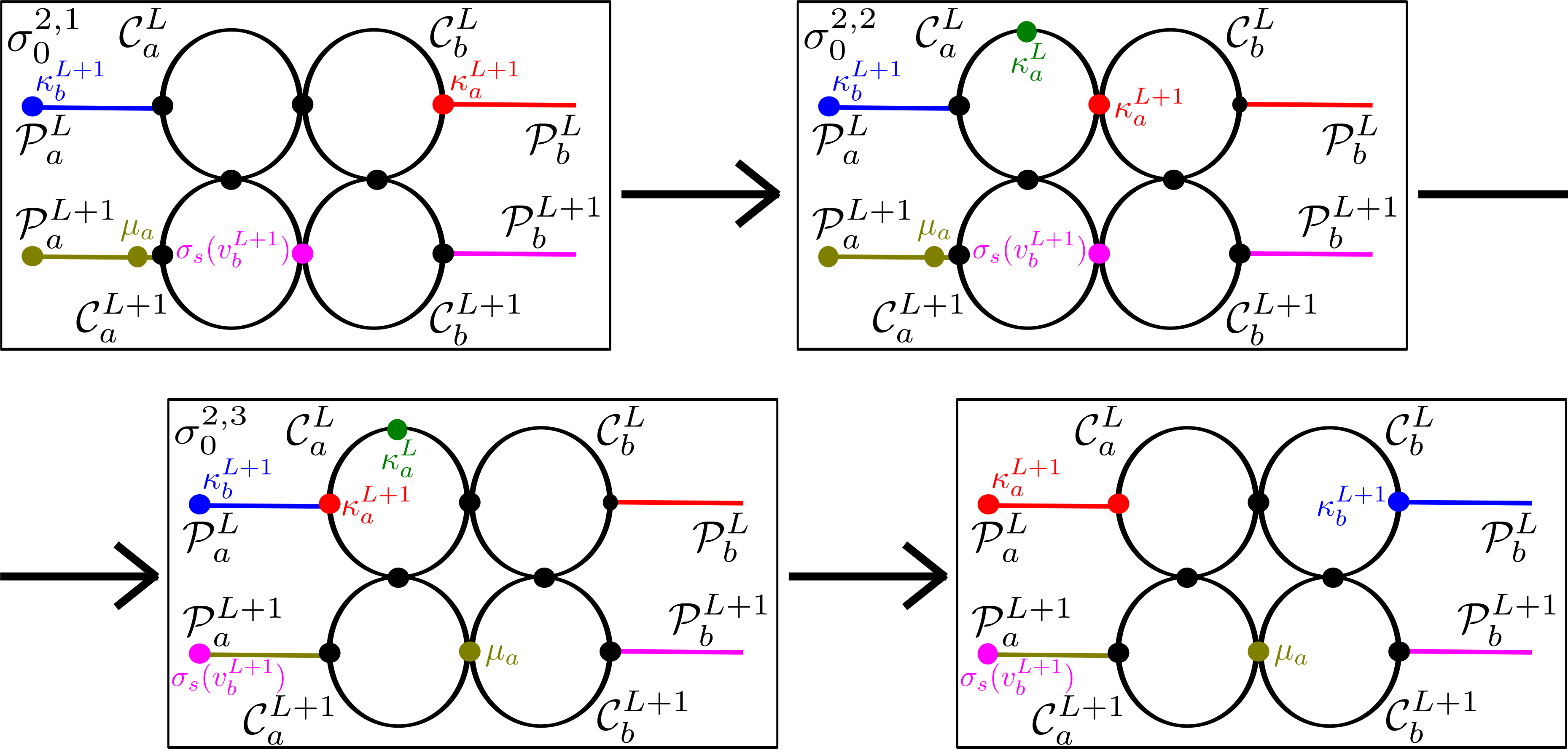}
    \caption{An illustration of the sequences of swaps defined during the construction of $\{\sigma_i'\}_{i=0}^{\lambda'}$ in the induction step, on $L+1$ layers. Subgraphs/vertices corresponding to preimages of $V(\mathcal K_a^L)$, $V(\mathcal K_b^L)$, $V(\mathcal K_a^{L+1})$, and $V(\mathcal K_b^{L+1})$ are red, blue, gold, and pink, respectively. We specifically depict the construction of the sequence $\mathscr{S}_2$, constructed from the subsequence $\{\sigma_i\}_{i=i_1}^{i_2}$ of the original sequence $\{\sigma_i\}_{i=0}^\lambda$. Initially, for $\sigma_0^{2,1}$, we have $\sigma_s(v_b^{L+1})$ upon $v^{L+1}$. At $\sigma_0^{2,2}$, we extend a $\kappa_a^L$-rotation with $\kappa_a^{L+1}$ in $\{\sigma_i\}_{i=i_1}^{i_2}$ (which is guaranteed to exist by the induction hypothesis) so that it includes the following sequence of swaps: a $\kappa_a^{L+1}$-rotation with $\sigma_s(v_b^{L+1})$, swapping $\sigma_s(v_b^{L+1})$ left into $\mathcal P_a^{L+1}$, and a $\kappa_a^{L+1}$-rotation with the resulting $\mu_a$ on $v_a^{L+1}$. This will result in the configuration $\sigma_0^{2,3}$. From $\sigma_0^{2,3}$ to the final configuration in $\mathscr{S}_2$, we execute the rest of $\{\sigma_i\}_{i=i_1}^{i_2}$, leading to an $L$-extraction of $\sigma_0^{2,1}$. Exhausting the original sequence $\{\sigma_i\}_{i=0}^\lambda$ by proceeding like this will yield an $(L+1)$-extraction of $\sigma_s$, and the resulting swap sequence satisfies Proposition \ref{prop:L_knob}(2).}
    \label{fig:general_L}
\end{figure}

For general $\eta \geq 1$, we can invoke the induction hypothesis, applied to $31\eta$, to extract a swap sequence $\{\sigma_i\}_{i=0}^\lambda$ in $V(\mathscr{C}(X_L, Y_L))$ with subsequence $\{\sigma_{i_j}\}_{j=0}^{31\eta}$. Then we can proceed as in the $\eta = 1$ case for every contiguous subsequence $\{\sigma_k\}_{k=31(i-1)}^{31i}$ in $\{\sigma_i\}_{i=0}^\lambda$, for $i \in [\eta]$, to construct a swap sequence $\{\sigma'_i\}_{i=0}^{\lambda'} \subset V(\mathscr{C})$ which establishes Proposition \ref{prop:L_knob} for this value of $\eta$.
\end{proof}

In the proof of Proposition \ref{prop:L_knob}, during the induction basis we reached a $1$-extraction of $\sigma_s$ by performing $\Omega(n)$ iterations of an algorithm which executed $\Omega(n^2)$ swaps.\footnote{Note that we inducted on $L$ in the proof of Proposition \ref{prop:L_knob}, so the sequence of swaps we found for smaller values of $L$ would be executed on subgraphs of $(X_L, Y_L)$ for larger values of $L$. However, it is easy to verify, by tracing the construction in Subsection \ref{subsec:construction}, that the asymptotic statements here hold regardless of the fixed value of $L$ that we choose.} Then in the inductive step, we reached an $(\ell+1)$-extraction of $\sigma_s$ by taking $\Omega(n)$ $\ell$-extractions of $\sigma_s$ and stringing them together by appending some other swap sequences. Altogether, it follows that we found a sequence of $\Omega(n^{L+2})$ swaps to reach an $L$-extraction of $\sigma_s$ --- if this were tight, taking $L$ to be as large as desired would be enough to answer Question \ref{ques:poly_bdd} in the negative. Motivated by these ideas, we prove Proposition \ref{prop:next_layer_ext}, which will lend itself to a lower bound on $d(\sigma_s,\sigma_f)$.

\begin{proposition} \label{prop:next_layer_ext}
Fix integers $L \geq 2$ and $\ell \in [L-1]$, and take $\sigma, \tau \in V(\mathscr{C})$ such that $\tau$ is an $(\ell+1)$-extraction of $\sigma$. Any swap sequence $\{\sigma_i\}_{i=0}^\lambda$ with $\sigma_0 = \sigma$ and $\sigma_\lambda = \tau$ must have a subsequence $\{\sigma_{i_j}\}_{j=0}^{25}$ such that, for $j \in [25]$, there exists a configuration $\Tilde{\sigma} \in \{\sigma_i\}_{i=i_{j-1}}^{i_j}$ that is an $\ell$-extraction of $\sigma_{i_{j-1}}$.
\end{proposition}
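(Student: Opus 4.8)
The plan is to follow the two layer-$(\ell+1)$ knob vertices $\kappa_a^{\ell+1}$ and $\kappa_b^{\ell+1}$, which are themselves members of $V(\mathcal K^\ell)$, and to show that any swap sequence realizing an $(\ell+1)$-extraction must drive each of them off the layer-$\ell$ paths $V(\mathcal P_a^\ell)\cup V(\mathcal P_b^\ell)$ many times, in an alternating fashion; Proposition \ref{prop:knob_extract} then converts each alternation into an $\ell$-extraction. Without loss of generality I would take $\tau$ to be an $(\ell+1)$-extraction of $\sigma$ of the kind in Definition \ref{defn:extraction}(1), so $V(\mathcal K_a^{\ell+1})\subseteq\sigma(V(\mathcal P_a^{\ell+1}))$ while $V(\mathcal K_a^{\ell+1})\cap\tau(V(\mathcal P_a^{\ell+1}))=\emptyset$ and $V(\mathcal K_b^{\ell+1})\subseteq\tau(V(\mathcal P_a^{\ell+1}))$ (the Definition \ref{defn:extraction}(2) case is symmetric, with the roles of the two partite sets of $V(\mathcal K^{\ell+1})$ interchanged).

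The first step is to pin down the geometry inside layer $\ell+1$. Comparing cardinalities, all $15$ vertices of $V(\mathcal K_a^{\ell+1})$ leave $\mathcal P_a^{\ell+1}$ over the course of $\{\sigma_i\}_{i=0}^\lambda$ and at least $14$ vertices of $V(\mathcal K_b^{\ell+1})$ enter it. Since a vertex can only enter or leave $\mathcal P_a^{\ell+1}$ through $v_a^{\ell+1}$, and can only pass between the two sides of $X^{\ell+1}$ through the cut vertex $v^{\ell+1}\in V(\mathcal C_a^{\ell+1})\cap V(\mathcal C_b^{\ell+1})$, each migrating vertex must be rotated through the interior of $\mathcal C_a^{\ell+1}$, and (for all but boundedly many of them) also through the interior of $\mathcal C_b^{\ell+1}$. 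The rule of two (Proposition \ref{prop:rule_of_two}) caps the number of non-$\mathcal S_a^{\ell+1}$-leaf vertices on $\mathcal C_a^{\ell+1}$ at two, one of which must be $\kappa_a^{\ell+1}$ whenever a vertex is being rotated through $\mathcal C_a^{\ell+1}$, and likewise for $\mathcal C_b^{\ell+1}$; since $v^{\ell+1}$ lies on both cycles, at most one ``foreign'' vertex can be mid-traversal at a time, so the traversals happen one at a time. Rotating a vertex through $\mathcal C_a^{\ell+1}$ forces $\kappa_a^{\ell+1}$ onto $\mathcal C_a^{\ell+1}$, hence off $V(\mathcal P_a^\ell)\cup V(\mathcal P_b^\ell)$, and symmetrically for $\mathcal C_b^{\ell+1}$ and $\kappa_b^{\ell+1}$; as $\kappa_a^{\ell+1},\kappa_b^{\ell+1}\in V(\mathcal K^\ell)$, Proposition \ref{prop:layer_independence}(4) forbids both from being off the layer-$\ell$ paths simultaneously. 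Chaining this over the migrating vertices, I would extract configurations $\rho_1,\rho_2,\dots,\rho_{26}$ occurring at strictly increasing indices of the swap sequence such that $\rho_{2m-1}$ lies in a ``$\kappa_b^{\ell+1}$-excursion'' (a stretch during which $\kappa_b^{\ell+1}$ is off $V(\mathcal P_a^\ell)\cup V(\mathcal P_b^\ell)$) and $\rho_{2m}$ lies in a ``$\kappa_a^{\ell+1}$-excursion'', the types thus alternating along $\rho_1,\dots,\rho_{26}$.

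The second step converts each consecutive pair $\rho_k,\rho_{k+1}$ into an $\ell$-extraction. Suppose $\rho_k$ lies in a $\kappa_a^{\ell+1}$-excursion and $\rho_{k+1}$ in a $\kappa_b^{\ell+1}$-excursion (the reverse is symmetric). By Proposition \ref{prop:knob_extract}(1), $V(\mathcal K_b^\ell)\subseteq\rho_k(V(\mathcal P_a^\ell))$, and by Proposition \ref{prop:knob_extract}(2), $V(\mathcal K_a^\ell)\subseteq\rho_{k+1}(V(\mathcal P_a^\ell))$. To see that also $V(\mathcal K_b^\ell)\cap\rho_{k+1}(V(\mathcal P_a^\ell))=\emptyset$: if not, then since $|V(\mathcal P_a^\ell)|=16$ and $|V(\mathcal K_a^\ell)|=15$, all sixteen vertices of $\mathcal P_a^\ell$ would carry $\rho_{k+1}$-images in $V(\mathcal K^\ell)$; but $\kappa_b^{\ell+1}\in V(\mathcal K^\ell)$ lies off $V(\mathcal P_a^\ell)\cup V(\mathcal P_b^\ell)$ at $\rho_{k+1}$, so Proposition \ref{prop:path_images}(2) would force a vertex of $\{\kappa_a^\ell,\kappa_b^\ell\}$ onto $V(\mathcal P_a^\ell)\setminus\{v_a^\ell\}$, impossible since those fifteen slots all carry $V(\mathcal K^\ell)$-vertices and $\kappa_a^\ell,\kappa_b^\ell\notin V(\mathcal K^\ell)$. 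Hence $\rho_{k+1}$ is an $\ell$-extraction of $\rho_k$ in the sense of Definition \ref{defn:extraction}(2) (symmetrically, if $\rho_k$ lies in a $\kappa_b^{\ell+1}$-excursion then $\rho_{k+1}$ is an $\ell$-extraction of $\rho_k$ of the kind in Definition \ref{defn:extraction}(1)). Taking $i_{j-1}$ to be the index of $\rho_j$ for $j\in[25]$ and $i_{25}$ the index of $\rho_{26}$, the indices are strictly increasing, and for each $j\in[25]$ the configuration $\tilde\sigma=\sigma_{i_j}=\rho_{j+1}$ lies in $\{\sigma_i\}_{i=i_{j-1}}^{i_j}$ and is an $\ell$-extraction of $\sigma_{i_{j-1}}=\rho_j$, which is exactly what is required.

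The hard part will be the first step: rigorously establishing that the traversals of $\mathcal C_a^{\ell+1}$ and $\mathcal C_b^{\ell+1}$ are genuinely carried out one vertex at a time and in an alternating $b,a,b,a,\dots$ order, so as to rule out (say) a long run of $\kappa_a^{\ell+1}$-excursions with no intervening $\kappa_b^{\ell+1}$-excursion, throughout which $V(\mathcal K_b^\ell)$ would simply remain pinned on $\mathcal P_a^\ell$ and no $\ell$-extraction would occur. This requires a careful case analysis of how vertices enter and leave $\mathcal C_a^{\ell+1}$ and $\mathcal C_b^{\ell+1}$ at $v^{\ell+1}$ and at the attachment vertices $v_a^{\ell,\ell+1},v_b^{\ell,\ell+1}$, using Proposition \ref{prop:rule_of_two} to bound the foreign vertices on each cycle and Propositions \ref{prop:layer_independence} and \ref{prop:order_invariance} to locate the knobs, in the style of the arguments in Subsection \ref{subsec:configurations_in_c}. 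The constant $26$ obtained this way has a little room to spare over the $25$ actually needed, so small losses in the counting are harmless. Everything else is routine bookkeeping with the identities $|V(\mathcal P_a^\ell)|=16$ and $|V(\mathcal K_a^\ell)|=|V(\mathcal K_b^\ell)|=15$ together with repeated appeals to Propositions \ref{prop:knob_extract} and \ref{prop:path_images}.
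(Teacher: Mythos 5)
Your second step is sound and essentially coincides with the paper's: once you know that $\kappa_a^{\ell+1}$ is off $V(\mathcal P_a^\ell)\cup V(\mathcal P_b^\ell)$ at one moment and $\kappa_b^{\ell+1}$ at a later one, Proposition \ref{prop:knob_extract} gives the two containments, and your cardinality argument via Proposition \ref{prop:path_images}(2) (all sixteen slots of $\mathcal P_a^\ell$ carrying $V(\mathcal K^\ell)$-images forces a layer-$\ell$ knob onto $V(\mathcal P_a^\ell)\setminus\{v_a^\ell\}$, which is impossible) is exactly the argument the paper uses to establish the emptiness condition \eqref{eq:next_layer_ext_eq_6}. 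The counting that at least $26$ vertices of $V(\mathcal K^{\ell+1})\setminus\{\kappa_a^{\ell+2},\kappa_b^{\ell+2}\}$ must change sides is also fine.

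The genuine gap is the step you yourself flag as ``the hard part'': producing $26$ knob excursions in alternating order. This is not routine bookkeeping in the style of Subsection \ref{subsec:configurations_in_c} that can be waved at --- it is the technical core of the proposition, and in the paper it occupies most of the proof. Concretely, what has to be shown is the analogue of \eqref{eq:next_layer_ext_eq_3}: when a vertex $\mu_j\in V(\mathcal K_a^{\ell+1})$ first leaves its initial subgraph by swapping off $v^{\ell+1}$, its swap partner must be $\kappa_b^{\ell+1}$ and cannot be $\kappa_a^{\ell+1}$ (and symmetrically for the $b$-side). Ruling out $\kappa_a^{\ell+1}$ requires a backward-tracing argument with a minimal index $\xi$ as in \eqref{eq:next_layer_ext_eq_2}, playing Proposition \ref{prop:rule_of_two} on $\mathcal C_b^{\ell+1}$ against Proposition \ref{prop:layer_independence}(4) to show the pair $\{\mu_j,\kappa_a^{\ell+1}\}$ could never have arrived in that position. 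Moreover, strict alternation of crossing types is simply not available: two consecutive side-switching vertices may come from the same partite set of $V(\mathcal K^{\ell+1})$, and in that case one must separately exhibit an intermediate configuration $\Tilde\sigma$ in which $\kappa_a^{\ell+1}$ sits on $\mathcal C_a^{\ell+1}$ while ferrying $\mu_j$ toward $v^{\ell+1}$ (this is how the paper recovers the missing excursion, again via Propositions \ref{prop:rule_of_two}, \ref{prop:layer_independence}(4) and \ref{prop:knob_extract}(1)). So your framing in terms of alternating excursions is achievable in spirit, but proving that the excursions exist and alternate is essentially equivalent to proving the paper's key crossing fact plus its same-side case analysis; as written, your proposal identifies where the difficulty lies but does not resolve it, so the proof is incomplete.
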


\begin{proof}
Assume $\tau$ is an $(\ell+1)$-extraction of $\sigma$ of the kind of Definition \ref{defn:extraction}(1). Proposition \ref{prop:next_layer_ext} can be proved in the setting where $\tau$ is an $(\ell+1)$-extraction of $\sigma$ of the kind of Definition \ref{defn:extraction}(2) entirely analogously, where we switch the roles of several expressions corresponding to the ``left and right sides" of the subgraphs $X^{\ell+1}$ and $\mathcal K^{\ell+1}$ in this case.

We will say that $X_a^{\ell+1}$ is the \textit{\textcolor{red}{initial subgraph}} of any vertex $\mu \in V(\mathcal K_a^{\ell+1}) \setminus \{\kappa_a^{\ell+2}\}$ ($\mu \in V(\mathcal K_a^{L})$ if $\ell = L-1$) with $\sigma^{-1}(\mu) \in V(X_a^{\ell+1})$. By Proposition \ref{prop:layer_independence}(3), $\mu$ leaving the initial subgraph corresponds to an $(X_L, Y_L)$-friendly swap where $\mu$ is upon $v^{\ell+1}$ and swaps onto some vertex in $N_{X_L}(v^{\ell+1}) \cap V(X_b^{\ell+1})$. Similarly, $X_b^{\ell+1}$ is the \textit{\textcolor{red}{initial subgraph}} for $\mu \in V(\mathcal K_b^{\ell+1}) \setminus \{\kappa_b^{\ell+2}\}$ ($\mu \in V(\mathcal K_b^{L})$ if $\ell = L-1$) with $\sigma^{-1}(\mu) \in V(X_b^{\ell+1})$. By Proposition \ref{prop:layer_independence}(3), $\mu$ leaving the initial subgraph corresponds to an $(X_L, Y_L)$-friendly swap where $\mu$ is upon $v^{\ell+1}$ and swaps onto some vertex in $N_{X_L}(v^{\ell+1}) \cap V(X_a^{\ell+1})$.

Let $\Sigma = \{\sigma_i\}_{i=0}^\lambda$ be a swap sequence with $\sigma_0 = \sigma$ and $\sigma_\lambda = \tau$. It is straightforward to show from Proposition \ref{prop:layer_independence}(4) and Definition \ref{defn:extraction}(1) that at least $26$ vertices in $V(\mathcal K^{\ell+1}) \setminus \{\kappa_a^{\ell+2}, \kappa_b^{\ell+2}\}$ ($V(\mathcal K^{L})$ for $\ell = L-1$) switch to the ``opposite" layer $\ell+1$ subgraph in $X_L$ over the course of $\Sigma$. Take any $26$ such vertices $\{\mu_1, \dots, \mu_{26}\}$, indexed in the order that they first leave their initial subgraph during $\Sigma$ (it is clear that at most one such vertex can leave their initial subgraph over a given swap). Construct a subsequence $\{\sigma_{i_j}\}_{j=1}^{26}$ of $\Sigma$ such that, for every $j \in [26]$, $i_j$ is the smallest index for which 
\begin{itemize}
    \item $\sigma_{i_j}^{-1}(\mu_j) = v^{\ell+1}$;
    \item $\sigma_{i_j+1}^{-1}(\mu_j)$ is not a vertex in the initial subgraph of $\mu_j$.
\end{itemize}
Consider any $j \in [26]$ for which $\mu_j \in V(\mathcal K_a^{\ell+1})$. The neighborhood of $\mu_j$ is
\begin{align*}
    N_{Y_L}(\mu_j) = V(\mathcal K_b^{\ell+1}) \cup \{\kappa_a^{\ell+1}, \kappa_b^{\ell+1}\}.
\end{align*}
The vertex $\sigma_{i_j+1}(v^{\ell+1})$ that $\mu_j$ swaps with to reach $\sigma_{i_j+1}$ from $\sigma_{i_j}$ satisfies 
\begin{align*}
    \sigma_{i_j+1}(v^{\ell+1}) \in \{\kappa_a^{\ell+1}, \kappa_b^{\ell+1}\},
\end{align*}
since $\sigma_{i_j}$ would violate Proposition \ref{prop:layer_independence}(4) (on layer $\ell+1$) if we had that $\sigma_{i_j+1}(v^{\ell+1}) \in V(\mathcal K_b^{\ell+1})$. Assume (towards a contradiction) that $\sigma_{i_j+1}(v^{\ell+1}) = \kappa_a^{\ell+1}$, and let $1 \leq \xi \leq i_j$ (the lower bound is since $\sigma_s^{-1}(\mu_j) \neq v^{\ell+1}$) be the smallest such index satisfying 
\begin{align} \label{eq:next_layer_ext_eq_2}
    \sigma_\xi^{-1}(\mu_j) = \sigma_{i_j}^{-1}(\mu_j) = v^{\ell+1} \text{ and } \sigma_\xi^{-1}(\kappa_a^{\ell+1}) = \sigma_{i_j}^{-1}(\kappa_a^{\ell+1}) \in N_{X_L}(v^\ell) \cap V(X_b^{\ell+1}).
\end{align}
Exactly one of the two statements
\begin{itemize}
    \item $\sigma_{\xi-1}^{-1}(\kappa_a^{\ell+1}) \neq \sigma_\xi^{-1}(\kappa_a^{\ell+1})$;
    \item $\sigma_{\xi-1}^{-1}(\mu_j) \neq \sigma_\xi^{-1}(\mu_j)$
\end{itemize}
is true; both being false would contradict $\xi$ being the smallest possible, while both being true would contradict $i_j$ being the smallest possible. But $\sigma_{\xi-1}^{-1}(\kappa_a^{\ell+1}) \neq \sigma_\xi^{-1}(\kappa_a^{\ell+1})$ would imply that $\sigma_{\xi-1}$ violates Proposition \ref{prop:rule_of_two} (on $\mathcal C_b^{\ell+1}$), and $\sigma_{\xi-1}^{-1}(\mu_j) \neq \sigma_\xi^{-1}(\mu_j)$ would imply that $\sigma_{\xi-1}$ violates Proposition \ref{prop:layer_independence}(4) (on layer $\ell$ if it swaps with $\kappa_b^{\ell+1}$, and on layer $\ell+1$ if it swaps with a vertex in $V(\mathcal K_b^{\ell+1})$). So for all $j \in [26]$,
\begin{align} \label{eq:next_layer_ext_eq_3}
    \mu_j \in V(\mathcal K_a^{\ell+1}) \implies \sigma_{i_j+1}(v^{\ell+1}) = \kappa_b^{\ell+1} \text{ and } \mu_j \in V(\mathcal K_b^{\ell+1}) \implies \sigma_{i_j+1}(v^{\ell+1}) = \kappa_a^{\ell+1},
\end{align}
where the latter claim can be deduced from an entirely analogous argument. See Figure \ref{fig:next_layer_ext} for an illustration.

\begin{figure}[ht]
    \centering
    \subfloat[Case where $\mu_j \in V(\mathcal K_a^{\ell+1})$.]{\includegraphics[width=0.49\textwidth]{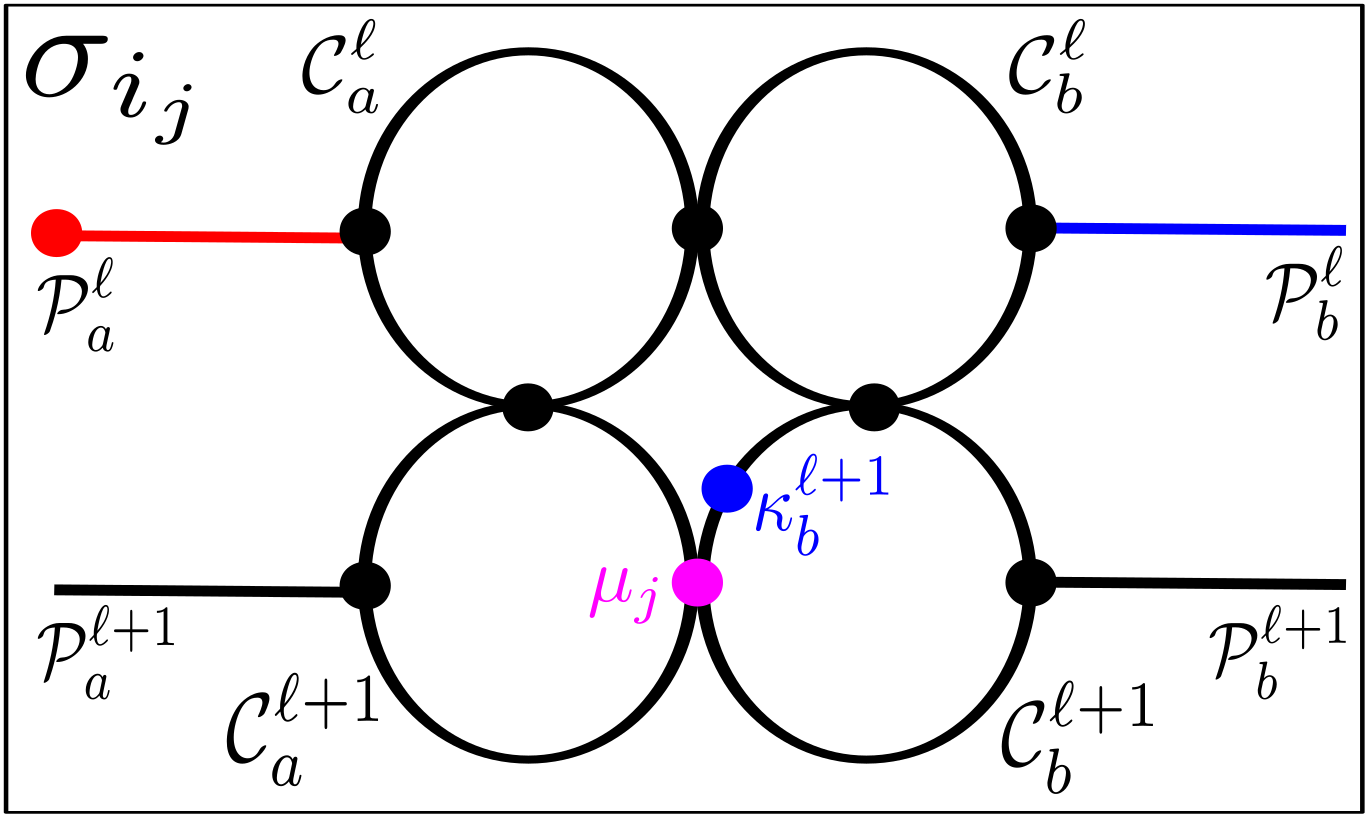}\label{fig:next_layer_ext_a}}
    \hfill
    \subfloat[Case where $\mu_j \in V(\mathcal K_b^{\ell+1})$.]{\includegraphics[width=0.49\textwidth]{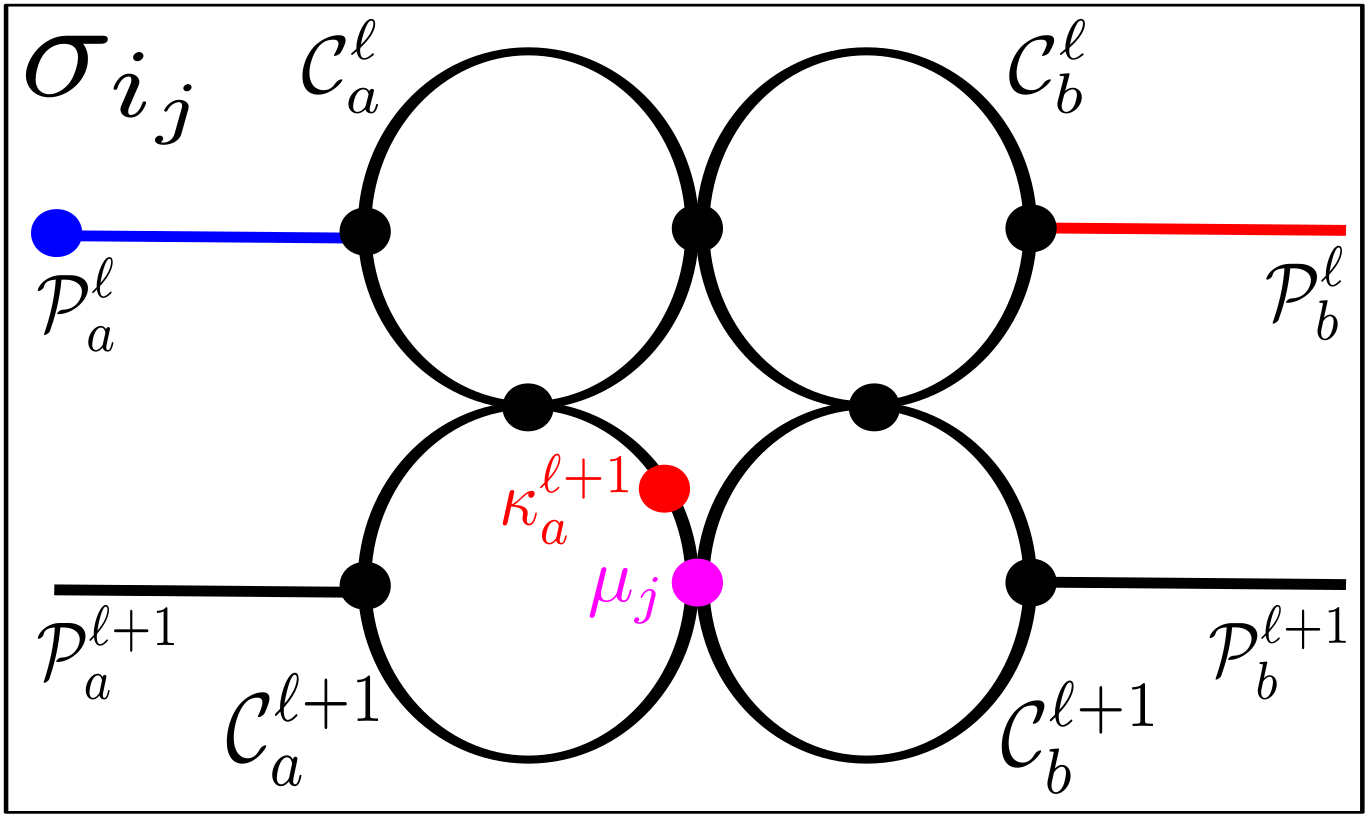}\label{fig:next_layer_ext_b}}
    \caption{The two possibilities for the configuration $\sigma_{i_j}$ for any $j \in [26]$. Subgraphs/vertices corresponding to preimages of $V(\mathcal K_a^\ell)$ and $V(\mathcal K_b^\ell)$ are colored red and blue, respectively. The coloring of $\mathcal P_a^\ell$ in both cases follows from Proposition \ref{prop:knob_extract}.}
    \label{fig:next_layer_ext}
\end{figure}

Now consider $2 \leq j \leq 26$ for which $\mu_j \in V(\mathcal K_a^{\ell+1})$. For such values of $j$ which have that $\mu_j \in V(\mathcal K_b^{\ell+1})$, establishing the existence of a configuration $\Tilde{\sigma} \in \{\sigma_i\}_{i=i_{j-1}}^{i_j}$ that is an $\ell$-extraction of $\sigma_{i_{j-1}}$ can be done entirely analogously. By \eqref{eq:next_layer_ext_eq_3}, $\sigma_{i_j+1}(v^{\ell+1}) = \kappa_b^{\ell+1}$, so certainly 
\begin{align*}
    \sigma_{i_j}^{-1}(\kappa_b^{\ell+1}) \notin V(\mathcal P_a^\ell) \cup V(\mathcal P_b^\ell),
\end{align*}
and since $\kappa_b^{\ell+1} \in V(\mathcal K^\ell)$,
\begin{align*}
    \sigma_{i_j}^{-1}(V(\mathcal K^\ell)) \not\subset V(\mathcal P_a^\ell) \cup V(\mathcal P_b^\ell).
\end{align*}
By Proposition \ref{prop:knob_extract}(2), 
\begin{align} \label{eq:next_layer_ext_eq_4}
    V(\mathcal K_a^\ell) \subset \sigma_{i_j}(V(\mathcal P_a^\ell)).
\end{align}
If it were true that $V(\mathcal K_b^\ell) \cap \sigma_{i_j}(V(\mathcal P_a^\ell)) \neq \emptyset$, there would exist $\eta_1 \in V(\mathcal K_b^\ell)$ satisfying $\eta_1 \in \sigma_{i_j}(V(\mathcal P_a^\ell))$. Combined with \eqref{eq:next_layer_ext_eq_4}, we would have
\begin{align} \label{eq:next_layer_ext_eq_5}
    V(\mathcal K_a^\ell) \cup \{\eta_1\} = \sigma_{i_j}(V(\mathcal P_a^\ell)),
\end{align}
since the LHS is a subset of the RHS and their cardinalities are equal. In particular, it must be that $\sigma_{i_j}(v_a^\ell) \in V(\mathcal K^\ell)$, so Proposition \ref{prop:path_images}(2) would imply that 
\begin{align*}
    \sigma_{i_j}^{-1}(\{\kappa_a^\ell, \kappa_b^\ell\}) \cap (V(\mathcal P_a^\ell) \setminus \{v_a^\ell\}) \neq \emptyset,
\end{align*}
so there exists $\eta_2 \in \{\kappa_a^\ell, \kappa_b^\ell\}$ that is in $\sigma_{i_j}(V(\mathcal P_a^\ell))$, contradicting \eqref{eq:next_layer_ext_eq_5}. So it must be that $\sigma_{i_j}^{-1}(V(\mathcal K_b^\ell)) \cap V(\mathcal P_a^\ell) = \emptyset$, i.e., that
\begin{align} \label{eq:next_layer_ext_eq_6}
    V(\mathcal K_b^\ell) \cap \sigma_{i_j}(V(\mathcal P_a^\ell)) = \emptyset.
\end{align}
If $\mu_{j-1} \in V(\mathcal K_b^{\ell+1})$, then \eqref{eq:next_layer_ext_eq_3} implies $\sigma_{i_{j-1}+1}(v^{\ell+1}) = \kappa_a^{\ell+1}$, and so 
\begin{align*}
    \sigma_{i_{j-1}}^{-1}(\kappa_a^{\ell+1}) \notin V(\mathcal P_a^\ell) \cup V(\mathcal P_b^\ell).
\end{align*}
Proposition \ref{prop:knob_extract}(1) thus implies 
\begin{align*}
    V(\mathcal K_b^\ell) \subset \sigma_{i_{j-1}}(V(\mathcal P_a^\ell)).
\end{align*}
This statement, with \eqref{eq:next_layer_ext_eq_4} and \eqref{eq:next_layer_ext_eq_6}, implies that $\sigma_{i_j}$ is an $\ell$-extraction of $\sigma_{i_{j-1}}$, namely of the Definition \ref{defn:extraction}(2) kind. If $\mu_{j-1} \in V(\mathcal K_a^{\ell+1})$, then \eqref{eq:next_layer_ext_eq_3} implies $\sigma_{i_{j-1}+1}(v^{\ell+1}) = \kappa_b^{\ell+1}$, so 
\begin{align*}
    \sigma_{i_{j-1}}^{-1}(\kappa_b^{\ell+1}) \notin V(\mathcal P_a^\ell) \cup V(\mathcal P_b^\ell).
\end{align*}
Proposition \ref{prop:knob_extract}(2) now implies that 
\begin{align*}
    V(\mathcal K_a^\ell) \subset \sigma_{i_{j-1}}(V(\mathcal P_a^\ell)).
\end{align*}
Since $i_j$ is the smallest possible, $\sigma_{i_{j-1}}^{-1}(\mu_j) \in V(X_a^{\ell+1})$, and it follows that $\sigma_{i_{j-1}}^{-1}(\mu_j) \in V(\mathcal P_a^{\ell+1})$, as $\sigma_{i_{j-1}}$ would otherwise violate Proposition \ref{prop:layer_independence}(4) on layer $\ell$ (due to $\kappa_b^{\ell+1}$ and $\mu_j$). It is straightforward to confirm, appealing to Proposition \ref{prop:rule_of_two} on $\mathcal C_a^{\ell+1}$, that $\mu_j$ moves to $v^{\ell+1}$ during $\{\sigma_i\}_{i=i_{j-1}}^{i_j}$, and swaps with $\kappa_a^{\ell+1}$ upon $V(\mathcal C_a^{\ell+1})$ at some point in this swap sequence.\footnote{This can be proved using ideas and arguments which are essentially identical to those that were carried out in Subcase 1.2 of the proof of Proposition \ref{prop:layer_independence}.} Thus, there exists a configuration $\Tilde{\sigma} \in \{\sigma_i\}_{i=i_{j-1}}^{i_j}$ for which 
\begin{align*}
    \Tilde{\sigma}^{-1}(\{\mu_j, \kappa_a^{\ell+1}\}) \subset V(\mathcal C_a^{\ell+1}) \text{ and } \Tilde{\sigma}^{-1}(\kappa_a^{\ell+1}) \neq v_a^\ell,
\end{align*}
from which it immediately follows that
\begin{align*}
    \Tilde{\sigma}^{-1}(\kappa_a^{\ell+1}) \notin V(\mathcal P_a^\ell) \cup V(\mathcal P_b^\ell),
\end{align*}
and Proposition \ref{prop:knob_extract}(1) implies
\begin{align*}
    V(\mathcal K_b^\ell) \subset \Tilde{\sigma}(V(\mathcal P_a^\ell)).
\end{align*}
This, with \eqref{eq:next_layer_ext_eq_4} and \eqref{eq:next_layer_ext_eq_6}, implies that $\Tilde{\sigma}$ is an $\ell$-extraction of $\sigma_{i_{j-1}}$, namely of the Definition \ref{defn:extraction}(2) kind.

Therefore, taking $\{\sigma_{i_j}\}_{j=1}^{26}$ yields the desired subsequence of $\{\sigma_i\}_{i=0}^\lambda$.
\end{proof}

\subsection{Proof of Theorem \ref{thm:main_diam_result}} \label{subsec:proof_of_lower_bound}

We finally derive the desired lower bound on the diameter of $\mathscr{C}$.

\mainDiamResult*

\begin{proof}
For $L \geq 2$, take $X_L$, $Y_L$ on $58L+2$ vertices (see Subsection \ref{subsec:construction}). For $\ell \in [L-1]$, define 
\begin{align} \label{eq:diam_lb_eq_1}
    \lambda_{(L, n)}(\ell) := \min\left\{d(\sigma, \tau): \sigma, \tau \in V(\mathscr{C}), \tau \text{ is an } \ell\text{-extraction of } \sigma\right\}.
\end{align}
It follows from Proposition \ref{prop:next_layer_ext} that for all $\ell \in [L-1]$,
\begin{align} \label{eq:diam_lb_eq_2}
    \lambda_{(L, n)}(\ell+1) \geq 25\lambda_{(L, n)}(\ell).
\end{align}
Let $\sigma_f \in V(\mathscr{C})$ be such that $\sigma_f$ is an $L$-extraction of $\sigma_s$, which exists by Proposition \ref{prop:L_knob}. By \eqref{eq:diam_lb_eq_1} and \eqref{eq:diam_lb_eq_2},
\begin{align*}
    d(\sigma_s, \sigma_f) \geq \lambda_{(L, n)}(L) \geq 25\lambda_{(L, n)}(L-1) \geq \dots \geq 25^{L-1}\lambda_{(L, n)}(1) \geq 25^{L-1}.
\end{align*}
Now, for $n \geq 60$, fix $L = \lfloor (n-2)/58 \rfloor$ (here, $L \geq 1$), and construct $n$-vertex graphs $\Tilde{X}_n$, $\Tilde{Y}_n$ by adding $n' = n-(58L+2)$ isolated vertices to $X_L$ and $Y_L$, respectively. Let $\mathscr{C}(\Tilde{X}_n$, $\Tilde{Y}_n)$ denote the connected component of $\FS(\Tilde{X}_n$, $\Tilde{Y}_n)$ containing the configuration resulting from placing $V(\Tilde{Y}_n)$ upon $V(\Tilde{X}_n)$ as usual (i.e., under the starting configuration as defined in Subsection \ref{subsec:construction}), and then placing the $n-n'$ isolated vertices in $\Tilde{Y}_n$ upon the $n-n'$ isolated vertices of $\Tilde{X}_n$ in some way. It easily follows from our construction that $58L+2 \leq n \leq 58L+58$, so 
\begin{align*}
    d(\sigma_s, \sigma_f) \geq 25^{L-1} = e^{\Omega(n)}.
\end{align*}
By accounting for the values $2 \leq n \leq 59$, which may weaken the constant implicit in the $\Omega(n)$ term, the desired result now follows immediately.
\end{proof}

To conclude Section \ref{sec:large_diameter}, we mention an especially notable implication of Theorem \ref{thm:main_diam_result} in the study of random walks on friends-and-strangers graphs. In the proceeding discussion, to avoid distracting from the nature of this article, we elect to be terse and do not define many of the objects we consider; we refer the reader to \cite{lovasz1993random} for a thorough treatment of random walks on graphs. We begin by providing the following Definition \ref{defn:fs_markov_chain}. The fact that there is a natural discrete-time Markov chain associated to a friends-and-strangers graph was observed in passing in \cite[Section 7]{alon2022typical}, and an investigation of its mixing properties was separately proposed in \cite{alon2021talk}.

\begin{definition} \label{defn:fs_markov_chain}
    Let $X$ and $Y$ be $n$-vertex graphs. The \textit{\textcolor{red}{friends-and-strangers Markov chain}} of $X$ and $Y$ is the discrete-time Markov chain whose state space is $V(\FS(X,Y))$ and such that at each time step, a pair of friends standing upon adjacent vertices is chosen uniformly at random amongst all such pairs and swap places with probability $1/2$.
\end{definition}

The friends-and-strangers Markov chain of $X$ and $K_n$, which models a lazy random walk on a connected component of $\FS(X, Y)$, is aperiodic by construction (recall from Proposition \ref{prop:basic_properties}(2) that friends-and-strangers graphs are bipartite, which warrants the laziness condition in Definition \ref{defn:fs_markov_chain} if we would like to discuss mixing to stationarity) and irreducible when restricted to a connected component of $\FS(X, Y)$. From a different perspective, Definition \ref{defn:fs_markov_chain} may be interpreted as the generalization of a natural discrete variant of the interchange process\footnote{The interchange process is usually posed as a continuous-time stochastic process by assigning, to the edges of $X$, independent point processes on the positive half-line, and transposing the particles upon the vertices incident to a given edge at the points of its corresponding process. One can certainly adapt Definition \ref{defn:fs_markov_chain} to accommodate for such differences in the presentation of the model.} (sometimes called the random stirring process) where we include the condition that arbitrary pairs of particles may be forbidden from swapping positions. The friends-and-strangers Markov chain has received substantial attention under certain restricted settings (chiefly that in which $Y = K_n$), and classical polynomial upper bounds on the mixing time (in total variation distance) of the underlying Markov chain are known; see \cite{aldous1983random, aldous1986shuffling, diaconis1981generating, diaconis1993comparison, jonasson2012mixing, matthews1988strong, wilson2004mixing}. An immediate corollary of Theorem \ref{thm:main_diam_result}, which might be thought of as its natural stochastic analogue (especially in light of the polynomial upper bounds that we derived in Section \ref{sec:fixed_one_graph}), is the following.

\begin{corollary}
    For all $n \geq 2$, there exist $n$-vertex graphs $X$ and $Y$ for which there exists a connected component of $\FS(X,Y)$ such that the friends-and-strangers Markov chain of $X$ and $Y$, when restricted to this component, has mixing time (in total variation distance) which is $e^{\Omega(n)}$.
\end{corollary}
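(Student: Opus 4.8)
The plan is to derive the corollary as a direct consequence of Theorem~\ref{thm:main_diam_result} together with the standard fact that the mixing time of a reversible, irreducible, aperiodic Markov chain is at least (a constant times) the diameter of its underlying state-transition graph. First I would fix $n \geq 2$ and let $X$, $Y$ be the $n$-vertex graphs furnished by Theorem~\ref{thm:main_diam_result}, so that $\FS(X,Y)$ has a connected component $\mathscr{C}$ with $\diam(\mathscr{C}) = e^{\Omega(n)}$. I would restrict the friends-and-strangers Markov chain of Definition~\ref{defn:fs_markov_chain} to $\mathscr{C}$: as noted in the surrounding discussion, this restricted chain is irreducible on $V(\mathscr{C})$ (two configurations in the same component are joined by a swap sequence, each step of which has positive transition probability) and aperiodic (the laziness, i.e.\ the probability $1/2$ of not swapping, creates self-loops, which is what the bipartiteness from Proposition~\ref{prop:basic_properties}(2) makes necessary). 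It is furthermore reversible with respect to the uniform distribution on $V(\mathscr{C})$, since each allowed transition $\sigma \leftrightarrow \tau$ is symmetric: the pair of adjacent friends over which $\sigma$ and $\tau$ differ is selected with the same probability from either endpoint, and the swap happens with probability $1/2$, so the detailed-balance equations hold with $\pi(\sigma) = \pi(\tau) = 1/|V(\mathscr{C})|$.

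Next I would invoke the general lower bound relating mixing time and diameter. In one step of the chain, the probability mass can move to a configuration at graph-distance at most $1$ from the current state; hence after $t$ steps, starting from any $\sigma$, the distribution is supported on the ball of radius $t$ around $\sigma$ in $\mathscr{C}$. If $t < \diam(\mathscr{C})/2$, then there is a pair $\sigma, \tau$ with $d(\sigma,\tau) \geq \diam(\mathscr{C})$, so the balls of radius $t$ around $\sigma$ and around $\tau$ are disjoint; the distributions at time $t$ started from $\sigma$ and from $\tau$ then have disjoint supports, so their total variation distance is $1$, and consequently at least one of them is at total variation distance at least $1/2$ from the stationary distribution $\pi$ (by the triangle inequality for total variation distance). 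Taking $t$ just below $\diam(\mathscr{C})/2$ shows that the mixing time (to total variation distance, say, $1/4$) is at least $\diam(\mathscr{C})/2 - O(1) = e^{\Omega(n)}$. I would cite \cite{lovasz1993random} (or an analogous standard reference on random walks on graphs) for the precise form of this bound.

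The only genuine content beyond bookkeeping is the observation that the friends-and-strangers Markov chain, restricted to a component, is exactly a lazy walk whose one-step transitions respect the graph metric of $\FS(X,Y)$ — so that the diameter lower bound on mixing time applies verbatim — and this is essentially immediate from Definition~\ref{defn:fs_markov_chain}. I do not anticipate a real obstacle here; the corollary is a soft consequence of Theorem~\ref{thm:main_diam_result}. The one point requiring a sentence of care is that the relevant mixing-time lower bound is stated for distributions, not for hitting or cover times, so I would phrase the argument purely in terms of the radius-$t$ support of the time-$t$ distribution, as above, rather than appealing to more delicate spectral quantities.

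\begin{proof}
Fix $n \geq 2$, and let $X, Y$ be $n$-vertex graphs such that $\FS(X,Y)$ has a connected component $\mathscr{C}$ with $\diam(\mathscr{C}) = e^{\Omega(n)}$, as furnished by Theorem~\ref{thm:main_diam_result}. Consider the friends-and-strangers Markov chain of $X$ and $Y$ restricted to $\mathscr{C}$. This chain is irreducible on $V(\mathscr{C})$, since any two configurations in $\mathscr{C}$ are joined by a swap sequence, each step of which is a transition of positive probability; it is aperiodic, since the laziness (not performing the chosen swap, with probability $1/2$) yields a self-loop at every state; and it is reversible with respect to the uniform distribution $\pi$ on $V(\mathscr{C})$, as each permitted transition $\sigma \leftrightarrow \tau$ occurs with the same probability in either direction and $\pi$ is uniform.

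We use the standard fact (see \cite{lovasz1993random}) that for such a chain, the distribution at time $t$ started from any state $\sigma$ is supported on the set of states at graph-distance at most $t$ from $\sigma$ in $\mathscr{C}$. If $t < \diam(\mathscr{C})/2$, choose $\sigma, \tau \in V(\mathscr{C})$ with $d(\sigma, \tau) = \diam(\mathscr{C}) > 2t$. Then the time-$t$ distributions started from $\sigma$ and from $\tau$ have disjoint supports, hence are at total variation distance $1$ from each other; by the triangle inequality, at least one of them is at total variation distance at least $1/2$ from $\pi$. Taking $t = \lceil \diam(\mathscr{C})/2 \rceil - 1$, we conclude that the mixing time (to total variation distance $1/4$, say) of the chain restricted to $\mathscr{C}$ is at least $\diam(\mathscr{C})/2 - O(1) = e^{\Omega(n)}$, as desired.
\end{proof}
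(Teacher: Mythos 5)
Your proposal is correct and follows exactly the route the paper intends: the paper derives this corollary immediately from Theorem \ref{thm:main_diam_result} via the standard fact that the mixing time of the lazy chain restricted to a component is bounded below by (roughly half) the diameter of that component, which is precisely your radius-$t$ support argument. One small inaccuracy: since each step picks a uniformly random available friendly pair, the transition probability from $\sigma$ to a neighbor is $\tfrac{1}{2\deg(\sigma)}$ (degree taken in $\FS(X,Y)$), so the chain is reversible with respect to the distribution proportional to degree on $V(\mathscr{C})$, not the uniform distribution, unless the component happens to be regular; this does not affect your proof, because the disjoint-support/triangle-inequality bound lower-bounds the distance to \emph{any} fixed target distribution, in particular the true stationary one.
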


In other words, for this variant of the interchange process in which we may further forbid certain pairs of particles from swapping places with each other, it is possible to fix restrictions between particles in such a way that the mixing time of the underlying Markov chain is exponential in the size of the graph on which the process occurs. This is in stark contrast to the aforementioned polynomial upper bounds regarding rapidly mixing Markov chains. 

\section{Open Questions and Future Directions} \label{sec:future_directions}

Theorem \ref{thm:main_diam_result} of this paper proves that diameters of connected components of friends-and-strangers graphs may grow exponentially in the size of their input graphs. There are many other interesting questions concerning distance and diameter that remain unresolved by this article.

\subsection{Other Choices of Fixed Graphs}

In Section \ref{sec:fixed_one_graph}, we fixed $X$ to be from a particular class of graphs, and derived bounds on the maximal diameter of a connected component $\FS(X,Y)$. Of course, we could pursue similar inquiries for other choices of $X$. One natural choice would be to take $X = \Star_n$. It is known (see \cite{biniaz2023token}) that the diameter of any component of $\FS(\Star_n, K_n)$ is at most $\frac{3}{2}n + O(1)$, but to our knowledge, there are no known bounds on the maximum diameter for general $Y$. We also remark that it may be possible to extract a bound on the maximum diameter of a component of $\FS(\Star_n, Y)$ for biconnected graphs $Y$ by tracing the arguments in \cite{wilson1974graph}.

\subsection{Improvements} \label{subsec:improvements}

For much of our discussion in Subsection \ref{subsec:cycle_graphs}, we were primarily interested in showing that the maximum diameter of a connected component of $\FS(\Cycle_n,Y)$ was polynomially bounded (in the sense of Question \ref{ques:poly_bdd}), rather than achieving tight asymptotic statements. It would be desirable to improve these results, toward which we pose the following conjectures. We mention that generalizing the lattice-theoretic methods of \cite{propp2021lattice}, which are of a very different flavor than the arguments presented here, might lead to the resolution of Conjecture \ref{conj:double_flip_quadratic}. We also note that if Conjecture \ref{conj:cycle_quadratic} were settled, then tracing the proof of Corollary \ref{cor:double_flip_dist} would immediately lead to an $O(n^3)$ bound on the number of double-flips needed to go between double-flip equivalent acyclic orientations $\alpha$ and $\alpha''$ on an $n$-vertex graph, sharpening Corollary \ref{cor:double_flip_dist}.

\begin{conjecture} \label{conj:cycle_quadratic}
    The maximum diameter of a connected component of $\FS(\Cycle_n, Y)$ is $O(n^2)$.
\end{conjecture}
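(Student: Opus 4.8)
The plan is to sharpen both halves of the argument used to prove Theorem \ref{thm:cycle_diam_1} and Theorem \ref{thm:cycle_diam_2}: the combinatorial bound on the number of double-flips required to pass between double-flip equivalent acyclic orientations, and the ``cost'' of realizing each double-flip by $(\Cycle_n, Y)$-friendly swaps. Recall that in the proof of Theorem \ref{thm:cycle_diam_1} the bound $d(\sigma,\tau) \le 2n^2 \cdot 2n + |E(Y)|$ arose from (i) reaching $\alpha''$ from $\alpha$ in $O(n^2)$ double-flips, and (ii) paying $O(n)$ friendly swaps per double-flip. To get $O(n^2)$ overall one needs the product of these two quantities, suitably amortized, to be $O(n^2)$. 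The key realization should be that the $O(n)$-per-double-flip estimate is extremely wasteful when double-flips are performed in long monotone runs: a sequence of $k$ consecutive inflips on sources of $\alpha_r$ (paired with outflips on sinks elsewhere) does not cost $kn$ friendly swaps, because after each flip the relevant vertex is already adjacent to the boundary edge $\{1,n\}$ and only needs to be cycled around. I would therefore first prove a refined lemma: if $\alpha''$ is reached from $\alpha$ by a sequence of double-flips that decomposes into $O(1)$ monotone runs (inflips-only or outflips-only within each component $\overline{Y_i}$), then the corresponding $\widetilde\sigma \in \mathcal L(\alpha'')$ is reachable from $\sigma \in \mathcal L(\alpha)$ in $O(n^2)$ friendly swaps total, not $O(n)$ per flip. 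This should follow by tracking, for each $i$, how far the ``knob'' configuration on $\overline{Y_i}$ has rotated and observing that the total rotation is bounded by the number of flips, which is $O(n^2)$, while the auxiliary swaps needed to bring vertices to the edge $\{1,n\}$ amortize to $O(n)$ per run.

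The second step is to re-examine the $O(n^2)$ double-flip count itself and show that the double-flip sequence produced by the B\'ezout argument (plus the preliminary $\sum_i \binom{n_i}{2}$ flips) can always be arranged to consist of $O(1)$ monotone runs per component. In the proof of Theorem \ref{thm:cycle_diam_1} the construction already has this shape: one performs $\le \binom{n_r}{2}+n_1$ or $\le \sum_{i<r}\binom{n_i}{2}$ flips to reach $\widetilde\alpha$, then $\sum_{i<r} d_i n_i \le n^2$ more to reach $\alpha''$, and in each phase the flips on a given component are monotone. Feeding this into the refined lemma from the first step gives $d(\sigma,\tau) = O(n^2)$ whenever $\gcd(n_1,\dots,n_r)=1$, i.e.\ whenever $\overline{Y}$ is a forest with coprime tree sizes — in particular sharpening the corollary after Theorem \ref{thm:cycle_diam_1} to $O(n^2)$. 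Then I would redo the reduction of Theorem \ref{thm:cycle_diam_2}: adjoin a universal vertex $v$ to $Y$ to force coprimality, so that the $O((n+1)^2) = O(n^2)$ bound applies in $\FS(\Cycle_{n+1}, Y')$; a path from $\sigma'$ to $\tau'$ of length $O(n^2)$ descends, upon deleting swaps involving $v$, to a path of length $O(n^2)$ from $\sigma$ to some rotation $\tau_*$ of $\tau$. The final ingredient is to bound $d(\tau,\tau_*)$: since the pigeonhole argument in Theorem \ref{thm:cycle_diam_2} only used the existence of $n$ rotations, it now yields $d(\tau,\tau_*) = O(n^2) \cdot n = O(n^3)$, which is not good enough — so this part must be replaced.

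The main obstacle, therefore, is controlling the distance between $\tau$ and its cyclic rotation $\tau_*$ directly, rather than through the lossy pigeonhole bound. The right statement to aim for is: for any $\sigma \in V(\FS(\Cycle_n,Y))$ and any rotation $\sigma'$ of $\sigma$ lying in the same component, $d(\sigma,\sigma') = O(n^2)$. I would try to prove this by a structural analysis of what a single ``global rotation'' costs: rotating all $n$ tokens one step around $\Cycle_n$ requires, in the worst case, only $O(n)$ friendly swaps when $Y$ is connected with an appropriate spanning structure, and more generally one can conjugate by the reduction-to-$Y'$ trick again (the universal vertex makes a global rotation cheap) to see that $d(\tau, \tau_*) = O(n^2)$, absorbing the loss into a single application of the coprime case rather than iterating it $n$ times. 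If that works, combining with the $O(n^2)$ bound on $d(\sigma,\tau_*)$ yields $d(\sigma,\tau) = O(n^2)$ and hence the conjecture. I expect the delicate point to be verifying that the rotation $\tau_*$ obtained by deleting $v$-swaps is always a rotation by a \emph{bounded} number of steps away from $\tau$ in the appropriate metric, or else handling the case where it is a large rotation by a separate, direct argument; getting a clean $O(n^2)$ here without reintroducing a factor of $n$ is the crux.
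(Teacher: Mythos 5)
This statement is posed in the paper as an open conjecture --- the paper explicitly records that the $O(n^2)$ bound of Proposition \ref{prop:span_star} could not be extended to general $Y$, and only the weaker bounds of Theorems \ref{thm:cycle_diam_1} and \ref{thm:cycle_diam_2} ($O(n^3)$ in the connected case, $O(n^4)$ in general) are proved. So there is no proof in the paper to compare against, and the question is whether your proposal closes the gap on its own. It does not: it is a plausible research plan, but its two load-bearing steps are asserted rather than proved. The first is the amortization lemma. In the proof of Theorem \ref{thm:cycle_diam_1}, each double-flip costs up to $2n-3$ auxiliary swaps because the source $v$ and sink $w$ to be flipped can sit anywhere in the current permutation; your claim that within a monotone run ``the relevant vertex is already adjacent to the boundary edge $\{1,n\}$ and only needs to be cycled around'' is not justified. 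After flipping $v$ and $w$ across $\{1,n\}$, the next source of $\alpha_r$ to be inflipped (in the cyclic-return order) is generally in the interior of the permutation, with vertices of the other components of $\overline{Y}$ interleaved, and it is exactly the question of whether this transport cost amortizes to $O(1)$ per double-flip (the Bézout phase alone may require $\Theta(n^2)$ double-flips, so $O(n)$ per flip is fatal) that constitutes the hard content of the conjecture. Nothing in your sketch shows how to schedule the intermediate swaps so that total work is proportional to total rotation.

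The second gap, which you partly acknowledge, is the rotation step replacing the pigeonhole argument of Theorem \ref{thm:cycle_diam_2}: you need $d(\tau,\tau_*)=O(n^2)$ where $\tau_*$ is the cyclic rotation of $\tau$ produced by deleting the swaps involving the universal vertex $v$. The suggestion that ``rotating all $n$ tokens one step around $\Cycle_n$ requires only $O(n)$ friendly swaps'' cannot be used as stated, because a one-step rotation of $\tau$ need not lie in the same connected component of $\FS(\Cycle_n,Y)$ at all (it generally changes the double-flip class of the associated acyclic orientation), so one cannot walk from $\tau$ to $\tau_*$ through intermediate single-step rotations; and ``conjugating by the universal-vertex trick again'' only reproves a bound on distances to \emph{some} rotation of $\sigma$, not a bound on $d(\tau,\tau_*)$ when the offset is large. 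Since you yourself flag this as the crux and offer no argument that the offset is bounded or that a large offset can be handled directly, the proposal as written does not establish the conjecture; it refines the paper's existing $O(n^3)$/$O(n^4)$ strategy but leaves open precisely the two points where a factor of $n$ must be saved.
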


\begin{conjecture} \label{conj:double_flip_quadratic}
    For an $n$-vertex graph $G$ and two acyclic orientations $\alpha, \alpha'' \in \Acyc(G)$ that are double-flip equivalent, it is possible to go from $\alpha$ to $\alpha''$ in $O(n^2)$ double-flips.
\end{conjecture}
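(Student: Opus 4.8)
The plan is to reduce Conjecture~\ref{conj:double_flip_quadratic} to a statement internal to a single flip-equivalence class, and then to attack that statement by extending the lattice-theoretic machinery behind Lemma~\ref{lem:inflips_outflips}. The starting observation is that double-flip equivalence refines flip equivalence: a double-flip at a nonadjacent source--sink pair $(s,t)$ is the composition of an inflip at $s$ and an outflip at $t$ (both legitimate, since $s$ and $t$ are nonadjacent, so neither disturbs the other's applicability), and each of these is a flip, hence preserves $|\mathcal C_\alpha^-|$ for every cycle subgraph $\mathcal C$ by the computation in the proof of Proposition~\ref{prop:flip_equiv}. Thus $\alpha\approx\alpha''$ forces $\alpha\sim\alpha''$, and it suffices to work inside one flip class $\mathcal F=[\alpha]_\sim$, whose double-flip classes partition it. Inside $\mathcal F$, Lemma~\ref{lem:inflips_outflips} already supplies a sequence of at most $\binom n2$ inflips from $\alpha$ to $\alpha''$; the goal is to promote this to a sequence of $O(n^2)$ double-flips, the obstruction being that double-flips are available only at orientations of $\mathcal F$ that possess a nonadjacent source--sink pair.

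I would next dispose of the case where $G$ is disconnected, say $G=G_1\oplus\cdots\oplus G_r$ with $r\ge 2$. Here the first part of the proof of Theorem~\ref{thm:cycle_diam_1} applies almost verbatim: a source of one component and a sink of another are automatically nonadjacent, so an inflip on one component paired with an outflip on another is a single double-flip, and $O(n^2)$ such moves bring all components but one into their target flip positions, leaving a cyclic offset $c$ on the last component. The coprimality hypothesis of Theorem~\ref{thm:cycle_diam_1} was used there only to solve the ensuing congruence over $\mathbb Z/n_r$; but $\alpha\approx\alpha''$ forces $c$ to be a multiple of $d:=\gcd(|V(G_1)|,\dots,|V(G_r)|)$ --- the quantity $d$ being exactly what pins down which double-flip class of $\mathcal F$ one occupies (compare Lemma~\ref{lem:star_cycle}, in which $\mathcal F$ itself splits into $n$ cyclically arranged double-flip classes) --- so the congruence is solvable and a B\'ezout argument closes the gap in a further $O(n^2)$ double-flips. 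This reduces the conjecture to the case that $G$ is connected.

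For connected $G$ I would develop a common refinement of Propp's \cite{propp2021lattice} lattice-theoretic description of $\mathcal F$ (in which the covering relations are inflips and all maximal chains have length $O(n^2)$) that additionally records the ``winding'' invariant $\omega$ whose level sets are precisely the double-flip classes of $\mathcal F$; in the $\FS(\Cycle_n,\overline G)$ picture of Theorem~\ref{thm:cycle_comp}, $\omega$ counts traversals of the wrap-around edge, read modulo the appropriate period. The program is: realize this description concretely as a lattice of order ideals with covering moves equal to inflips; describe a single double-flip as a pair consisting of one ideal-growth and one ideal-shrink move of bounded displacement, changing $\omega$ by $\pm O(1)$; and then prove (a) any two order ideals at the same $\omega$-level are joined inside $\mathcal F$ by a sequence of $O(n^2)$ such paired moves, and (b) $\omega$ takes only $O(n)$ values, so the target value of $\omega$ can be reached with $O(n)$ additional double-flips. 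Combining (a), (b), and the disconnected case would yield the quadratic bound.

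The hard part will be the connected case, and within it step~(a): there is no spare component to supply a nonadjacent source--sink partner, so promoting Propp's optimal inflip sequence to double-flips requires showing that a suitable compensating move is always available at bounded cost and can be routed so as to disturb $\omega$ by only $O(n)$ in total. The complete-graph case --- where $\mathcal F$ shatters into singleton double-flip classes, so that no double-flip path exists unless $\alpha=\alpha''$ --- shows that the hypothesis $\alpha\approx\alpha''$ must be used globally rather than move-by-move, so the real content is to make the lattice machinery detect $\omega$. Finally, I note that progress on Conjecture~\ref{conj:cycle_quadratic} would help here, since a swap sequence realizing $\diam\bigl(\FS(\Cycle_n,\overline G)\bigr)$ uses the wrap-around edge at most its own length many times; but the lattice route seems the more promising way to obtain the sharp quadratic bound directly.
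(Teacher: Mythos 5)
The statement you are addressing is not a theorem of the paper but an open conjecture: the paper proves only the much weaker bound of $4n^4(1+o(1))$ double-flips (Corollary \ref{cor:double_flip_dist}, via the $\FS(\Cycle_n,Y)$ diameter bound of Theorem \ref{thm:cycle_diam_2}), observes that a resolution of Conjecture \ref{conj:cycle_quadratic} would improve this to $O(n^3)$, and explicitly suggests that generalizing the lattice-theoretic methods of \cite{propp2021lattice} might yield the quadratic bound. Your proposal follows exactly that suggested route, so there is no paper proof to compare it against --- and, more importantly, your proposal is not a proof either. The connected case, which you correctly identify as the entire difficulty, is presented only as a program: steps (a) and (b) --- that same-$\omega$ order ideals are joined by $O(n^2)$ paired moves, and that the winding invariant can be corrected cheaply --- are precisely the content of the conjecture, and nothing in the proposal establishes them, nor even that the proposed winding statistic $\omega$ is well defined on a general flip class and has level sets equal to the double-flip classes. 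You acknowledge this yourself, so the gap is not hidden, but it must be named: the argument stops where the problem starts.

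Two smaller points. First, your disconnected-case reduction is genuinely in the spirit of the first half of the proof of Theorem \ref{thm:cycle_diam_1} and would give $O(n^2)$ there, but it rests on the unproven claim that for $\alpha\approx\alpha''$ the residual cyclic offset $c$ on the last component is necessarily divisible by $\gcd(n_1,\dots,n_r)$; this amounts to showing that the offset modulo the gcd is a double-flip invariant (equivalently, identifying the double-flip classes inside a flip class, in the vein of \cite[Section 4]{defant2021friends}), which requires an argument, not just the analogy with Lemma \ref{lem:star_cycle}. Second, the correct reductions you do carry out --- that $\approx$ refines $\sim$, and that one may work inside a single flip class where Lemma \ref{lem:inflips_outflips} gives a $\binom n2$ inflip sequence --- are sound and are consistent with how the paper frames the problem, but they do not advance beyond what the paper already records. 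As it stands, the proposal should be read as a reasonable research plan for the open conjecture, not as a proof of it.
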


In another direction, Theorem \ref{thm:main_diam_result} states that for all $n \geq 2$, there exist $n$-vertex $X$ and $Y$ such that the maximum diameter of a connected component of $\FS(X,Y)$ is $e^{\Omega(n)}$. It is unclear how close this is to the truth. As a first step, we pose the following problem.
\begin{question} \label{ques:upper_bound}
    For $n$-vertex graphs $X$ and $Y$, does there exist a nontrivial upper bound (in terms of $n$) on the maximum diameter of a component of $\FS(X,Y)$?
\end{question}

We briefly clarify what we mean by a nontrivial upper bound in Question \ref{ques:upper_bound}. Let $\mathcal D(n)$ denote the maximum possible diameter of a connected component of $\FS(X,Y)$ when $X$ and $Y$ are $n$-vertex graphs. By Theorem \ref{thm:main_diam_result} for the lower bound and Stirling's approximation applied to $n!$ for the upper bound, we observe that
\begin{align*}
    e^{\Omega(n)} = \mathcal D(n) \leq e^{\left(1-o(1)\right)n \log n}.
\end{align*}
Thus, our understanding of $\mathcal D(n)$ is tight up to a logarithmic factor in the exponent. Any improvement over this naive upper bound, or confirmation that this upper bound is essentially the truth, would be highly desirable. In particular, we ask the following more precise question. Indeed, given the preceding discussion, Question \ref{ques:O(n)_diam_upper_bound} is the natural next target.
\begin{question} \label{ques:O(n)_diam_upper_bound}
    Is it true that $\mathcal D(n) = e^{O(n)}$?
\end{question}
We propose one final problem in this subsection which we would especially like to see resolved.
\begin{problem}
    Find a shorter (perhaps via non-constructive\footnote{Certainly, a proof of Theorem \ref{thm:main_diam_result} using non-constructive techniques would be a novel contribution. In another direction, recall that the central idea behind Section \ref{sec:large_diameter} was to construct an exponentially increasing recursive sequence of swaps in such a way that executing this sequence of swaps is necessary in order to reach one configuration from another. A constructive proof which either proceeds via a similar paradigm with a construction that is more amenable to analysis or leverages different ideas altogether would also be of interest.} means) proof of Theorem \ref{thm:main_diam_result}.
\end{problem}

\subsection{Connected Friends-and-Strangers Graphs}

The proof of Theorem \ref{thm:main_diam_result} relied heavily on characterizing all vertices of $\FS(X_L, Y_L)$ in the same connected component of $\sigma_s$. It is thus natural to ask Question \ref{ques:poly_bdd} in the setting where $\FS(X, Y)$ is assumed to be connected, which was separately raised by Defant and Kravitz.

\begin{question}[{\cite[Subsection 7.3]{defant2021friends}}] \label{ques:diam_q_conn} 
    Does there exist an absolute constant $C > 0$ such that for all $n$-vertex graphs $X$ and $Y$ with $\FS(X, Y)$ connected, it holds that $\diam(\FS(X, Y))$ is $O(n^C)$?
\end{question}

If Question \ref{ques:diam_q_conn} holds in the negative, then settling it will likely require very different techniques and paradigms than those which were developed in this article. Indeed, the proof of the negative result for Question \ref{ques:poly_bdd} relies heavily on ``rigging" the configurations that lie in a particular connected component of $\FS(X_L, Y_L)$, which allows us to argue that two particular configurations (namely, $\sigma_s$ and $\sigma_f$) are necessarily very far apart. Such a strategy is not applicable if we require $\FS(X, Y)$ to be connected. Additionally, by Proposition \ref{prop:basic_properties}(3), we can assume (without loss of generality) that $X$ is biconnected under this setting, and that either $X$ or $Y$ has no cut vertices. Theorem \ref{thm:cycle_diam_1} already gives a positive result for $\Cycle_n$, the ``simplest" biconnected graph (e.g., the $n$-vertex cycle has the smallest Betti number amongst all $n$-vertex biconnected graphs: see \cite[Theorem 19]{whitney1931non}, which might lend itself to an inductive argument) and for $K_n$, the most ``complicated" ($K_n$ has the largest Betti number amongst all $n$-vertex biconnected graphs). Furthermore, the constructions $X_L$ and $Y_L$ contain cut vertices which hold central roles in the proofs of the intermediate propositions (namely, vertices on the paths $\mathcal P_a^\ell, \mathcal P_b^\ell$ for $X_L$, and the knob vertices $\kappa_a^\ell, \kappa_b^\ell$ in $Y_L$).

In another direction, a negative answer to Question \ref{ques:diam_q_conn} implies the existence of long paths in the connected graph $\FS(X, Y)$. The following result shows that the extreme end of this is not possible.

\begin{proposition}
For $n \geq 4$, $\FS(X, Y)$ is not isomorphic to a tree on $n!$ vertices (e.g., $\Path_{n!}$) or a tree on $n!$ vertices with one edge appended (e.g., $\Cycle_{n!}$).
\end{proposition}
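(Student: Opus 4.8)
The plan is to argue by a local structural obstruction: count vertices of small degree in $\FS(X,Y)$ and show they cannot be arranged as required in $\Path_{n!}$ or $\Cycle_{n!}$ (or in any tree on $n!$ vertices, or such a tree plus one edge). Recall that $\FS(X,Y)$ is bipartite (Proposition~\ref{prop:basic_properties}(2)), so it has no odd cycles; in particular a graph on $n!$ vertices which is a tree plus one edge and happens to be $\Cycle_{n!}$ is only possible when $n!$ is even, which holds for $n \geq 2$, so the cycle case is genuinely in play. The key quantity is the degree of a vertex $\sigma \in V(\FS(X,Y))$: by Definition~\ref{defn:fs_def}, $\deg(\sigma)$ equals the number of edges $\{a,b\} \in E(X)$ with $\{\sigma(a),\sigma(b)\} \in E(Y)$. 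A tree on $n!$ vertices has $n!-1$ edges and a tree plus one edge has $n!$ edges, so in either case the average degree is less than $2$; hence such a graph must have many leaves (degree-$1$ vertices) — at least $2$ in the cycle/tree-plus-edge case, and in a tree strictly more than half... more precisely any tree with $\geq 2$ vertices has $\geq 2$ leaves, and if it has maximum degree $\Delta$ then it has $\geq (\Delta-2)\cdot(\text{something}) + 2$ leaves; but the cleanest route is: a tree on $N \geq 2$ vertices has at least $2$ leaves, and $\Path_{n!}$ has exactly $2$ vertices of degree $1$, while $\Cycle_{n!}$ has none.

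So the strategy splits according to the two claimed non-isomorphism types. First I would handle the case $\FS(X,Y) \cong \Cycle_{n!}$ (or more generally a connected graph that is $2$-regular, i.e.\ a disjoint union of cycles with $n!$ vertices total and $n!$ edges): here every vertex has degree exactly $2$. I would show that for $n \geq 4$ this forces a contradiction, e.g.\ by exhibiting two configurations $\sigma, \tau$ in the same component whose degrees differ, or by showing some configuration has degree $\neq 2$. Concretely: if $X$ or $Y$ has an isolated vertex then $\FS(X,Y)$ is disconnected (Proposition~\ref{prop:basic_properties}(3)), hence not $\cong \Cycle_{n!}$; so we may assume $X$ and $Y$ have minimum degree $\geq 1$. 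Then pick a configuration $\sigma$ maximizing the number of ``active" edges and one minimizing it; for $n \geq 4$ one can arrange more than $2$ simultaneously active $X$-edges at some configuration whenever $\FS(X,Y)$ is connected and has $\geq 2$ edges per vertex on average — but actually the sharper observation is that a connected $2$-regular graph is a single cycle, and in a cycle the graph is vertex-transitive-ish in degree, so it suffices to find one vertex of degree $\neq 2$. I would find such a vertex by a small case analysis on $X$: if $X$ has a vertex of degree $\geq 3$, place a vertex of $Y$ of degree $\geq 3$ there (possible since $\sum \deg = 2|E|$ and $|E(X)|=|E(Y)|$ forces... hmm, not immediate) — the main obstacle is precisely pinning down that degrees in $\FS(X,Y)$ are not all equal to $2$, which I expect requires using that $X$ has $n \geq 4$ vertices and is connected (else $\FS$ disconnected), so $X$ has at least $3$ edges and some vertex of degree $\geq 2$; then a counting argument on how many $X$-edges can be simultaneously $Y$-active under a well-chosen bijection shows the max degree exceeds $2$, contradicting $2$-regularity.

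Second I would handle $\FS(X,Y) \cong T$ for $T$ a tree on $n!$ vertices (which subsumes $\Path_{n!}$). A tree has a cut vertex (any internal vertex), so if I can show $\FS(X,Y)$ has no cut vertex when $n \geq 4$, or alternatively that $\FS(X,Y)$ always contains a cycle (has girth $< \infty$), I am done, since trees are acyclic. The cleanest line: again reduce to $X,Y$ connected with minimum degree $\geq 1$ (Proposition~\ref{prop:basic_properties}(3)). Then I claim $\FS(X,Y)$ contains a cycle. If $\FS(X,Y)$ were a tree it would be bipartite (consistent) but acyclic, so in particular have $n!-1$ edges; I would get a contradiction by producing a closed non-backtracking walk, e.g.\ using the ``rotate a vertex around a cycle of $X$'' phenomenon from Lemma~\ref{lem:star_cycle}, or more simply: pick an edge $\{a,b\}$ of $X$ and an edge $\{c,d\}$ of $X$ sharing no vertex (exists since $n \geq 4$ and $X$ connected with $\geq 2$ edges — actually need $X$ to have two independent edges, which holds once $X$ is connected on $\geq 4$ vertices unless $X$ is a star; handle the star case separately using its own structure), and two disjoint edges of $Y$; then swapping along $\{a,b\}$ and along $\{c,d\}$ commute, producing a $4$-cycle in $\FS(X,Y)$ whenever a suitable bijection exists. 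The existence of a bijection $\sigma$ with $\{\sigma(a),\sigma(b)\}$ and $\{\sigma(c),\sigma(d)\}$ both edges of $Y$ (with $a,b,c,d$ distinct) follows from $Y$ having two disjoint edges, which again holds for $n \geq 4$ connected $Y$ except when $Y$ is a star — and the star cases ($X$ or $Y$ a star) can be dispatched directly, e.g.\ $\FS(\Star_n, Y)$ or $\FS(X,\Star_n)$ is either disconnected or has a known non-tree, non-cycle structure. The main obstacle I anticipate is the star exceptional cases and the bookkeeping to guarantee the $4$-cycle's four bijections are genuinely distinct (they are, since swapping a single transposition changes the bijection and the two transpositions are disjoint, so the four products $\mathrm{id}, (ab), (cd), (ab)(cd)$ composed with $\sigma$ are pairwise distinct), after which the $4$-cycle witnesses that $\FS(X,Y)$ is neither a tree nor $\Cycle_{n!}$ (a $4$-cycle properly contained in a larger $2$-regular connected graph is impossible, and a $4$-cycle in a tree is impossible), completing both cases simultaneously for $n \geq 4$.
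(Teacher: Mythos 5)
Your plan has two genuine gaps, and they sit exactly where the work of this statement lies under your approach. First, the proposition is about \emph{any} tree on $n!$ vertices and \emph{any} tree with one edge appended (a connected unicyclic graph); $\Path_{n!}$ and $\Cycle_{n!}$ are only examples. Your commuting-swaps $4$-cycle rules out trees and rules out $\Cycle_{n!}$ (girth $n!>4$), but it does \emph{not} rule out a unicyclic graph whose unique cycle happens to be a $4$-cycle, and your first-paragraph degree argument only addresses the $2$-regular case, i.e.\ $\Cycle_{n!}$ itself. This is patchable (when both $X$ and $Y$ contain two independent edges one can exhibit two distinct $4$-cycles in $\FS(X,Y)$, so it is not unicyclic), but as written the second half of the statement is not proved. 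Second, and more seriously, the ``star exceptional cases'' are not dispatched, they are merely asserted, and they are precisely where your method breaks: if $Y=\Star_n$ every edge of $Y$ contains the center, so no disjoint pair of simultaneously active swaps exists and there are no $4$-cycles of your type; moreover the degree of $\sigma$ in $\FS(X,\Star_n)$ equals $\deg_X(\sigma^{-1}(\text{center}))$, so for $X=\Cycle_n$ the graph $\FS(\Cycle_n,\Star_n)$ \emph{is} $2$-regular (Lemma \ref{lem:star_cycle}), and your stated plan of ``finding one vertex of degree $\neq 2$'' is impossible there. In that case one must instead argue disconnectedness (the components are $\Cycle_{n(n-1)}$ with $n(n-1)<n!$ for $n\geq 4$), and for $2$-connected $X\neq\Cycle_n$ one needs both a vertex of $X$-degree $\geq 3$ and an explicit cycle (and, for the unicyclic case, two cycles) in $\FS(X,\Star_n)$ obtained by rotating the center around a cycle of $X$. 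None of this is routine bookkeeping; it is the bulk of a correct proof along your lines.

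For comparison, the paper avoids all of this with a global count: $|E(\FS(X,Y))|=|E(X)|\cdot|E(Y)|\cdot(n-2)!$ is even for $n\geq 4$, while a tree on $n!$ vertices has $n!-1$ (odd) edges; and if $\FS(X,Y)$ were a connected unicyclic graph then $|E(X)|\cdot|E(Y)|=n(n-1)$, which with connectivity of $X$ and $Y$ and Proposition \ref{prop:basic_properties}(3) forces $X=\Cycle_n$ and $Y$ a tree, contradicting Theorem \ref{thm:cycle_connected}. If you want to salvage your local-structure route, you should (i) state and prove the two-cycles claim to cover general unicyclic targets, and (ii) carry out the $Y=\Star_n$ (equivalently $X=\Star_n$) analysis in full rather than by appeal to ``known structure''; otherwise the counting argument is both shorter and complete.
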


\begin{proof}
The number of edges of $\FS(X, Y)$ is $|E(X)| \cdot |E(Y)| \cdot (n-2)!$, while this is $n! - 1$ and $n!$ for a tree on $n!$ vertices and a tree with one edge appended on $n!$ vertices, respectively. Notice that $|E(X)| \cdot |E(Y)| \cdot (n-2)!$ is divisible by $2$ while $n! - 1$ is not, so $\FS(X,Y)$ cannot be isomorphic to a tree on $n!$ vertices. Assume $\FS(X, Y)$ is isomorphic to a tree with an edge appended to it, so $|E(X)| \cdot |E(Y)| \cdot (n-2)! = n!$, or $|E(X)| \cdot |E(Y)| = n(n-1)$. Then $X$ and $Y$ must both be connected, so that (without loss of generality) $|E(X)| = n$ and $|E(Y)| = n-1$, so $Y$ is a tree. Due to Proposition \ref{prop:basic_properties}(3), $X$ is biconnected, so necessarily $X = \Cycle_n$. But $|E(\overline{Y})| = \binom{n}{2} - (n-1)$, contradicting Theorem \ref{thm:cycle_connected}, which gives $|E(\overline{Y})| \leq n-1$.
\end{proof}

\subsection{Probabilistic Problems} \label{subsec:probabilistic_aspects}

In a different direction, we may study notions of distance in friends-and-strangers graphs when we take $X$ and $Y$ to be random graphs. We propose the following problem; we leave the meaning of ``small diameter" up to interpretation.
\begin{problem} \label{prob:small_diameter_whp}
   Let $X$ and $Y$ be independently-chosen random graphs from $\mathcal G(n,p)$. Find conditions on $p$ (in terms of $n$) which guarantee that every connected component of $\FS(X,Y)$ has small diameter with high probability.
\end{problem}
We also restate a problem of this kind proposed by \cite{alon2022typical}.
\begin{problem}[{\cite[Problem 7.9]{alon2022typical}}] \label{prob:expected_diam}
    Obtain estimates (in terms of $n$ and $p$) for the expectation of the maximum diameter of a connected component in $\FS(X,Y)$ when $X$ and $Y$ are independently-chosen random graphs from $\mathcal G(n,p)$.
\end{problem}
In a manner analogous to how we fixed one of the two graphs $X$ and $Y$ in Section \ref{sec:fixed_one_graph} and studied the resulting variant of Question \ref{ques:poly_bdd} before addressing the more global question, it may be insightful to first fix (without loss of generality) $X$ to be a particular kind of graph and study the variants of Problems \ref{prob:small_diameter_whp} and \ref{prob:expected_diam} which only take $Y$ to be drawn from $\mathcal G(n,p)$. The graphs we studied in Section \ref{sec:fixed_one_graph} (complete graphs, paths, and cycles) may also serve as natural starting points here.

\subsection{Complexity} \label{subsec:complexity}

As the literature on the token swapping problem suggests, computing exact distances between two configurations in $\FS(X,Y)$ and the maximum diameter of a component of $\FS(X,Y)$, under mild assumptions on $X$ and $Y$, seems to be intractable. We might thus study distances and diameters in friends-and-strangers graphs from the perspective of complexity theory. We discuss one possible direction of study along these lines here. We start by introducing a decision problem which encapsulates finding the shortest swap sequence between two configurations.

\begin{definition} \label{defn:distance_problem}
    In an instance of the \textit{\textcolor{red}{distance problem,}} we are given graphs $X$ and $Y$ on $n$ vertices, configurations $\sigma, \tau \in V(\FS(X,Y))$, and a positive integer $K$, and want to know if $d(\sigma, \tau) \leq K$.
\end{definition}

This problem has been studied in many restricted contexts. If we proceed under the assumption that $Y = K_n$, the distance problem is known to be PSPACE-complete \cite{jerrum1985complexity}, APX-hard \cite{miltzow_et_al:LIPIcs:2016:6408}, and $W[1]$-hard when parametrized by the shortest number of swaps \cite{bonnet2018complexity}. Furthermore, it is NP-hard when we impose certain additional restrictions, such as when we take $X$ to be a tree and $Y = K_n$ \cite{aichholzer2021hardness}. It might be fruitful to study the complexity of this problem at the level of generality proposed by Definition \ref{defn:distance_problem}.

A natural follow-up to Definition \ref{defn:distance_problem} is to ask for worst-case distances between two configurations, which corresponds to the maximum diameter of a component of $\FS(X,Y)$.

\begin{definition} \label{defn:diameter_problem}
    In an instance of the \textit{\textcolor{red}{diameter problem,}} we are given graphs $X$ and $Y$ on $n$ vertices and a positive integer $K$, and want to know if the maximum diameter of a component of $\FS(X,Y)$ is at most $K$.
\end{definition}

The literature suggests that the diameter problem has not been studied as thoroughly as the distance problem, even when assuming that one of the two graphs is complete. Towards bridging this gap, we pose the following primitive question and problem.

\begin{question}
    Is the diameter problem in EXPSPACE? If so, is it EXPSPACE-complete? What changes if we fix $Y = K_n$?
\end{question}

\begin{problem}
    Find assumptions on $X$, $Y$, and $K$ which guarantee that the diameter problem (under these assumptions) is in PSPACE.
\end{problem}

Notably, even simpler decision problems than those proposed in Definitions \ref{defn:distance_problem} and \ref{defn:diameter_problem} seem to be poorly understood. For instance, we may consider the following decision problems.

\begin{definition}
    In an instance of the \textit{\textcolor{red}{component problem}}, we are given $n$-vertex graphs $X$ and $Y$ and configurations $\sigma, \tau \in V(\FS(X,Y))$, and want to know if $\sigma$ and $\tau$ lie in the same connected component of $\FS(X,Y)$.
\end{definition}

\begin{definition}
    In an instance of the \textit{\textcolor{red}{connectivity problem}}, we are given $n$-vertex graphs $X$ and $Y$, and want to know if $\FS(X,Y)$ is connected.
\end{definition}

We may think of the component problem and the connectivity problem as, respectively, the simplest instances of the distance problem (asking whether $d(\sigma, \tau)$ is finite) and of the diameter problem (asking whether the diameter of $\FS(X,Y)$, when we do not restrict to connected components, is finite). To our knowledge, an understanding of the complexity of the component problem and the connectivity problem remains open, though the results in \cite{alaniz2023complexity} address these problems when studying friends-and-strangers graphs with multiplicities, as elaborated in \cite{milojevic2023connectivity}.

In a different direction, \cite{akers1989group, vaughan1995algorithm, yamanaka2015swapping} independently found $2$-approximation algorithms for determining the distance between two configurations in $\FS(X, K_n)$ when $X$ is a tree. Recall from the proof of Proposition \ref{prop:path_diam} that for any $\sigma, \tau \in \FS(\Path_n, Y)$ in the same connected component, $d(\sigma, \tau) = \inv(\sigma, \tau)$, and an algorithm which exactly computes the distance between any two configurations in $\FS(\Path_n, Y)$ is one which, starting from $\sigma$, reverses a $\tau$-inversion at every step. These two observations naturally suggest the following problem, which one can also pursue by replacing $\Cycle_n$ with a different fixed graph.

\begin{problem} \label{prob:cycle_algo}
    Find, under the most general assumptions on $Y$ possible, an $O(1)$-approximation algorithm for computing the distance between two configurations in $\FS(\Cycle_n, Y)$.
\end{problem}

\section*{Acknowledgments}

This research was initiated at the University of Minnesota Duluth REU and was supported, in part, by NSF-DMS grant 1949884 and NSA Grant H98230-20-1-0009. I would like to thank Professor Joseph Gallian for organizing the Duluth REU and for giving me a chance to participate in his 2021 program, and I am deeply grateful to Colin Defant and Noah Kravitz for suggesting these problems to me and for many helpful conversations over the course of that summer. In particular, I thank Colin Defant for carefully reading through a draft of this article, providing many productive comments on the manuscript. I also thank Yelena Mandelshtam for reviewing a draft of this work and providing constructive feedback. Finally, I would like to sincerely thank the two anonymous referees for thoroughly reviewing an earlier draft of this work, catching many typos and grammatical mistakes, and providing many detailed comments and suggestions which have vastly improved this article's clarity. Notably, I am immensely grateful to the anonymous referee who proposed a simple way to sharpen the main result of this paper. 

\section*{References}

\printbibliography[heading=none]

\end{document}